\titleformat{\chapter}[display]   
{\normalfont\huge\bfseries}{\chaptertitlename\ \thechapter}{20pt}{\Huge}   
\titlespacing*{\chapter}{0pt}{-30pt}{30pt}
\newtheorem{theoreme}{Theorem}
\newtheorem{lemme}{Lemma}
\newtheorem{remark}{Remark}
\newtheorem{prop}{Proposition}
\newtheorem{corol}{Corollary}
\begin{document}

\title{\Huge Linear elliptic homogenization for a class of highly oscillating non-periodic potentials}

\date{}

% \author{\Large R\'emi Goudey}

% \affil{ \large CERMICS, Ecole des Ponts and MATHERIALS Team, INRIA, \\ 6 \& 8, avenue Blaise Pascal, 77455 Marne-La-Vall\'ee Cedex 2, FRANCE.\\ \textbf{remi.goudey@enpc.fr}}

\author[,1]{R\'emi Goudey\thanks{Corresponding author: \texttt{remi.goudey@enpc.fr}}}
\author[1]{Claude Le Bris}

\affil[1]{CERMICS, \'Ecole des Ponts and MATHERIALS project-team, INRIA,\qquad \qquad \quad 6 \& 8, avenue Blaise Pascal, 77455 Marne-La-Vall\'ee Cedex 2, FRANCE.}

\maketitle

%\doublespacing

\begin{abstract}
    We consider an homogenization problem for the second order elliptic equation  $- \Delta  u^{\varepsilon} + \dfrac{1}{\varepsilon} V(./\varepsilon) u^{\varepsilon} + \nu u^{\varepsilon} =f$ when the highly oscillatory potential $V$ belongs to a particular class of non-periodic potentials. We show the existence of an adapted corrector and prove the convergence of $u^{\varepsilon}$ to its homogenized limit.  
\end{abstract}

\section{Introduction}

Our purpose is to homogenize the stationary Schr\"odinger equation : 
\begin{equation}
\label{homog_eps_schro}
    \left\{
    \begin{array}{cc}
      - \Delta u^{\varepsilon} + \dfrac{1}{\varepsilon}V(./\varepsilon) u^{\varepsilon} + \nu u^{\varepsilon} = f   & \text{on }  \Omega, \\
      u^{\varepsilon} = 0   & \text{on } \partial \Omega, 
    \end{array}
    \right.
\end{equation}
where $\Omega$ is a bounded domain of $\mathbb{R}^d$ ($d\geq 1)$, $f$ is a function in $L^2(\Omega)$, $\varepsilon>0$ is a small scale parameter, $V\in L^{\infty}(\mathbb{R}^d)$ is an highly oscillatory non-periodic potential that models a perturbed periodic geometry and $\nu \in \mathbb{R}^d$ is a fixed scalar. The potential V is assumed to have a vanishing "average" in the following sense~: 
\begin{equation}
\label{average_V_schro}
    \lim_{\varepsilon \to 0} V(./\varepsilon) = 0 \quad \text{ in } L^{\infty}(\mathbb{R}^d)-\star,
\end{equation} 
which is a necessary assumption to expect the convergence of $u^{\varepsilon}$ to a non trivial function $u^*$ when $\varepsilon$ converges to 0 due to the exploding term $\dfrac{1}{\varepsilon}V(./\varepsilon)$ in~\eqref{homog_eps_schro}. In order to avoid some technical details, we also assume that $\Omega$ is sufficiently regular, say $\mathcal{C}^1$.

When the potential $V=V_{per}$ is periodic, the homogenization problem \eqref{homog_eps_schro} is well known, see for instance \cite[Chapter 1, Section 12]{bensoussan2011asymptotic}.
The behavior of $u^{\varepsilon}$ is then described using a corrector $w_{per}$, that is the periodic solution (unique up to the addition of an irrelevant constant) to
\begin{equation}
\label{corrector_per_schro}
    \Delta w_{per} = V_{per} \quad \text{on } \mathbb{R}^d.
\end{equation}
More precisely, $u^{\varepsilon}$ converges, strongly in $L^2(\Omega)$ and weakly in $H^1(\Omega)$, to the unique solution $u^*$ in $H^1(\Omega)$ to 
\begin{equation}
    \left\{
    \begin{array}{cc}
       -\Delta u^* + (V_{per})^* u^* + \nu u^* = f  &  \text{on } \Omega,\\
        u^* = 0 & \text{on } \partial \Omega,
    \end{array}
   \right. 
\end{equation}
where the homogenized potential $(V_{per})^*$ is the scalar constant defined by
\begin{equation}
\label{eq_homog_per_schro}
    (V_{per})^* = \langle V_{per} w_{per} \rangle = - \langle |\nabla w_{per}|^2 \rangle,
\end{equation}
the rightmost equality being a consequence of the virial theorem applied to the setting of \eqref{corrector_per_schro}, in that case a simple application of the Green formula. 

We note that the well-posedness of \eqref{homog_eps_schro} requires an additional assumption. In \cite{bensoussan2011asymptotic}, the homogenization of~\eqref{homog_eps_schro} is performed under the (sufficient) assumption
\begin{equation}
\label{Schro_condition_suff_per}
     \mu_1+\langle V_{per} w_{per} \rangle + \nu > 0,
\end{equation}
where $\mu_1$ is the first eigenvalue of $-\Delta$ on $H^1_0(\Omega)$. The existence and uniqueness of $u^{\varepsilon}$, for $\varepsilon$ sufficiently small, is then a consequence  of the convergence of $\lambda_1^{\varepsilon}$, the first eigenvalue of the operator $-\Delta + \dfrac{1}{\varepsilon} V(./\varepsilon) + \nu$ with homogeneous Dirichlet boundary conditions, to $\mu_1 + \langle w_{per} V_{per} \rangle + \nu$. One can remark that assumption \eqref{Schro_condition_suff_per} only depends on the properties of the potential $V_{per}$ since $w_{per}$ is solution to \eqref{corrector_per_schro}.

The behavior of $u^{\varepsilon}$ can also be made explicit using the periodic corrector. The sequence of remainders
\begin{equation}
    R^{\varepsilon} = u^{\varepsilon} -u^* - \varepsilon u^* w_{per}(./\varepsilon)
\end{equation}
is shown to strongly converge to 0 in $H^1(\Omega)$. The convergence of the eigenvalues of $-\Delta + \dfrac{1}{\varepsilon}V(./\varepsilon) +\nu$ is also studied in \cite{zhang2021estimates, cances2021second}. Respectively denoting by $\lambda_l^{\varepsilon}$ and $\lambda^*_l$ the lowest $l^{th}$ eigenvalue (counting multiplicities) of $-\Delta + \dfrac{1}{\varepsilon}V(./\varepsilon) + \nu $ and $-\Delta + \langle w_{per} V_{per} \rangle + \nu$, the results of \cite[Theorem 1.4]{zhang2021estimates} show the existence of a constant $C>0$ such that $\displaystyle |\lambda_l^{\varepsilon} - \lambda^*_l| \leq C |\lambda_l^*|^{3/2} \varepsilon$. All these results are essentially established using two specific properties of the corrector $w_{per}$ : the strict sublinearity at infinity, that is the uniform convergence to 0 of $\varepsilon w_{per}(./\varepsilon)$, and the weak convergence of the periodic function $|\nabla w_{per}(./\varepsilon)|^2$ to its average.
For completeness, let us also mention that both elliptic and parabolic variants of \eqref{homog_eps_schro} have also been studied in \cite{allaire2005homogenization},\cite[Chapter 1, Section5]{lions1981some} and were alternatively considered in the case where the potential $V$ is random and stationary, see for instance  \cite{bal2010homogenization, bal2015limiting, iftimie2008homogenization}.

The aim of the present contribution is to extend the results of the elliptic periodic case to an elliptic deterministic non-periodic setting that models a periodic geometry perturbed by a certain class of so-called defects which we make precise in the next section.

\subsection{The non-periodic case : mathematical setting and assumptions}

\label{Section_intro_setting_schro}

Throughout this work, we denote by $B_R$ the ball of radius $R>0$ centered at the origin and by $B_R(x)$ the ball of radius $R$ and center $x \in \mathbb{R}^d$, by $|A|$ the volume of any measurable subset $A\subset \mathbb{R}^d$ and by $A/\varepsilon = \left\{\dfrac{x}{\varepsilon} \ \middle| \ x \in A\right\}$ the scaling of $A$ by $\varepsilon>0$. In addition, for a normed vector space $(X,\|.\|_X)$ and a $d$-dimensional vector $f \in X^d$, we use the notation $\|f\|_{X} \equiv \displaystyle \sum_{i=1}^d\|f_i \|_{X}$. 

The class of potentials $V$ we consider in this paper consists of those that read as
\begin{equation}
\label{potential_form_schro}
    V(x) = g_{per} + \sum_{k \in \mathbb{Z}^d} \varphi(x-k-Z_k),
\end{equation}
where $g_{per}$ is a $Q$-periodic function (where $Q = ]0,1[^d$ denotes the unit cell of $\mathbb{R}^d$), $\varphi$ belongs to $\mathcal{D}(\mathbb{R}^d)$, the space of smooth compactly supported functions, and $Z = (Z_k)_{k\in \mathbb{Z}^d}$ is a vector-valued sequence that satisfies 
\begin{equation}
\label{hyp_borne_Z_schro}
   Z \in \left(l^{\infty}(\mathbb{Z}^d)\right)^d.
\end{equation}
In a certain sense, the potential $V$ is a perturbation of the periodic potential 
\begin{equation}
\label{periodic_potential_schro}
    \displaystyle V_{per} = g_{per} + \sum_{k \in \mathbb{Z}^d} \varphi(x-k)
\end{equation}
 by the "defect"
 \begin{equation}
 \label{perturbation_potential_schro}
     \Tilde{V} = \displaystyle \sum_{k \in \mathbb{Z}^d}\left(\varphi(x-k-Z_k) - \varphi(x-k)\right).
 \end{equation}
A simple calculation shows that assumption \eqref{average_V_schro} satisfied by $V$ is actually equivalent to
\begin{equation}
\label{Schro_condition_moyenne2}
    \displaystyle \int_{\mathbb{R}^d} \varphi = - \langle g_{per} \rangle,
\end{equation}
where $\langle g_{per} \rangle$ is the average of the $Q$-periodic function $g_{per}$. In this work, we additionally assume that 
\begin{equation}
\label{hyp_holder_continuite_schro}
   g_{per} \in \mathcal{C}^{0,\alpha}(\mathbb{R}^d), \text{ for some } \alpha \in ]0,1[, 
\end{equation}
where $\mathcal{C}^{0,\alpha}(\mathbb{R}^d)$ denotes the space of $\alpha$-H\"older continuous functions. This assumption ensures that the periodic corrector $w_{per}$, solution to $\Delta w_{per} = V_{per}$, belongs to $L^{\infty}(\mathbb{R}^d)$, as a consequence of the Schauder regularity theory. 

The specific form \eqref{potential_form_schro} of the potential $V$ is inspired by the work \cite{MR1974463}, related to the minimization of the energy of an infinite non-periodic system of particles. In this work were introduced several algebras generated by functions of the form $\displaystyle \sum_{k \in \mathbb{Z}^d} \varphi(x-X_k)$ together with general assumptions related to the distribution of the points $X_k$. For some particular sets $\{X_k\}_{k \in \mathbb{Z}^d}$, this setting has been employed as a \emph{motivation} to introduce several linear elliptic homogenization problems for \emph{local} perturbations of a periodic geometry in \cite{blanc2018precised, blanc2018correctors, blanc2015local, blanc2012possible}. In these cases, $X_k = k + Z_k$, where $Z_k$ is a compactly supported sequence. Stochastic cases with stationary coefficients were also considered in \cite{blanc2007stochastic} using sets of random points $X_k(\omega)$. Although the homogenization of \eqref{homog_eps_schro} in the whole generality of the sequence $X_k$ introduced in \cite{MR1974463} is to date an open mathematical question, the aim of the present contribution is to introduce a somewhat general case of sequences of the form $X_k = k + Z_k$ that model \emph{non-local} perturbations of a periodic setting when $Z_k$ does not vanish or only slowly vanishes at infinity, and for which the homogenization problem can be addressed. This work is also motivated by the hope to establish a theory of homogenization for the diffusion operator $-\operatorname{div}(a(./\varepsilon) \nabla)$, for a coefficient of the form $\displaystyle \sum_{k \in \mathbb{Z}^d} \varphi(x-X_k)$, or of a related general form. To some extent, equation \eqref{homog_eps_schro} is a \emph{bilinear} version ($V$ \emph{multiplies} the unknown function $u^{\varepsilon}$) of the diffusion equation $-\operatorname{div}(a(./\varepsilon) \nabla u^{\varepsilon}) = f$ where $a$ and $u^{\varepsilon}$ are, on the other hand, \emph{fully} entangled.

One of the main difficulties in the non-periodic setting \eqref{potential_form_schro} is the study of the corrector equation 
\begin{equation}
\label{corrector_general_schro}
   \Delta w = V \quad \text{on } \mathbb{R}^d,
\end{equation}
which, in sharp contrast with the periodic setting, cannot be reduced to an equation posed on a bounded domain. In particular, this prevents us from using classical techniques (the Lax-Milgram lemma for instance) to solve this equation. A natural approach to show the existence of a solution to \eqref{corrector_general_schro} consists in finding a solution of the form $w=G\ast V$, where $G$ denotes the Green function associated with $\Delta$ on $\mathbb{R}^d$. For $d\geq 3$, it is well-known that $G$ is of the form $G(x) = C(d) \dfrac{1}{|x|^{d-2}}$, where the constant $C(d)$ only depends on the ambient dimension, and the difficulty for solving our problem is therefore threefold. First, the existence of such a solution $w$ has to be established, the definition of $G \ast V$ being not obvious since the potential $V$ is non-periodic and is only known to belong to $L^{\infty}(\mathbb{R}^d)$. Second, we have to establish its strict sub-linearity at infinity in the sense that $\varepsilon w(./\varepsilon)$ converges to 0 on $\Omega$. We shall recall that the weak convergence of $\nabla w(./\varepsilon)$ to~0 is actually sufficient to show the latter property (see Lemma \ref{lemme_sous_linearité}). Third, we have to rigorously establish that the homogenized potential appearing in \eqref{eq_homog_per_schro} is the weak limit of $|\nabla w|^2(./\varepsilon)$ when $\varepsilon$ converges to 0. In particular, for our purpose, we have to prove some bounds, uniform with respect to $\varepsilon$ and satisfied by $\nabla w$ on $\Omega/\varepsilon$, at least in $L^2$.

To address the question related to the existence of a solution $w$ to \eqref{corrector_general_schro}, our approach first consists in using the specific structure of $V$, that is a perturbation of the periodic potential \eqref{periodic_potential_schro} by \eqref{perturbation_potential_schro} in order to find a corrector of the form
\begin{equation}
\label{corrector_form_schro}
    w = w_{per} + \Tilde{w},
\end{equation}
where $\Delta w_{per} = V_{per}$ and $\nabla \Tilde{w}$ formally reads as
\begin{equation}
\label{form_solution_schro}
   \nabla \Tilde{w} = \sum_{k \in \mathbb{Z}^d} \nabla G \ast \left(\varphi \ (.-k-Z_k) - \varphi(.-k)\right).
\end{equation} 

Since $\nabla G(x-k)$ behaves as $\dfrac{1}{|x-k|^{d-1}}$ at infinity, obtaining in the right hand side of \eqref{form_solution_schro} a series that normally converges requires to increase by more than one the exponent in the rate of decay with respect to k for large k. At the very least, a decay in $\dfrac{1}{|k|^d}$, that is a critical decay in the ambient dimension $d$, is sufficient. But then $\nabla \tilde{w}$ will be expected to only be a $BMO$ function (where $BMO$ denotes the space of functions with bounded mean oscillations, see \cite[Chapter IV]{stein1993harmonic} for instance) and not an $L^\infty$ function, which will in turn generate some additional technical difficulties in how $\nabla w$ is employed in the homogenization proof. We will return to this below, in Sections \ref{Section_approach_taylor} and \ref{Schro_linear_problem}.

For the time being, let us observe that the gain in the rate of decay must come from a suitable combination of, on the one hand, the properties of the function $\varphi$, or functions constructed from $\varphi$, appearing in the right hand side of \eqref{form_solution_schro}, and, on the other hand, of the properties of the sequence~$Z$.
One possible approach to realize the difficulty is to perform a formal Taylor expansion in the right-hand side of \eqref{form_solution_schro}. Then we have
\begin{equation}
\label{Taylor_expansion_intro_schro}
    \nabla \Tilde{w} = \sum_{k \in \mathbb{Z}^d} \nabla G \ast \left(-Z_k.\nabla \varphi(.-k) + \int_{0}^1 (1-t) Z_k^T D^2\varphi(.-k- tZ_k) Z_k  dt \right).
\end{equation}
In \eqref{Taylor_expansion_intro_schro}, it is immediately realized that the remainder term of order two yields an absolutely converging contribution to the construction of $\nabla \tilde{w}$. This term indeed only contains second derivatives of $\varphi$, which, in \eqref{Taylor_expansion_intro_schro}, give by integration by parts third-order derivatives $D^3 G$ which all decay like~$\dfrac{1}{|x|^{d+1}}$. The only possibly delicate term is the leftmost, linear term on the right-hand side of~\eqref{Taylor_expansion_intro_schro}, for which only one level, and not two levels, of derivation are gained. Put differently, the key point is the consideration of the equation
\begin{equation}
\label{schro_eq_linear_intro}
    \Delta w_1 = \sum_{k \in \mathbb{Z}^d} Z_k.\nabla \varphi(.-k),
\end{equation}
which is related to the convergence of the sum
\begin{equation}
\label{sum_linear_intro_schro}
   \sum_{k \in \mathbb{Z}^d} \nabla G \ast \left(Z_k.\nabla \varphi(.-k)\right).
\end{equation}
For this purpose, a possible additional assumption is that the integral of $\varphi$ is required to vanish. We note in passing that this algebraic condition is not a matter of a simple renormalization, since in our setting and except under trivial circumstances, there does not exist any partition of unity, that is, any function $\chi \in \mathcal{D}(\mathbb{R}^d)$ such that $\displaystyle \int_{\mathbb{R}^d} \chi = 1$ and $1 \equiv \displaystyle \sum_{k \in \mathbb{Z}^d} \chi(.-k-Z_k)$ (we will return to this later in Section \ref{section_linear_particular_case}). Then the first term of \eqref{Taylor_expansion_intro_schro} also gives an absolutely converging series (see Lemma \ref{Prop_elementaire_schro} for details), and thus, by linearity and combination of all terms, $\nabla \tilde{w}$ is shown to exist. In addition, it is indeed a $L^\infty$ function. 

Another, alternative, possible option is to assume that the Ces\`aro means of $Z$ rapidly vanish, that is, for every $R>0$, $x_0\in \mathbb{R}^d$,  $\displaystyle  \dfrac{\varepsilon^d}{|B_R|} \sum_{k \in B_R(x_0)/\varepsilon} Z_k$ rapidly converges to 0 as $\varepsilon$ vanishes, in which case the convergence of \eqref{sum_linear_intro_schro} in $L^{\infty}$ can be established (see Proposition \ref{Prop_schro_convergence_average}). For example, it is the case when $Z_k$ itself rapidly vanishes at infinity. We will elaborate upon this particular assumption below. 

Yet another option is to assume that $Z$ is the discrete gradient of a sequence $(T_k)_{k\in \mathbb{Z}^d}$, that is $(Z_k)_i = T_{k+e_i} - T_k$ for every $i \in \{1,...d\}$, such that $|T_k| =O\left( |k|^{\alpha}\right)$ for $\alpha \in [0,1[$. In that case again, an integration by parts, this time a discrete one, yields the absolute convergence of \eqref{sum_linear_intro_schro}. 
%But we will not proceed further in that particular direction. 

Our only take-away message to the reader is, here, that various combinations of assumptions may be considered and although we briefly consider some of the above specific cases in Section \ref{section_linear_particular_case}, the main purpose of the present contribution is to establish the existence of a corrector adapted to our setting \eqref{homog_eps_schro}-\eqref{potential_form_schro} under a set of "general" assumptions satisfied by the pair $(\varphi,Z)$ that are as weak as possible. In particular, our aim is to homogenize equation \eqref{homog_eps_schro} even if $\displaystyle \int_{\mathbb{R}^d} \varphi \neq 0$ and $Z_k$ does not vanish by any mean at infinity. As specified above, for such a general setting, we will only be able to show the convergence of \eqref{sum_linear_intro_schro} in a somewhat weak sense using the properties of the operator $T : f \mapsto \nabla^2 G \ast f$  which, as a particular element of the class of Calder\'on-Zygmund operators, is known to be only continuous from $L^{\infty}(\mathbb{R}^d)$ to $BMO(\mathbb{R}^d)$.

In any event, once the gradient \eqref{form_solution_schro} is shown to exist in a suitable functional space (namely~$L^\infty$ or $BMO$), we have to use the corrector function $w$ defined by \eqref{corrector_form_schro} for the homogenization process. This second step brings additional constraints on our input parameters $\varphi$ and $Z$. In the first place, $\displaystyle \nabla w(./\varepsilon)$ must weakly converges to 0 as $\varepsilon$ vanishes in order for $w$ to be strictly sublinear at infinity. Furthermore, simply considering the periodic setting and the expression \eqref{eq_homog_per_schro} of the homogenized coefficient in that setting, we anticipate that we will have to make sure that the weak limit of $ |\nabla w(./\varepsilon)|^2 $ exists. These two conditions (strict sublinearity of $w$ at infinity and weak convergence of $|\nabla w(./\varepsilon) |^2$) bring additional constraints on $\nabla w$ itself besides the only convergence of the sum~\eqref{form_solution_schro}. These constraints are again related to, and can be expressed with, suitable properties of the function $\varphi$ and the sequence $Z$. Actually, since we intend to be as general as possible regarding the function $\varphi$, we will only impose constraints on Z. The first of these constraints, meant to ensure the strict-sublinearity of $w$, has already been mentioned above for a different purpose. It is related to the average of Z. As for the weak convergence of $|\nabla w(./\varepsilon)|^2$, it is intuitive to realize that \emph{correlations} of the sequence Z will matter. 

Given the above general and somewhat vague considerations, we now make precise the detailed properties of the sequence $Z$ that we will assume throughout our work. The necessity of such assumptions will be motivated in Section \ref{dimension_1_schro}  with an illustrative one-dimensional example for which the corrector can be explicitly determined. 

Our first assumption is related to the strict sublinearity at infinity of the corrector. We assume that $Z$ has an average, that is there exists a constant $\langle Z \rangle \in \mathbb{R}^d$ such that 
\begin{equation}
    \label{A0}
    \tag{A1}
    \forall R>0, x_0 \in \mathbb{R}^d, \quad \lim_{\varepsilon \to 0} \dfrac{\varepsilon^d}{|B_R|} \sum_{k \in B_R(x_0)/\varepsilon} Z_k = \langle Z \rangle.
\end{equation} 
We note that Assumption \eqref{A0} is stronger that the existence of a simple average of $Z_k$ in the sense that $\displaystyle \lim_{R \to \infty} \dfrac{1}{|B_R|} \sum_{k\in B_R(x_0)} Z_k$ exists. Here, since $B_R(x_0)/\varepsilon = B_{\frac{R}{\varepsilon}}\left(\frac{x_0}{\varepsilon}\right)$, the center of the ball of radius $\dfrac{R}{\varepsilon}$ may depends on $\varepsilon$ and \eqref{A0} actually provides a certain uniformity of the convergence with respect to this center. 

The next three assumptions regard the auto-correlations of $Z$ and are related to the specific sum \eqref{sum_linear_intro_schro} that we need to manipulate. Denoting by $\overline{Z}_k : = Z_k - \langle Z \rangle$, we assume the existence of a family of constants, denoted by $\mathcal{C}_{l,i,j}$ for every $l \in \mathbb{Z}^d$ and $i,j \in \{1,...,d\}$, such that 
\begin{equation}
    \label{A00}
    \tag{A2.a}
     \forall R>0, x_0 \in \mathbb{R}^d, \quad \lim_{\varepsilon \to 0} \dfrac{\varepsilon^d}{|B_R|} \sum_{k \in B_R(x_0)/\varepsilon} (\overline{Z}_k)_i(\overline{Z}_k)_j = \mathcal{C}_{l,i,j}.
\end{equation}
\begin{equation}
\label{A2}
\tag{A2.b}
\begin{array}{cc}
\exists \delta : \mathbb{R}^+ \rightarrow \mathbb{R}^+, \ \forall i,j \in \{1,...,d\},  \ \forall R>0, \forall x_0 \in \mathbb{R}^d,  \exists \ \gamma : \mathbb{R}^+ \rightarrow  \mathbb{R}^+, \ &  \\[0.2cm]
   \displaystyle  \sup_{|l|\leq \frac{1}{\delta(\varepsilon)}} \left| \dfrac{\varepsilon^d}{|B_R|}\sum_{k \in B_R(x_0)/\varepsilon} (\overline{Z}_k)_i (\overline{Z}_{k+l})_j - \mathcal{C}_{l,i,j}\right| \leq \gamma(\varepsilon) \text{ and } \left\{\begin{array}{cc}
      \displaystyle \lim_{\varepsilon \to 0} \gamma(\varepsilon) |\ln(\varepsilon)| = 0,  &  \\[0.2cm]
      \displaystyle \lim_{\varepsilon\to 0 } \varepsilon^{-1} \delta(\varepsilon) = 0.& 
   \end{array}
   \right.
\end{array}
\end{equation}
\begin{equation}
\label{A3}
\tag{A2.c}
   \forall i,j \in \{1,...d\}, \ x \mapsto  \sum_{|l|\leq L} \mathcal{C}_{l,i,j} \left(\partial_i \partial_j G * \varphi\right)(x-l) \quad  \text{converges in $L^1_{loc}(\mathbb{R}^d)$ when } L \to +\infty.
\end{equation}
We shall see in Section \ref{Schro_linear_problem} that Assumptions \eqref{A00} \eqref{A2} and \eqref{A3} will allow us to establish the weak-convergence of $|\nabla w_1(./\varepsilon)|^2$, where $w_1$ is solution to \eqref{schro_eq_linear_intro} with a gradient of the form \eqref{sum_linear_intro_schro}. As we have just sketched above, equation \eqref{schro_eq_linear_intro} will indeed be  key in our present study. We also note that in Assumption \eqref{A2}, the rate $|\ln(\varepsilon)|$ is related to the decreasing rate of the second derivatives of $G$.

As specified, \eqref{A00} \eqref{A2} and \eqref{A3} will only be sufficient to study the solution to equation~\eqref{schro_eq_linear_intro} for which the right hand side is linear with respect to $Z$. However, in our original problem, the potential \eqref{potential_form_schro} is actually nonlinear with respect to $Z$ and we shall see that this nonlinearity implies that the convergence of $|\nabla w(./\varepsilon)|^2$, where $w$ will be given by \eqref{corrector_form_schro}, requires an additional strong assumption related to the second-order correlations of $Z$. We therefore assume that
\begin{equation}
\label{A4}
\tag{A3}
\begin{array}{cc}
   \forall F \in \mathcal{C}^0(\mathbb{R}^d \times \mathbb{R}^d), \ \forall l \in \mathbb{Z}^d, \ \exists C_{F,l}\in \mathbb{R}, \ \forall R>0, \forall x_0 \in \mathbb{R}^d, &  \\\vspace{3pt}
   \displaystyle  \lim_{\varepsilon \to 0} \dfrac{\varepsilon^d}{|B_R|}\sum_{k \in B_R(x_0)/\varepsilon} F(Z_k,Z_{k+l}) = C_{F,l}.
\end{array}
\end{equation}
Considering respectively $F(x,y) = x_i$ and $F(x,y) = (x_i-\langle Z\rangle_i)(y_j-\langle Z \rangle_j)$ for every $i,j \in \{1,...,d\}$, Assumption \eqref{A4} of course implies \eqref{A0} and \eqref{A00} but we chose here to consider these assumptions separately for a pedagogic purpose. 

We give several examples of sequences $Z$ satisfying assumptions \eqref{A0} to \eqref{A4} in Section~\ref{Section_schro_examples}. We refer to Figure \ref{fig2_schro} for an illustration in dimension $d=2$ of two sequences that satisfy our assumptions respectively in a case of local perturbation $\left(\displaystyle\lim_{|k|\to \infty} Z_k = 0\right)$ and of non-local perturbation $\left(\displaystyle\lim_{|k|\to \infty} Z_k \neq 0 \right)$.

We also note that, if $V$ is a potential of the form \eqref{potential_form_schro} and $Z$ satisfies \eqref{A0}, then $V = g_{per} + \displaystyle \sum_{k \in \mathbb{Z}^d} \overline{\varphi}(.-k-\overline{Z}_k)$ which is also of the form \eqref{potential_form_schro} where $\overline{\varphi} = \varphi(.-\langle Z \rangle)$ and $\overline{Z_k} = Z_k - \langle Z \rangle$ has a vanishing average. Consequently, for simplicity, we will sometimes assume $\langle Z \rangle=0$ in \eqref{A0} without loss of generality.

\begin{figure}[h!]
    \centering
    \includegraphics[scale =0.315]{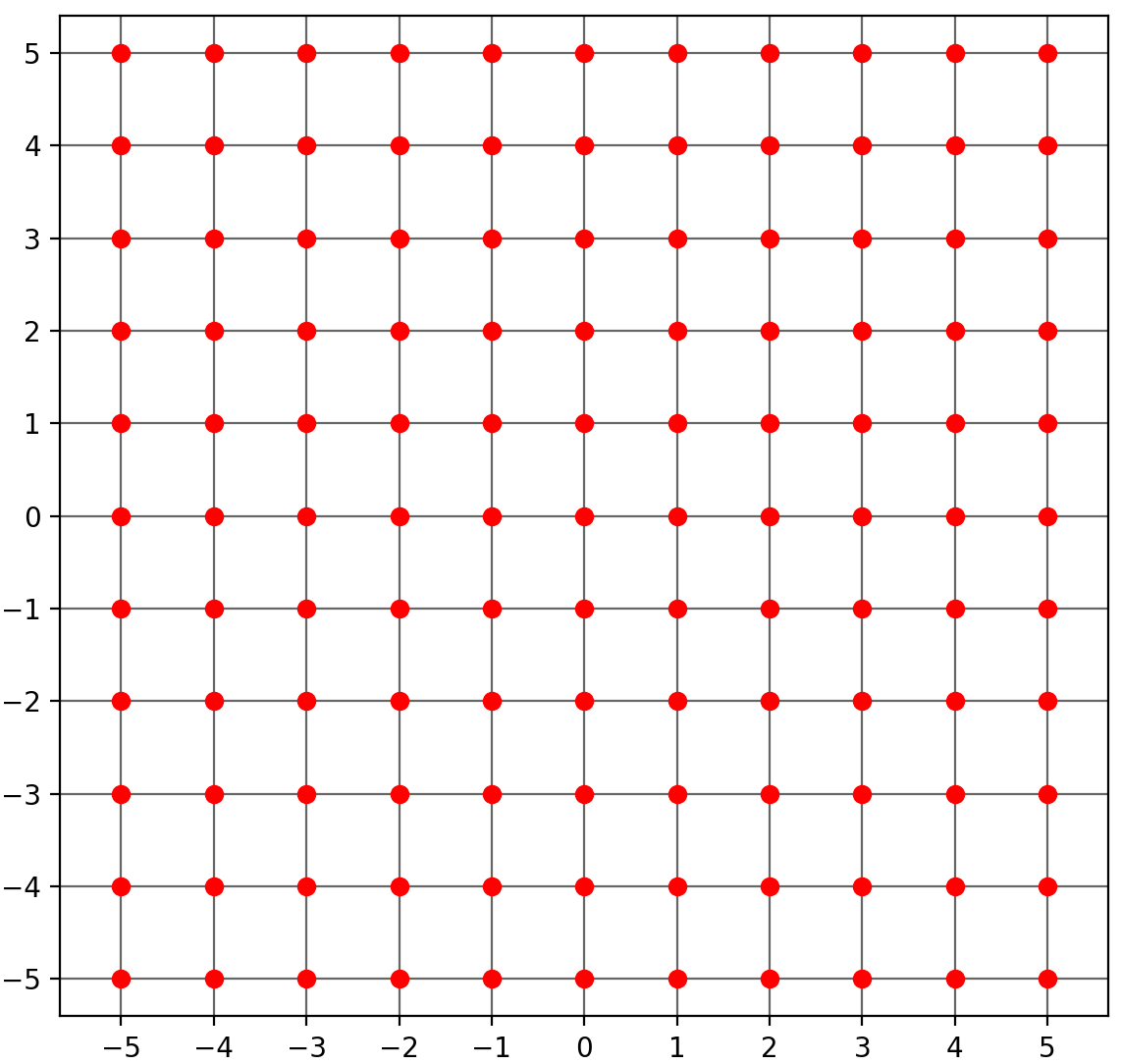} \
    \includegraphics[scale =0.325]{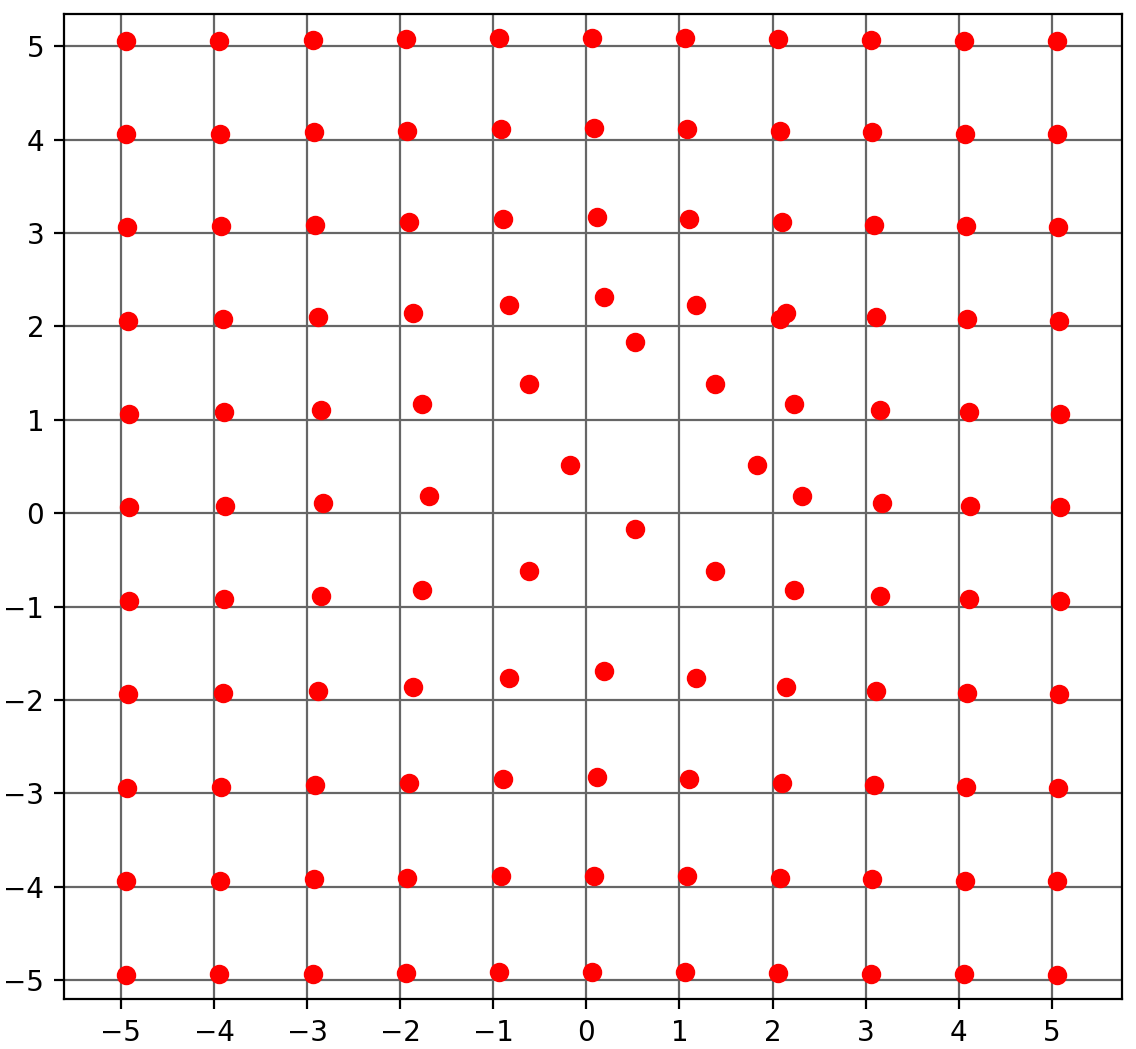} \
    \includegraphics[scale =0.295]{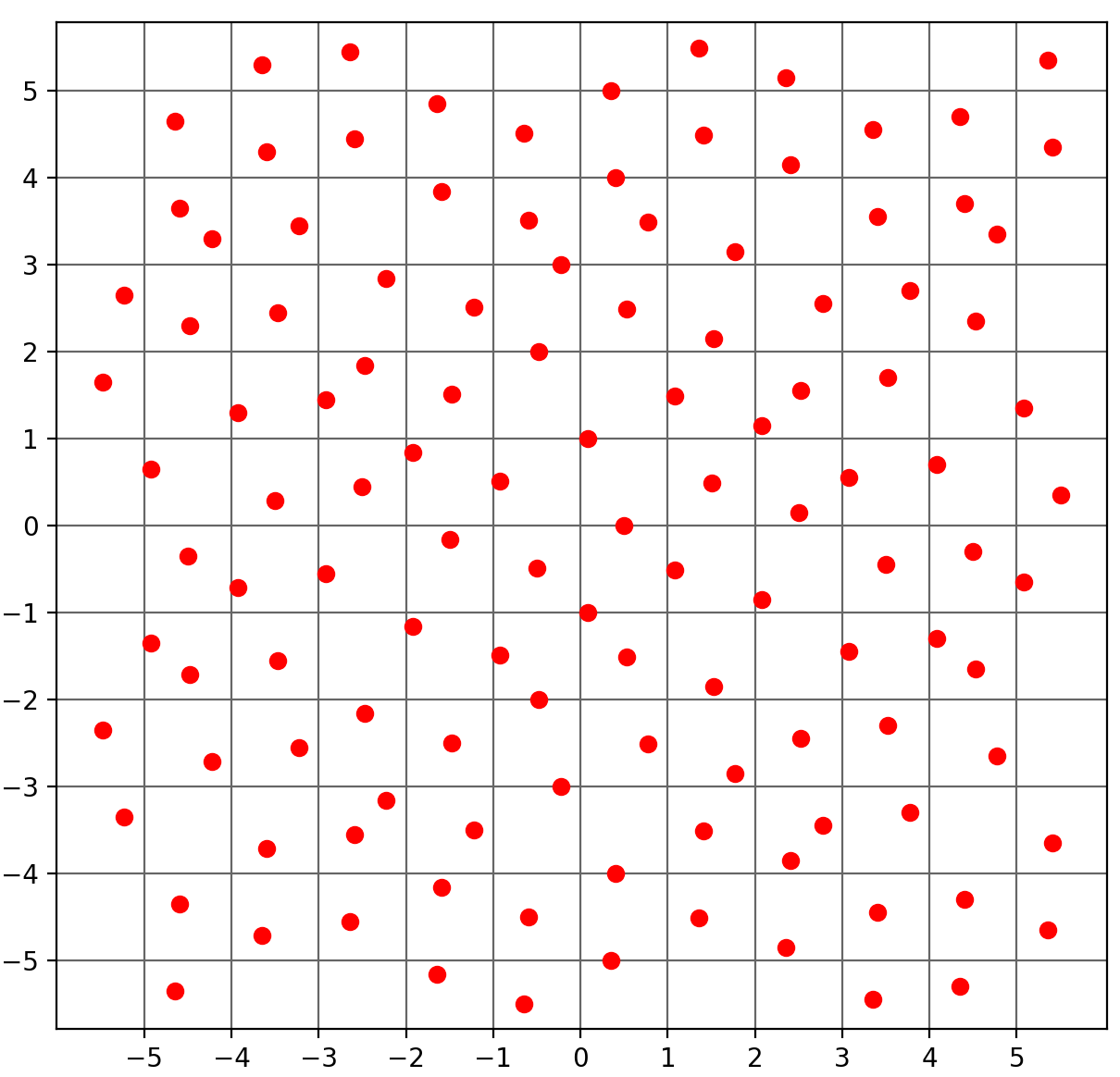}
    \caption{Illustration of $X_k = k +Z_k$ for admissible sequences $Z_k$ ($d=2$).\\
    \textbf{Left}. Reference periodic case : $Z_k =0$ \ \textbf{Center}. Local perturbation : $\displaystyle \lim_{|k| \to \infty} Z_k = 0$ \ \\\textbf{Right}. Non-local perturbations. } 
    \label{fig2_schro}
\end{figure}

\subsection{Main results}

Our main result regarding the existence of a corrector adapted to our setting is stated in the next theorem when $d\geq 2$. The case of dimension $d=1$ may be addressed by analytical arguments that are briefly presented in Section \ref{dimension_1_schro}. 
\begin{theoreme}
\label{theorem1_schro}
Assume $d\geq 2$. Assume that $V$ is a potential of the form \eqref{potential_form_schro}, where $\varphi \in \mathcal{D}(\mathbb{R}^d)$ and $g_{per}$ is a periodic function satisfying \eqref{hyp_holder_continuite_schro}, which also satisfies \eqref{average_V_schro}. Assume that $Z$ satisfies \eqref{hyp_borne_Z_schro}, \eqref{A0}, \eqref{A00}, \eqref{A2},  \eqref{A3}, and \eqref{A4}. 

Then, for every $R>0$ and every $\varepsilon>0$, there exists $W_{\varepsilon,R} \in L^1_{loc}(\mathbb{R}^d)$ solution to 
\begin{equation}
\label{equ_correcteur_theorem1}
    \Delta W_{\varepsilon,R} = V \quad \text{on } B_{R/\varepsilon},
\end{equation}
such that $\left(  \nabla W_{\varepsilon, R}(./\varepsilon)\right)_{\varepsilon>0}$ is bounded in $\left(L^p(B_R)\right)^d$ for every $p \in [1, +\infty[$ and
\begin{empheq}[left=\empheqlbrace]{alignat=2}
\quad & \nabla W_{\varepsilon,R}(./\varepsilon) \stackrel{\varepsilon \to 0}{\longrightarrow} 0 \quad \text{weakly in } L^p(B_R), \ \forall p \in [1, +\infty[, \label{b1}\\
\quad &  \varepsilon W_{\varepsilon,R}(./\varepsilon) \stackrel{\varepsilon \to 0}{\longrightarrow} 0 \quad \text{strongly in } L^{\infty}(B_R),  \label{c1} \\
\quad & \exists \mathcal{M} \in \mathbb{R}, \quad   |\nabla W_{\varepsilon,R}|^2(./\varepsilon) \stackrel{\varepsilon \to 0}{\longrightarrow} \mathcal{M} \quad \text{weakly in } L^p(B_R), \ \forall p \in [1, +\infty[. \label{d1}
\end{empheq}
\end{theoreme}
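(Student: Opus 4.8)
The plan is to follow the three–term decomposition sketched in Sections~\ref{Section_approach_taylor} and \ref{Schro_linear_problem}. Write $V = V_{per} + \widetilde V$ with $V_{per}$ as in \eqref{periodic_potential_schro} and $\widetilde V$ as in \eqref{perturbation_potential_schro}, and expand $\widetilde V$ by Taylor's formula exactly as in \eqref{Taylor_expansion_intro_schro}: $\widetilde V = \widetilde V_1 + \widetilde V_2$, where $\widetilde V_1 = - \sum_{k} Z_k\cdot\nabla\varphi(\cdot - k)$ is the part that is linear in $Z$ and $\widetilde V_2 = \sum_k \int_0^1 (1-t)\, Z_k^T D^2\varphi(\cdot - k - tZ_k) Z_k\, dt$ is the quadratic remainder. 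Accordingly I will look for $W_{\varepsilon,R} = w_{per} + w_{1,\varepsilon,R} + w_2$ on $B_{R/\varepsilon}$, with $\Delta w_{per} = V_{per}$, $\Delta w_{1,\varepsilon,R} = \widetilde V_1$ and $\Delta w_2 = \widetilde V_2$, and then fix the additive constant so that $\varepsilon W_{\varepsilon,R}(\cdot/\varepsilon)$ has zero average on $B_R$. The periodic term is handled by classical periodic homogenization: assumption \eqref{hyp_holder_continuite_schro} together with Schauder theory gives $w_{per}\in \mathcal C^{1,\alpha}$, so $\nabla w_{per}(\cdot/\varepsilon)$ is bounded in $L^\infty(B_R)$, converges weakly to $\langle\nabla w_{per}\rangle = 0$, $|\nabla w_{per}(\cdot/\varepsilon)|^2$ converges weakly to $\langle|\nabla w_{per}|^2\rangle$ as in \eqref{eq_homog_per_schro}, and $\varepsilon w_{per}(\cdot/\varepsilon)\to 0$ uniformly.

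For the quadratic remainder, moving one derivative onto the Green kernel turns each summand of $\nabla w_2$ into $\big(D^3 G * \varphi(\cdot - k - tZ_k)\big) : Z_k\otimes Z_k$ integrated in $t$; since $D^3 G$ decays like $|x|^{-d-1}$, $\varphi$ is compactly supported and $Z\in (\ell^\infty(\mathbb Z^d))^d$, each summand is $O\big(\|Z\|_{\ell^\infty}^2 (1+|x-k|)^{-d-1}\big)$, so the series converges normally and $\nabla w_2 \in (L^\infty(\mathbb R^d))^d$; this is the content of Lemma~\ref{Prop_elementaire_schro}. Hence $\nabla w_2(\cdot/\varepsilon)$ is bounded in every $L^p(B_R)$ and $\varepsilon w_2(\cdot/\varepsilon)\to 0$ uniformly. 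Its contribution to the weak limits of $\nabla W_{\varepsilon,R}(\cdot/\varepsilon)$ and of $|\nabla W_{\varepsilon,R}|^2(\cdot/\varepsilon)$ — including the cross terms with $\nabla w_{per}$ and with $\nabla w_{1,\varepsilon,R}$ — will be identified by writing the relevant integrals as Cesàro–type sums over $k$, and over pairs $(k,k+l)$, of quantities of the form $F(Z_k,Z_{k+l})$ with $F$ continuous, and invoking \eqref{A4} (which subsumes \eqref{A0} and \eqref{A00}); in particular $\nabla w_2(\cdot/\varepsilon)$ will be shown to converge weakly to $0$.

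The delicate term is $w_{1,\varepsilon,R}$, solving $\Delta w_{1,\varepsilon,R} = \widetilde V_1$ on $B_{R/\varepsilon}$, whose gradient is, formally, the Calderón–Zygmund operator $T : f\mapsto D^2 G * f$ applied to the bounded function $h := \sum_k Z_k\,\varphi(\cdot - k)$ (the sum being locally finite, with $\|h\|_{L^\infty}\le \|Z\|_{\ell^\infty}\sup_x\sum_k|\varphi(x-k)|$). Since $T$ maps $L^\infty$ only into $BMO$, the series \eqref{sum_linear_intro_schro} need not converge in $L^\infty(\mathbb R^d)$, which is precisely why the statement localizes on $B_{R/\varepsilon}$: there I will define $\nabla w_{1,\varepsilon,R}$ as $T$ applied to a suitably truncated version of $h$ relative to $B_{R/\varepsilon}$, correcting for the far field by an explicit function that is smooth and harmonic on $B_{R/\varepsilon}$ and directly controlled by the decay of $D^2 G$. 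The key point is that the $BMO$ seminorm of the truncated part is $\le C\|h\|_{L^\infty}$, a bound independent of $(\varepsilon,R)$; since the $BMO$ seminorm is scale invariant and $BMO(B_R)\hookrightarrow L^p(B_R)$ for every $p<\infty$ by John–Nirenberg, this yields the uniform $L^p(B_R)$ bound on $\nabla W_{\varepsilon,R}(\cdot/\varepsilon)$. Having reduced, via the substitution noted after \eqref{A4}, to the case $\langle Z\rangle = 0$, assumption \eqref{A0} forces the Cesàro means of $Z$ to vanish, and a direct computation testing $\nabla w_{1,\varepsilon,R}(\cdot/\varepsilon)$ against a smooth function gives the weak convergence to $0$ in \eqref{b1}; \eqref{c1} then follows from \eqref{b1} with $p>d$ together with Lemma~\ref{lemme_sous_linearité} and the normalization of the additive constant. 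Finally, for \eqref{d1} I will expand $|\nabla w_{1,\varepsilon,R}(x/\varepsilon)|^2$ as a double sum over $k,k'$, set $l = k'-k$, recognize the inner sum as $\frac{\varepsilon^d}{|B_R|}\sum_k (\overline Z_k)_i(\overline Z_{k+l})_j$, pass to the limit termwise using \eqref{A00}–\eqref{A2} (the factor $|\ln\varepsilon|$ in \eqref{A2} absorbing the $\sum_{|k|\lesssim 1/\varepsilon}|D^2 G(k)|\sim|\ln\varepsilon|$ growth that reflects the $BMO$, rather than $L^\infty$, nature of $\nabla w_1$), and then sum over $l$ using \eqref{A3}, which provides the $L^1_{loc}$ convergence of $\sum_l \mathcal C_{l,i,j}(\partial_i\partial_j G*\varphi)(\cdot - l)$; a last convolution produces the constant $\mathcal M_1$, and adding $\langle|\nabla w_{per}|^2\rangle$ and the constant contributions of $\nabla w_2$ and of the three cross terms gives $\mathcal M$.

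I expect the construction and uniform control of $w_{1,\varepsilon,R}$ — in particular making the far-field harmonic correction precise while keeping every estimate independent of $(\varepsilon,R)$ — together with the termwise passage to the limit in the double sum for $|\nabla w_{1,\varepsilon,R}(\cdot/\varepsilon)|^2$, to be the main obstacle: this is where the logarithmic loss is unavoidable and where assumptions \eqref{A2} and \eqref{A3} are used in an essential way, in sharp contrast with the periodic case in which $|\nabla w_{per}(\cdot/\varepsilon)|^2$ is simply a rescaled periodic function converging to its mean.
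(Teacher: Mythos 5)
Your decomposition $W_{\varepsilon,R} = w_{per} + w_{1,\varepsilon,R} + w_2$ (up to sign conventions), the appeal to Schauder regularity for $w_{per}$, to Lemma~\ref{Prop_elementaire_schro} for the normal convergence of $\nabla w_2$, and to the $L^\infty\to BMO$ continuity of $T=\nabla^2 G\ast\cdot$ together with John--Nirenberg for the uniform $L^p$ bound, all coincide with the paper's strategy (Propositions~\ref{prop_correcteur_cas_lin_schro} and \ref{prop_corrector_nonlin}). The use of Lemmas~\ref{lemme_BMO_harmonique}, \ref{Lemme_convergence_BMO} and \ref{lemme_sous_linearité} for \eqref{b1} and \eqref{c1}, and the reduction to $\langle Z\rangle = 0$, also match.

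The genuine gap is in your treatment of \eqref{d1}. You propose to expand $|\nabla w_{1,\varepsilon,R}(x/\varepsilon)|^2$ directly as a double sum over $k,k'$ and then identify Ces\`aro means. But $\nabla w_1 = \sum_k Z_k\,\nabla u(\cdot-k)$ converges only in $BMO$ (each term decays merely like $(1+|x-k|)^{-d}$ and is not compactly supported), so the sum is \emph{not} absolutely convergent; squaring and interchanging the resulting double sum with the spatial integral is precisely the step that needs justification and, in fact, cannot be carried out naively. The paper avoids this entirely by an energy identity: multiplying \eqref{correcteur_tronque} by $\chi_\varepsilon(x)=\Tilde W_{\varepsilon,R}(x/\varepsilon)\phi(x)$ and integrating by parts reduces $\int|\nabla\Tilde W_{\varepsilon,R}(\cdot/\varepsilon)|^2\phi$ to $\int\mathcal V(\cdot/\varepsilon)\cdot\nabla\Tilde W_{\varepsilon,R}(\cdot/\varepsilon)\phi$ plus remainders that vanish by \eqref{c1} and the uniform $L^2$ bound. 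Because $\mathcal V(y)=\sum_k Z_k\,\varphi(y-k)$ is a \emph{locally finite} sum of compactly supported terms, this product is pointwise well defined and the $k$-sum localizes, so the Ces\`aro means in $k$ and the conditionally convergent sum over $l$ (controlled by \eqref{A2}--\eqref{A3}) decouple cleanly. The same energy identity is used for the cross terms $\nabla w_2\cdot\nabla\Tilde W_{\varepsilon,R}$ and $\nabla w_{per}\cdot\nabla\Tilde W_{\varepsilon,R}$; your proposal to handle these directly ``by writing the relevant integrals as Ces\`aro-type sums and invoking \eqref{A4}'' runs into the same non--absolute-convergence obstacle. Finally, even after the reduction to $\mathcal V\cdot\nabla\Tilde W_{\varepsilon,R}$, the actual limit argument introduces an auxiliary truncation scale $\eta(\varepsilon)=\max(\varepsilon\alpha(\varepsilon),\beta(\varepsilon),\delta(\varepsilon),\varepsilon^2)$ and a three-way split $I^\varepsilon - J^\varepsilon + K^\varepsilon$, where $J^\varepsilon$ controls the subtracted average on $B_{4R/\varepsilon}$ against the (slowly vanishing) Ces\`aro mean of $Z$ and $K^\varepsilon$ the tail $|l|\gtrsim 1/\eta(\varepsilon)$; this adapted truncation, not just the $|\ln\varepsilon|$ absorption from \eqref{A2}, is where \eqref{A0} and \eqref{A2} enter quantitatively, and it is missing from your sketch.
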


We point out one \emph{major} difference from the periodic case : the corrector is "$\varepsilon$-dependent". It however depends on $\varepsilon$ in a very specific manner which we may make entirely explicit. We shall indeed see that it is of the form $W_{\varepsilon,R} = w - x.C_{\varepsilon,R}$ where $w$ is a particular solution of \eqref{equ_correcteur_theorem1} with a gradient in $\left(BMO(\mathbb{R}^d)\right)^d$ and $C_{\varepsilon,R}$ is a constant related to the average of $\nabla w$ on $B_{R/\varepsilon}$. In fact, $W_{\varepsilon,R}$ is not even a solution to $\Delta w = V$ on the whole space $\mathbb{R}^d$ but only on the ball $B_{R/\varepsilon}$. This corrector is however sufficient to address the homogenization problem \eqref{homog_eps_schro} since one eventually only needs to evaluate the corrector on the domain $\Omega/\varepsilon$. As specified above, the proof of Theorem \ref{theorem1_schro} uses the specific structure of $V$, that is the sum of the periodic potential $V_{per}$ and of a perturbation $\Tilde{V}$ respectively defined by \eqref{periodic_potential_schro} and \eqref{perturbation_potential_schro}.

Our methodology to study the perturbation term $\Tilde{V}$ is detailed in Section \ref{Schro_preliminary_section} and consists in performing a first order Taylor expansion of the potential $\Tilde{V}$ with respect to $Z_k$ with the aim to solve both the equation induced by the first order term (linear with respect to $Z$) and the equation corresponding to the remainder of the expansion exactly suggested by our decomposition \eqref{Taylor_expansion_intro_schro}. We shall see that the difficulties to solve these two equations are very different in nature and, as we have sketched in Section \ref{Section_intro_setting_schro}, are related to the properties of the derivatives of the Green function~$G$. 

We now turn to the study of the convergence of $u^{\varepsilon}$ which is next addressed in two steps. Similarly to the periodic case, the existence of a corrector stated in Theorem \ref{theorem1_schro} first allows to establish the well-posedness of  \eqref{homog_eps_schro} when
\begin{equation}
\label{H}
   \mu_1 -\mathcal{M} + \nu > 0.
\end{equation}
This result is a consequence of our Proposition \ref{prop_convergence_vp} in Section \ref{schro_scetion_wellposedness} (the results of which are only based upon those of Theorem \ref{theorem1_schro}) which shows that the first eigenvalue $\lambda_1^{\varepsilon}$ of $-\Delta + \dfrac{1}{\varepsilon}V(./\varepsilon) + \nu $ with homogeneous Dirichlet boundary conditions converges to $\mu_1 - \mathcal{M} + \nu $ when $\varepsilon$ vanishes as is the case in the periodic setting. Under assumption \eqref{H}, we next homogenize \eqref{homog_eps_schro} and show the strong convergence of $u^{\varepsilon}$ to the function $u^*$ in $L^2(\Omega)$, solution to 
\begin{equation}
\label{equation_homog_schrodinger}
    \left\{ 
    \begin{array}{cc}
        -\Delta u^* - \mathcal{M} u^* + \nu u^* = f & \text{on } \Omega, \\
         u^* = 0 & \text{on } \partial \Omega. 
    \end{array}
\right.
\end{equation}
This result is established in Proposition \ref{prop_schro_homog} in Section \ref{Sction_homog_schro}. The behavior of $u^{\varepsilon}$ in $H^1(\Omega)$ is given in the following theorem :  
\begin{theoreme}[Homogenization results]
\label{schro_theorem_2}
Assume, as in Theorem \ref{theorem1_schro}, that $d\geq 2$, $V$ is a potential of the form \eqref{potential_form_schro} that satisfies \eqref{average_V_schro}-\eqref{hyp_holder_continuite_schro}, $Z$ satisfies \eqref{hyp_borne_Z_schro}, \eqref{A0}, \eqref{A00}, \eqref{A2},  \eqref{A3}, and \eqref{A4} and denote $W_{\varepsilon, \Omega} = W_{\varepsilon, R}$ where $R = Diam(\Omega)$. Assume also \eqref{H} is satisfied. 

Then, for $\varepsilon$ sufficiently small, there exists an unique solution $u^{\varepsilon} \in H^1_0(\Omega)$ to $\eqref{homog_eps_schro}$. In addition, the sequence
\begin{equation}
\label{schro_def_R}
    R^{\varepsilon} = u^{\varepsilon} - u^* - \varepsilon u^* W_{\varepsilon,\Omega}(./\varepsilon)
\end{equation}
strongly converges to 0 in $H^1(\Omega)$ as $\varepsilon$ vanishes. 
\end{theoreme}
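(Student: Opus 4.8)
The plan is to mirror the classical periodic homogenization argument of \cite[Chapter 1]{bensoussan2011asymptotic}, replacing the periodic corrector $w_{per}$ by the $\varepsilon$-dependent corrector $W_{\varepsilon,\Omega}$ provided by Theorem \ref{theorem1_schro}. First I would settle well-posedness: by Proposition \ref{prop_convergence_vp}, $\lambda_1^\varepsilon \to \mu_1 - \mathcal{M} + \nu$, so assumption \eqref{H} forces $\lambda_1^\varepsilon > 0$ for $\varepsilon$ small; hence $-\Delta + \frac1\varepsilon V(./\varepsilon) + \nu$ is coercive on $H^1_0(\Omega)$ and Lax--Milgram yields a unique $u^\varepsilon$, together with a uniform bound $\|u^\varepsilon\|_{H^1(\Omega)} \leq C\|f\|_{L^2(\Omega)}$ (this bound is where the strict positivity of $\lambda_1^\varepsilon$, uniform in $\varepsilon$, is used). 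Having identified the weak-$H^1$ limit as $u^*$ via Proposition \ref{prop_schro_homog}, the task reduces to showing the corrected remainder $R^\varepsilon = u^\varepsilon - u^* - \varepsilon u^* W_{\varepsilon,\Omega}(./\varepsilon)$ converges to $0$ \emph{strongly} in $H^1(\Omega)$.

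The core of the argument is an energy estimate for $R^\varepsilon$. The idea is to compute the equation satisfied by $R^\varepsilon$ and test it against $R^\varepsilon$ itself (after a suitable truncation near $\partial\Omega$, since $\varepsilon u^* W_{\varepsilon,\Omega}(./\varepsilon)$ need not vanish on the boundary — one introduces a cutoff $\chi_\varepsilon$ equal to $1$ outside an $O(\sqrt\varepsilon)$ or $O(\varepsilon|\ln\varepsilon|)$ neighborhood of $\partial\Omega$, exploiting \eqref{c1} to control the boundary layer). Expanding $\Delta(\varepsilon u^* W_{\varepsilon,\Omega}(./\varepsilon)) = \varepsilon (\Delta u^*) W_{\varepsilon,\Omega}(./\varepsilon) + 2\nabla u^* \cdot \nabla W_{\varepsilon,\Omega}(./\varepsilon) + \frac1\varepsilon u^* (\Delta W_{\varepsilon,\Omega})(./\varepsilon)$ and using $\Delta W_{\varepsilon,\Omega} = V$ on $B_{R/\varepsilon} \supset \Omega/\varepsilon$, the singular term $\frac1\varepsilon u^* V(./\varepsilon)$ cancels against $\frac1\varepsilon V(./\varepsilon) u^\varepsilon$ modulo a term $\frac1\varepsilon V(./\varepsilon)(u^\varepsilon - u^*)$ which must be absorbed. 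After collecting terms, $R^\varepsilon$ solves an equation of the form $-\Delta R^\varepsilon + \frac1\varepsilon V(./\varepsilon) R^\varepsilon + \nu R^\varepsilon = $ (a sum of remainders), where the right-hand side gathers: (i) terms with an explicit $\varepsilon$ prefactor times bounded quantities, controlled by the uniform $(L^p)$-bound on $\nabla W_{\varepsilon,\Omega}(./\varepsilon)$ and \eqref{c1}; (ii) the term $2\nabla u^* \cdot \nabla W_{\varepsilon,\Omega}(./\varepsilon)$, which goes to $0$ weakly in $L^p$ by \eqref{b1} but whose pairing with $R^\varepsilon$ (or its gradient) must be handled carefully; and (iii) boundary-layer contributions from the cutoff. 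Testing against $R^\varepsilon \chi_\varepsilon$ and using again the uniform coercivity (now for the operator at $\varepsilon$-level, via $\lambda_1^\varepsilon + \nu > 0$ eventually, or more precisely $\inf \mathrm{Spec} > 0$) one obtains $\|R^\varepsilon\|_{H^1}^2 \leq o(1) + o(1)\|R^\varepsilon\|_{H^1}$, whence the conclusion.

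The main obstacle I anticipate is twofold. First, the term $\frac1\varepsilon V(./\varepsilon)(u^\varepsilon - u^*) R^\varepsilon$: naively it is $O(1/\varepsilon)$, but one rewrites $u^\varepsilon - u^* = R^\varepsilon + \varepsilon u^* W_{\varepsilon,\Omega}(./\varepsilon)$, so that $\frac1\varepsilon V(./\varepsilon)(u^\varepsilon-u^*) = \frac1\varepsilon V(./\varepsilon) R^\varepsilon + V(./\varepsilon) u^* W_{\varepsilon,\Omega}(./\varepsilon)$; the first piece moves to the left and is absorbed by coercivity, while the second is $O(1)$ but one needs its weak limit — here \eqref{average_V_schro} (so $V(./\varepsilon) \rightharpoonup 0$ weak-$\star$) combined with some equicontinuity/strong compactness of $u^* W_{\varepsilon,\Omega}(./\varepsilon)$ in an appropriate space is what saves the day. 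Second, and more delicate, is the term $\int_\Omega \nabla u^* \cdot \nabla W_{\varepsilon,\Omega}(./\varepsilon)\, R^\varepsilon$ (coming from testing the remainder equation): since $\nabla W_{\varepsilon,\Omega}(./\varepsilon)$ only converges weakly and $R^\varepsilon \to 0$ only weakly a priori, this product of two weak limits is the classical compensated-compactness difficulty. The standard resolution — integrating by parts to trade the $\nabla W_{\varepsilon,\Omega}(./\varepsilon)$ derivative onto $u^*$ and $R^\varepsilon$, using that $\varepsilon W_{\varepsilon,\Omega}(./\varepsilon) \to 0$ strongly in $L^\infty$ (from \eqref{c1}) to kill the resulting $\frac1\varepsilon$ — is exactly what makes this work, and it is the place where the three conclusions \eqref{b1}, \eqref{c1}, \eqref{d1} of Theorem \ref{theorem1_schro} are used in concert; the role of \eqref{d1} in particular is to identify the homogenized potential $-\mathcal{M}$ appearing in \eqref{equation_homog_schrodinger}, which has already been carried out in Proposition \ref{prop_schro_homog} but is re-used here to ensure the $u^*$-equation genuinely cancels the corresponding terms. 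Once these two products are tamed, everything else is a routine Cauchy--Schwarz bookkeeping of terms carrying a positive power of $\varepsilon$.
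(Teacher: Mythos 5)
Your overall architecture (well-posedness via Proposition \ref{prop_convergence_vp}, identify the weak limit via Proposition \ref{prop_schro_homog}, then test the equation satisfied by $R^{\varepsilon}$ against $R^{\varepsilon}$ and invoke the uniform coercivity of $a^{\varepsilon}$) is the right one and matches the paper. However, there are three technical points where your sketch deviates from, or falls short of, what is actually needed.

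First, the cutoff near $\partial\Omega$ is unnecessary. You assert that $\varepsilon u^* W_{\varepsilon,\Omega}(./\varepsilon)$ ``need not vanish on the boundary,'' but since $u^* \in H^1_0(\Omega)$ and $\varepsilon W_{\varepsilon,\Omega}(./\varepsilon)$ is bounded in $L^{\infty}(\Omega)$ while $\nabla W_{\varepsilon,\Omega}(./\varepsilon)$ is in every $L^p(\Omega)$, the product $\varepsilon u^* W_{\varepsilon,\Omega}(./\varepsilon)$ belongs to $H^1_0(\Omega)$, hence so does $R^{\varepsilon}$. You may be conflating this with the divergence-form setting, where the corrector term $\varepsilon w(\cdot/\varepsilon)\cdot\nabla u^*$ does not vanish on $\partial\Omega$; here no boundary-layer estimate is needed.

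Second, your treatment of the term $V(./\varepsilon)\,W_{\varepsilon,\Omega}(./\varepsilon)\,u^*$ is misdirected: invoking \eqref{average_V_schro} (weak-$\star$ convergence of $V(./\varepsilon)$ to $0$) together with ``equicontinuity/strong compactness of $u^* W_{\varepsilon,\Omega}(./\varepsilon)$'' would not close the argument, and the latter is not available. The correct mechanism is to replace $V(./\varepsilon)$ by $\Delta W_{\varepsilon,\Omega}(./\varepsilon)$ and integrate by parts, which transforms $\int_\Omega W_{\varepsilon,\Omega}(./\varepsilon)V(./\varepsilon)\,u^* R^{\varepsilon}$ into
\begin{equation}
- \int_{\Omega} |\nabla W_{\varepsilon,\Omega}(./\varepsilon)|^2\, u^* R^{\varepsilon} - \int_\Omega \varepsilon W_{\varepsilon,\Omega}(./\varepsilon)\,\nabla W_{\varepsilon,\Omega}(./\varepsilon)\cdot\nabla(u^* R^{\varepsilon}).
\end{equation}
The second integral is killed by the strong $L^\infty$ convergence \eqref{c1} together with Hölder and Sobolev. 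The first integral, combined with the $-\mathcal{M}\int u^* R^{\varepsilon}$ coming from the homogenized equation, leaves $\int_{\Omega}\bigl(|\nabla W_{\varepsilon,\Omega}(./\varepsilon)|^2 - \mathcal{M}\bigr)u^*R^{\varepsilon}$, and this is the genuinely delicate step your sketch does not address. The paper handles it with the auxiliary construction of Lemma \ref{Lemme_convergence_Hminus1}: one solves $\Delta\psi^{\varepsilon} = |\nabla W_{\varepsilon,\Omega}(./\varepsilon)|^2 - \mathcal{M}$ with $\psi^{\varepsilon}\in W^{2,p}$ and $\nabla\psi^{\varepsilon}\to 0$ in $L^\infty$, then integrates by parts once more. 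Without this potential-theoretic step (or an equivalent interpolation argument exploiting the strong $L^2$-convergence of $R^{\varepsilon}$ from Proposition \ref{prop_schro_homog}), the weak $L^p$-convergence \eqref{d1} alone does not pair against the merely $H^1$-bounded $R^{\varepsilon}$.

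Third, your description of the $\int_\Omega \nabla u^*\cdot\nabla W_{\varepsilon,\Omega}(./\varepsilon)\,R^{\varepsilon}$ term as a compensated-compactness issue overstates the difficulty: since $\nabla W_{\varepsilon,\Omega}(./\varepsilon)$ is exactly the gradient of $\varepsilon W_{\varepsilon,\Omega}(./\varepsilon)$, which tends to $0$ strongly in $L^\infty$, an integration by parts (using $u^*\in H^2(\Omega)$ by elliptic regularity, since $f\in L^2$) reduces it immediately to terms carrying a small $L^\infty$ factor. Your resolution is correct but the framing hides that the gain is elementary rather than structural.

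In sum, the strategy is right, but the proof as written would stall at the $\bigl(|\nabla W_{\varepsilon,\Omega}(./\varepsilon)|^2 - \mathcal{M}\bigr)$ term; the missing ingredient is the auxiliary potential $\psi^{\varepsilon}$ of Lemma \ref{Lemme_convergence_Hminus1}.
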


In a second step, following a method introduced in \cite{zhang2021estimates} in the periodic case, we use the convergence of the operator $-\Delta + \dfrac{1}{\varepsilon}V(./\varepsilon) + \nu$ that holds under the particular assumption \eqref{H} to study the convergence of \emph{all} its eigenvalues. This allows us to establish the following generalization of Theorem \ref{schro_theorem_2} :

\begin{theoreme}
\label{schro_theorem_3}
Assume, as in Theorems \ref{theorem1_schro} and \ref{schro_theorem_2}, that $d\geq 2$, $V$ is a potential of the form \eqref{potential_form_schro} that satisfies \eqref{average_V_schro}-\eqref{hyp_holder_continuite_schro}, $Z$ satisfies \eqref{hyp_borne_Z_schro}, \eqref{A0}, \eqref{A00}, \eqref{A2},  \eqref{A3}, and \eqref{A4} but, instead of \eqref{H}, assume $\mu_l-\mathcal{M}+\nu \neq 0$ for all $l \in \mathbb{N}^*$, where $\mu_l$ is the $l^{th}$ eigenvalue (counting multiplicities) of -$\Delta$ on $\Omega$ with homogeneous Dirichlet boundary conditions. Then the conclusions of Theorem \ref{schro_theorem_2} hold true. 
\end{theoreme}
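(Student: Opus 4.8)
The plan is to bootstrap from the coercive regime of Theorem~\ref{schro_theorem_2} --- which requires \eqref{H} --- to the general regime by a resolvent (equivalently, spectral) perturbation argument, in the spirit of \cite{zhang2021estimates}. Write $A^{\varepsilon} = -\Delta + \tfrac{1}{\varepsilon}V(./\varepsilon) + \nu$ and $A^{*} = -\Delta - \mathcal{M} + \nu$, both understood with homogeneous Dirichlet conditions on $\Omega$, as self-adjoint operators on $L^2(\Omega)$ with compact resolvent. Since $\mathrm{spec}(A^{*}) = \{\mu_l - \mathcal{M} + \nu : l\geq 1\}$, the standing assumption of Theorem~\ref{schro_theorem_3} is precisely that $0 \notin \mathrm{spec}(A^{*})$, so that the homogenized equation \eqref{equation_homog_schrodinger} is well posed and admits a unique solution $u^{*} = (A^{*})^{-1}f \in H^1_0(\Omega)$.

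First I would fix a constant $c>0$ large enough that $\mu_1 - \mathcal{M} + (\nu + c) > 0$, i.e.\ \eqref{H} holds with $\nu$ replaced by $\nu+c$. Applying Theorem~\ref{schro_theorem_2}, together with Proposition~\ref{prop_convergence_vp} and Proposition~\ref{prop_schro_homog} on which it rests, to the shifted equation --- whose homogenized limit is $-\Delta u^{*} - \mathcal{M}u^{*} + (\nu+c)u^{*} = f$ --- yields that for every $f \in L^2(\Omega)$ one has $(A^{\varepsilon}+c)^{-1}f \to (A^{*}+c)^{-1}f$ strongly in $L^2(\Omega)$; moreover the a~priori estimate underlying that statement (in which the singular term $\tfrac{1}{\varepsilon}V(./\varepsilon)$ is tamed via $\tfrac{1}{\varepsilon}V(x/\varepsilon) = \varepsilon\,\Delta_x\big(W_{\varepsilon,\Omega}(x/\varepsilon)\big)$ on $\Omega$, an integration by parts, the $L^p$-bounds on $\nabla W_{\varepsilon,\Omega}(./\varepsilon)$ from Theorem~\ref{theorem1_schro}, and a Gagliardo--Nirenberg interpolation) shows that $(A^{\varepsilon}+c)^{-1}$ is bounded from $L^2(\Omega)$ into $H^1_0(\Omega)$ uniformly in $\varepsilon$. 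Since $H^1_0(\Omega) \hookrightarrow L^2(\Omega)$ compactly, the standard fact that operators converging strongly and mapping the unit ball into a fixed relatively compact set converge in operator norm (the tail $g_n - g \rightharpoonup 0$ being handled using the self-adjointness of the resolvents) upgrades this to $(A^{\varepsilon}+c)^{-1} \to (A^{*}+c)^{-1}$ in $\mathcal{L}(L^2(\Omega))$. This is norm-resolvent convergence of $A^{\varepsilon}$ to $A^{*}$, and it entails the convergence of the whole spectrum: every eigenvalue of $A^{*}$ is the limit, counting multiplicities, of eigenvalues of $A^{\varepsilon}$, and no spurious eigenvalue accumulates elsewhere; in particular $\lambda_l^{\varepsilon} \to \mu_l - \mathcal{M} + \nu$ for all $l$, which generalizes Proposition~\ref{prop_convergence_vp}.

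Because $\mathrm{dist}\big(0,\mathrm{spec}(A^{\varepsilon})\big) \to \mathrm{dist}\big(0,\mathrm{spec}(A^{*})\big) > 0$, there is $\varepsilon_0>0$ such that $0 \notin \mathrm{spec}(A^{\varepsilon})$ for $\varepsilon < \varepsilon_0$; hence, for such $\varepsilon$, \eqref{homog_eps_schro} admits a unique solution $u^{\varepsilon} = (A^{\varepsilon})^{-1}f \in H^1_0(\Omega)$, and norm-resolvent convergence at $z=0$ (deduced from that at $z=-c$ by the resolvent identity) gives $(A^{\varepsilon})^{-1} \to (A^{*})^{-1}$ in $\mathcal{L}(L^2(\Omega))$, so $u^{\varepsilon} \to u^{*}$ strongly in $L^2(\Omega)$; the uniform $L^2 \to H^1_0$ bound also gives $\|u^{\varepsilon}\|_{H^1(\Omega)} \leq C$, whence $u^{\varepsilon} \rightharpoonup u^{*}$ weakly in $H^1_0(\Omega)$. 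It then only remains to rerun the $H^1$-part of the proof of Theorem~\ref{schro_theorem_2} for $R^{\varepsilon}$ defined by \eqref{schro_def_R}, and the key point is that \emph{this step does not use the coercivity \eqref{H}}. Indeed, expanding $A^{\varepsilon}\big(u^{*} + \varepsilon u^{*} W_{\varepsilon,\Omega}(./\varepsilon)\big)$ and using $\Delta W_{\varepsilon,\Omega} = V$ on $B_{R/\varepsilon} \supseteq \Omega/\varepsilon$ produces an exact cancellation of the two $\tfrac{1}{\varepsilon}V(./\varepsilon)u^{*}$ contributions; testing the resulting equation for $R^{\varepsilon}$ against $R^{\varepsilon}$, the term $\int_\Omega \tfrac{1}{\varepsilon}V(./\varepsilon)(R^{\varepsilon})^2$ is rewritten via the same integration by parts as $-2\int_\Omega (\nabla W_{\varepsilon,\Omega})(./\varepsilon)\cdot R^{\varepsilon}\nabla R^{\varepsilon}$ and absorbed, up to $\delta\|\nabla R^{\varepsilon}\|_{L^2(\Omega)}^2 + C_\delta\|R^{\varepsilon}\|_{L^{r'}(\Omega)}^2$, into the left-hand side, while all the remaining terms tend to $0$ using only (i) that $R^{\varepsilon} \to 0$ strongly in $L^2(\Omega)$ and is bounded in $H^1_0(\Omega)$, hence by interpolation $R^{\varepsilon} \to 0$ in $L^r(\Omega)$ for some $r > 2$ (with $r' < 2^{*}$), and (ii) the conclusions of Theorem~\ref{theorem1_schro}: $\nabla W_{\varepsilon,\Omega}(./\varepsilon)$ and $|\nabla W_{\varepsilon,\Omega}(./\varepsilon)|^2$ bounded in every $L^q(\Omega)$, $q<\infty$, together with $\varepsilon W_{\varepsilon,\Omega}(./\varepsilon)\to 0$ in $L^\infty(\Omega)$ by \eqref{c1}. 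One concludes $(1-\delta)\|\nabla R^{\varepsilon}\|_{L^2(\Omega)}^2 \leq o(1)$, i.e.\ $R^{\varepsilon} \to 0$ in $H^1(\Omega)$, which is the desired statement.

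The main obstacle is the passage from the strong, $f$-by-$f$ convergence supplied by Theorem~\ref{schro_theorem_2} to genuine norm-resolvent convergence --- this is exactly what turns the coercive homogenization result into a statement about \emph{all} the eigenvalues $\lambda_l^{\varepsilon}$, and hence into Theorem~\ref{schro_theorem_3}. It hinges on the uniform-in-$\varepsilon$ control of $(A^{\varepsilon}+c)^{-1}$ as a map into $H^1_0(\Omega)$; and there the fact that the corrector of Theorem~\ref{theorem1_schro} has gradient only in $\bigcap_{p<\infty}L^p$, not in $L^\infty$, forces every integration by parts involving $\tfrac{1}{\varepsilon}V(./\varepsilon)$ to be performed with finite Hölder exponents and a Gagliardo--Nirenberg interpolation, in contrast with the $L^\infty$--$L^1$ pairing available in the periodic setting of \cite{zhang2021estimates}.
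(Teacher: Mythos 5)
Your proposal is correct, and it follows a genuinely different route from the paper's. For the convergence of all eigenvalues, the paper (Proposition \ref{prop_convergence_autre_vp}) applies, to the shifted operators $T^{\varepsilon,\kappa}$ and $T^{*,\kappa}$, the quantitative min-max inequality \eqref{majoration_schro_vp_opertor} borrowed from \cite{kenig2013estimates,zhang2021estimates}, whereas you invoke the general spectral-stability principle for self-adjoint operators under norm-resolvent convergence. Both routes must, at some point, upgrade the strong operator convergence supplied by Proposition \ref{prop_schro_homog} to operator-norm convergence; the paper only asserts this ("as a consequence of Proposition \ref{prop_schro_homog}"), while you actually sketch the compactness mechanism (collective compactness via the uniform $L^2\to H^1_0$ bound and $H^1_0\hookrightarrow L^2$), which is a genuine improvement in transparency. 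Your passage from $z=-c$ to $z=0$ by the resolvent identity is correct but is stated a bit too quickly — one needs the spectral-gap uniformity established just before — and your uniform $H^1$ bound on $u^\varepsilon = (A^\varepsilon)^{-1}f$ should be spelled out via $u^\varepsilon = (A^\varepsilon+c)^{-1}(f+cu^\varepsilon)$ together with the already established $L^2$ bound, rather than citing the bound on $(A^\varepsilon+c)^{-1}$ directly. For the $H^1$ convergence of $R^\varepsilon$, the paper sets up the spectral projections $\Pi^\varepsilon_\pm$ onto positive/negative eigenspaces and bounds $\|R^\varepsilon\|_{H^1}$ through \eqref{Schro_born_h1_pip}--\eqref{Schro_born_h1_pim}; you instead test against $R^\varepsilon$ and absorb the singular term $\int \varepsilon^{-1}V(\cdot/\varepsilon)(R^\varepsilon)^2$ by integrating by parts into $-2\int(\nabla W_{\varepsilon,\Omega})(\cdot/\varepsilon)\cdot R^\varepsilon\nabla R^\varepsilon$, then Hölder with $q>d$ and interpolation $R^\varepsilon\to 0$ in $L^s$, $s<2^*$. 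This absorption argument is more elementary, avoids the spectral-projection bookkeeping entirely, and makes explicit that the $H^1$ step of Theorem \ref{schro_theorem_2} only needs an a~priori $H^1$ bound, not coercivity. What the paper's route buys in exchange is a quantitative shape — an estimate of $|1/\lambda_l^{\varepsilon,\kappa}-1/\lambda_l^{*,\kappa}|$ directly in terms of $\|T^{\varepsilon,\kappa}-T^{*,\kappa}\|$ — which is not exploited in Theorem \ref{schro_theorem_3} but mirrors the rate results of \cite{zhang2021estimates} recalled in the introduction.
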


Given the convergence of all the eigenvalues of $-\Delta + \dfrac{1}{\varepsilon}V(./\varepsilon)$ established in Proposition \ref{prop_convergence_autre_vp}, the proof of Theorem \ref{schro_theorem_3} is actually a simple adaptation of the proof of Theorem \ref{schro_theorem_2}. Although this result also holds in the periodic case, to the best of our knowledge it has never been explicitly stated in the literature and that is why we prove it in the sequel.

Our article is organized as follows. We begin by presenting our approach to solve equation~\eqref{equ_correcteur_theorem1} and by collecting some preliminary results in Section \ref{Schro_preliminary_section}. In that section, we also give several examples of admissible sequences $Z$ that satisfy Assumptions \eqref{A0} to \eqref{A4}. Sections \ref{Schro_linear_problem} and \ref{Schro_section4} are next devoted to the study of the corrector equation and the proof of Theorem \ref{theorem1_schro}. Finally, in Section~\ref{Schro_section5}, we use the corrector to show Theorems \ref{schro_theorem_2} and \ref{schro_theorem_3}. 

\section{Preliminaries}

\label{Schro_preliminary_section}

Our twofold purpose here is to motivate our assumptions (\ref{A0}) through (\ref{A4}) and to emphasize the difficulties related to the corrector equation \eqref{corrector_general_schro}. We first address a particular illustrative case in dimension $d=1$ and we next introduce our approach to solve \eqref{corrector_general_schro} for higher dimensions. We conclude this section with some examples of sequences $Z$ satisfying our assumptions. 

\subsection{A one-dimensional setting}
\label{dimension_1_schro}

We briefly study here an illustrative one-dimensional setting in order to motivate our Assumptions \eqref{A0}, \eqref{A00} and \eqref{A4}. When $d=1$, one can remark that \eqref{A2} and \eqref{A3} are not needed. These two assumptions are indeed related to the specific behavior of $\nabla^2 G$ in higher dimensions, in particular, to the lack of continuity of $f \mapsto \nabla^2 G \ast f$ from $\left(L^{\infty}(\mathbb{R}^d)\right)^d$ to $\left(L^{\infty}(\mathbb{R}^d)\right)^d$ when $d>1$. 

We recall here that we consider a potential $V$ of the form \eqref{potential_form_schro} when $g_{per}$ is periodic and $\varphi \in \mathcal{D}(\mathbb{R})$. For simplicity, we assume that $V_{per} = 0$ in \eqref{periodic_potential_schro}, that is, $\displaystyle g_{per} = -\sum_{k \in \mathbb{Z}} \varphi (.-k)$. For every $k \in \mathbb{Z}$, we evidently have 
$$\displaystyle \varphi(x-k-Z_k) - \varphi(x-~k) = -Z_k\int_0^{1} \varphi'(x-k-tZ_k)dt.$$
In this specific case, the derivative of the solution $w$ to \eqref{corrector_general_schro}, which reads here as $w'' = V$, is explicitly given by 
\begin{align*}
    w'(x) = - \sum_{k \in \mathbb{Z}} Z_k\int_0^{1} \varphi(x-k-tZ_k) dt +C,
\end{align*}
where $C$ is a constant.

We first investigate the strict sublinearity of $w$ at infinity. Up to the addition of a constant, we have $w(x) = \displaystyle \int_0^x w'(y) dy$. The strict sublinearity of $w$ at infinity  is therefore equivalent to the fact that $w'$ has a vanishing average in the following sense : 
\begin{equation}
   \forall x \in \mathbb{R}^*, \quad  \lim_{\varepsilon \to 0} \dfrac{\varepsilon}{|x|} \int_0^{x/\varepsilon} w'(y) dy = 0.
\end{equation} 
Since $Z_k$ is assumed to satisfy \eqref{hyp_borne_Z_schro} and $\varphi$ is compactly supported, we have for $\varepsilon$ sufficiently small : 
\begin{align}
\label{calcul_sublinearity_d1_schro}
\begin{split}
    \int_0^{x/\varepsilon}  \sum_{k \in \mathbb{Z}} Z_k \int_0^{1} \varphi(z-k-tZ_k) dt  dz & = \sum_{k = 0}^{[x/\varepsilon]} Z_k \int_{0}^{1} \int_{\mathbb{R}} \varphi(z) dz dt + O(1) \\
    & = \sum_{k = 0}^{[x/\varepsilon]} Z_k \int_{\mathbb{R}} \varphi(z) dz + O(1),
\end{split}
\end{align}
where we have denoted by $[x] \in \mathbb{Z}$ the integer part of $x$. We have then to distinguish two cases : 
\begin{itemize}
    \item[a)] If $\displaystyle \int_{\mathbb{R}}\varphi=0$, we have $\displaystyle \lim_{\varepsilon \to 0} \dfrac{\varepsilon}{|x|} \int_{0}^{x/\varepsilon} w'(y) dy = 0$. In this case, the choice $C=0$ is the unique value of $C$ that allows for the strict sublinearity of $w$ at infinity and no additional assumption is required for $Z$.
    \item[b)] If $\displaystyle \int_{\mathbb{R}}\varphi\neq 0$, the convergence of $\displaystyle \dfrac{\varepsilon}{|x|} \int_{0}^{x/\varepsilon} w'(y) dy$ is equivalent to the convergence of $\displaystyle \sum_{k = 0}^{[x/\varepsilon]} Z_k$ to a constant $\langle Z \rangle \in \mathbb{R}^d$ independent of $x\in \mathbb{R}$. We can actually show that this property is equivalent to \eqref{A0}. The choice $\displaystyle C =- \langle Z \rangle \int_{\mathbb{R}} \varphi$ therefore allows for the strict sublinearity of $w$ in this case.
\end{itemize}

We next study the weak convergence of $|w'(./\varepsilon)|^2$. To this aim, we consider
$\displaystyle \int_{\mathbb{R}} g(x) |w'(x/\varepsilon)|^2 dx$ for every characteristic function $g = 1_{B_M(x_0)}$, for $M>0$ and $x_0 \in \mathbb{R}$. We can show that the convergence of this quantity as $\varepsilon \to 0$ is equivalent to the convergence of
\begin{align}
\label{def_S_eps_1d_schro}
    S^{\varepsilon} = \sum_{l \in \mathbb{Z}}  \left( \varepsilon \sum_{k \in B_{M}(x_0)/\varepsilon} Z_kZ_{k+l} \int_{\mathbb{R}} \varphi(x) \int_0^{1}\int_0^{1}  \varphi(x-l+t Z_k-\Tilde{t}Z_{k+l}) d\Tilde{t} \ dt \ dx \right),
\end{align}
and if $S^{\varepsilon}$ converges, we have $ \displaystyle \lim_{\varepsilon \to 0} S^{\varepsilon} = \lim_{\varepsilon \to 0} \displaystyle \int_{\mathbb{R}} g(x) |w'(x/\varepsilon)|^2 dx$. 
This equivalence can be obtained with several changes of variables and using that $\varphi$ is compactly supported. We skip its proof for brevity. We therefore remark, in the right-hand side of \eqref{def_S_eps_1d_schro}, that a specific assumption regarding the correlations of the sequence $Z_k$ is required to obtain the convergence of $S^{\varepsilon}$. For instance, if we define
$$F_l(y,z) = y \hspace{0.7pt}z \int_{\mathbb{R}} \varphi(x) \int_0^{1}\int_0^{1}  \varphi(x-l+t y-\Tilde{t}z) d\Tilde{t} \ dt \ dx, \quad \text{for } l \in \mathbb{Z},$$
we have $S^{\varepsilon} = \displaystyle \sum_{l \in \mathbb{Z}}  \left( \varepsilon \sum_{k \in B_{M}(x_0)/\varepsilon} F_l(Z_{k},Z_{k+l})\right)$ and
Assumption \eqref{A4} gives the existence of a sequence $(C_{F_l,l})_{l\in \mathbb{Z}}$ of constants that only depend on $\varphi$ and such that 
$$\displaystyle \lim_{\varepsilon \to 0} \displaystyle S^{\varepsilon} = |B_M| \sum_{l \in \mathbb{Z}} C_{F_l,l} =  \left(\int_{\mathbb{R}} g\right) \sum_{l \in \mathbb{Z}} C_{F_l,l}.$$
We note that the sum is well defined since the number of terms such that $C_{F_l,l} \neq 0$ is finite, given the compact support of $\varphi$. 

This one-dimensional example therefore suffices to show that we need two specific properties regarding the distribution of the sequence $Z$, namely :\\
\textbf{(i)} the existence of an average to have the strict sublinearity at infinity of the corrector, which assumption is the point of \eqref{A0}, and \\
\textbf{(ii)} an assumption regarding the correlations of $Z$ to ensure the weak convergence of $|\nabla w(./\varepsilon)|^2$, which is the point of \eqref{A4} (and of $\eqref{A00}$ which is implied by \eqref{A4}). 

We however note that if $ \displaystyle \int_{\mathbb{R}} \varphi =0$, the existence of an average for $Z$ is not required to obtain the strict sublinearity of $w$ at infinity. This phenomenon will also be true for higher dimensions (see Section \ref{section_linear_particular_case}).

\subsection{Taylor expansion of $V$}

\label{Section_approach_taylor}

As specified in the introductory section, the study of the gradient of a solutions to \eqref{corrector_general_schro} is related to the convergence of sums of the form $\displaystyle \sum_{k \in \mathbb{Z}^d} \nabla \left( G \ast \psi_k \right)$, where $\psi_k$ is a compactly supported function for every $k\in \mathbb{Z}^d$ that depends on $\varphi$ (or its derivatives) and on $Z_k$. For $\psi \in \mathcal{D}(\mathbb{R}^d)$, it is therefore necessary to first recall the behavior of $u = G \ast \psi$, solution to $\Delta u = \psi$. The following elementary lemma recalls the answer to this question when the first or the first two moments of $\psi$ vanish. It will be essential throughout our study. 

\begin{lemme}
\label{Prop_elementaire_schro}
Assume $d\geq 2$. Let $\psi \in \mathcal{D}(\mathbb{R}^d)$ such that $\displaystyle \int_{\mathbb{R}^d} \psi(x) dx = 0.$
Then, if we denote $u= G\ast \psi$, there exists a constant $C_1>0$ such that for every $x\in \mathbb{R}^d$
\begin{equation}
\label{borne_green_schrodinger}
   \displaystyle |\nabla u(x)| \leq \dfrac{C_1}{1+|x|^{d}}. 
\end{equation}
If we additionally assume that $\displaystyle  \int_{\mathbb{R}^d} x \psi(x) dx = 0$, there exists a constant $C_2>0$ such that 
\begin{equation}
\label{borne_green_schrodinger2}
   \displaystyle |\nabla u(x)| \leq \dfrac{C_2}{1+|x|^{d+1}}. 
\end{equation}
%and $\displaystyle \int_{\mathbb{R}^d} \nabla u = 0$. 
\end{lemme}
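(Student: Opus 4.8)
\textbf{Proof plan for Lemma \ref{Prop_elementaire_schro}.}

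The plan is to exploit the fact that $\psi$ is compactly supported, say $\operatorname{supp}\psi \subset B_{R_0}$, together with the explicit decay of the derivatives of the fundamental solution $G$ and the cancellation of the moments of $\psi$. Recall that for $d\geq 3$, $\nabla G(x) = -C(d)\,x/|x|^{d}$, so $|\nabla G(x)| \leq C/|x|^{d-1}$, $|D^2 G(x)| \leq C/|x|^{d}$, and more generally $|D^j G(x)| \leq C_j/|x|^{d-2+j}$ away from the origin; for $d=2$ the same estimates hold for $j\geq 1$ since the logarithm only appears in $G$ itself, not in $\nabla G$. Since $\nabla u = \nabla G \ast \psi$ and $\psi$ is smooth with compact support, $\nabla u$ is smooth everywhere, so it suffices to prove the stated bounds for $|x|\geq 2R_0$ (for $|x|\leq 2R_0$ the bound is immediate from continuity and boundedness of $\nabla u$, adjusting the constant).

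First I would establish \eqref{borne_green_schrodinger}. For $|x| \geq 2R_0$, write
\begin{equation*}
\nabla u(x) = \int_{B_{R_0}} \nabla G(x-y)\,\psi(y)\,dy = \int_{B_{R_0}} \bigl(\nabla G(x-y) - \nabla G(x)\bigr)\,\psi(y)\,dy,
\end{equation*}
where the subtraction of the constant $\nabla G(x)$ is permitted precisely because $\int \psi = 0$. By the mean value inequality, $|\nabla G(x-y) - \nabla G(x)| \leq |y|\sup_{z\in[x-y,x]}|D^2G(z)|$, and since $|y|\leq R_0 \leq |x|/2$ the segment stays in the region $|z| \geq |x|/2$, so $|D^2 G(z)| \leq C/|x|^d$. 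Hence $|\nabla u(x)| \leq C R_0 \|\psi\|_{L^1}/|x|^d$, which gives \eqref{borne_green_schrodinger} after combining with the bounded region $|x|\leq 2R_0$. For the second estimate \eqref{borne_green_schrodinger2}, I would perform a second-order Taylor expansion of $\nabla G$ around $x$: since both $\int\psi = 0$ and $\int y\,\psi(y)\,dy = 0$, both the zeroth and first order terms of
\begin{equation*}
\nabla G(x-y) = \nabla G(x) - D^2G(x)\,y + \int_0^1 (1-t)\, y^T D^3G(x-ty)\, y \; dt
\end{equation*}
integrate against $\psi$ to zero, leaving only the quadratic remainder; bounding $|D^3 G(x-ty)| \leq C/|x|^{d+1}$ on the relevant segment and using $|y|^2 \leq R_0^2$ yields $|\nabla u(x)| \leq C R_0^2\|\psi\|_{L^1}/|x|^{d+1}$, and again we conclude on all of $\mathbb{R}^d$.

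The argument is essentially routine; the only point requiring a little care is the justification that the segments $[x-ty, x]$ (or $[x-y,x]$) remain in the region where the homogeneous-degree bounds on $D^jG$ are valid, which is exactly why one restricts first to $|x|\geq 2R_0$ and absorbs the compact region into the constant. No genuine obstacle arises; the lemma is a clean consequence of the homogeneity of $G$ and the vanishing moments of $\psi$.
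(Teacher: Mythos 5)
Your proposal is correct and takes essentially the same approach as the paper: both Taylor expand the kernel $\nabla G(x-y)$ in $y$ about $y=0$ and use the vanishing zeroth (and first) moment of $\psi$ to kill the leading term(s), leaving a remainder with one (or two) extra powers of decay. The paper phrases this as an explicit asymptotic expansion of $1/|x-y|^d$, whereas you subtract the Taylor polynomial and control the Lagrange/integral remainder by pointwise bounds on $D^2G$, $D^3G$ along the segment $[x-y,x]$; the two formulations are equivalent, and your version makes the uniformity of the remainder slightly more explicit.
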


\begin{proof}
For $d\geq 2$ and every $i\in \{1,...,d\}$, we know that $G \in L^1_{loc}(\mathbb{R}^d)$ and $\partial_i G(x) = C(d) \dfrac{x_i}{|x|^{d}} \in L^1_{loc}(\mathbb{R}^d)$,
where $C(d)$ only depends on the ambient dimension $d$. In particular, since $\psi$ is compactly supported, $u=G\ast \psi$ is well-defined and, for every $i \in \{1,...,d\}$, 
$$\partial_i u = \int_{\mathbb{R}^d} C(d)\dfrac{x_i-y_i}{|x-y|^{d}}  \psi(y) dy.$$
We denote by $A>0$ a radius such that the support of $\psi$ is included in $B_A$. An asymptotic expansion when $|x|>>A$ shows that, for every $y \in B_A$,
\begin{align*}
  \dfrac{1}{|x-y|^{d}} = \dfrac{1}{(|x|^2 - 2 \langle x,y \rangle + |y|^2)^{d/2}} = \dfrac{1}{|x|^{d}}\left( 1 + d \dfrac{ \langle x,y\rangle}{|x|^2}\right) + O\left(\dfrac{1}{|x|^{d+2}}\right),
\end{align*}
the remainder of this expansion being uniformly bounded with respect to $y\in B_A$.
Since $ \psi$ is supported in $B_A$, we can perform this expansion in the integral above when $|x|>>A$. Since $\displaystyle \int_{\mathbb{R}^d} \psi = 0$, we deduce 
\begin{align}
    \int_{B_A} \dfrac{x_i-y_i}{|x-y|^d} \psi(y) dy &=  \int_{B_A} \left( \dfrac{x_i}{|x|^d}+d  \dfrac{x_i \langle x,y \rangle}{|x|^{d+2}} - \dfrac{y_i}{|x|^d}\right) \psi(y)dy + O\left(\dfrac{1}{|x|^{d+1}}\right) \\
    & = \int_{B_A} \left( d  \dfrac{x_i \langle x,y \rangle}{|x|^{d+2}} - \dfrac{y_i}{|x|^d}\right) \psi(y)dy + O\left(\dfrac{1}{|x|^{d+1}}\right) \\
    & = O\left(\dfrac{1}{|x|^{d}}\right).
\end{align}
We therefore obtain \eqref{borne_green_schrodinger}.
In addition, when $\displaystyle \int_{\mathbb{R}^d} x\psi =0$, we also have  
$$ \dfrac{1}{|x|^d} \int_{B_A} y_i \psi(y)dy = 0,$$
and 
$$ \int_{B_A}   \dfrac{x_i \langle x,y \rangle}{|x|^{d+2}} \psi(y)dy =  \dfrac{x_i}{|x|^{d+2}} \left\langle x, \int_{B_A} y \psi(y) dy \right \rangle = 0.$$
We obtain $\displaystyle \int_{B_A} \dfrac{x_i-y_i}{|x-y|^d} \psi(y) dy = O\left(\dfrac{1}{|x|^{d+1}}\right)$, which shows \eqref{borne_green_schrodinger2}.

% In order to prove that $\displaystyle \int_{\mathbb{R}^d} \nabla u =0$, we define for every $\varepsilon>0$ :
% $$v_i^{\varepsilon} = C(d) \int_{\mathbb{R}^d} \dfrac{e^{-\varepsilon|x-y|}}{|x-y|^{d-2}}\partial_i\psi(y) dy.$$
% Using the Fubini theorem, we clearly have $\displaystyle \int_{\mathbb{R}^d} v_i^{\varepsilon} = 0$ and, as above, we can perform an asymptotic expansion to obtain the existence of $C_3>0$ independent of $\varepsilon$ such that $|v_i^{\varepsilon}(x)| \leq \dfrac{C_3}{1+|x|^{d+1}}.$ The dominated convergence theorem therefore allows to show that $v_i^{\varepsilon}$ converges to $\partial_i u$ in $L^1(\mathbb{R}^d)$ and we can conclude. 
\end{proof}

%\subsection{Taylor expansion of $V$}

We now present our approach to solve \eqref{corrector_general_schro}, or more precisely \eqref{equ_correcteur_theorem1}. The specific structure of $V$ first allows us to perform a Taylor expansion :
\begin{equation}
\begin{array}{cc}
     V(x)  & =  \displaystyle g_{per} + \sum_{k \in \mathbb{Z}^d} \varphi(x-k) - Z_k.\nabla \varphi(x-k)  \\
     & + \displaystyle \int_{0}^1 (1-t) Z_k^T D^2\varphi(x-k- tZ_k) Z_k  dt,
\end{array}
\end{equation}
as already briefly mentioned in \eqref{Taylor_expansion_intro_schro}. By linearity, the corrector equation can therefore be split into three different equations : 
\begin{enumerate}[label=(\alph*)]
    \item \label{a_schro} $\Delta w_{per} = V_{per}$, where $V_{per} = g_{per} + \displaystyle \sum_{k \in \mathbb{Z}^d} \varphi(.-k)$ is a  $Q$-periodic function such that $\langle V_{per} \rangle = 0$ as a consequence of \eqref{Schro_condition_moyenne2}. 
    \item \label{b_schro} $\Delta w_1 = V_1$, where $V_1 = \displaystyle \sum_{k \in \mathbb{Z}^d} Z_k. \nabla \varphi(.-k)$ is the first order term of the Taylor expansion and is linear with respect to $Z$.
    \item \label{c_schro} $\Delta w_2 = V_2$, where $V_2 = \displaystyle \sum_{k \in \mathbb{Z}^d} \int_{0}^1 (1-t) Z_k^T D^2\varphi(.-k - tZ_k) Z_k  dt$ is the remainder of the Taylor expansion and is nonlinear with respect to $Z$. 
\end{enumerate}
In the sequel, the proof of Theorem \ref{theorem1_schro} will consequently consist in solving each of these equations and in showing the expected properties of weak convergence satisfied by the gradient of their solution. Each of these equations is put under the form $\Delta u = \displaystyle \sum_{k \in \mathbb{Z}^d} \psi_k$ for a specific potential where $\psi_k \in \mathcal{D}(\mathbb{R}^d)$ depends on $\varphi$, $k$ and $Z_k$. In particular, the difficulties to study the gradients of their solution will be related to the convergence of the sum $\displaystyle \sum_{k \in \mathbb{Z}^d} \nabla \left( G \star \psi_k\right)$  and are various in nature.

Since $V_{per}$ is a periodic potential with a vanishing average, the existence of a periodic solution $w_{per}$, therefore strictly sublinear at infinity, is well known. So \ref{a_schro} is easily solved.

The third equation \ref{c_schro} is associated with the remainder of the Taylor expansion. As informally announced in the introduction, Lemma~\ref{Prop_elementaire_schro} shows that the presence of high order derivatives of $\varphi$ in $V_2$ ensures the normal convergence of the sum $\displaystyle \sum_{k \in \mathbb{Z}^d} \nabla \left( G \ast D^2 \varphi\right)(.-k)$. This allows to make explicit $\nabla w_2$ and to show the weak convergence of $\nabla w_2(./\varepsilon)$ to 0. However, the nonlinearity of $V_2$ with respect to $Z$ requires a strong assumption such as \eqref{A4} to obtain the expected convergence of $|\nabla w_2(./\varepsilon)|^2$. This problem is addressed in Section \ref{Schro_section4}. 

As for the second equation \ref{b_schro}, the gradient of a solution to $\Delta w_1 = V_1$, associated with the first order term of the Taylor expansion, is related to the potential convergence of the sum $\displaystyle \sum_{k \in \mathbb{Z}^d} \nabla \left( G \ast Z_k.\nabla \varphi \right)(.-k)$. Since the integral of $\nabla \varphi$ vanishes, Lemma \ref{Prop_elementaire_schro} also implies that the function $\nabla \left( G \ast Z_k.\nabla \varphi \right)(.-k)$ generically formally behaves as $\dfrac{Z_k}{|x-k|^d}$. Most of our arguments to come therefore consist in showing first that the specific properties of the Calder\'on-Zygmund operator $T : f \mapsto \nabla^2 G \ast f$, particularly its continuity from $L^{\infty}(\mathbb{R}^d)$ to $BMO(\mathbb{R}^d)$, allow to prove the convergence of this sum in $BMO(\mathbb{R}^d)$ and, secondly, that the properties of $BMO(\mathbb{R}^d)$ together with our assumptions \eqref{A0}, \eqref{A00}, \eqref{A2} and \eqref{A3} ensure its weak-convergence on $B_R/\varepsilon$, provided we subtract a certain $\varepsilon$-dependent constant. More precisely, in Proposition \ref{prop_correcteur_cas_lin_schro} which is established in Section \ref{Schro_linear_problem}, we shall obtain the expected weak-convergences for some $\displaystyle \nabla \Tilde{W}_{\varepsilon,R} = \nabla w_1 - \fint_{B_{R}/\varepsilon} \nabla w_1$. This is sufficient to finally construct the $\varepsilon$-dependent corrector of Theorem \ref{theorem1_schro} and perform the homogenization of problem~\eqref{homog_eps_schro}.  

% \begin{remark}
% For simplicity, we use here the differentiability of $\varphi$ to perform a first-order Taylor expansion of the corrector equation. However, without assumption of differentiability, we claim that this approach is still possible. Indeed, in the sense of distributions, corrector equation \eqref{corrector_general_schro} is equivalent to 
% \begin{equation}
%   \forall \psi \in \mathcal{D}(\mathbb{R}^d), \quad  \langle \Delta w, \psi \rangle_{\mathcal{D}'(\mathbb{R}^d), \mathcal{D}(\mathbb{R}^d)} = \langle g_{per}, \psi \rangle_{\mathcal{D}'(\mathbb{R}^d), \mathcal{D}(\mathbb{R}^d)} + \left\langle \varphi, \sum_{k \in \mathbb{Z}^d} \psi(.+k+Z_k) \right\rangle_{\mathcal{D}'(\mathbb{R}^d), \mathcal{D}(\mathbb{R}^d)},
% \end{equation}
% and a Taylor expansion of $\psi$ still allows to split the corrector problem into the above three sub-problems, considered in the sense of distributions.
% \end{remark}

\subsection{Examples of suitable sequences $Z$}

\label{Section_schro_examples}

We give here some examples of sequences $Z$ that satisfy Assumptions \eqref{A0} to \eqref{A4}. We begin by proving a technical property related to Assumption \eqref{A3}. 

\begin{prop}
\label{Prop_schro_convergence_average}
Assume $d\geq 2$. Let $\varphi \in \mathcal{D}(\mathbb{R}^d)$ and $Z$ be a sequence of $l^{\infty}(\mathbb{Z}^d)$. Assume there exists $\alpha>1$ and $C>0$ such that for every $R>0$ and $x_0 \in \mathbb{R}^d$, 
\begin{equation}
\label{assumption_convergence_average_schro}
    \left|\dfrac{1}{|B_R|} \sum_{k \in B_R(x_0)} Z_k\right| \leq \dfrac{C}{\left(\ln(1+R)\right)^{\alpha}}.
\end{equation}
Then, for every $i,j \in \{1,...,d\}$, the sequence of functions $\displaystyle x \mapsto \sum_{|k|\leq M}  Z_k (\partial_i \partial_j G \ast \varphi)(x-k)$ converges in $\displaystyle L^{\infty}_{loc}(\mathbb{R}^d)$ when $ M\to \infty$.
\end{prop}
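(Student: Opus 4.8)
The plan is to show that the partial sums $S_M(x) = \sum_{|k| \le M} Z_k (\partial_i\partial_j G \ast \varphi)(x-k)$ form a Cauchy sequence in $L^\infty_{loc}(\mathbb{R}^d)$. Fix a compact set, say a ball $B_\rho$, and let $x \in B_\rho$. The crucial input is the decay of the kernel: by Lemma \ref{Prop_elementaire_schro}, the function $H := \partial_j G \ast \varphi$ (which is $\nabla_j$ of $G \ast \varphi$, with $\int \varphi$ \emph{not} assumed to vanish, so only the $\int$ of the relevant first-moment reasoning gives) — more carefully, $\partial_i\partial_j G \ast \varphi = \partial_i(\partial_j G \ast \varphi)$ and since $\partial_j G \ast \varphi = \partial_j(G\ast\varphi)$ is the gradient of $G\ast\varphi$ with $\psi = \varphi$, one does not directly get decay unless $\int\varphi = 0$. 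Instead I would write $\partial_i\partial_j G \ast \varphi = \partial_j G \ast (\partial_i \varphi)$ and apply Lemma \ref{Prop_elementaire_schro} to $\psi = \partial_i\varphi$, whose integral vanishes: this yields $|(\partial_i\partial_j G \ast \varphi)(y)| \le C_1/(1+|y|^d)$. So writing $K(y) := (\partial_i\partial_j G \ast \varphi)(y)$ we have the critical decay $|K(y)| \le C_1(1+|y|)^{-d}$, and moreover $K$ is smooth with $|\nabla K(y)| \le C_1'(1+|y|)^{-d-1}$ by the same argument applied to further derivatives.

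The heart of the matter is then: given the critical (non-summable) decay $|K(y)| \lesssim |y|^{-d}$, the series $\sum_k Z_k K(x-k)$ need not converge absolutely, so I would use \emph{summation by parts against the Cesàro means} of $Z$, exactly the kind of Abel-summation argument the decay of the means \eqref{assumption_convergence_average_schro} is designed for. Concretely, for $N < M$ I would group the indices $k$ with $N < |k-x| \le M$ into dyadic annuli $A_n = \{k : 2^n \le |k-x| < 2^{n+1}\}$; on each annulus $K(x-k) \approx 2^{-nd}$ with variation controlled by $|\nabla K| \lesssim 2^{-n(d+1)}$, while $\#A_n \approx 2^{nd}$ and, by hypothesis \eqref{assumption_convergence_average_schro}, the average of $Z_k$ over the ball $B_{2^{n+1}}(x)$ is $O(n^{-\alpha})$ (since $\ln(1+2^{n+1}) \approx n$). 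A discrete Abel summation on each annulus — summing $K(x-k)$ against the increments of the partial sums $\sum_{k} Z_k$, then using the mean bound and the gradient bound on $K$ to absorb the "boundary" and "interior-variation" contributions — shows that the contribution of annulus $n$ to $S_M(x) - S_N(x)$ is $O(n^{-\alpha})$ (this is where $\alpha > 1$ enters: $\sum_n n^{-\alpha} < \infty$). Summing over the annuli with $2^n \ge N$ gives a tail bounded by $\sum_{2^n \ge N} n^{-\alpha} \to 0$, uniformly in $x \in B_\rho$, which is the Cauchy property.

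The main obstacle I anticipate is making the summation-by-parts on annuli rigorous and \emph{uniform in $x$}: one must carefully handle the fact that the natural "partial sums of $Z$" appearing in \eqref{assumption_convergence_average_schro} are over balls centered at $x_0$, not over the dyadic shells around the moving point $x$, so I would pass from shell sums to differences of ball sums, $\sum_{k \in A_n(x)} Z_k = \sum_{k \in B_{2^{n+1}}(x)} Z_k - \sum_{k \in B_{2^n}(x)} Z_k$, each term being $O(|B_{2^{n+1}}| n^{-\alpha})$ by hypothesis, and then transfer the variation of $K$ across the shell into an error controlled by $\int_{A_n} |\nabla K| \lesssim 2^{-n}$ times the worst ball-sum bound. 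Keeping all the constants independent of $x \in B_\rho$ (they will depend on $\rho$ only through a harmless shift in which annulus the compact set sits in) is the bookkeeping one has to get right; the analytic content is entirely in the kernel decay from Lemma \ref{Prop_elementaire_schro} and the logarithmic decay of the means, and $\alpha > 1$ is exactly the threshold that closes the estimate.
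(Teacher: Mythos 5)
Your starting ingredients are the right ones and match the paper's: the kernel bound $|K(y)|\lesssim (1+|y|)^{-d}$ (from Lemma~\ref{Prop_elementaire_schro} applied to $\psi=\partial_i\varphi$), the Lipschitz/gradient bound $|\nabla K(y)|\lesssim(1+|y|)^{-d-1}$, and the translation of hypothesis \eqref{assumption_convergence_average_schro} into a decay of Ces\`aro means at dyadic scale. However, there is a genuine gap in the step where you estimate the contribution of each dyadic annulus $A_n$ by Abel summation: the kernel $K=\partial_i\partial_j G\ast\varphi$ is \emph{not} radial, so a one-dimensional summation by parts over radii does not make sense on its own. Within a dyadic annulus, $K$ varies by a factor of order $1$ both radially (across the thickness $\sim 2^n$) and angularly (around a sphere of radius $\sim 2^n$, since $2^n\cdot|\nabla K|\sim 2^{-nd}$, the same order as $K$ itself). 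Consequently, pulling a reference kernel value out and writing the shell sum as a difference of ball sums loses nothing, but the remaining "variation" term is bounded only by $(\#A_n)\cdot\sup_{A_n}|K-\bar K|\sim 2^{nd}\cdot 2^{-nd}=O(1)$, which is not summable. Your proposed bound --- $\int_{A_n}|\nabla K|\lesssim 2^{-n}$ times "the worst ball-sum bound" $\lesssim 2^{nd}n^{-\alpha}$ --- actually gives $2^{n(d-1)}n^{-\alpha}$, which \emph{diverges} for $d\ge 2$; the claim $O(n^{-\alpha})$ per annulus does not follow from the estimate you describe.

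To close the argument you must subdivide finer than dyadic annuli, so that the kernel is nearly constant on each piece. This is exactly what the paper's proof does: it tiles $\mathbb{Z}^d$ with the anisotropic boxes $\mathcal{Q}_l=\prod_i[l_i^2,(l_i+1)^2]$, which have side $\asymp n$ at distance $\asymp n^2$ when $|l|_\infty=n$, so that the kernel variation over a box is $\lesssim n\cdot n^{-2(d+1)}=n^{-2d-1}$, one power of $n$ smaller than $|K|\asymp n^{-2d}$. Within each box, $Z_k$ is split into its box-average $\mathcal{M}_l(Z)$ (controlled by \eqref{assumption_convergence_average_schro}, giving the summable series $\sum_n n^{-1}(\ln n)^{-\alpha}$ precisely because $\alpha>1$) plus a zero-mean fluctuation $\omega_{l,k}$ (paired with the small kernel variation, giving the summable $\sum_n n^{-2}$). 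An equivalent fix within your dyadic framework would be to cut each annulus $A_n$ into sub-cubes of side $s_n\sim 2^n/n^2$ and run your mean/variation split at that scale, but some such refinement is indispensable; "Abel summation on the annulus" alone cannot exploit the Ces\`aro-mean hypothesis because the kernel lacks the angular constancy that a one-dimensional summation-by-parts implicitly requires.
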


\begin{proof}
In this proof we denote $\mathcal{K}_{i,j} = (\partial_i \partial_j G \ast \varphi)$ for every $i,j \in \{1,...,d\}$. We define $|x|_{\infty} = \displaystyle \sup_{i \in \{1,...,d\}} |x_i|$ and $Q_M= \left\{x \in \mathbb{R}^d \ \middle| \ \displaystyle |x|_{\infty} < M \right\}$ for $M>0$. We first show the result for the subsequence $(M_N)_{N \in \mathbb{N}} = (N^{2})_{N \in \mathbb{N}}$. We remark that if $\displaystyle \mathcal{Q}_{l} = \prod_{i=1}^d [|l_i|^{2}, |l_i + 1|^{2}]$ for $l\in \mathbb{Z}^d$, then $ \displaystyle Q_{N^2} = \bigcup_{n=0}^N \ \bigcup_{|l|_{\infty}=n} \mathcal{Q}_{l}$. Since $(l_i+1)^2 - l_i^2 = O(l_i)$ for every $i \in \{1,...,d\}$, we have that $|\mathcal{Q}_l| = O(n^d)$ when $\displaystyle |l|_{\infty}=n$. We next consider a compact subset $K \subset{\mathbb{R}^d}$ and we fix $x \in K$. For $l\in \mathbb{Z}^d$ and $k\in \mathcal{Q}_l$, we also define $\mathcal{M}_l(Z)= \displaystyle \dfrac{1}{|\mathcal{Q}_l|}\sum_{q\in \mathcal{Q}_l}Z_q$ and $\omega_{l,k} = \displaystyle \dfrac{1}{|\mathcal{Q}_l|}\sum_{q\in \mathcal{Q}_l}(Z_k-Z_q)$ and we have 
\begin{align}
    \sum_{|k|_{\infty}\leq N^2} Z_k \mathcal{K}_{i,j}(x-k) & = \sum_{n=0}^N  \sum_{|l|_{\infty} = n} \sum_{k \in \mathcal{Q}_{l}} Z_k \mathcal{K}_{i,j}(x-k)\\
    & =  \sum_{n=0}^N   \sum_{|l|_{\infty} = n} \sum_{k \in \mathcal{Q}_{l}} \omega_{l,k} \mathcal{K}_{i,j}(x-k) + \sum_{n=0}^N   \sum_{|l|_{\infty} = n}\sum_{k \in \mathcal{Q}_{l}} \mathcal{M}_l(Z)\mathcal{K}_{i,j}(x-k)\\
    &= S_N^1(x) + S_N^2(x).
\end{align}
We remark that $\displaystyle \sum_{k \in \mathcal{Q}_{l}} \omega_{l,k} =0$. For every $l \in \mathbb{Z}^d$, we consider a point $k_l$ of $\mathcal{Q}_{l} \cap \mathbb{Z}^d$ that can be chosen arbitrarily and we deduce 
\begin{align}
    S^1_N(x) = \sum_{n=0}^N \sum_{|l|_{\infty} = n} \sum_{k \in \mathcal{Q}_l \setminus{\{k_l\}}} \omega_{l,k} \left(\mathcal{K}_{i,j}(x-k) - \mathcal{K}_{i,j}(x-k_l)\right).  
\end{align}
When $|l|$ is sufficiently large, using that $\varphi$ is compactly supported, we have for $k \in \mathcal{Q}_l$ :
$$\mathcal{K}_{i,j}(x-k) - \mathcal{K}_{i,j}(x-k_l) = \displaystyle \int_{\mathbb{R}^d} \left(\partial_i \partial_j G (x-k-y) -  \partial_i \partial_jG(x-k_l-y)\right) \varphi(y) dy.$$
Moreover, for every $u,v \in \mathbb{R}^d$ such that $|u| > 2|v|$, the result of \cite[Lemma 7.18 p.151]{MR3099262} implies the existence of a constant $C_1>0$ such that $\left| \partial_i \partial_j G (u-v) -  \partial_i \partial_jG(u) \right| \leq C_1\dfrac{ |v|}{|u|^{d+1}}$.
We deduce the existence of a constant $C_2>0$ such that for $n \in \mathbb{N}$ sufficiently large, $l \in \mathbb{Z}^d$ such that $|l|_{\infty} = n$ and $k\in \mathcal{Q}_l$, 
\begin{equation}
    \left|\mathcal{K}_{i,j}(x-k) - \mathcal{K}_{i,j}(x-k_l)\right| \leq C_2 \dfrac{|k - k_l|}{n^{2(d+1)}} \leq C_2 \dfrac{ Diam(\mathcal{Q}_l)}{n^{2(d+1)}}. 
\end{equation}
Since $(l_i+1)^2 - l_i^2 = 2l_i+1$ for every $i\in \{1,...d\}$, we can show the existence of $C_3>0$ such that for every $n\in \mathbb{N}$ and $l \in \mathbb{Z}^d$ such that $ |l|_{\infty} = n$, the diameter of $\mathcal{Q}_l$ is bounded by $C_3n$ and we obtain 
\begin{equation}
\label{inegalite_schro_kij}
    \left|\mathcal{K}_{i,j}(x-k) - \mathcal{K}_{i,j}(x-k_l)\right|  \leq C_2C_3 \dfrac{1}{n^{2d+1}}.
\end{equation}
In addition, since $Z\in l^{\infty}(\mathbb{Z}^d)$, $\omega_{l,k}$ is uniformly bounded with respect to $l$ and $k$. We also know that the number of $l \in \mathbb{Z}^d$ such that $|l|_{\infty}=n$ is a $O(n^{d-1})$. Using also that $|\mathcal{Q}_l| = O(n^{d})$, \eqref{inegalite_schro_kij} therefore gives the existence of $C>0$ independent of $n$, $l$ and $k$ such that : 
\begin{align}
\label{ineg_schro_remark11}
    \left|\sum_{|l|_{\infty} = n} \sum_{k \in \mathcal{Q}_l \setminus{\{k_l\}}} \omega_{l,k} \left(\mathcal{K}_{i,j}(x-k) - \mathcal{K}_{i,j}(x-k_l)\right)\right| & \leq \sup_{l,k} |\omega_{l,k}|  \sum_{|l|_{\infty} = n} C_2C_3 \dfrac{|\mathcal{Q}_l|}{n^{2d+1}}  \leq C \dfrac{1}{n^{2}}.
\end{align}
It follows that the sum $S_N^1$ normally converges in $L^{\infty}_{loc}(\mathbb{R}^d)$. 

We next study the convergence of $S^2_N$. If we denote $x_l = \left( l_i^2 + l_i +\dfrac{1}{2}\right)_{i \in \{1,...,d\}}$, we have $\mathcal{Q}_l = \displaystyle \prod_{i=1}^d \left[-l_i - \dfrac{1}{2}, l_i + \dfrac{1}{2}\right] + x_l $. Since $\mathcal{K}_{i,j} = \partial_i \left(G \ast \partial_j \varphi\right)$ and $\displaystyle \int_{\mathbb{R}^d} \partial_j \varphi =0$, Lemma \ref{Prop_elementaire_schro} shows the existence of $C_4>0$ such that $|\mathcal{K}_{i,j}(x)| \leq \dfrac{C_4}{1+|x|^d}$.
A direct consequence of  \eqref{assumption_convergence_average_schro} also yields the existence of $C_5>0$ such that for every $l \in \mathbb{Z}^d$, we have 
\begin{equation}
    \left|\mathcal{M}_{l}(Z)\right| = \left| \displaystyle \dfrac{1}{|\mathcal{Q}_l|}\sum_{q\in \mathcal{Q}_l}Z_q\right| \leq \dfrac{C_5}{\ln(1+|l|)^{\alpha}}. 
\end{equation}
We obtain 
\begin{equation}
\label{ineg_schro_remark12}
    \left|\sum_{|l|_{\infty} = n} \sum_{k \in \mathcal{Q}_l} \mathcal{M}_l(Z)\mathcal{K}_{i,j}(x-k)\right| \leq C \sum_{|l|_{\infty} = n} \dfrac{|\mathcal{Q}_l|}{n^{2d}\ln(1+n)^{\alpha}} \leq \Tilde{C} \dfrac{1}{n\ln(1+n)^{\alpha}},
\end{equation}
where $C$ and $\Tilde{C}$ are independent of $n$. Since $\alpha >1$, we deduce that the sum $S^2_N$ normally converges in $L^{\infty}_{loc}(\mathbb{R}^d)$. We have finally established the convergence of the sum $\displaystyle \sum_{|k|\leq N^2} Z_k \mathcal{K}_{i,j}(x-k)$ in $L^{\infty}_{loc}(\mathbb{R}^d)$ when $N^2 \to \infty$. 

To conclude, for $M>0$, we denote by $[M]$ the integer part of $M$. We have
\begin{align}
    \sum_{|k|\leq M} Z_k \mathcal{K}_{i,j}(x-k) = \sum_{[\sqrt{M}]^2 < |k|\leq M} Z_k \mathcal{K}_{i,j}(x-k)+ \sum_{ |k|\leq [\sqrt{M}]^2} Z_k \mathcal{K}_{i,j}(x-k).
\end{align}
We have shown above the convergence of the second sum of the right-hand term. For the other sum, we first remark that $[\sqrt{M}]^2 \stackrel{M \to \infty}{\sim} M$ and $[\sqrt{M}]^2 \leq M < [\sqrt{M}]^2 + 1$, which imply that $\left|B_{M} \setminus{B_{[\sqrt{M}]^2}}\right| = O(M^{d-1})$. Using again that $|\mathcal{K}_{i,j}(x)| \leq \dfrac{C_4}{1+|x|^d}$,  we have the existence of $C>0$ such that for every $x$ in a compact subset $K$ : 
\begin{align}
    \left|\sum_{[\sqrt{M}]^2 < |k|\leq M} Z_k \mathcal{K}_{i,j}(x-k)\right| & \leq C_4 \sum_{[\sqrt{M}]^2 < |k|\leq M} \dfrac{\left|Z_k\right|}{|x-k|^d}\\
   &  \leq C\|Z\|_{l^{\infty}(\mathbb{Z}^d)} \sum_{[\sqrt{M}]^2 < |k|\leq M} \dfrac{1}{M^d} = O(M^{-1}) \stackrel{M \to \infty}{\longrightarrow} 0.
\end{align}
We can conclude that $ \displaystyle  \sum_{|k|\leq M} Z_k \mathcal{K}_{i,j}(x-k)$ converges in $L^{\infty}_{loc}(\mathbb{
R}^d)$ when $M \to \infty$. 
\end{proof}

\begin{remark}
Assumption \eqref{assumption_convergence_average_schro} has two important parts : a logarithmic rate of convergence for the sequence of the partial averages and the uniformity of this rate with respect to the center $x_0$ of the ball $B_R(x_0)$.
They ensure the convergence of the two sums that appear respectively in the rightmost inequality of \eqref{ineg_schro_remark11} and in the rightmost inequality of \eqref{ineg_schro_remark12}. Actually, we can remark that the result of Proposition \ref{Prop_schro_convergence_average} still holds under the weaker assumption  
\begin{equation}
    \left|\dfrac{1}{|B_R|} \sum_{k \in B_R(R^2 x_0)} Z_k\right| \leq \dfrac{C_K}{\left(\ln(1+R)\right)^{\alpha}},
\end{equation}
for every $x_0$ belonging to a compact subset of $\mathbb{R}^d$ denoted by $K$ in the above inequality. 
\end{remark}

We are now in position to introduce several examples of sequences $Z$ that satisfy our assumptions. 

%\subsubsection*{Elementary examples}

To start with, we mention for completeness two \emph{elementary} examples for which the homogenization of \eqref{homog_eps_schro}-\eqref{potential_form_schro} can be easily addressed : the case of periodic sequences and the case of local defects such that $Z_k$ \emph{rapidly} decreases when $|k|\to \infty$. The first setting is, of course, periodic homogenization theory.
We define the second setting as that for which $Z \in \left(l^p(\mathbb{Z}^d)\right)^d$ for some $p\in [1,+\infty[$. The potentials $V_1$ and $V_2$ respectively defined in \ref{b_schro} and \ref{c_schro} are then both in $L^p(\mathbb{R}^d)$ (as a consequence of a discrete Young-type inequality together with the fact that $Z \in \left(l^{\infty}(\mathbb{Z}^d)\right)^d$) and the existence of a corrector is then provided by the classical properties of the Laplace operator. We now check in details our general assumptions cover these two settings.

%\cite{blanc2018correctors, blanc2015local, blanc2012possible} (introduced for equations in divergence form). 

1) \underline{Periodic sequences} : \\
 If $Z_k$ is periodic, $F(Z_k,Z_{k+l})$ is also periodic for every continuous function $F$ and it is well-known that, for every $R>0$ and $x_0\in\mathbb{R}^d$, the sequence $\displaystyle \left(\dfrac{\varepsilon^d}{|B_R|}\sum_{k\in B_{R}(x_0)/\varepsilon} F(Z_k,Z_{k+l})\right)_{\varepsilon>0}$ converges to the average of $F(Z_k,Z_{k+l})$ (in the sense of periodic sequence) and the convergence rate is at least $\varepsilon$ uniformly with respect to $R$, $x_0$ and $l$. $Z$ therefore satisfies Assumptions \eqref{A0}, \eqref{A00}, \eqref{A2} and \eqref{A4}. In addition, since the sequence $\left(Z_{k+l} - \langle Z \rangle\right)_{l \in \mathbb{Z}^d}$ is periodic with a vanishing average for every $k \in \mathbb{Z}^d$, the sequence $C_{l,i,j}$ given in \eqref{A00} is also periodic with a vanishing average. Using again that the convergence rate of $ \displaystyle \dfrac{\varepsilon^d}{|B_R|}\sum_{l\in B_{R}(x_0)/\varepsilon} C_{l,i,j}$ is at least $\varepsilon$ uniformly with respect to $x_0$, Assumption \eqref{A3} is therefore a consequence of Proposition \ref{Prop_schro_convergence_average}.  \\

 2) \underline{Sequences $Z \in \left(l^p(\mathbb{Z}^d)\right)^d$, for some $p\in[1,+\infty[$} : \\
In this case, since $\displaystyle \lim_{|k| \to \infty} Z_k = 0$, the sequence  $\displaystyle \dfrac{\varepsilon^d}{|B_R|}\sum_{k\in B_{R}(x_0)/\varepsilon} F(Z_k,Z_{k+l})$ converges to $F(0,0)$ for every continuous function $F$ and we have \eqref{A0}, \eqref{A00} and \eqref{A4}. Moreover, since  $C_{l,i,j} = 0$ (with the notation of \eqref{A00}), $Z$ clearly satisfies \eqref{A3} and, using that $Z \in \left(l^p(\mathbb{Z}^d)\right)^d$ and the H\"older inequality, we can easily show that \eqref{A2} holds with $\delta(\varepsilon) = 0$ and $\gamma(\varepsilon)=O(\varepsilon^{d/p})$. \\

We next introduce several examples of sequences $Z$ that model both local and non-local perturbations and for which the homogenization of \eqref{homog_eps_schro} with potentials of the form \eqref{potential_form_schro} will be addressed in the present work.

%\subsubsection*{Local perturbations}

3) \underline{Sequences $Z$ that only \emph{slowly} converge to 0 when $|k| \to \infty$} : \\
These are sequences such that, as in the previous example, \eqref{A0}, \eqref{A00}, \eqref{A3} and \eqref{A4} are satisfied since $\displaystyle \lim_{|k|\to \infty} Z_k = 0$ and for which, moreover, Assumption \eqref{A2} holds, since we may ensure that $|Z_k| = O\left(\ln(|k|)^{-\alpha}\right)$ for $\alpha > \dfrac{1}{2}$. We however may take such sequences such that $Z \notin \left(l^p(\mathbb{Z}^d)\right)^d$ for any $p\in [1,+\infty[$. 

%\subsubsection*{Non-local perturbations}

% We finally introduce several examples that allow to model non-local perturbations. They are characterized by the fact that $Z_k$ is non-periodic and does not vanish by any mean at infinity.

4) \underline{Deterministic approximations of random variables} : \\
% We first note that in a stochastic case, for a sequence $Z(\omega)=(Z_k(\omega))_{k \in \mathbb{Z}^d}$ of bounded, independent and identically distributed random variables, our assumptions are almost surely satisfied as a consequence of the strong law of large numbers and the law of the iterated logarithm (for the convergence rate of Assumption \eqref{A2}). In this case, our conclusions would hold only $\omega$ by $\omega$ and almost surely. \\
Such sequences are deterministic sequences $Z$ that share the property of i.i.d sequences of random variables and are commonly used to simulate random processes. They are some low-discrepancy sequences. We refer to \cite{drmota2006sequences, franklin1963deterministic,kuipers2012uniform} for an overview of the theory of deterministic approximation of random sequences. \\
A particular example in dimension $d=1$ is given by $(Z_k)_{k \in \mathbb{N}} = \left\{k \hspace{0.05cm} \theta^p \right\}$ (where $\left\{x\right\}$ denotes the fractional part of $x \in \mathbb{R}$) for a fixed integer $p\geq 2$ and for almost all irrational number $\theta \in \mathbb{R}$, see \cite[Section 4]{franklin1963deterministic} for details. Such a sequence is not periodic, not even almost periodic. It is dense in $[0,1]$ and simulates a realization of uniform distribution on $[0,1]$. More precisely, the results of \cite{cigler1969theorem,lawton1959note} and \cite[Theorem 5.1 p.41]{kuipers2012uniform} ensure that 
\begin{align}
   \forall F \in \mathcal{C}^{0}(\mathbb{R}), \quad &\lim_{\varepsilon \to 0} \dfrac{\varepsilon}{R}\sum_{k  = [x_0/\varepsilon]}^{[R/\varepsilon + x_0/\varepsilon]} F(Z_k) = \int_0^1 F(t)dt,\\
    \forall G \in \mathcal{C}^{0}(\mathbb{R}\times \mathbb{R}), \forall l \neq 0, \ &\lim_{\varepsilon \to 0} \dfrac{\varepsilon}{R}\sum_{k  = [x_0/\varepsilon]}^{[R/\varepsilon + x_0/\varepsilon]} G(Z_k,Z_{k+l}) = \int_0^1 \int_0^1 G(t,u)dtdu.
\end{align}
This show that our Assumptions \eqref{A0},\eqref{A00} and \eqref{A4} are satisfied. In particular, for \eqref{A00}, we have 
\begin{equation}
    \lim_{\varepsilon \to 0} \dfrac{\varepsilon}{R}\sum_{k  = [x_0/\varepsilon]}^{[R/\varepsilon + x_0/\varepsilon]} Z_kZ_{k+l} = 0,
\end{equation}
which directly implies $\eqref{A3}$.
A deterministic equivalent of the law of the iterated logarithm (which can be established using the results and the methods introduced in \cite{jagerman1963autocorrelation}, \cite[Theorem 1.193, p.198]{drmota2006sequences} and \cite[Chapter 2]{kuipers2012uniform}) also ensures
\begin{equation}
    \sup_{l \neq 0} \left|\dfrac{\varepsilon}{R}\sum_{k  = [x_0/\varepsilon]}^{[R/\varepsilon + x_0/\varepsilon]} Z_k Z_{k+l}\right| \leq C \dfrac{\sqrt{\varepsilon}}{\ln(\ln(\varepsilon))}, 
\end{equation}
and implies that Assumption \eqref{A2} is satisfied with $\delta(\varepsilon) = 0$ and $\gamma(\varepsilon) = \dfrac{\sqrt{\varepsilon}}{\ln(\ln(\varepsilon))}$.
All these results can, of course, be generalized in higher dimensions considering the vectors 
$$\left(\left\{kd \hspace{0.05cm} \theta^p \right\}, \left\{(kd+1) \theta^p \right\},..., \left\{(kd+d-1)  \theta^p \right\}\right)_{k \in \mathbb{N}},$$
for $p>2d$, such as in \cite[Theorem 21]{franklin1963deterministic}.

5) \underline{Some other non periodic sequences that do not vanish at infinity} : \\
We give an example of such a sequence in dimension $d=2$ : $Z_{(k_1,k_2)} = \left(\cos(\sqrt{2} k_1), \sin(\sqrt{2} k_2)\right)$. 
In this case, for every $a,b \in \mathbb{Z}$ $N \in \mathbb{N}$, we have
\begin{equation}
    \dfrac{1}{4N^2}\sum_{k_1=-N+a}^{k_1=N+a}\sum_{k_2=-N+b}^{k_2=N+b} \cos(\sqrt{2}k_1) = \dfrac{\cos(\sqrt{2} \hspace{0.9pt}a) \sin\left(\dfrac{2N+1}{\sqrt{2}}\right)}{2N \sin\left(\dfrac{1}{\sqrt{2}}\right)},
\end{equation}
and 
\begin{equation}
    \dfrac{1}{4N^2}\sum_{k_1=-N+a}^{k_1=N+a}\sum_{k_2=-N+b}^{k_2=N+b} \sin(\sqrt{2}k_2) = \dfrac{\sin(\sqrt{2} \hspace{0.9pt}b) \sin\left(\dfrac{2N+1}{\sqrt{2}}\right)}{2N \sin\left(\dfrac{1}{\sqrt{2}}\right)}.
\end{equation}
This implies that \eqref{A0} is satisfied by $Z_k$ and there exists $M>0$ such that for every $R>0$ and $x_0 \in \mathbb{R}^d$, 
\begin{equation}
   \left| \dfrac{1}{|B_R|} \sum_{k \in B_R(x_0)} Z_k\right| \leq \dfrac{M}{R}. 
\end{equation}

Similarly, a direct calculation that we omit here shows the existence of $M_1>0$ and of a family of constants $C_{l,i,j}(n,m) \in \mathbb{R}$ for $l\in \mathbb{Z}^2$, $i,j \in \{1,2\}$, $n,m \in \mathbb{N}$, such that, for every $x_0 \in \mathbb{R}^2$ and $R>0$, we have 
    \begin{equation}
    \label{Coorelation_schro_example}
        \left| \dfrac{1}{|B_R|} \sum_{k \in B_R(x_0)} (Z_k)_i^n(Z_{k+l})_j^m - C_{l,i,j}(n,m)\right| \leq \dfrac{M_1}{R}. 
    \end{equation}
The constants $C_{l,i,j}(n,m)$ are linear combinations of $\left(\cos(h\sqrt{2}l_1)\right)_{h \in \mathbb{N}}$, $\left(\cos(h\sqrt{2}l_2)\right)_{h \in \mathbb{N}}$, $\left(\sin(h\sqrt{2}l_1)\right)_{h \in \mathbb{N}}$ and $\left(\sin(h\sqrt{2}l_2)\right)_{h \in \mathbb{N}}$. In particular, for every $l\in \mathbb{Z}^2$ : 
\begin{equation}
\label{def_C_particular_example_schro}
   C_l= C_{l}(1,1) = \left[ \begin{array}{cc}
       \dfrac{\cos(\sqrt{2}l_1)}{2}  & 0  \\
        0 & \dfrac{\cos(\sqrt{2}l_2)}{2} 
    \end{array}
    \right].
\end{equation}
This implies that $Z_k$ satisfies \eqref{A00} and $\eqref{A2}$ and, using the density of the polynomial functions in the set of continuous functions, \eqref{Coorelation_schro_example} also shows \eqref{A4}. In addition, a direct calculation again shows the existence of $M_2>0$ such that, for every $R>0$ and $x_0 \in \mathbb{R}^2$, we have
    \begin{equation}
    \label{taux_conv_example_schro}
       \left| \dfrac{1}{|B_R|} \sum_{k \in B_R(x_0)} C_l \right| \leq \dfrac{M_2}{R}.
    \end{equation}
Proposition \ref{Prop_schro_convergence_average} then ensures that Assumption \eqref{A3} is also satisfied.

\section{Corrector equation : the first-order equation \ref{b_schro}}

\label{Schro_linear_problem}

This section is devoted to the linear equation :
\begin{equation}
\label{correcteur_lienaire_schro}
    \Delta w_1 = \sum_{k \in \mathbb{Z}^d} Z_k.\nabla \varphi =  \operatorname{div}(\mathcal{V}),
\end{equation}
where we have denoted by
\begin{equation}
\label{primitiv_pot_lienar_schro}
    \displaystyle \mathcal{V} = \sum_{k \in \mathbb{Z}^d} Z_k \  \varphi(x-k).
\end{equation}
The existence of a solution to \eqref{correcteur_lienaire_schro}, which is the equation obtained for the subproblem \ref{b_schro} in our decomposition of Section \ref{Section_approach_taylor} of our original corrector equation \eqref{corrector_general_schro}, is related to the convergence of $\displaystyle \sum_{k \in \mathbb{Z}^d} \nabla^2 G \ast \left(Z_k \varphi(.-k)\right)$ and thus to the continuity of the Riesz operator $Tf = \nabla^2 G \ast f$ from $\left(L^{\infty}(\mathbb{R}^d)\right)^d$ to the space $BMO(\mathbb{R}^d)$ of functions with bounded mean oscillations. We will not solve \eqref{correcteur_lienaire_schro} itself but we will find a solution on every ball of radius $\dfrac{1}{\varepsilon}$. To study the properties of weak convergence satisfied by the gradient of this solution, we next use several properties of $BMO(\mathbb{R}^d)$ together with the specific properties of the sequence $Z$ ensured by assumptions \eqref{hyp_borne_Z_schro}, \eqref{A0}, \eqref{A00}, \eqref{A2} and \eqref{A3}. 
In addition, although we consider here a generic pair $(\varphi, Z)$ where $Z$ is only assumed to satisfy \eqref{hyp_borne_Z_schro}, \eqref{A0}, \eqref{A00}, \eqref{A2} and \eqref{A3}, there exist several specific choices of function $\varphi$ and sequence $Z_k$ for which the study of \eqref{correcteur_lienaire_schro} is actually simpler. We give some examples of such simpler settings in Section \ref{section_linear_particular_case}.

\subsection{Some preliminary results}
Here, we introduce several preliminary results related to the study of equation \eqref{correcteur_lienaire_schro}. These results are elementary and classical but we include them here for completeness. 
Our approach being based on the continuity of the Riesz operator from $\left(L^{\infty}(\mathbb{R}^d)\right)^d$ to $\left(BMO(\mathbb{R}^d)\right)^d$, we begin with two preliminary technical lemmas related to the functions of $BMO(\mathbb{R}^d)$ : Lemma $\ref{lemme_BMO_harmonique}$ regards the harmonic functions with a gradient in $\left(BMO(\mathbb{R}^d)\right)^d$ and Lemma~\ref{Lemme_convergence_BMO} shows a property of convergence satisfied by the functions that weakly converge to 0 in $BMO(\mathbb{R}^d)$. In Lemma \ref{lemme_sous_linearité} below, we also recall a classical result regarding the strict sublinearity at infinity of functions $u$ such that $\nabla u(./\varepsilon)$ weakly converges to $0$ as $\varepsilon$ vanishes. We finally conclude this section with Lemma \ref{prop_convergence_primlitiv_pot} which is related to the weak convergence of the functions defined as in \eqref{primitiv_pot_lienar_schro} and is the direct generalization of the calculation \eqref{calcul_sublinearity_d1_schro} performed for $d=1$. 

\begin{lemme}
\label{lemme_BMO_harmonique}
Let $v\in L^1_{loc}(\mathbb{R}^d)$ be a solution to $\Delta v = 0$ in $\mathcal{D}'(\mathbb{R}^d)$ such that $\nabla v \in \left(BMO(\mathbb{R}^d)\right)^d$. Then $\nabla v$ is constant. 
\end{lemme}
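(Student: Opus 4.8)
The plan is to exploit the classical Liouville-type rigidity for harmonic functions together with the fact that a harmonic function with gradient of slow growth must be a polynomial of controlled degree; the $BMO$ hypothesis forces that degree to be $\leq 1$, and being harmonic plus having polynomial gradient of degree $0$ gives the conclusion. Concretely, set $g = \nabla v$, so each component $g_i$ is harmonic (since $\partial_i v$ is harmonic in $\mathcal{D}'$ whenever $v$ is) and belongs to $BMO(\mathbb{R}^d)$. The key step is a mean-value / gradient estimate: for a harmonic function $h$ on $\mathbb{R}^d$, the interior estimate $|\nabla h(x_0)| \leq \frac{C(d)}{R}\,\fint_{B_R(x_0)} |h - \fint_{B_R(x_0)} h|$ holds (apply the standard interior gradient bound to $h$ minus its average on $B_R(x_0)$, which is still harmonic). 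Since $h \in BMO$, the right-hand side is bounded by $\frac{C(d)}{R}\,\|h\|_{BMO}$, which tends to $0$ as $R \to \infty$. Hence $\nabla h \equiv 0$ on $\mathbb{R}^d$, so each $g_i$ is constant, i.e. $\nabla v$ is constant.

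First I would record the regularity step: a distributional solution of $\Delta v = 0$ is smooth (Weyl's lemma), so $v \in \mathcal{C}^\infty(\mathbb{R}^d)$ and every partial derivative of $v$ is again a smooth harmonic function; in particular $g_i = \partial_i v$ is a genuine (pointwise) harmonic function for each $i \in \{1,\dots,d\}$. Then I would state the interior gradient estimate for harmonic functions in the form above, deriving it from the representation of $\nabla h$ via differentiation of the Poisson/mean-value kernel over $B_R(x_0)$ applied to $h - c$ for the constant $c = \fint_{B_R(x_0)} h$ (subtracting a constant does not affect $\nabla h$ and keeps harmonicity). Next I would invoke the definition of the $BMO$ norm, $\|h\|_{BMO} = \sup_{B} \fint_{B} |h - \fint_B h| < \infty$, to bound $\fint_{B_R(x_0)} |h - \fint_{B_R(x_0)} h| \leq \|h\|_{BMO}$ uniformly in $R$ and $x_0$. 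Combining these and letting $R \to \infty$ at a fixed $x_0$ yields $\nabla h(x_0) = 0$; since $x_0$ is arbitrary, $h$ is constant.

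Applying this with $h = g_i$ for each $i$ gives that all first partials of $v$ are constant, i.e. $\nabla v$ is a constant vector, which is exactly the claim. (One could equivalently phrase the argument as: $v$ itself is harmonic with $\nabla v \in BMO \subset L^1_{loc}$ of "sub-affine" growth, forcing $v$ to be an affine function by the polynomial Liouville theorem; but going through $g_i = \partial_i v$ avoids having to set up the growth-at-infinity bookkeeping for $v$ and is the cleaner route.)

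I do not anticipate a serious obstacle here — the statement is essentially a soft consequence of elliptic regularity plus the scale-invariant interior gradient estimate. The only mild subtlety worth being careful about is making sure the interior gradient bound is applied to $h$ \emph{minus its average on the relevant ball} so that the right-hand side is genuinely controlled by the $BMO$ seminorm and not by $\|h\|_{L^\infty}$ (which is not available); once that normalization is in place, the limit $R \to \infty$ closes the argument immediately.
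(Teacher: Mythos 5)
Your proof is correct, and it takes a genuinely different (and more elementary) route than the paper's. The paper differentiates to get $\Delta(\partial_i v) = 0$, then observes that $\partial_i v \in BMO(\mathbb{R}^d) \subset \mathcal{S}'(\mathbb{R}^d)$, so $\partial_i v$ is a harmonic tempered distribution and therefore a polynomial; one then concludes because the only polynomials in $BMO$ are constants. Your argument instead stays at the level of classical PDE estimates: after Weyl's lemma, you apply the scale-invariant interior gradient bound $|\nabla h(x_0)| \leq \tfrac{C(d)}{R}\,\fint_{B_R(x_0)}|h - \fint_{B_R(x_0)} h|$ to each $h = \partial_i v$, control the right side by $\tfrac{C(d)}{R}\,\|h\|_{BMO}$, and send $R \to \infty$. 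Both are clean; the paper's route quotes a somewhat heavier structural theorem (harmonic tempered distributions are polynomials), while yours replaces it by a quantitative estimate whose scale-invariance is precisely what matches the $BMO$ normalization. The one step worth making fully explicit if you write this up is the passage from the pointwise interior gradient estimate (which in its most common textbook form involves $\sup_{B_{R/2}}|h-c|$) to the averaged form with $\fint_{B_R}|h-c|$; this uses that $|h-c|$ is subharmonic so its sup on the half-ball is controlled by its average on the full ball. Once that is spelled out the argument is complete.
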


\begin{proof}
Differentiating the equation, for every $i\in \{1,...d\}$, we have $\Delta \partial_i v = 0$. Since $\partial_i v \in BMO(\mathbb{R}^d) \subset \mathcal{S}'(\mathbb{R}^d)$, $\partial_i v$ is an harmonic tempered distribution and it is therefore a polynomial function (see \cite[Example 4.11 p.142]{l1992theory} for instance). The only polynomials that belong to $BMO(\mathbb{R}^d)$ being the constants, we can conclude. 
\end{proof}

\begin{lemme}
\label{Lemme_convergence_BMO}
Let $(v_n)_{n\in \mathbb{N}}$ be a sequence of $BMO(\mathbb{R}^d)$ such that $v_n$ converges to 0 for the weak$-\star$ topology of $BMO(\mathbb{R}^d)$. Then, for every compactly supported function $g \in L^{\infty}(\mathbb{R}^d)$ such that $\displaystyle \int_{\mathbb{R}^d} g = 1$, we have
$$v_n - \int_{\mathbb{R}^d} g v_n \overset{n \to \infty}{\longrightarrow} 0 \quad  \text{in } \mathcal{D}'(\mathbb{R}^d).$$
\end{lemme}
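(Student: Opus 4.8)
\textbf{Proof plan for Lemma \ref{Lemme_convergence_BMO}.}

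The plan is to test the difference $v_n - \int g v_n$ against an arbitrary $\phi \in \mathcal{D}(\mathbb{R}^d)$ and exploit the fact that, after subtracting the right constant, $\phi - \left(\int_{\mathbb{R}^d}\phi\right) g$ is a function of integral zero, hence (being bounded and compactly supported) lies in the Hardy space $H^1(\mathbb{R}^d)$, which is the predual of $BMO(\mathbb{R}^d)$. Concretely, first I would write
\begin{equation}
\left\langle v_n - \int_{\mathbb{R}^d} g v_n, \ \phi\right\rangle = \int_{\mathbb{R}^d} v_n \phi - \left(\int_{\mathbb{R}^d}\phi\right)\int_{\mathbb{R}^d} g v_n = \int_{\mathbb{R}^d} v_n \left(\phi - \Big(\int_{\mathbb{R}^d}\phi\Big) g\right),
\end{equation}
using $\int_{\mathbb{R}^d} g = 1$. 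Denote $h = \phi - \left(\int_{\mathbb{R}^d}\phi\right) g$. Then $h \in L^{\infty}(\mathbb{R}^d)$, $h$ is compactly supported, and $\int_{\mathbb{R}^d} h = 0$.

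The key step is to observe that such an $h$ belongs to $H^1(\mathbb{R}^d)$. This is a classical fact (see e.g. \cite[Chapter III]{stein1993harmonic}): a bounded, compactly supported function with vanishing integral is an $H^1$-atom up to normalization — indeed if $\operatorname{supp} h \subset B_r(x_0)$, then $a = h / (|B_r| \|h\|_{L^{\infty}})$ satisfies $\operatorname{supp} a \subset B_r(x_0)$, $\|a\|_{L^{\infty}} \leq |B_r|^{-1}$, and $\int a = 0$, so $a$ is an atom and $h = |B_r|\|h\|_{L^{\infty}}\, a \in H^1(\mathbb{R}^d)$. Since $BMO(\mathbb{R}^d)$ is the dual of $H^1(\mathbb{R}^d)$ (Fefferman's theorem), the weak-$\star$ convergence $v_n \rightharpoonup 0$ means precisely that $\langle v_n, \psi \rangle \to 0$ for every $\psi \in H^1(\mathbb{R}^d)$, and the pairing of $v_n$ against $h$ appearing above is (a representative of) exactly this duality pairing — here one should note that $v_n$ is only defined up to an additive constant as an element of $BMO$, but pairing against the integral-zero function $h$ is insensitive to that ambiguity, so the expression is well defined. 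Applying this with $\psi = h$ gives
\begin{equation}
\left\langle v_n - \int_{\mathbb{R}^d} g v_n, \ \phi\right\rangle = \int_{\mathbb{R}^d} v_n h \overset{n\to\infty}{\longrightarrow} 0,
\end{equation}
which is the desired convergence in $\mathcal{D}'(\mathbb{R}^d)$ since $\phi \in \mathcal{D}(\mathbb{R}^d)$ was arbitrary.

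The main obstacle — really the only subtle point — is the identification of $h$ as an element of $H^1$ and the verification that the distributional pairing $\int v_n h$ genuinely coincides with the $H^1$–$BMO$ duality bracket (so that the hypothesis on $v_n$ applies); both are standard once the atomic decomposition of $H^1$ and the Fefferman duality theorem are invoked. Everything else is bookkeeping: the cancellation $\int_{\mathbb{R}^d} g = 1$ to produce a mean-zero test function, and the observation that subtracting $\int g v_n$ removes the constant-ambiguity in $v_n$.
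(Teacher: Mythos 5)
Your proof is correct and follows essentially the same route as the paper's: test against $\phi\in\mathcal{D}(\mathbb{R}^d)$, use $\int g=1$ to rewrite the pairing as $\int v_n\bigl(\phi-(\int\phi)\,g\bigr)$, observe that the test function is bounded, compactly supported and mean-zero hence lies in $\mathcal{H}^1(\mathbb{R}^d)$, and invoke the $\mathcal{H}^1$--$BMO$ duality together with the weak-$\star$ hypothesis. The only difference is that you spell out the atomic normalization explicitly, whereas the paper refers to it by citation; the substance is identical.
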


\begin{proof}
Let $\phi \in \mathcal{D}(\mathbb{R}^d)$, we have : 
\begin{align*}
    \left\langle v_n - \int_{\mathbb{R}^d} g v_n , \phi \right\rangle_{\mathcal{D}',\mathcal{D}} &=  \int_{\mathbb{R}^d} v_n \phi  - \left(\int_{\mathbb{R}^d} \phi\right) \int_{\mathbb{R}^d} v_n g  \\ 
    & = \int_{\mathbb{R}^d} v_n \left(\phi - g\int \phi\right).
\end{align*}
We note that $\psi = \displaystyle \phi - g\int_{\mathbb{R}^d} \phi$ belongs to $L^{\infty}(\mathbb{R}^d)$, is compactly supported and its integral vanishes. Therefore (see \cite[Section 6.4.1]{MR3099262}), $\psi$ belongs to the Hardy space $\mathcal{H}^1(\mathbb{R}^d)$, that is to the predual of $BMO(\mathbb{R}^d)$ (see \cite[Chapter IV]{stein1993harmonic} for details). Since, by assumption, $v_n$ converges to 0 for the weak-$\star$ topology of $BMO$, it follows that the right-hand side in the latter equality converges to 0 when $n \to \infty$ and we conclude.
\end{proof}

% We next show a classical result regarding the strict sublinearity at infinity of functions $u$ such that $\nabla u(./\varepsilon)$ weakly converges to 0 as $\varepsilon$ vanishes. 

\begin{lemme}
\label{lemme_sous_linearité}
Let $u\in L^1_{loc}(\mathbb{R}^d)$ such that u(0) = 0 and denote by $v_{\varepsilon} = \varepsilon u(./\varepsilon)$. Assume that $\nabla v_{\varepsilon}$ converges to 0 in  $\mathcal{D}'(\mathbb{R}^d)$ when $\varepsilon \to 0$ and that there exists $R>0$ and $p>d$ such that $\nabla v_{\varepsilon}$ is bounded in $L^p(B_{4R})$, uniformly with respect to $\varepsilon$. Then
\begin{equation}
   \lim_{\varepsilon \to 0} \|v_{\varepsilon}\|_{L^{\infty}(B_R)} = 0.
\end{equation}
\end{lemme}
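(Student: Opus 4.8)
The plan is to establish strict sublinearity of $u$ at infinity by combining the Morrey--Sobolev embedding (which controls oscillations of $v_\varepsilon$ by the $L^p$ norm of its gradient for $p>d$) with the distributional convergence $\nabla v_\varepsilon \to 0$. First I would recall that since $p>d$, the Morrey inequality gives a constant $C=C(d,p,R)$ such that for every function $v\in W^{1,p}(B_{4R})$ and every $x,y\in B_{2R}$,
\begin{equation}
   |v(x)-v(y)| \leq C\, |x-y|^{1-d/p}\, \|\nabla v\|_{L^p(B_{4R})}.
\end{equation}
Applied to $v_\varepsilon$, and using the uniform bound $\|\nabla v_\varepsilon\|_{L^p(B_{4R})}\leq M$, this shows that the family $(v_\varepsilon)_{\varepsilon>0}$ is uniformly bounded in $C^{0,1-d/p}(\overline{B_{2R}})$ \emph{once} we know it is bounded at one point; and indeed $v_\varepsilon(0)=\varepsilon u(0)=0$, so $\|v_\varepsilon\|_{L^\infty(B_{2R})}\leq C(2R)^{1-d/p}M$, hence the family is bounded and equi-Hölder on $\overline{B_{2R}}$.

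Next I would argue by contradiction (equivalently, by a compactness/subsequence argument). Suppose $\|v_\varepsilon\|_{L^\infty(B_R)}$ does not tend to $0$; then there is $\eta>0$ and a sequence $\varepsilon_n\to 0$ with $\|v_{\varepsilon_n}\|_{L^\infty(B_R)}\geq \eta$. By the Arzelà--Ascoli theorem applied on $\overline{B_{2R}}$ (using the uniform bound and equicontinuity from the previous step), a further subsequence of $(v_{\varepsilon_n})$ converges uniformly on $\overline{B_{2R}}$, and in particular in $\mathcal{D}'(B_{2R})$, to some limit $v_0\in C^{0,1-d/p}(\overline{B_{2R}})$ with $v_0(0)=0$ and $\|v_0\|_{L^\infty(B_R)}\geq \eta$. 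On the other hand, $\nabla v_{\varepsilon_n}\to 0$ in $\mathcal{D}'(\mathbb{R}^d)$ by hypothesis, so $\nabla v_0 = 0$ in $\mathcal{D}'(B_{2R})$, which forces $v_0$ to be constant on the connected set $B_{2R}$; combined with $v_0(0)=0$ this gives $v_0\equiv 0$, contradicting $\|v_0\|_{L^\infty(B_R)}\geq\eta$. Hence $\|v_\varepsilon\|_{L^\infty(B_R)}\to 0$.

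The only mild subtlety—rather than a genuine obstacle—is to make sure the bound at a single point is legitimately available: this is exactly what the normalization $u(0)=0$ (hence $v_\varepsilon(0)=0$) provides, and it is why that hypothesis appears in the statement. One should also note that the Morrey estimate and Arzelà--Ascoli need only a ball strictly containing $B_R$, so working on $B_{2R}$ (comfortably inside $B_{4R}$) is enough; the extra room up to $B_{4R}$ is just to absorb interior-estimate constants and causes no difficulty. Everything else is a routine application of the embedding $W^{1,p}\hookrightarrow C^{0,1-d/p}$ for $p>d$ together with the elementary fact that a distribution with vanishing gradient on a connected open set is constant.
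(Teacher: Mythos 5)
Your proof is correct and follows essentially the same approach as the paper: Morrey embedding for $p>d$ to obtain uniform boundedness and equicontinuity (using $v_\varepsilon(0)=0$ as the anchoring point), then Arzelà--Ascoli plus the distributional convergence $\nabla v_\varepsilon\to 0$ to identify the limit as the zero constant. The only cosmetic difference is that you phrase the final step as a proof by contradiction, while the paper argues directly that $0$ is the unique adherent value and hence the whole family converges; these are equivalent.
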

\begin{proof}
The Morrey inequality (see for instance \cite[p.268]{evans10}) gives the existence of a constant $C>0$ independent of $\varepsilon$ such that for every $y,x$ in  $B_{2R}$
$$ |v_{\varepsilon}(x) - v_{\varepsilon}(y)| \leq C |x-y|^{1-d/p} \|\nabla v_{\varepsilon}\|_{L^p(B_{4R})}.$$
Therefore, since $\nabla v_{\varepsilon}$ is bounded in $L^p(B_{4R})$ uniformly with respect to $\varepsilon$ for $p>d$, the sequence $v_{\varepsilon}$ is also bounded in $L^{\infty}(B_{2R})$ and equicontinuous on $B_{2R}$, both uniformly with respect to $\varepsilon$. Thus, the Arzela-Ascoli theorem shows that the sequence $v_{\varepsilon}$, up to an extraction, converges uniformly on every compact of $B_{2R}$ to some function $v$. Since $\nabla v_{\varepsilon}$ converges to 0 in $\mathcal{D}'(\mathbb{R}^d)$, $v$ is constant and, since $v_{\varepsilon}(0) = 0$ for every $\varepsilon>0$, we necessarily have $v=0$. Finally $v = 0$ is the only adherent value of $v_{\varepsilon}$ in $L^{\infty}(B_R)$ and we can conclude that the whole sequence $v_{\varepsilon}$ converges to 0 in $L^{\infty}(B_{R})$. 
\end{proof}

\begin{lemme}
\label{prop_convergence_primlitiv_pot}
Let $\varphi \in \mathcal{D}(\mathbb{R}^d)$ and $\mathcal{V}$ be a function of the form \eqref{primitiv_pot_lienar_schro}. Assume $Z$ satisfies \eqref{hyp_borne_Z_schro} and \eqref{A0}. Then $\mathcal{V}(./\varepsilon)$ weakly converges to $\displaystyle \langle Z \rangle \int_{\mathbb{R}^d} \varphi$ in $L^{\infty}(\mathbb{R}^d)- \star$ as $\varepsilon$ vanishes.
\end{lemme}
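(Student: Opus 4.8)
The goal is to show that $\mathcal{V}(./\varepsilon) \rightharpoonup \langle Z \rangle \int_{\mathbb{R}^d}\varphi$ in $L^\infty(\mathbb{R}^d)-\star$, i.e. that for every $h \in L^1(\mathbb{R}^d)$,
$$
\int_{\mathbb{R}^d} \mathcal{V}(x/\varepsilon)\, h(x)\, dx \;\xrightarrow[\varepsilon\to 0]{}\; \langle Z\rangle \Big(\int_{\mathbb{R}^d}\varphi\Big) \int_{\mathbb{R}^d} h.
$$
First I would note that since $Z \in (l^\infty(\mathbb{Z}^d))^d$ and $\varphi$ is compactly supported, $\mathcal{V}$ is bounded in $L^\infty$ uniformly (locally only finitely many terms overlap), so $\mathcal{V}(./\varepsilon)$ is bounded in $L^\infty$; hence it suffices, by density of simple functions (equivalently, linear combinations of indicators of balls) in $L^1$, to test against $h = 1_{B_M(x_0)}$ for arbitrary $M>0$, $x_0 \in \mathbb{R}^d$. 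This is exactly the finite-dimensional analogue of the computation \eqref{calcul_sublinearity_d1_schro}.

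Next I would carry out the change of variables $x = \varepsilon y$ and reorganize:
$$
\int_{B_M(x_0)} \mathcal{V}(x/\varepsilon)\,dx = \varepsilon^d \int_{B_M(x_0)/\varepsilon} \sum_{k\in\mathbb{Z}^d} Z_k\,\varphi(y-k)\, dy = \varepsilon^d \sum_{k\in\mathbb{Z}^d} Z_k \int_{B_M(x_0)/\varepsilon}\varphi(y-k)\,dy.
$$
Because $\varphi$ is supported in some $B_A$, the integral $\int_{B_M(x_0)/\varepsilon}\varphi(y-k)\,dy$ equals $\int_{\mathbb{R}^d}\varphi$ whenever $k+B_A \subset B_M(x_0)/\varepsilon$, equals $0$ whenever $k+B_A$ is disjoint from $B_M(x_0)/\varepsilon$, and takes some value bounded by $\|\varphi\|_{L^1}$ for the $k$ lying in the "boundary layer" $\{k : \operatorname{dist}(k, \partial(B_M(x_0)/\varepsilon)) \le A\}$. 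The number of such boundary $k$ is $O((M/\varepsilon)^{d-1})$, so (using $\|Z\|_{l^\infty}<\infty$) the boundary contribution is $\varepsilon^d \cdot O((M/\varepsilon)^{d-1}) = O(\varepsilon) \to 0$. The bulk term is
$$
\Big(\int_{\mathbb{R}^d}\varphi\Big)\; \varepsilon^d \!\!\sum_{k \in B_M(x_0)/\varepsilon} \!\! Z_k \;+\; o(1),
$$
again up to a boundary-layer error of the same order. By Assumption \eqref{A0}, $\dfrac{\varepsilon^d}{|B_M|}\sum_{k\in B_M(x_0)/\varepsilon} Z_k \to \langle Z\rangle$, hence the whole expression converges to $|B_M|\,\langle Z\rangle \int_{\mathbb{R}^d}\varphi = \big(\int_{\mathbb{R}^d} 1_{B_M(x_0)}\big)\,\langle Z\rangle \int_{\mathbb{R}^d}\varphi$, which is the claim for indicators.

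Finally I would upgrade from indicators to arbitrary $h\in L^1$: given $\eta>0$, pick a finite linear combination $h_\eta$ of indicators of balls with $\|h-h_\eta\|_{L^1}<\eta$; then $\big|\int \mathcal{V}(./\varepsilon)(h-h_\eta)\big| \le \|\mathcal{V}\|_{L^\infty}\,\eta$ uniformly in $\varepsilon$, the term with $h_\eta$ converges by the previous step, and the limit of $\int h_\eta$-type terms differs from $\langle Z\rangle\int\varphi\int h$ by at most $|\langle Z\rangle|\,\|\varphi\|_{L^1}\,\eta$; letting $\eta\to 0$ concludes. I do not anticipate a genuine obstacle here — the only point requiring care is the bookkeeping of the boundary-layer terms, making sure their count is $O((M/\varepsilon)^{d-1})$ and hence negligible after multiplication by $\varepsilon^d$; the uniformity in $x_0$ built into \eqref{A0} is what makes the argument work regardless of where the test ball sits.
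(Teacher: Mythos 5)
Your proof is correct and follows essentially the same route as the paper's: test against indicators $1_{B_M(x_0)}$, split $k$ into bulk (where the integral of the translated $\varphi$ over the rescaled ball equals $\int\varphi$), vanishing, and a boundary layer of $O(\varepsilon^{1-d})$ indices whose contribution is $O(\varepsilon)$, apply Assumption \eqref{A0}, then conclude by density of simple functions in $L^1$. The only difference is that you spell out the final density step more explicitly; the paper invokes it in one line.
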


\begin{proof}
For $M>0$ and $x_0 \in \mathbb{R}^d$, we first introduce $g=1_{B_M(x_0)}$, the characteristic function of $B_M(x_0)$.  We have 
\begin{align}
    \langle \mathcal{V}(./\varepsilon), g\rangle_{L^{\infty}(\mathbb{R}^d),L^1(\mathbb{R}^d)} = \varepsilon^d \sum_{k \in \mathbb{Z}^d} Z_k \int_{B_{M}(x_0)/\varepsilon} \varphi(x-k) dx.  
\end{align}
We denote by $A>0$ a radius such that $\varphi$ is supported in $B_A$. We note that, if $k \notin B_{M/\varepsilon+A}(x_0/\varepsilon)$, we have $\varphi(x-k) = 0 $ for every $x \in B_{M}(x_0)/\varepsilon$ and, if $k \in B_{M/\varepsilon-A}(x_0/\varepsilon)$, we have $$\displaystyle \int_{B_M(x_0)/\varepsilon} \varphi(x-k) dx = \int_{\mathbb{R}^d} \varphi(x) dx.$$
Since $Z \in \left(l^{\infty}(\mathbb{Z}^d)\right)^d$ and the number of indices $k\in \mathbb{Z}^d$ such that $k\in B_{M/\varepsilon+A}(x_0/\varepsilon)\setminus{B_{M/\varepsilon-A}(x_0/\varepsilon)}$ is bounded by $C \varepsilon^{1-d}$, where $C$ only depends on $d$, $M$ and $x_0$, we deduce 
\begin{align}
\label{note_for_remark_schro}
    \langle \mathcal{V}(./\varepsilon), g\rangle_{L^{\infty}(\mathbb{R}^d),L^1(\mathbb{R}^d)}&= \varepsilon^d \sum_{k \in B_{M}(x_0)/\varepsilon} Z_k \int_{B_A} \varphi(x) dx + O(\varepsilon).
\end{align}
Assumption \eqref{A0} therefore shows that 
\begin{equation}
    \lim_{\varepsilon \to 0}\langle \mathcal{V}(./\varepsilon), g\rangle_{L^{\infty}(\mathbb{R}^d),L^1(\mathbb{R}^d)} =  \left(\langle Z \rangle \int_{B_A} \varphi\right) |B_M(x_0)| = \left(\langle Z \rangle \int_{B_A} \varphi\right)\int_{\mathbb{R}^d} g.
\end{equation}
Since $g$ is an arbitrary characteristic function, we conclude using the density of simple (a.k.a step) functions in $L^1(\mathbb{R}^d)$.
\end{proof}

\begin{remark}
\label{remark_average_intphi_vanish}
When $\displaystyle \int_{\mathbb{R}^d} \varphi = 0$, we note that the above proof (equality \eqref{note_for_remark_schro} in particular) also shows that $\mathcal{V}(./\varepsilon)$ weakly converges to 0 without assuming \eqref{A0}. 
\end{remark}

\subsection{Existence result}

We next turn to the main proposition of this section that shows the existence of an $\varepsilon$-dependent solution to \eqref{correcteur_lienaire_schro} on $B_{R/\varepsilon}$ for every fixed $R>0$. In the sequel, we use the notation $\displaystyle \fint_A = \dfrac{1}{|A|} \int_A$ for every Borel subset $A$ of $\mathbb{R}^d$.

\begin{prop}
\label{prop_correcteur_cas_lin_schro}
Assume $d\geq 2$. Let $\varphi \in \mathcal{D}(\mathbb{R}^d)$ and $\mathcal{V}$ be a function of the form \eqref{primitiv_pot_lienar_schro}. Assume $Z$ satisfies \eqref{hyp_borne_Z_schro}, \eqref{A0}, \eqref{A00}, \eqref{A2} and $\eqref{A3}$. We denote by $G$ be the Green function associated with $\Delta$ on $\mathbb{R}^d$ and by $u_i = G \ast \partial_i \varphi$ for $i \in \{1,...,d\}$. Let $\beta : \mathbb{R}^+ \rightarrow \mathbb{R}^+$ such that $\displaystyle \lim_{\varepsilon \to 0} \varepsilon^{-1} \beta(\varepsilon) = 0$. For every $R>0$ and $\varepsilon>0$, we define,
$$ \Tilde{w}_{\varepsilon} = \sum _{i \in \{1,...,d\} } \sum_{k \in B_{ \frac{1\mathstrut}{ \mathstrut \Large{\beta(\varepsilon)}\mathstrut}}}  (Z_k)_i u_i(. -k),$$
and
\begin{equation}
\label{corrector_schro_BMO}
    \Tilde{W}_{\varepsilon, R} = \Tilde{w}_{\varepsilon} - x. \fint_{B_{4R}} \nabla \Tilde{w}_{\varepsilon}(y/\varepsilon) dy.
\end{equation}
Then, when $\varepsilon$ is sufficiently small, $\Tilde{W}_{\varepsilon, R}$ is a solution to 
\begin{equation}
\label{correcteur_tronque}
    \Delta \Tilde{W}_{\varepsilon,R} =  \operatorname{div}(\mathcal{V}) \quad \text{on } B_{R/\varepsilon}, 
\end{equation}
such that  $ \nabla \Tilde{W}_{\varepsilon,R}(./\varepsilon) \in L^p(B_R)$ for every $p\in [1, + \infty[$ and 
\begin{empheq}[left=\empheqlbrace]{alignat=2}
\quad  & \left(  \nabla W_{\varepsilon, R}(./\varepsilon)\right)_{\varepsilon>0} \text{is bounded in } L^p(B_R), \ \forall p \in [1, + \infty[, \label{a}\\
\quad & \nabla \Tilde{W}_{\varepsilon,R}(./\varepsilon) \stackrel{\varepsilon \to 0}{\longrightarrow} 0 \quad \text{weakly in } L^p(B_R), \ \forall p \in [1, +\infty[, \label{b}\\
\quad &  \varepsilon \Tilde{W}_{\varepsilon,R}(./\varepsilon) \stackrel{\varepsilon \to 0}{\longrightarrow} 0 \quad \text{strongly in } L^{\infty}(B_R).  \label{c} 
%\quad & \exists C \in \mathbb{R}, \quad   |\nabla \Tilde{W}_{\varepsilon,R}|^2(./\varepsilon) \stackrel{\varepsilon \to 0}{\longrightarrow} C \quad \text{weakly in } L^p(B_R), \ \forall p \in [1, +\infty[. \label{d}
\end{empheq}
In addition, if we denote by $\mathcal{C}_{l,i,j}$ the constant defined by Assumption \eqref{A00} for $l\in\mathbb{Z}^d$ and $i,j \in \{1,...,d\}$, and by $\Tilde{\mathcal{M}}$ the constant
$$\Tilde{\mathcal{M}} = \sum_{i,j \in \{1,...,d\}} \sum_{l\in \mathbb{Z}^d} \mathcal{C}_{l,i,j} \int_{\mathbb{R}^d} \varphi(x) \partial_i u_j (x-l) dx,$$
then, 
\begin{equation}
\label{d} 
    |\nabla \Tilde{W}_{\varepsilon,R}(./\varepsilon)|^2 \stackrel{\varepsilon \to 0}{\longrightarrow} \Tilde{\mathcal{M}} \quad \text{weakly in } L^p(B_R) \text{ for every } p\in [1,+\infty[.
\end{equation}
\end{prop}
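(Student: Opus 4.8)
The plan is to reduce everything to the mapping properties of the Calder\'on--Zygmund operator $T:f\mapsto\nabla^2G\ast f$, continuous from $(L^\infty(\mathbb{R}^d))^d$ to $(BMO(\mathbb{R}^d))^d$, together with the preliminary Lemmas \ref{Prop_elementaire_schro}, \ref{Lemme_convergence_BMO}, \ref{lemme_sous_linearité} and \ref{prop_convergence_primlitiv_pot}. \emph{Equation and uniform bounds.} Since $\Delta u_i=\partial_i\varphi$, one has $\Delta\tilde w_\varepsilon=\sum_{|k|\le1/\beta(\varepsilon)}Z_k\cdot\nabla\varphi(\cdot-k)$, and this coincides with $\operatorname{div}(\mathcal V)$ on $B_{R/\varepsilon}$ as soon as $1/\beta(\varepsilon)>R/\varepsilon+A$, where $B_A\supset\operatorname{supp}\varphi$ — which holds for $\varepsilon$ small because $\beta(\varepsilon)=o(\varepsilon)$; subtracting the affine (hence harmonic) term of \eqref{corrector_schro_BMO} gives \eqref{correcteur_tronque}. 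Writing $\mathcal V_\varepsilon=\sum_{|k|\le1/\beta(\varepsilon)}Z_k\,\varphi(\cdot-k)$, whose $L^\infty$ norm is bounded uniformly in $\varepsilon$ (finitely many overlapping bumps, $Z\in(l^\infty(\mathbb{Z}^d))^d$), one checks componentwise that $\nabla\tilde w_\varepsilon=T\mathcal V_\varepsilon$ (here the explicit kernels $\nabla u_i$, which decay like $|x|^{-d}$ by Lemma \ref{Prop_elementaire_schro}, pin the representative), hence $\|\nabla\tilde w_\varepsilon\|_{BMO}\le C$ uniformly. The $BMO$ seminorm being scale invariant, $\nabla\tilde w_\varepsilon(\cdot/\varepsilon)$ has uniformly bounded $BMO$ seminorm; as $\nabla\tilde W_{\varepsilon,R}(\cdot/\varepsilon)=\nabla\tilde w_\varepsilon(\cdot/\varepsilon)-\fint_{B_{4R}}\nabla\tilde w_\varepsilon(\cdot/\varepsilon)$ has zero mean on $B_{4R}$, John--Nirenberg bounds it in $L^p(B_{4R})$ for every $p<\infty$, which is \eqref{a}.

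\emph{Weak limit of the gradient and strict sublinearity.} Since $\nabla^2G$ is homogeneous of degree $-d$, $T$ commutes with dilations, so $\nabla\tilde w_\varepsilon(\cdot/\varepsilon)=T\big(\mathcal V_\varepsilon(\cdot/\varepsilon)\big)$. By Lemma \ref{prop_convergence_primlitiv_pot} (and because the truncation error $\mathcal V(\cdot/\varepsilon)-\mathcal V_\varepsilon(\cdot/\varepsilon)$ is supported in $\{|x|\gtrsim\varepsilon/\beta(\varepsilon)\}$, whose distance to the origin tends to $\infty$), $\mathcal V_\varepsilon(\cdot/\varepsilon)$ converges to the constant $\langle Z\rangle\int_{\mathbb{R}^d}\varphi$ weakly-$\star$ in $L^\infty$; since $T$, being a Calder\'on--Zygmund operator, also maps $\mathcal H^1(\mathbb{R}^d)$ boundedly into $L^1(\mathbb{R}^d)$, it is sequentially weak-$\star$ continuous on bounded subsets of $L^\infty$, whence $\nabla\tilde w_\varepsilon(\cdot/\varepsilon)\to T\big(\langle Z\rangle\int\varphi\big)=0$ weakly-$\star$ in $BMO$ (the divergence of a constant field vanishes, so $T$ applied to a constant is $0$ modulo constants). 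Lemma \ref{Lemme_convergence_BMO}, applied with $g=|B_{4R}|^{-1}\mathbf 1_{B_{4R}}$, then yields $\nabla\tilde W_{\varepsilon,R}(\cdot/\varepsilon)\to0$ in $\mathcal D'(\mathbb{R}^d)$; combined with \eqref{a} this upgrades to weak convergence in $L^p(B_R)$ for every $p<\infty$, proving \eqref{b}. For \eqref{c}, after normalizing $\tilde W_{\varepsilon,R}$ by an additive constant so that $\tilde W_{\varepsilon,R}(0)=0$ (which alters none of \eqref{a}, \eqref{b}, \eqref{d}), \eqref{a} and \eqref{b} are exactly the hypotheses of Lemma \ref{lemme_sous_linearité} with any $p>d$, giving \eqref{c}.

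\emph{Weak limit of the squared gradient — the main obstacle.} By \eqref{a}, $|\nabla\tilde W_{\varepsilon,R}(\cdot/\varepsilon)|^2$ is bounded in $L^p(B_R)$ for all $p<\infty$, so it suffices to show $\int_{B_M(x_0)}|\nabla\tilde W_{\varepsilon,R}(x/\varepsilon)|^2\,dx\to\tilde{\mathcal M}|B_M(x_0)|$ for balls $\overline{B_M(x_0)}\subset B_R$. Expanding $|\nabla\tilde W_{\varepsilon,R}(\cdot/\varepsilon)|^2=|\nabla\tilde w_\varepsilon(\cdot/\varepsilon)|^2-2c_\varepsilon\cdot\nabla\tilde w_\varepsilon(\cdot/\varepsilon)+|c_\varepsilon|^2$ with $c_\varepsilon=\fint_{B_{4R}}\nabla\tilde w_\varepsilon(\cdot/\varepsilon)$, and using \eqref{b} together with $c_\varepsilon\to0$ (proved, as in the proof of Lemma \ref{prop_convergence_primlitiv_pot}, by a summation by parts over $k$ that exploits \eqref{A0}), the problem reduces to $\int_{B_M(x_0)}|\nabla\tilde w_\varepsilon(x/\varepsilon)|^2\,dx\to\tilde{\mathcal M}|B_M(x_0)|$. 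Inserting $\nabla\tilde w_\varepsilon=\sum_i\sum_{|k|\le1/\beta(\varepsilon)}(Z_k)_i\nabla u_i(\cdot-k)$ and performing the changes of variables $y=x/\varepsilon$, $z=y-k$, $l=k'-k$, this integral becomes
\[
\varepsilon^d\sum_{|k|\le1/\beta(\varepsilon)}\sum_{l}\sum_{i,j}(Z_k)_i(Z_{k+l})_j\int_{(B_M(x_0)/\varepsilon)-k}\nabla u_i(z)\cdot\nabla u_j(z-l)\,dz .
\]
For $k$ well inside $B_M(x_0)/\varepsilon$ the inner integral is, up to integrable remainders, $\int_{\mathbb{R}^d}\nabla u_i\cdot\nabla u_j(\cdot-l)$, which after one integration by parts (using $\Delta u_i=\partial_i\varphi$, the decay of Lemma \ref{Prop_elementaire_schro}, and the symmetry $\partial_iu_j=\partial_ju_i$) and the relabelings $l\mapsto-l$, $(i,j)\mapsto(j,i)$ matches $\int_{\mathbb{R}^d}\varphi\,\partial_iu_j(\cdot-l)=\int_{\mathbb{R}^d}\varphi\,(\partial_i\partial_jG\ast\varphi)(\cdot-l)$. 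Discarding the $\langle Z\rangle$ contributions (which cancel, again because $T$ kills constants) leaves $\varepsilon^d\sum_k\sum_l\sum_{i,j}(\overline Z_k)_i(\overline Z_{k+l})_j\int\varphi\,\partial_iu_j(\cdot-l)$ plus boundary-layer and truncation remainders that must be controlled uniformly in $\varepsilon$ and in $l$. Passing to the limit is where the auto-correlation assumptions enter: the $l$-sum is split at a scale of order $1/\varepsilon$ (which lies below $1/\delta(\varepsilon)$ for $\varepsilon$ small, by \eqref{A2}); on $\{|l|\lesssim1/\varepsilon\}$, \eqref{A00}--\eqref{A2} give $\varepsilon^d\sum_{k\in B_M(x_0)/\varepsilon}(\overline Z_k)_i(\overline Z_{k+l})_j\to\mathcal C_{l,i,j}|B_M(x_0)|$ with an error $\le\gamma(\varepsilon)$ uniform in $l$, and since $\partial_iu_j(\cdot-l)=(\partial_i\partial_jG\ast\varphi)(\cdot-l)=O(|l|^{-d})$ the accumulated error is $O\big(\gamma(\varepsilon)|\ln\varepsilon|\big)\to0$ — this logarithmic balance is precisely the point of \eqref{A2}; on $\{|l|\gtrsim1/\varepsilon\}$ the terms are negligible because the truncated inner integral is then much smaller than $|l|^{-d}$ and because \eqref{A3} controls the corresponding tail of $\sum_l\mathcal C_{l,i,j}(\partial_i\partial_jG\ast\varphi)(\cdot-l)$. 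Finally \eqref{A3} also guarantees that $\tilde{\mathcal M}=\sum_{i,j}\sum_l\mathcal C_{l,i,j}\int\varphi\,\partial_iu_j(\cdot-l)$ is a well-defined real number, and it is the limit, establishing \eqref{d}. The hard part is exactly this last step: the uniform-in-$\varepsilon$ bounds on the boundary-layer and truncation remainders and the splitting of the $l$-sum calibrated against the logarithmic growth of $\sum_{|l|\le L}|\partial_i\partial_jG\ast\varphi|(\cdot-l)$.
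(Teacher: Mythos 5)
Steps 1–3 of your proposal are essentially sound. Step 1 follows the paper's route via the $L^\infty\to BMO$ continuity of $T=\nabla^2G\ast\cdot$ and John--Nirenberg. Step 2 takes a slightly different path: you invoke the sequential weak-$\star$ continuity of $T$ (as the adjoint of $T:\mathcal H^1\to L^1$) and the fact that $T$ annihilates constants modulo $BMO$, whereas the paper extracts a weak-$\star$ limit $\nabla v\in BMO$, passes to the limit in the variational formulation using $\mathcal H^1$--$BMO$ duality, and identifies $\nabla v=0$ via Lemma~\ref{lemme_BMO_harmonique}; both are legitimate, and the paper's version avoids having to justify $T(\text{const})=0$. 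Step 3 is identical.

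The gap is in Step 4, and it is fatal to the argument as written. You expand
$|\nabla\Tilde W_{\varepsilon,R}(\cdot/\varepsilon)|^2 = |\nabla\Tilde w_\varepsilon(\cdot/\varepsilon)|^2 - 2c_\varepsilon\cdot\nabla\Tilde w_\varepsilon(\cdot/\varepsilon) + |c_\varepsilon|^2$
with $c_\varepsilon=\fint_{B_{4R}}\nabla\Tilde w_\varepsilon(\cdot/\varepsilon)$, and you assert that $c_\varepsilon\to0$, "proved as in the proof of Lemma~\ref{prop_convergence_primlitiv_pot}". This is not correct: $c_\varepsilon$ is the \emph{mean over a ball of radius $4R/\varepsilon$} of a $BMO$ function, and under the stated hypotheses this mean need not even be bounded. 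The paper's remark immediately after the proposition gives an explicit counterexample in $d=3$, namely $Z_k=\frac{1}{\ln(2+|k|)}(1,1,1)$, for which $\sum_{|k|\le1/\beta(\varepsilon)}(Z_k)_i\partial_j u_i(x/\varepsilon)$ grows like $|\ln(\ln\beta(\varepsilon))|$ on every compact — so $c_\varepsilon\to\infty$. Lemma~\ref{prop_convergence_primlitiv_pot} controls the weak-$\star$ limit of $\mathcal V(\cdot/\varepsilon)$, not the growth of the ball average of $T\mathcal V_\varepsilon$; there is no summation-by-parts argument that rescues $c_\varepsilon\to 0$ from \eqref{A0} alone. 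As a consequence your subsequent reduction to $\int_{B_M(x_0)}|\nabla\Tilde w_\varepsilon(x/\varepsilon)|^2\,dx\to\Tilde{\mathcal M}|B_M|$ is also wrong: since $|\nabla\Tilde W_{\varepsilon,R}(\cdot/\varepsilon)|^2$ is bounded in $L^1$ and $|c_\varepsilon|^2$ may diverge, the term $|\nabla\Tilde w_\varepsilon(\cdot/\varepsilon)|^2$ may diverge too. Likewise, "discarding the $\langle Z\rangle$ contributions because $T$ kills constants" is precisely the cross term that cannot be discarded for free.

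The paper's Step 4 is structured to sidestep exactly this. It does \emph{not} expand the square; instead it tests the equation $\Delta\Tilde W_{\varepsilon,R}=\operatorname{div}\mathcal V$ against $\Tilde W_{\varepsilon,R}(\cdot/\varepsilon)\phi$, so that after an integration by parts (and using \eqref{c}, \eqref{a} to kill the lower-order terms) only $\mathcal V(\cdot/\varepsilon)\cdot\nabla\Tilde W_{\varepsilon,R}(\cdot/\varepsilon)$ remains. This quantity is then split as $I^\varepsilon-J^\varepsilon+K^\varepsilon$, and the constant you want to control, namely $\fint_{B_{4R/\varepsilon}}\nabla\Tilde w^\eta_\varepsilon$, only ever appears \emph{multiplied by} $\langle\mathcal V(\cdot/\varepsilon),g\rangle$, which is $O(\alpha(\varepsilon)+\varepsilon)$ by \eqref{A0}. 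The balance $(\alpha(\varepsilon)+\varepsilon)\cdot\min(|\ln\alpha(\varepsilon)|,|\ln\varepsilon|)\to 0$ (together with the $L^2$ estimate for the innermost ball average) is what makes $J^\varepsilon\to0$, and this balance is exactly what is missing from your decomposition. Your analysis of the $I$-type sum — splitting the $l$-range at $1/\delta(\varepsilon)$, using \eqref{A2} with the $\gamma(\varepsilon)|\ln\varepsilon|$ bound, and passing to the limit via \eqref{A3} — is sound and close to the paper's, but the surrounding reduction must be replaced by the energy argument above; the naive pointwise expansion of the square does not go through.
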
 

\begin{proof}
The proof is rather lengthy and proceeds in four steps. Here we assume that $\langle Z \rangle =0$ in \eqref{A0}. We indeed recall that we can always assume that $Z$ has a vanishing average without loss of generality. 

\textbf{Step 1 :  Proof of \eqref{a}}. For every $\varepsilon>0$, we denote $\displaystyle \mathcal{V}_{\varepsilon} = \sum_{k \in B_{ \frac{1\mathstrut}{ \mathstrut \Large{\beta(\varepsilon)}\mathstrut}}} Z_k \hspace{0.1cm} \varphi(.-k)$. For every $R>0$, since $\beta(\varepsilon) =o(\varepsilon)$ and $\varphi$ is compactly supported, we have $\mathcal{V}_{\varepsilon} = \mathcal{V}$ on $B_{R/\varepsilon}$ when $\varepsilon$ is sufficiently small. It follows that $\Tilde{w}_{\varepsilon} = G \ast \operatorname{div}(\mathcal{V}_{\varepsilon})$ is a solution to \eqref{correcteur_tronque} in $\mathcal{D}'(B_{R/\varepsilon})$. We next remark that 
$$\nabla \Tilde{w}_{\varepsilon} = \nabla G \ast \operatorname{div}(\mathcal{V}_{\varepsilon}) = \int_{\mathbb{R}^d} \nabla^2 G(.-y) \mathcal{V}_{\varepsilon}(y) dy =: T\mathcal{V}_{\varepsilon}.$$ 
It is well known that the operator $T : f \mapsto Tf$ is continuous from $\left(L^{\infty}(\mathbb{R}^d)\right)^d$ to $\left(BMO(\mathbb{R}^d)\right)^d$ (see \cite[Section 4.2]{stein1993harmonic}), that is, there exists a constant $C>0$ independent of $R$ and $\varepsilon$ such that : 
\begin{equation}
\label{Continuity_BMO_L}
    \|\nabla \Tilde{w}_{\varepsilon}\|_{BMO(\mathbb{R}^d)} \leq C \|\mathcal{V}_{\varepsilon} \|_{L^{\infty}(\mathbb{R}^d)} \leq C \|\mathcal{V}\|_{L^{\infty}(\mathbb{R}^d)}.
\end{equation}
In addition, for every $p\in [1,+\infty[$, the John-Nirenberg inequality (see for instance \cite[p.144]{stein1993harmonic}) yields the existence of a constant $c_p>0$, independent of $\varepsilon$, such that 
\begin{equation}
\label{Jonh_nirenberg}
    \sup_{M>0} \left(\fint_{B_M} \left|\nabla \Tilde{w}_{\varepsilon}(x) - \fint_{B_M} \nabla \Tilde{w}_{\varepsilon}(y)dy\right|^p dx\right)^{1/p} \leq c_p \|\nabla \Tilde{w}_{\varepsilon}\|_{BMO(\mathbb{R}^d)}.
\end{equation}
Using \eqref{Continuity_BMO_L} and \eqref{Jonh_nirenberg}, we obtain 
\begin{align*}
   \left(\int_{B_{4R}}  \left| \nabla \Tilde{W}_{\varepsilon,R}(x/\varepsilon) \right|^p dx\right)^{1/p} & =  \left(\varepsilon^d \int_{B_{4R/\varepsilon}}  \left| \nabla \Tilde{w}_{\varepsilon}(x) - \fint_{B_{4R/\varepsilon}} \nabla \Tilde{w}_{\varepsilon}(y) dy \right|^p dx\right)^{1/p}  \\
   & \leq |B_{4R}|^{1/p} c_p \|\nabla \Tilde{w}_{\varepsilon}\|^p_{BMO(\mathbb{R}^d)} \\
   & \leq |B_{4R}|^{1/p}c_pC \|\mathcal{V}\|_{L^{\infty}(\mathbb{R}^d)}.
\end{align*}
We deduce that $\nabla \Tilde{W}_{\varepsilon,R}(./\varepsilon)$ is uniformly bounded in $L^p(B_{4R})$ with respect to $\varepsilon$.\\

\textbf{Step 2 : Proof of \eqref{b}}. We begin by showing that $\nabla \Tilde{W}_{\varepsilon,R}(./\varepsilon)$ converges to 0 in $\mathcal{D}'(\mathbb{R}^d)$. Using \eqref{Continuity_BMO_L}, we know that $\nabla \Tilde{w}_{\varepsilon}(./\varepsilon)$ is uniformly bounded in $\left(BMO(\mathbb{R}^d)\right)^d$ with respect to $\varepsilon$. Up to an extraction, it therefore converges for the weak-$\star$ topology of $\left(BMO(\mathbb{R}^d)\right)^d$ when $\varepsilon \to 0$. Its limit is also a gradient and we denote it by $\nabla v$. Since $\Tilde{w}_{\varepsilon}$ is solution to $\Delta \Tilde{w}_{\varepsilon} = \operatorname{div}(\mathcal{V}_{\varepsilon})$ in $\mathcal{D}'(\mathbb{R}^d)$, we have for every $\phi \in \mathcal{D}(\mathbb{R}^d)$,
\begin{equation}
\label{correcteur_distrib_rescale}
    \int_{\mathbb{R}^d} \nabla \Tilde{w}_{\varepsilon}(./\varepsilon).\nabla \phi = \int_{\mathbb{R}^d} \mathcal{V}_{\varepsilon}(./\varepsilon) . \nabla \phi.
\end{equation}
We note that Assumption \eqref{A0} with $\langle Z\rangle =0$ and Lemma \ref{prop_convergence_primlitiv_pot} ensure that $\mathcal{V}_{\varepsilon}(./\varepsilon)$ converges to 0 for the weak-$\star$ topology of $\left(L^{\infty}(\mathbb{R}^d)\right)^d$. Since every function $\chi \in \mathcal{D}(\mathbb{R}^d)$ such that $\displaystyle \int_{\mathbb{R}^d} \chi = 0$ belongs to the Hardy space $\mathcal{H}^1$, the predual of $BMO$, we can pass to the limit in \eqref{correcteur_distrib_rescale} when $\varepsilon \to 0$ and it follows
$$\int_{\mathbb{R}^d} \nabla v.\nabla \phi = 0.$$
Therefore, $v$ is solution to $\Delta v = 0$ in $\mathcal{D}'(\mathbb{R}^d)$ such that $\nabla v$ belongs to $\left(BMO(\mathbb{R}^d)\right)^d$. Lemma~\ref{lemme_BMO_harmonique} therefore implies that $\nabla v$ is constant. The space $BMO(\mathbb{R}^d)$ being the quotient space of functions $g\in L^{1}_{loc}(\mathbb{R}^d)$ such that $\|g\|_{BMO(\mathbb{R}^d)} < + \infty$ modulo the space of constant functions, we have actually shown that $\nabla v$ is equal to 0 in $\left(BMO(\mathbb{R}^d)\right)^d$. We deduce that 0 is the only adherent value of $\nabla \Tilde{w}_{\varepsilon,R}(./\varepsilon)$ in $BMO(\mathbb{R}^d)-\star$, and a compactness argument shows that the whole sequence $\nabla \Tilde{w}_{\varepsilon}(./\varepsilon)$ converges to 0 for this topology.
Lemma \ref{Lemme_convergence_BMO} next shows that
$$\partial_i \Tilde{w}_{\varepsilon}(./\varepsilon) - \int_{\mathbb{R}^d} g_i \partial_i \Tilde{w}_{\varepsilon}(y/\varepsilon)dy \overset{\varepsilon \to 0}{\longrightarrow} 0 \quad  \text{in } \mathcal{D}'(\mathbb{R}^d),$$
for every compactly supported function $g\in \left(L^{\infty}(\mathbb{R}^d)\right)^d$ such that $\displaystyle \int_{\mathbb{R}^d} g_i = 1$ for all $i\in \{1,...,d\}$. Considering $g_i = \dfrac{1}{|B_{4R}|}1_{B_{4R}}$ for every $i\in \{1,...,d\}$, we obtain that $\nabla \Tilde{W}_{\varepsilon,R}(./\varepsilon)$ converges to 0 in $\mathcal{D}'(\mathbb{R}^d)$. Using the density of $\mathcal{D}'(B_R)$ in $L^p(B_R)$ for every $p\in [1,+\infty[$, we finally deduce the weak convergence of $\nabla \Tilde{W}_{\varepsilon,R}(./\varepsilon)$ to 0 in $L^p(B_R)$. \\

\textbf{Step 3 : Proof of \eqref{c}}. We have shown that $\nabla \Tilde{W}_{\varepsilon,R}(./\varepsilon)$ is uniformly bounded in $L^p(B_{4R})$ for every $p\in [1, +\infty[$. We note that we can always consider a solution $\Tilde{W}_{\varepsilon,R}$ such that $\Tilde{W}_{\varepsilon,R}(0)=0$ without altering the properties of $\nabla \Tilde{W}_{\varepsilon,R}$. The uniform convergence of $\varepsilon \Tilde{W}_{\varepsilon,R}(./\varepsilon)$ to 0 in $B_R$ is therefore a consequence of Lemma \ref{lemme_sous_linearité}. \\

\textbf{Step 4 : Proof of \eqref{d}}. For every $\phi \in \mathcal{D}'(B_R)$ and every $\varepsilon>0$, multiplying \eqref{correcteur_tronque} by $\chi_{\varepsilon}(x) = \Tilde{W}_{\varepsilon,R}(x/\varepsilon) \phi(x)$ and integrating, we obtain 
\begin{align*}
    \int_{B_R} |\nabla \Tilde{W}_{\varepsilon,R}|^2(./\varepsilon) \hspace{0.7pt} \phi & = \int_{B_R} \mathcal{V}(./\varepsilon).\nabla \Tilde{W}_{\varepsilon, R}(./\varepsilon) \hspace{0.7pt} \phi  \\
    & + \int_{B_R} \varepsilon \Tilde{W}_{\varepsilon,R}(./\varepsilon) \left(\mathcal{V}(./\varepsilon) - \nabla \Tilde{W}_{\varepsilon ,R}(./\varepsilon)\right). \nabla \phi.
\end{align*}
Since $\varepsilon \Tilde{W}_{\varepsilon ,R}(./\varepsilon)$ converges to 0 in $L^{\infty}(B_R)$ and $\mathcal{V}(./\varepsilon) - \nabla \Tilde{W}_{\varepsilon ,R}(./\varepsilon)$ is bounded in $L^2(B_R)$, uniformly with respect to $\varepsilon$, we have 
$$\lim_{\varepsilon \to 0} \int_{B_R} \varepsilon \Tilde{W}_{\varepsilon,R}(./\varepsilon) \left(\mathcal{V}(./\varepsilon) - \nabla \Tilde{W}_{\varepsilon ,R}(./\varepsilon)\right). \nabla \phi = 0.$$
It is therefore sufficient to show the weak convergence of $\mathcal{V}(./\varepsilon).\nabla \Tilde{W}_{\varepsilon, R}(./\varepsilon)$ to obtain the weak convergence of $|\nabla \Tilde{W}_{\varepsilon,R}|^2(./\varepsilon)$ and, in this case, we have :
\begin{equation}
\label{weak_convergence_square_schro}
    \operatorname{weak}\lim_{\varepsilon \to 0} |\nabla \Tilde{W}_{\varepsilon,R}|^2(./\varepsilon) = \operatorname{weak}\lim_{\varepsilon \to 0} \mathcal{V}(./\varepsilon).\nabla \Tilde{W}_{\varepsilon, R}(./\varepsilon).
\end{equation}

We thus study the sequence $\mathcal{V}(./\varepsilon).\nabla \Tilde{W}_{\varepsilon, R}(./\varepsilon)$ when $\varepsilon \to~0$. We consider $0<M\leq R$ and $x_0 \in \mathbb{R}^d$ such that $B_M(x_0) \subset B_R$ and we denote by $g = 1_{B_M(x_0)}$ the characteristic function of $B_M(x_0)$. We next introduce 
$$\alpha(\varepsilon) = \dfrac{\varepsilon^d}{|B_M|} \sum_{k \in B_M(x_0)/ \varepsilon} Z_k,$$
and we define $\eta(\varepsilon) = \max(\varepsilon \alpha(\varepsilon) , \beta(\varepsilon), \delta(\varepsilon), \varepsilon^2)$, where $\delta(\varepsilon)$ is defined in \eqref{A2}. In particular, Assumptions \eqref{A0} with $\langle Z \rangle =0$ and \eqref{A2} show that $\displaystyle \lim_{\varepsilon \to 0} \varepsilon^{-1}\eta(\varepsilon) = 0$. We also denote 
$$\Tilde{w}_{\varepsilon}^{\eta} = \sum _{i \in \{1,...,d\} } \sum_{k \in B_{\frac{1 \mathstrut}{\eta(\varepsilon) \mathstrut}}} (Z_k)_i u_i(. -k).$$
We now study the convergence of $\langle \mathcal{V}(./\varepsilon) .\nabla W_{\varepsilon,R}(./\varepsilon) , g \rangle = \displaystyle \int_{B_R} \mathcal{V}(x/\varepsilon) .\nabla W_{\varepsilon,R}(x/\varepsilon) g(x) dx$ splitting this quantity into three terms. We indeed have $ \displaystyle \langle \mathcal{V}(./\varepsilon) .\nabla W_{\varepsilon,R}(./\varepsilon) , g \rangle  = I^{\varepsilon} - J^{\varepsilon} + K^{\varepsilon}$, where 
\begin{align}
    & I^{\varepsilon} = \langle \mathcal{V}(./\varepsilon).\nabla \Tilde{w}_{\varepsilon}^{\eta}(./\varepsilon), g \rangle, \\
    & J^{\varepsilon} = \langle \mathcal{V}(./\varepsilon), g\rangle . \fint_{B_{4R/\varepsilon}} \nabla \Tilde{w}^{\eta}_{\varepsilon}(y) dy, \\
    & K^{\varepsilon} = \left\langle \mathcal{V}(./\varepsilon). \left( \nabla (\Tilde{w}_{\varepsilon} - \Tilde{w}_{\varepsilon}^{\eta})(./\varepsilon) - \fint_{B_{4R/\varepsilon}} \nabla (\Tilde{w}_{\varepsilon} - \Tilde{w}_{\varepsilon}^{\eta})(y)\right), g \right\rangle. 
\end{align}
In the sequel we denote by $C$ constants independent of $\varepsilon$ which may vary from one line to another.\\

\underline{\textit{Substep 4.1 : Convergence of $I^{\varepsilon}$}}. We remark that $I^{\varepsilon} = \displaystyle \sum_{i,j \in \{1,...,d\}} I_{i,j}^{\varepsilon}$ where 
\begin{equation}
    \label{defI_schro}
    I_{i,j}^{\varepsilon} =  \varepsilon^d  \sum_{k \in \mathbb{Z}^d} \sum_{l \in B_{\frac{1 \mathstrut}{\mathstrut\eta(\varepsilon)\mathstrut}}} (Z_k)_i(Z_l)_j \int_{B_{M}(x_0)/\varepsilon} \varphi(x-k) \partial_i u_j(x-l)dx.
\end{equation}
We first study the convergence of the sequence : 
\begin{equation}
\label{defI_tilde_schro}
    \Tilde{I}^{\varepsilon}_{i,j} = \varepsilon^d  \sum_{k \in \frac{B_{\mathstrut M}(x_0)\mathstrut}{ \mathstrut \varepsilon}} \sum_{l \in B_{\frac{1\mathstrut}{\mathstrut\eta(\varepsilon)}-\frac{D}{\varepsilon}}} (Z_k)_i(Z_{k+l})_j \int_{B_A} \varphi(x) \partial_i u_j(x-l)dx,
\end{equation}
where $D= |x_0| + M > 0$. In the sequel we shall show that $\displaystyle \lim_{\varepsilon \to 0}\Tilde{I}^{\varepsilon}_{i,j} = \lim_{\varepsilon \to 0}I^{\varepsilon}_{i,j}$. We have
\begin{equation}
    \Tilde{I}^{\varepsilon}_{i,j} = M_1^{\varepsilon} + M_2^{\varepsilon},
\end{equation}
where
\begin{align*}
    M_1^{\varepsilon} & = \sum_{l \in B_{\frac{1\mathstrut}{\mathstrut\eta(\varepsilon) }-\frac{D}{\varepsilon}}} \varepsilon^d  \sum_{k \in \frac{B_{\mathstrut M}(x_0)\mathstrut}{\mathstrut \varepsilon}} \left((Z_k)_i(Z_{k+l})_j - |B_M| \ \mathcal{C}_{l,i,j} \right) \int_{B_A} \varphi(x) \partial_i u_j(x-l)dx, \\
    M_2^{\varepsilon} & = \sum_{l \in B_{\frac{1\mathstrut}{\mathstrut\eta(\varepsilon)}-\frac{D}{\varepsilon}}} |B_M| \ \mathcal{C}_{l,i,j} \int_{B_A} \varphi(x) \partial_i u_j(x-l)dx.
\end{align*}
% \begin{align*}
%   \Tilde{I}^{\varepsilon}_{i,j} 
%     & = \sum_{l \in B_{\frac{1}{\eta(\varepsilon) }-\frac{D}{\varepsilon}}} \varepsilon^d  \sum_{k \in \frac{B_{M}(x_0)}{\varepsilon}} \left((Z_k)_i(Z_{k+l})_j - |B_M|\mathcal{C}_{l,i,j} \right) \int_{B_A} \varphi(x) \partial_i u_j(x-l)dx \\
%     & + \sum_{l \in B_{\frac{1}{\eta(\varepsilon)}-\frac{D}{\varepsilon}}} |B_M|\mathcal{C}_{l,i,j} \int_{B_A} \varphi(x) \partial_i u_j(x-l)dx = M^{\varepsilon}_1 + M^{\varepsilon}_2.
% \end{align*}
Since $u_j = G*\partial_j \varphi$, Lemma \ref{Prop_elementaire_schro} shows that $|\partial_i u_j(x)| \leq \dfrac{C}{1+|x|^{d}}$ for every $x \in \mathbb{R}^d$. We next use Assumption \eqref{A2} to obtain
\begin{align*}
    \left|M^{\varepsilon}_1\right| \leq C \gamma(\varepsilon) \sum_{l \in B_{\frac{1 \mathstrut}{\mathstrut \eta(\varepsilon) }-\frac{D}{\varepsilon}}}  \int_{B_A} |\varphi(x) \partial_i u_j(x-l)|dx\leq C \ \gamma(\varepsilon)  \sum_{l \in B_{\frac{1 \mathstrut}{\mathstrut \eta(\varepsilon) }}} \dfrac{1}{1+|l|^d} \leq C \gamma(\varepsilon) |\ln\left(\eta(\varepsilon)\right)|.
\end{align*}
Since $|\ln( \eta(\varepsilon))| \leq 2|\ln(\varepsilon)|$, it follows $\left|M^{\varepsilon}_1\right| \leq C \gamma(\varepsilon) |\ln\left(\varepsilon\right)|$ which shows the convergence of $M^{\varepsilon}_1$ to~0 due to assumption \eqref{A2}.

We recall that $\partial_i u_j = \partial_i \partial_j G \ast \varphi$ and, using assumption \eqref{A3}, we can also consider the limit in $M^{\varepsilon}_2$ to obtain 
\begin{align*}
    M^{\varepsilon}_2 \stackrel{\varepsilon \to 0}{\longrightarrow} |B_M| \sum_{l \in \mathbb{Z}^d} \mathcal{C}_{l,i,j} \int_{B_A} \varphi(x) \partial_i u_j(x-l) dx = \left\langle \sum_{l \in \mathbb{Z}^d} \mathcal{C}_{l,i,j} \int_{B_A} \varphi(x) \partial_i u_j(x-l) dx , g \right\rangle.
\end{align*}
We have proved that 
\begin{equation}
   \sum_{i,j \in \{1,...,d\}} \Tilde{I}^{\varepsilon}_{i,j} \stackrel{\varepsilon \to 0}{\longrightarrow} \left\langle  \Tilde{\mathcal{M}} , g \right\rangle.
\end{equation}

We next show that $\displaystyle \lim_{\varepsilon \to 0}\Tilde{I}^{\varepsilon}_{i,j} = \lim_{\varepsilon \to 0}I^{\varepsilon}_{i,j}$ (where $I^{\varepsilon}_{i,j}$ and $\Tilde{I}^{\varepsilon}_{i,j}$ are respectively given by \eqref{defI_schro} and \eqref{defI_tilde_schro}).
We denote by $A>0$ a radius such that $\operatorname{Supp}(\varphi) \subset B_A$. We remark that if $k \notin B_{M/\varepsilon+A}(x_0/\varepsilon)$, we have $\displaystyle \int_{B_M(x_0)/\varepsilon} \varphi(x-k)\partial_i u_j(x-l) dx = 0$ and if $k \in B_{M/\varepsilon-A}(x_0/\varepsilon)$, 
$$\displaystyle \int_{B_M(x_0)/\varepsilon} \varphi(x-k)\partial_i u_j(x-l) dx  = \int_{\mathbb{R}^d} \varphi(x) \partial_i u_j(x-l+k)dx.$$
We next denote $ \mathbf{C}_{\varepsilon}= B_{\frac{M}{\varepsilon}+A}(\frac{x_0}{\varepsilon})\setminus{B_{\frac{M}{\varepsilon}-A}(\frac{x_0}{\varepsilon})}$ and, using $|\partial_i u_j(x)| \leq C (1+|x|^d)^{-1}$, we have 
\begin{align*}
   \bigg|\varepsilon^d \sum_{k \in \mathbf{C}_{\varepsilon}} \sum_{l \in B_{\frac{1 \mathstrut}{\mathstrut \eta(\varepsilon) }}} (Z_k)_i(Z_l)_j & \int_{B_{\frac{M}{\varepsilon}}(\frac{x_0}{\varepsilon})-k}  \varphi(x)  \partial_i u_j(x-l+k)dx \bigg| \\
   & \leq \varepsilon^d \|Z\|^2_{l^{\infty}}\|\varphi\|_{L^{\infty}(\mathbb{R}^d)} \sum_{k \in \mathbf{C}_{\varepsilon}} \sum_{l \in B_{\frac{1 \mathstrut}{\mathstrut \eta(\varepsilon) }}} \int_{B_A} \left|\partial_i u_j(x-l+k)  \right| dx.\\
   & \leq C \varepsilon^d \sum_{k \in \mathbf{C}_{\varepsilon}} |\ln(\eta(\varepsilon))|.
   \end{align*}
Since $|\mathbf{C}_{\varepsilon}| = O\left(\varepsilon^{1-d}\right)$ and $|\ln(\eta(\varepsilon))|\leq 2 |\ln(\varepsilon)|$, we obtain
 \begin{align*}
 \bigg|\varepsilon^d \sum_{k \in \mathbf{C}_{\varepsilon}} \sum_{l \in B_{\frac{1 \mathstrut}{\mathstrut \eta(\varepsilon) }}} (Z_k)_i(Z_l)_j  \int_{B_{M}(x_0)/\varepsilon-k}  \varphi(x)  \partial_i u_j(x-l+k)dx \bigg|
   & \leq C \varepsilon \left| \ln(\varepsilon) \right| \stackrel{\varepsilon \to 0}{\rightarrow} 0.
\end{align*}

It follows that 
\begin{align*}
  I_{i,j}^{\varepsilon} & = \varepsilon^d  \sum_{k \in \frac{B_{\mathstrut M}(x_0)\mathstrut}{\mathstrut\varepsilon}} \sum_{l \in B_{\frac{1 \mathstrut}{\mathstrut \eta(\varepsilon) }}} (Z_k)_i(Z_{l})_j \int_{B_A} \varphi(x) \partial_i u_j(x-l+k)dx + O\left(\varepsilon |\ln(\varepsilon)|\right) \\
  & = \varepsilon^d  \sum_{k \in \frac{B_{\mathstrut M}(x_0)\mathstrut}{\mathstrut\varepsilon}} \sum_{l \in B_{\frac{1 \mathstrut}{\mathstrut \eta(\varepsilon) }}} (Z_k)_i(Z_{k+l})_j \int_{B_A} \varphi(x) \partial_i u_j(x-l)dx + O\left(\varepsilon |\ln(\varepsilon)|\right) \\
 & = \quad \qquad \qquad \qquad \qquad \mathcal{N}_{i,j}^{\varepsilon}  \hspace{4.7cm} +  O\left(\varepsilon |\ln(\varepsilon)|\right).
\end{align*}

We recall that $D = M + |x_0|$ and for every $k \in B_{M}(x_0)/\varepsilon$, we have $|k| \leq \frac{D}{\varepsilon}$. Since $\eta(\varepsilon)= o\left(\varepsilon\right)$, we have for $\varepsilon$ sufficiently small : 
$$B_{\frac{1}{\eta(\varepsilon)} - \frac{D}{\varepsilon}} \subset B_{\frac{1 \mathstrut}{\mathstrut \eta(\varepsilon) }}(-k) \subset{B_{\frac{1}{\eta(\varepsilon)} + \frac{D}{\varepsilon}}} \quad \text{ and } \quad  \left(B_{\frac{1 \mathstrut}{\mathstrut \eta(\varepsilon) }}(-k)\setminus{B_{\frac{1\mathstrut}{\mathstrut\eta(\varepsilon)} - \frac{D}{\varepsilon}}} \right)\subset \left(B_{\frac{1 \mathstrut}{\mathstrut \eta(\varepsilon)} + \frac{D}{\varepsilon}}\setminus{B_{\frac{1 \mathstrut}{\mathstrut\eta(\varepsilon)} - \frac{D}{\varepsilon}}}\right)  .$$ 
We next split $\mathcal{N}_{i,j}^{\varepsilon} $ in two parts : 
\begin{align*}
    \mathcal{N}_{i,j}^{\varepsilon}  & = \varepsilon^d  \sum_{k \in \frac{B_{\mathstrut M}(x_0)\mathstrut}{\mathstrut\varepsilon}}\sum_{l \in B_{\frac{1 \mathstrut}{\mathstrut \eta(\varepsilon)}- \frac{D}{\varepsilon}}} (Z_k)_i(Z_{k+l})_j \int_{B_A} \varphi(x) \partial_i u_j(x-l)dx \\
    & + \varepsilon^d  \sum_{k \in \frac{B_{\mathstrut M}(x_0)\mathstrut}{\mathstrut\varepsilon}} \sum_{l \in  \left( B_{\frac{1 \mathstrut}{\mathstrut \eta(\varepsilon) }}(-k)\setminus{B_{\frac{1}{\eta(\varepsilon)} - \frac{D}{\varepsilon}}}\right)} (Z_k)_i(Z_{k+l})_j \int_{B_A} \varphi(x) \partial_i u_j(x-l)dx \\
    & = \Tilde{I}^{\varepsilon}_{i,j} + R^{\varepsilon}.
\end{align*}

We have 
\begin{align*}
    \left| R^{\varepsilon} \right| & \leq C \|\varphi \|_{L^{\infty}(\mathbb{R}^d)}\|Z\|_{l^{\infty}}^2 \varepsilon^d \sum_{k \in \frac{B_{\mathstrut M}(x_0)\mathstrut}{\mathstrut\varepsilon}} \  \sum_{l \in \left(B_{\frac{1 \mathstrut}{\mathstrut \eta(\varepsilon) }}(-k)\setminus{B_{\frac{1\mathstrut}{\mathstrut\eta(\varepsilon)} - \frac{D}{\varepsilon}}}\right)} \dfrac{1}{1+|l|^d} \\
    & \leq C \varepsilon^d \sum_{k \in \frac{B_{\mathstrut M}(x_0)\mathstrut}{\mathstrut\varepsilon}}  \ \sum_{l \in \left(B_{\frac{1\mathstrut}{\mathstrut\eta(\varepsilon)} + \frac{D}{\varepsilon}}\setminus{B_{\frac{1\mathstrut}{\mathstrut\eta(\varepsilon)} - \frac{D}{\varepsilon}}}\right)} \dfrac{1}{1+|l|^d} \\
    & \leq  C \left(\ln\left( \dfrac{1}{\eta(\varepsilon)}+ \dfrac{D}{\varepsilon}\right) - \ln\left( \dfrac{1}{\eta(\varepsilon)}- \dfrac{D}{\varepsilon}\right) \right) \stackrel{\varepsilon \to 0}{\longrightarrow} 0.
\end{align*}
Since $I^{\varepsilon}_{i,j} = \Tilde{I}^{\varepsilon}_{i,j} + R^{\varepsilon} + O(\varepsilon |\ln(\varepsilon)|)$, we finally conclude that $\displaystyle \lim_{\varepsilon \to 0} I^{\varepsilon}_{i,j} = \lim_{\varepsilon \to 0} \Tilde{I}^{\varepsilon}_{i,j}$.\\

\underline{\textit{Substep 4.2} : Convergence of $J^{\varepsilon}$}. We claim that $J^{\varepsilon}$ converges to 0 when $\varepsilon \to 0$. We indeed remark that 
\begin{align}
    \langle \mathcal{V}(./\varepsilon), g\rangle = \varepsilon^d \sum_{k \in \mathbb{Z}^d} Z_k \int_{B_{M}(x_0)/\varepsilon} \varphi(x-k) dx.  
\end{align}
As in the previous substep, if $k \notin B_{M/\varepsilon+A}(x_0/\varepsilon)$, we have $\varphi(x-k) = 0 $ for every $x \in B_{M}(x_0)/\varepsilon$ and if $k \in B_{M/\varepsilon-A}(x_0/\varepsilon)$, we have $\displaystyle \int_{B_M(x_0)/\varepsilon} \varphi(x-k) dx = \int_{\mathbb{R}^d} \varphi(x) dx$. Since the number of $k\in \mathbb{Z}^d$ such that $k\in B_{M/\varepsilon+A}(x_0/\varepsilon)$ and $k \notin B_{M/\varepsilon-A}(x_0/\varepsilon)$ is proportional to $\varepsilon^{1-d}$, we have
\begin{align}
\label{inegalite1}
    \left|\langle \mathcal{V}(./\varepsilon), g\rangle \right|&= \left|\varepsilon^d \sum_{k \in \frac{B_{\mathstrut M}(x_0)\mathstrut}{\mathstrut\varepsilon}} Z_k \int_{B_A} \varphi(x) dx + O(\varepsilon) \right| \leq C (\alpha(\varepsilon) + \varepsilon).
\end{align}
In addition, since $\nabla \Tilde{w}^{\eta}_{\varepsilon}$ is uniformly bounded in $BMO(\mathbb{R}^d)$ as a consequence of \eqref{Continuity_BMO_L}, we have (see \cite[Chapter IV, Section 1]{stein1993harmonic}) : 
$$ \left| \fint_{B_{4R/\varepsilon}} \nabla \Tilde{w}^{\eta}_{\varepsilon}(y) dy -  \fint_{B_{\frac{1 \mathstrut}{\mathstrut \eta(\varepsilon) }}} \nabla \Tilde{w}^{\eta}_{\varepsilon}(y) dy\right| \leq C \left|\ln\left( \dfrac{\eta(\varepsilon)}{\varepsilon}\right)\right|.$$
We recall that $\eta(\varepsilon) \geq \max(\epsilon \alpha(\varepsilon), \varepsilon^2)$, that is $\left|\ln\left( \dfrac{\eta(\varepsilon)}{\varepsilon}\right)\right| \leq \min(|\ln(\alpha(\varepsilon))|, |\ln(\varepsilon)|)$ when $\varepsilon$ is sufficiently small. Using \eqref{inegalite1}, we obtain
\begin{equation}
\label{majoration_moyenne_carre1}
\begin{split}
   \left|\langle \mathcal{V}(./\varepsilon), g\rangle \right|  \Big| \fint_{B_{4R/\varepsilon}} \nabla \Tilde{w}^{\eta}_{\varepsilon}(y) dy -  \fint_{B_{\frac{1 \mathstrut}{\mathstrut \eta(\varepsilon) }}} & \nabla \Tilde{w}^{\eta}_{\varepsilon}(y) dy\Big| \\
   & \leq C (\alpha(\varepsilon) + \varepsilon) \min(|\ln(\alpha(\varepsilon))|, |\ln(\varepsilon)|) \stackrel{\varepsilon \to 0}{\longrightarrow} 0. 
\end{split}
\end{equation}

Next, since $\displaystyle \nabla \Tilde{w}^{\eta}_{\varepsilon} = \nabla^2 G * \left(\sum_{k \in B_{\frac{1 \mathstrut}{\mathstrut \eta(\varepsilon) }}} Z_k \varphi(.-k)\right) =: T\left(\sum_{k \in B_{\frac{1 \mathstrut}{\mathstrut \eta(\varepsilon) }}} Z_k\varphi(.-k)\right)$, the continuity from $\left(L^2(\mathbb{R}^d)\right)^d$ to $\left(L^2(\mathbb{R}^d)\right)^d$ of the operator $T$ (see \cite[Section 7.2.3]{MR3099262}) yields a constant $C>0$ such that : 
$$\|\nabla \Tilde{w}^{\eta}_{\varepsilon}\|_{L^2(\mathbb{R}^d)} \leq C \left\|\sum_{k \in B_{\frac{1 \mathstrut}{\mathstrut \eta(\varepsilon) }}} Z_k \varphi(.-k) \right\|_{L^2(\mathbb{R}^d)} \leq  C\dfrac{1}{\left( \eta(\varepsilon)\right)^{d/2}}\|\mathcal{V}\|_{L^{\infty}(\mathbb{R}^d)}.$$
The Cauchy-Schwarz inequality therefore gives
\begin{equation}
\label{inequality_schro_moyenne_pb_lin1}
    \left| \fint_{B_{\frac{1 \mathstrut}{\mathstrut \eta(\varepsilon) }}} \nabla \Tilde{w}^{\eta}_{\varepsilon}(y) dy\right| \leq  \left| \fint_{B_{\frac{1 \mathstrut}{\mathstrut \eta(\varepsilon) }}} |\nabla \Tilde{w}^{\eta}_{\varepsilon}(y)|^2 dy\right|^{1/2} \leq C \left(\eta(\varepsilon)\right)^{d/2}\|\nabla \Tilde{w}_{\varepsilon}^{\eta}\|_{L^2(\mathbb{R}^d)} \leq C.
\end{equation}
Using \eqref{inegalite1} and \eqref{inequality_schro_moyenne_pb_lin1}, we deduce that : 
\begin{equation}
\label{majoration_moyenne_carre2}
    \left|\langle \mathcal{V}(./\varepsilon), g\rangle \right| \left| \fint_{B_{\frac{1 \mathstrut}{\mathstrut \eta(\varepsilon) }}} \nabla \Tilde{w}^{\eta}_{\varepsilon}(y) dy\right| \leq C (\alpha(\varepsilon) + \varepsilon) \stackrel{\varepsilon \to 0}{\longrightarrow} 0.
\end{equation}
To conclude, we use a triangle inequality to bound $J^{\varepsilon}$ : 
\begin{align*}
    |J^{\varepsilon}| & \leq \left|\langle \mathcal{V}(./\varepsilon), g\rangle \right| \left( \left| \fint_{B_{4R/\varepsilon}} \nabla \Tilde{w}^{\eta}_{\varepsilon}(y) dy -  \fint_{B_{\frac{1 \mathstrut}{\mathstrut \eta(\varepsilon) }}} \nabla \Tilde{w}^{\eta}_{\varepsilon}(y) dy\right| \right)\\
    & + \left|\langle \mathcal{V}(./\varepsilon), g\rangle \right| \left| \fint_{B_{\frac{1 \mathstrut}{\mathstrut \eta(\varepsilon) }}} \nabla \Tilde{w}^{\eta}_{\varepsilon}(y) dy\right|.
\end{align*}
 \eqref{majoration_moyenne_carre1} and \eqref{majoration_moyenne_carre2} finally show that $J^{\varepsilon} \to 0$ when $\varepsilon \to 0$. \\

\underline{\textit{Substep 4.3 : Convergence of $K^{\varepsilon}$}}. Using that $\varphi$ is supported in $B_A$ and proceeding as in the previous steps, we can show that the convergence of $K^{\varepsilon}$ is equivalent to the convergence of 
\begin{equation}
\label{K_schro}
\begin{split}
    \Tilde{K}^{\varepsilon}_{i,j} = \varepsilon^d \hspace{-0.7pt}\sum_{k \in \frac{B_{\mathstrut M}(x_0)\mathstrut}{\mathstrut\varepsilon}} \sum_{\frac{1}{\eta(\varepsilon)}<|l-k|<\frac{1}{\beta(\varepsilon)}} \hspace{-0.5cm}(Z_k)_i(Z_{k+l})_j \int_{B_A} \hspace{-0.3cm}\varphi(x) & \Bigg(\partial_i u_j(x-l) - \fint_{B_{4R/\varepsilon}} \hspace{-0.5cm}\partial_i u_j(y-l-k) dy\Bigg) dx,
\end{split}
\end{equation}
for every $i,j \in \{1,...,d\}$. Since $u_j = G*\partial_j \varphi$, we remark that for every $x \in B_A$, $y\in B_{4R/\varepsilon}$, $k \in B_M(x_0)/\varepsilon$ and $l$ such that $|l-k|>\frac{1}{\eta(\varepsilon)}$, we have 
\begin{align}
    \partial_i u_j&(x-l) - \partial_i u_j(y-l-k)  = \int_{B_A} \left(\partial_i \partial_j G(z-x+l) - \partial_i \partial_j G(z-y+l+k)\right) \varphi(z) dz \\
    & = \int_{B_A} \left(\partial_i \partial_j G(z-x+l) - \partial_i \partial_j G(l) + \partial_i \partial_j G(l) - \partial_i \partial_j G(z-y+l+k)\right) \varphi(z) dz. 
\end{align}
The results of \cite[Lemma 7.18 p.151]{MR3099262} yield the existence of $C>0$ such that for every $u,v \in \mathbb{R}^d$ with $|u|>2|v|$, we have 
\begin{equation}
    \left|\partial_i \partial_j G(u-v) - \partial_i \partial_j G(u) \right| \leq C \dfrac{|v|}{|u|^{d+1}}.
\end{equation}
Since $\eta(\varepsilon) = o(\varepsilon)$, for every $z\in B_A$ and $x,y,k,l$ as above, we have $|l|>2|z-y+k|$ and $|l|>2|z-x|$ when $\varepsilon$ is sufficiently small. It follows
\begin{equation}
\label{majoriation_K1_schro}
  \fint_{B_{4R/\varepsilon}}  \left| \partial_i u_j(x-l) - \partial_i u_j(y-l-k) \right| dy \leq C \fint_{B_{4R/\varepsilon}} \left(\dfrac{|x|}{|l|^{d+1}}+ \dfrac{|y|}{|l|^{d+1}}\right)dy \leq C \dfrac{\varepsilon^{-1}}{|l|^{d+1}}.  
\end{equation}
If we now insert \eqref{majoriation_K1_schro} into \eqref{K_schro}, since $Z_k$ and $\varphi$ are uniformly bounded, we obtain 
\begin{equation}
    \left|\Tilde{K}^{\varepsilon}_{i,j}\right| \leq C \varepsilon^{-1} \sum_{\frac{1}{\eta(\varepsilon)}<|l|} \dfrac{1}{|l|^{d+1}} \leq C \varepsilon^{-1} \eta(\varepsilon).
\end{equation}
By definition of $\eta(\varepsilon)$, we have $\displaystyle \lim_{\varepsilon \to 0} \varepsilon^{-1} \eta(\varepsilon) =0$. We therefore conclude that $\displaystyle \lim_{\varepsilon \to 0} K^{\varepsilon}  =0$.

\underline{\textit{Substep 4.3 : Conclusion}}. In the three substeps above, we have finally shown that
\begin{equation}
    \langle \mathcal{V}(./\varepsilon) .\nabla \Tilde{W}_{\varepsilon,R}(./\varepsilon) , g \rangle \stackrel{\varepsilon \to 0}{\longrightarrow} \left\langle \Tilde{\mathcal{M}} , g \right\rangle,
\end{equation}
for every characteristic function $g$ defined on $B_R$. 
Using \eqref{weak_convergence_square_schro} and the density of simple functions in $L^p(B_R)$ for every $p>1$, we deduce that 
$$\operatorname{weak}\lim_{\varepsilon \to 0} |\nabla \Tilde{W}_{\varepsilon,R}|^2(./\varepsilon) = \operatorname{weak}\lim_{\varepsilon \to 0} \mathcal{V}.\nabla \Tilde{W}_{\varepsilon, R}(./\varepsilon) = \Tilde{\mathcal{M}},$$
which concludes the proof of Proposition \ref{prop_correcteur_cas_lin_schro}.
\end{proof}

\begin{remark}
Given the only assumptions \eqref{A0}-\eqref{A00}-\eqref{A2}-\eqref{A3}, we cannot expect to show a uniform bound with respect to $\varepsilon$ for $\nabla \Tilde{w}_{\varepsilon}(./\varepsilon)$ without subtracting a constant that depends on $\varepsilon$ as in \eqref{corrector_schro_BMO}. For $d=3$, consider indeed $Z_k = \dfrac{1}{\ln(2+|k|)}\left(1, 1, 1\right)$ that satisfies our assumptions. When $\displaystyle \int_{\mathbb{R}^3} \varphi \neq 0$, $\partial_j u_i(x)$ behaves as $\dfrac{1}{|x|^3}$ at infinity and $\displaystyle \sum_{|k|\leq \frac{1}{\beta(\varepsilon)}} (Z_k)_i \partial_j u_i(x/\varepsilon)$ growths at least as $|\ln(\ln(\beta(\varepsilon))|$ on every compact. This phenomenon is related to the non continuity of the operator $T : f \mapsto \nabla^2 G \ast f$ from $\left(L^{\infty}(\mathbb{R}^d)\right)^d$ to $\left(L^{\infty}(\mathbb{R}^d)\right)^d$. 
\end{remark}

\begin{remark}
In Proposition \ref{prop_correcteur_cas_lin_schro}, we note that the choice $\beta(\varepsilon) = 0$ is also admissible since the bound given by \eqref{Continuity_BMO_L} allows to show that the sum $ \displaystyle \sum_{k \in \mathbb{Z}^d} (Z_k)_i \nabla u_i(.-k)$ makes sense in $\left(BMO(\mathbb{R}^d)\right)^d$. However, we have shown that Proposition \ref{prop_correcteur_cas_lin_schro} holds under the less restrictive assumption $\displaystyle \lim_{\varepsilon \to 0} \varepsilon^{-1} \beta(\varepsilon) =0$, that is for approximation of the full sum. 
\end{remark}

% \begin{remark}
% Here, we remark that if one consider equation \eqref{correcteur_lienaire_schro} independently of problem \eqref{homog_eps_schro}-\eqref{potential_form_schro} studied in the present work and if $\mathcal{V}$ is given by \eqref{primitiv_pot_lienar_schro}, an assumption as strong as \eqref{A4} related to the correlations of $Z$ is not necessary to show the convergence of $|\nabla W_{\varepsilon , R}|^2(./\varepsilon)$. Indeed, because of the linearity with respect to $Z$ of the potential $\mathcal{V}$, we only use \eqref{A0}-\eqref{A00}-\eqref{A2}-\eqref{A3} regarding the average of $Z$ and its auto-correlations to establish the result of Proposition~\ref{prop_correcteur_cas_lin_schro}. 
% \end{remark}

\subsection{Some particular cases}
\label{section_linear_particular_case}

We introduce here some particular settings in terms of the function $\varphi$ and the sequence $Z$ for which the study of \eqref{correcteur_lienaire_schro} is simpler and several stronger results can be shown. In particular, we show that under additional assumptions on $\varphi$ and $Z$, the sum $ \displaystyle \sum_{k \in \mathbb{Z}^d} (Z_k)_i \nabla u_i(.-k)$ converges in $\left(L^{\infty}_{loc}(\mathbb{R}^d)\right)^d$ for all $1\leq i \leq d$, which shows the existence of a solution $w_1$ to \eqref{correcteur_lienaire_schro} that is independent of $\varepsilon$. 

\textbf{1)} \textit{Vanishing integral $\displaystyle \int_{\mathbb{R}^d} \varphi = 0$}.\\
Given this assumption, the Green formula shows $\displaystyle \int_{\mathbb{R}^d}x \partial_i \varphi =~0$ and, for every $i \in \{1,...,d\}$,  $\nabla u_i = \nabla \left(G \ast  \partial_i \varphi\right)$ satisfies $\displaystyle |\nabla u_i (x)| \leq \dfrac{M}{1+|x|^{d+1}}$ as a consequence of Lemma~\ref{Prop_elementaire_schro}. It follows that the sums $ \displaystyle \sum_{k \in \mathbb{Z}^d} (Z_k)_i \nabla u_i(.-k)$ normally converge in $\left(L^{\infty}(\mathbb{R}^d)\right)^d$, for all $1 \leq i \leq d$. The Schwarz lemma ensures its limit is a gradient, which shows the existence of $w_1$, solution to \eqref{correcteur_lienaire_schro} on $\mathbb{R}^d$ such that $\nabla w_1 \in \left(L^{\infty}(\mathbb{R}^d)\right)^d$ (and not only in $\left(BMO(\mathbb{R}^d)\right)^d$). 

Since $\mathcal{V}(./\varepsilon)$ also weakly converges to 0 as $\varepsilon$ vanishes without any assumption on $Z$ (see Remark~\ref{remark_average_intphi_vanish}), we have $\displaystyle \lim_{\varepsilon \to 0} \nabla w_1(./\varepsilon) =0$ in $L^{\infty}(\mathbb{R}^d)-\star$. In addition, Assumption \eqref{A00} is sufficient to show the weak convergence of $|\nabla w_1(./\varepsilon)|^2$ to a constant. For the proof of these assertions, we refer to the study of equation \eqref{schro_eq_corrector_nonlin} in Section \ref{Section_non_linear_equation}, where we perform a similar proof.

\textbf{2)} \textit{Existence of a partition of unity}. \\
If there exists a function $\chi \in \mathcal{D}(\mathbb{R}^d)$ such that $\displaystyle \int_{\mathbb{R}^d} \chi = 1$ and $\displaystyle \sum_{k \in \mathbb{Z}^d} \chi(.-k-Z_k)=1$, the potential $V$ defined by \eqref{potential_form_schro} satisfies $V = W_{per} + W$,
where $\displaystyle W_{per} = g_{per} + \int_{\mathbb{R}^d} \varphi$ is periodic with a vanishing average as of consequence of \eqref{Schro_condition_moyenne2} and $W = \displaystyle \sum_{k \in \mathbb{Z}^d} \psi(.-k-Z_k)$ where $\psi = \displaystyle \varphi - \left(\int_{\mathbb{R}^d} \varphi\right) \chi \in \mathcal{D}(\mathbb{R}^d)$ and $\displaystyle \int_{\mathbb{R}^d} \psi =0$. The first setting \textbf{1)} above thus shows the existence of a corrector independent of $\varepsilon$. However, the existence of a partition of unity $\chi$ only exceptionally exists for a sequence $Z$. For instance, a counter-example is given by $Z_0 \neq 0$ and $Z_k = 0$ if $k\neq 0$, which clearly satisfies $\eqref{A0}$ to \eqref{A4}. If there exists $\chi\in \mathcal{D}(\mathbb{R}^d)$ such that $1 \equiv \displaystyle \sum_{k \in \mathbb{Z}^d} \chi(.-k-Z_k)$, using that $\chi$ is compactly supported, we have 
\begin{align*}
    1 = \lim_{|x| \to \infty} \chi(x+Z_0) + \sum_{k \in \mathbb{Z}^d} \chi(x-k) - \chi(x)   = \lim_{|x| \to \infty} \sum_{k \in \mathbb{Z}^d} \chi(x-k).
\end{align*}
For every $x \in \mathbb{R}^d$, we obtain $\displaystyle \sum_{k \in \mathbb{Z}^d} \chi(x-k) = 1$ by periodicity. It follows $\chi = \chi(.+Z_0)$, which is a contradiction since $Z_0 \neq 0$ and $\chi \neq 0$ is compactly supported. 

% \begin{align}
%     V = g_{per} + \sum_{k \in \mathbb{Z}^d} \varphi(x-k-Z_k) & = g_{per} + \int_{\mathbb{R}^d} \varphi + \sum_{k \in \mathbb{Z}^d} \left(\varphi(x-k-Z_k) - \left(\int_{\mathbb{R}^d} \varphi\right) \chi(x-k-Z_k)\right)  \\
%     & = W_{per} + W, 
% \end{align}

%Generically, the non-existence of a partition of unity actually implies that the difficulties related to our problem when $\displaystyle \int_{\mathbb{R}^d} \varphi \neq 0$ cannot be avoided by a simple normalization of the potential $V$.\\  

\textbf{3)} \textit{Additional assumptions related to the distribution of $Z$}.\\
As in \eqref{assumption_convergence_average_schro}, if the convergence of $ \displaystyle \dfrac{\varepsilon^d}{|B_R|} \sum_{k \in B_R(x_0)/\varepsilon} Z_k$ to 0 is sufficiently fast, uniformly with respect to the center $x_0$, Proposition~\ref{Prop_schro_convergence_average} shows the convergence of the sum $\displaystyle \sum_{k\in \mathbb{Z}^d} (Z_k)_i \partial_i \partial_j G \star \varphi(x-k)$. In this case, the subtraction of a $\varepsilon$-dependent constant $C_{\varepsilon}$ as in the proof of Proposition \ref{prop_correcteur_cas_lin_schro} is therefore not required to obtain the existence of a solution $w_1$ to \eqref{correcteur_tronque}.

On the other hand, the convergence of the sum could also be established if $Z_k$ is the discrete gradient of a sequence $(T_k)_{k\in \mathbb{Z}^d}$, that is, if $(Z_k)_i = \delta_i T_k = T_{k+e_i} - T_k$ for every $i \in \{1,...d\}$. Formally, the idea would be to perform a summation by parts to obtain 
$$\sum_{k\in \mathbb{Z}^d} (Z_k)_i \partial_i \partial_j G \star \varphi(x-k) \sim \sum_{k\in \mathbb{Z}^d}  \dfrac{\delta_i T_k}{|x-k|^d} \sim \sum_{k\in \mathbb{Z}^d} T_k \left(\dfrac{1}{|x-k+e_i|^d} - \dfrac{1}{|x-k|^d}\right). $$
Since $\dfrac{1}{|x-k+e_i|^d} - \dfrac{1}{|x-k|^d}\leq \dfrac{C}{|x-k|^{d+1}}$, we could show the convergence as soon as $|T_k| =O\left( |k|^{\alpha}\right)$ for $\alpha \in [0,1[$. This property is again very specific and, in particular, $Z$ is a discrete gradient if and only if it satisfies a discrete Cauchy equation given by $\delta_i (Z_k)_j = \delta_j (Z_k)_j$ for every $i,j \in \{1,...d\}$, which is not always true for a generic vector-valued sequence.

\section{Corrector equation : the full equation (\ref{corrector_general_schro})}

\label{Schro_section4}

In this section we return to our original problem \eqref{corrector_general_schro} when $V$ is the general potential given by \eqref{potential_form_schro} and we prove Theorem \ref{theorem1_schro}. The existence of a corrector is performed in two steps. In Proposition~\ref{prop_corrector_nonlin}, we first prove the existence of a specific solution to equation \eqref{schro_eq_corrector_nonlin} associated with the nonlinear higher order terms of the Taylor expansion. We next conclude with the proof of Theorem \ref{theorem1_schro}.

%using both the results of Propositions \ref{prop_correcteur_cas_lin_schro} and \ref{prop_corrector_nonlin}  and our assumptions \eqref{A4},\eqref{A2} and \eqref{A3}. 

\subsection{Preliminary properties of convergence}

We establish here a preliminary property related to the weak convergence of the functions of the form $\displaystyle \sum_{k \in \mathbb{Z}^d} G(Z_k,x)$ when each $ G(Z_k,x)$ behaves as $\dfrac{1}{|x|^{d+\alpha}}$ at infinity, for $\alpha >0$. To this end, we first consider compactly supported functions in Lemma \ref{Schro_convergence_carre_supp} and we conclude in Lemma \ref{corol_convergence_schro} using an argument of density given in Lemma \ref{Schro_lemme_densite}. 

\begin{lemme}
\label{Schro_convergence_carre_supp}
Assume $Z$ satisfies \eqref{hyp_borne_Z_schro} and \eqref{A4}. Let $G(x,y)$ and $H(x,y)$ be two continuous functions on $\mathbb{R}^d \times \mathbb{R}^d$, compactly supported with respect to $y$. We denote by
$$g(x) = \sum_{k,l \in \mathbb{Z}^d} G(Z_k, x-k)H(Z_{l},x-l) \in L^{\infty}(\mathbb{R}^d).$$
Then, when $\varepsilon$ converges to 0, $g(./\varepsilon)$ weakly converges to $\displaystyle \kappa = \sum_{l \in \mathbb{Z}^d} C_{F_l,l}$ in $L^{\infty}(\mathbb{R}^d)-\star$, where $C_{F_l,l}$ is the constant given by Assumption \eqref{A4} for $\displaystyle F_l(x,y) = \int_{\mathbb{R}^d} G(x,z)H(y,z-l) dz$. 
\end{lemme}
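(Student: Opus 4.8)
The plan is to test $g(\cdot/\varepsilon)$ against characteristic functions $h = \mathbf{1}_{B_N(x_0)}$, since these span a dense subspace of $L^1(\mathbb{R}^d)$; the result then follows by density of simple functions in $L^1$. So I would compute
\[
\langle g(\cdot/\varepsilon), h\rangle = \varepsilon^d \sum_{k,l\in\mathbb{Z}^d} \int_{B_N(x_0)/\varepsilon} G(Z_k, x-k)\, H(Z_l, x-l)\, dx,
\]
and the first move is the change of variables $x \mapsto x+k$ together with the substitution $l = k+m$, rewriting the double sum over $(k,l)$ as a double sum over $(k,m)$:
\[
\langle g(\cdot/\varepsilon), h\rangle = \varepsilon^d \sum_{m\in\mathbb{Z}^d} \sum_{k\in\mathbb{Z}^d} \int_{(B_N(x_0)/\varepsilon) - k} G(Z_k, x)\, H(Z_{k+m}, x-m)\, dx.
\]

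Next, I would use that $G$ and $H$ are compactly supported in their second variable: there is $A>0$ with $\operatorname{Supp} G(\cdot,z), \operatorname{Supp} H(\cdot,z) \subset \{|z|\le A\}$, so the $x$-integral vanishes unless $|x|\le A$ and $|x-m|\le A$, which forces $|m|\le 2A$ — hence the sum over $m$ is \emph{finite}, with a number of terms depending only on $A$ (and $\kappa$ is well-defined for the same reason). For each fixed admissible $m$, the inner $x$-integral, whenever $k$ is deep enough inside $B_N(x_0)/\varepsilon$ (i.e.\ $B_A \subset (B_N(x_0)/\varepsilon)-k$), equals exactly
\[
\int_{\mathbb{R}^d} G(Z_k,x)\,H(Z_{k+m},x-m)\,dx = F_m(Z_k, Z_{k+m}),
\]
with $F_m(u,v) = \int_{\mathbb{R}^d} G(u,z)H(v,z-m)\,dz$, which is continuous on $\mathbb{R}^d\times\mathbb{R}^d$. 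The boundary indices $k$ — those with $k\in B_{N/\varepsilon+A}(x_0/\varepsilon)\setminus B_{N/\varepsilon-A}(x_0/\varepsilon)$ — number $O(\varepsilon^{1-d})$, and each contributes a uniformly bounded amount (since $Z\in(\ell^\infty)^d$ and $G,H$ are bounded with compact support in $z$, so $F_m$ is bounded), so after multiplication by $\varepsilon^d$ their total contribution is $O(\varepsilon)$. This reduces the computation to
\[
\langle g(\cdot/\varepsilon), h\rangle = \sum_{|m|\le 2A} \varepsilon^d \sum_{k\in B_N(x_0)/\varepsilon} F_m(Z_k, Z_{k+m}) + O(\varepsilon).
\]
Now Assumption \eqref{A4} applied to each $F = F_m$ (and each $l = m$) gives
\[
\varepsilon^d \sum_{k\in B_N(x_0)/\varepsilon} F_m(Z_k,Z_{k+m}) \xrightarrow{\varepsilon\to 0} |B_N|\, C_{F_m,m} = \Big(\int_{\mathbb{R}^d} h\Big) C_{F_m,m},
\]
so $\langle g(\cdot/\varepsilon), h\rangle \to \big(\int h\big)\sum_{|m|\le 2A} C_{F_m,m} = \big(\int h\big)\,\kappa$. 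Since the characteristic functions of balls, and hence simple functions, are dense in $L^1(\mathbb{R}^d)$ and $g(\cdot/\varepsilon)$ is uniformly bounded in $L^\infty$ (its $L^\infty$ norm is bounded by $\sum_{|m|\le 2A}\|F_m\|_\infty$, a finite sum, independently of $\varepsilon$), we upgrade to weak-$\star$ convergence in $L^\infty(\mathbb{R}^d)$, concluding that $g(\cdot/\varepsilon) \rightharpoonup \kappa$.

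The main technical point — the one I would be most careful about — is the bookkeeping at the boundary of $B_N(x_0)/\varepsilon$: I must check that the $x$-integral genuinely equals $F_m(Z_k,Z_{k+m})$ for interior $k$ and that the $O(\varepsilon^{1-d})$ boundary indices contribute only $O(\varepsilon)$ after the $\varepsilon^d$ prefactor. This is exactly the mechanism already used in Lemma \ref{prop_convergence_primlitiv_pot} (equation \eqref{note_for_remark_schro}) and in Substep 4.1 of the proof of Proposition \ref{prop_correcteur_cas_lin_schro}, so there is nothing genuinely new, but it needs to be done cleanly. Everything else — the finiteness of the $m$-sum from compact support, the continuity of $F_m$, the uniform $L^\infty$ bound, the density argument — is routine.
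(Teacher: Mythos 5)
Your proposal is correct and mirrors the paper's own proof essentially step by step: test against characteristic functions, shift indices via $l = k+m$ and $x\mapsto x+k$, exploit compact support of $G$ and $H$ in the second variable to show the $m$-sum is finite and that the interior $x$-integral equals $F_m(Z_k,Z_{k+m})$, bound the $O(\varepsilon^{1-d})$ boundary indices by $O(\varepsilon)$ after the $\varepsilon^d$ prefactor, apply Assumption \eqref{A4} term by term, and finish by density of simple functions in $L^1$. You are slightly more explicit than the paper on two routine points (the finiteness of the sum over $m$, and the uniform $L^\infty$ bound on $g(\cdot/\varepsilon)$), but the mechanism and bookkeeping are identical.
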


\begin{proof}
For $R>0$ and $x_0 \in \mathbb{R}^d$, we consider $h = 1_{B_{R}(x_0)}$ and we first show that
$$\lim_{\varepsilon \to 0} \langle g(./\varepsilon),h\rangle_{L^{\infty}, L^1} = \kappa \int_{\mathbb{R}^d} h.$$
For every $\varepsilon>0$, we have 
$$\langle g(./\varepsilon), h \rangle_{L^{\infty}, L^1} = \sum_{k,l \in \mathbb{Z}^d} \varepsilon^d \int_{B_{\frac{R}{\varepsilon}\left(\frac{x_0}{\varepsilon}\right)}} G(Z_k, x-k) H(Z_l,x-l) dx.$$
We consider a radius $A>0$ such that $G(x,y) = H(x,y) = 0$ for every $x \in \mathbb{R}^d$ and $y \notin B_A$. If $k \notin B_{\frac{x_0}{\varepsilon} +A}(\frac{x_0}{\varepsilon})$, we clearly have $\displaystyle \int_{B_{\frac{R}{\varepsilon}\left(\frac{x_0}{\varepsilon}\right)}} G(Z_k, x-k) H(Z_l,x-l) dx =0$. On the other hand, if $k \in B_{\frac{x_0}{\varepsilon} -A}(\frac{x_0}{\varepsilon})$, we have $\displaystyle \int_{B_{\frac{R}{\varepsilon}\left(\frac{x_0}{\varepsilon}\right)}} G(Z_k, x-k) H(Z_l,x-l) dx = \int_{\mathbb{R}^d} G(Z_k, x) H(Z_l,x-l+k) dx$. Using that $ \displaystyle \left| B_{\frac{x_0}{\varepsilon} -A}\left(\frac{x_0}{\varepsilon}\right)\setminus{B_{\frac{x_0}{\varepsilon}}\left(\frac{x_0}{\varepsilon}\right)}\right| = O(\varepsilon^{1-d})$ and the change of variable $l \mapsto l-k$, it follows that
$$ \langle g(./\varepsilon), h \rangle_{L^{\infty}, L^1} = \sum_{l \in \mathbb{Z}^d} \int_{\mathbb{R}^d} \varepsilon^d \sum_{k \in B_{\frac{R}{\varepsilon}\left(\frac{x_0}{\varepsilon}\right)}} G(Z_{k},x) H(Z_{k+l},x-l) dx  + O(\varepsilon).$$
Assumption \eqref{A4} therefore shows that 
$$ \lim_{\varepsilon \to 0} \langle g(./\varepsilon),h\rangle_{L^{\infty}, L^1} = \kappa |B_R|=  \kappa \int_{\mathbb{R}^d} h.$$
We conclude using the density of simple functions in $L^1(\mathbb{R}^d)$. 
\end{proof}

The next Lemma is an elementary result of density. We skip its proof for the sake of brevity. 

\begin{lemme}
\label{Schro_lemme_densite}
Let $(F_k)_{k \in \mathbb{Z}^d} \in \left(L^{\infty}(\mathbb{R}^d)\right)^{\mathbb{Z}^d}$ such there exists $\alpha >0$ and $C>0$ satisfying 
\begin{equation}
    |F_k(x)| \leq \dfrac{C}{1+|x|^{d+\alpha}} \quad \forall k \in \mathbb{Z}^d, \ \forall x \in \mathbb{R}^d.
\end{equation}
Then the sequence $\displaystyle f_N = \sum_{k \in \mathbb{Z}^d} F_k(.-k)1_{B_N}(.-k)$ converges to $\displaystyle f = \sum_{k \in \mathbb{Z}^d} F_k(.-k)$ in $L^{\infty}(\mathbb{R}^d)$ when $N$ tends to $+\infty$.
\end{lemme}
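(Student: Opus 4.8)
The plan is to reduce everything to a single uniform-in-$x$ estimate on the tail of a convergent lattice series. First I would verify that $f$ is well defined: since $d+\alpha>d$, the series $\sum_{k\in\mathbb{Z}^d}\bigl(1+|x-k|^{d+\alpha}\bigr)^{-1}$ converges and is bounded uniformly in $x\in\mathbb{R}^d$ (by translation invariance one may assume $x\in[0,1)^d$, and then a routine comparison with $\int_{\mathbb{R}^d}\bigl(1+|y|^{d+\alpha}\bigr)^{-1}dy<\infty$ does the job). The hypothesis $|F_k(x)|\le C\bigl(1+|x|^{d+\alpha}\bigr)^{-1}$ then gives normal convergence of $\sum_k F_k(\cdot-k)$ in $L^\infty(\mathbb{R}^d)$, so $f\in L^\infty(\mathbb{R}^d)$ is well defined, and likewise each $f_N\in L^\infty(\mathbb{R}^d)$.

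Next I would write, for every $x\in\mathbb{R}^d$, since $1_{B_N}(x-k)=1$ exactly when $|x-k|<N$,
\[
f(x)-f_N(x)=\sum_{k\in\mathbb{Z}^d}F_k(x-k)\bigl(1-1_{B_N}(x-k)\bigr)=\sum_{\substack{k\in\mathbb{Z}^d\\ |x-k|\ge N}}F_k(x-k),
\]
hence
\[
|f(x)-f_N(x)|\le C\sum_{\substack{k\in\mathbb{Z}^d\\ |x-k|\ge N}}\frac{1}{1+|x-k|^{d+\alpha}}.
\]
It then remains to bound the right-hand side, uniformly in $x$, by a quantity that vanishes as $N\to\infty$.

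For this I would group the lattice points $k$ according to the integer $j=\lfloor|x-k|\rfloor$: the number of $k\in\mathbb{Z}^d$ with $j\le|x-k|<j+1$ is $O(j^{d-1})$ for $j\ge 1$, with a constant depending only on $d$ and \emph{not} on $x$ (again by translation invariance of $\mathbb{Z}^d$ up to a bounded shift). Therefore
\[
\sum_{\substack{k\in\mathbb{Z}^d\\ |x-k|\ge N}}\frac{1}{1+|x-k|^{d+\alpha}}\le C_d\sum_{j\ge N-1}\frac{j^{d-1}}{1+j^{d+\alpha}}\le C_d'\sum_{j\ge N-1}\frac{1}{j^{1+\alpha}}\le\frac{C_d''}{N^{\alpha}},
\]
the last step being a routine comparison with an integral. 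Combining the two displays yields $\|f-f_N\|_{L^\infty(\mathbb{R}^d)}\le C\,C_d''\,N^{-\alpha}\xrightarrow{N\to\infty}0$, which is the claim (and in fact gives the quantitative rate $O(N^{-\alpha})$).

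There is essentially no serious obstacle here: the only mildly delicate point is ensuring the lattice-counting estimate is uniform in the base point $x$, which is immediate from the translation structure, and every remaining step is an elementary comparison of a sum with an integral. This is presumably why the authors chose to omit the proof.
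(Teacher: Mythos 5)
Your proof is correct, and since the paper explicitly omits the proof of Lemma~\ref{Schro_lemme_densite} ``for the sake of brevity,'' there is no written argument to compare against; yours is the natural one the authors presumably had in mind. The tail estimate $\sum_{|x-k|\ge N}(1+|x-k|^{d+\alpha})^{-1}=O(N^{-\alpha})$ uniform in $x$ via the lattice-shell count $\#\{k:j\le|x-k|<j+1\}=O(j^{d-1})$ is exactly the right tool, and the resulting quantitative rate $\|f-f_N\|_{L^\infty}=O(N^{-\alpha})$ is a small bonus over the stated qualitative convergence.
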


% \begin{proof}
% For $x\in \mathbb{R}^d$ and $N \in \mathbb{N}^*$, we have 
% \begin{align*}
%     |f_N(x) - f(x) | & \leq \sum_{k\in \mathbb{Z}^d} |F_k(x-k)|(1-1_{B_N}(x-k)) \\
%     & \leq \sum_{k\in \mathbb{Z}^d} \dfrac{C}{1+|x-k|^{d+1}} (1-1_{B_N}(x-k))\\
%     & = \sum_{k\in \mathbb{Z}^d} \dfrac{C}{1+|\{x\}-k|^{d+1}} (1-1_{B_N}(\{x\}-k))\\
%     & \leq \sum_{k\in \mathbb{Z}^d} \dfrac{\Tilde{C}}{1+|k|^{d+1}} (1-1_{B_N}(k)),
% \end{align*}
% where we have denoted by $\{x\}= (\{x_1\},...,\{x_d\}) \in [0,1[^d$ the fractional part of the vector $x$ and by $\Tilde{C}$ a constant independent of $x$, $N$ and $k$. Since $\displaystyle \lim_{N\to \infty}\sum_{|k|>N}\dfrac{1}{1+|k|^{d+1}}=0$, we deduce the uniform convergence of $f_N$ to $f$ when $N \to \infty$. 
% \end{proof}

As a direct consequence of Lemmas \ref{Schro_convergence_carre_supp} and \ref{Schro_lemme_densite}, we therefore obtain : 

\begin{lemme}
\label{corol_convergence_schro}
Assume $Z$ satisfies \eqref{hyp_borne_Z_schro} and \eqref{A4}. Let $G(x,y)$ and $H(x,y)$ be two continuous functions on $\mathbb{R}^d \times \mathbb{R}^d$ such that 
$$\exists \alpha , \beta >0, \ \forall x,y \in \mathbb{R}^d, \quad |G(x,y)| \leq \dfrac{M_1}{1+|y|^{d+\alpha}}, \quad |H(x,y)|\leq \dfrac{M_2}{1+|y|^{d+\beta}},$$
where 
$M_1>0$ and $M_2>0$ are two constants independent of $x$ and $y$. 
We denote
$$g(x) = \sum_{k,l \in \mathbb{Z}^d} G(Z_k, x-k)H(Z_{l},x-l) \in L^{\infty}(\mathbb{R}^d).$$
Then, the conclusion of Lemma \ref{Schro_convergence_carre_supp} holds true.
\end{lemme}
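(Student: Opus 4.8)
The plan is to reduce to Lemma \ref{Schro_convergence_carre_supp} by truncating $G$ and $H$ in their second variable, and then to let the truncation parameter tend to infinity, every error being controlled uniformly in $\varepsilon$ by the polynomial decay hypotheses. Fix cut-off functions $\zeta_N \in \mathcal{D}(\mathbb{R}^d)$, $N\in\mathbb{N}$, with $0\le\zeta_N\le 1$, $\zeta_N\equiv 1$ on $B_N$ and $\operatorname{Supp}(\zeta_N)\subset B_{2N}$, and set $G_N(x,y)=G(x,y)\zeta_N(y)$, $H_N(x,y)=H(x,y)\zeta_N(y)$. These are continuous and compactly supported with respect to $y$, so Lemma \ref{Schro_convergence_carre_supp} applies: denoting $g_N(x)=\sum_{k,l\in\mathbb{Z}^d}G_N(Z_k,x-k)H_N(Z_l,x-l)$ and $F^N_l(x,y)=\int_{\mathbb{R}^d}G_N(x,z)H_N(y,z-l)\,dz$, the function $g_N(./\varepsilon)$ weakly-$\star$ converges in $L^\infty(\mathbb{R}^d)$, as $\varepsilon\to0$, to $\kappa_N=\sum_{l\in\mathbb{Z}^d}C_{F^N_l,l}$.

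Next I would establish two uniform estimates. First, one may factorize $g=\mathcal{G}\,\mathcal{H}$ and $g_N=\mathcal{G}_N\,\mathcal{H}_N$, where $\mathcal{G}(x)=\sum_k G(Z_k,x-k)$, $\mathcal{H}(x)=\sum_l H(Z_l,x-l)$ and $\mathcal{G}_N$, $\mathcal{H}_N$ are the analogous sums built from $G_N$, $H_N$; the bounds on $G$ and $H$ then show that $\mathcal{G}$ and $\mathcal{H}$ are bounded and, arguing exactly as in the proof of Lemma \ref{Schro_lemme_densite} (with $1-\zeta_N\le 1_{\{|\cdot|\ge N\}}$ in place of $1-1_{B_N}$), that $\mathcal{G}_N\to\mathcal{G}$ and $\mathcal{H}_N\to\mathcal{H}$ in $L^\infty(\mathbb{R}^d)$, whence $\|g-g_N\|_{L^\infty(\mathbb{R}^d)}\to0$. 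Second, splitting the integral defining $F_l(x,y)=\int_{\mathbb{R}^d}G(x,z)H(y,z-l)\,dz$ over the regions $\{|z|<|l|/2\}$, $\{|z-l|<|l|/2\}$ and the complement of their union, and using the two decay bounds, one gets a constant $C>0$ and an exponent $\gamma=\min(\alpha,\beta)>0$ such that $|F_l(x,y)|\le C(1+|l|^{d+\gamma})^{-1}$ for all $x,y$; since $|F^N_l(x,y)|\le\int_{\mathbb{R}^d}|G(x,z)||H(y,z-l)|\,dz$, the same bound holds for all $F^N_l$ uniformly in $N$. As each $F_l$ is continuous (by dominated convergence), $C_{F_l,l}$ is the constant furnished by Assumption \eqref{A4}, and $|C_{F_l,l}|\le\|F_l\|_{L^\infty(\mathbb{R}^d\times\mathbb{R}^d)}\le C(1+|l|^{d+\gamma})^{-1}$, so $\kappa:=\sum_{l}C_{F_l,l}$ converges absolutely; likewise $|C_{F^N_l,l}|\le C(1+|l|^{d+\gamma})^{-1}$ uniformly in $N$. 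Finally, for $|l|\le N/2$ the inequality $|\zeta_N(z)\zeta_N(z-l)-1|\le 1_{\{|z|\ge N\}}+1_{\{|z-l|\ge N\}}$, combined with $|z|\ge N\Rightarrow|z-l|\ge N/2$ and symmetrically, gives $\|F^N_l-F_l\|_{L^\infty(\mathbb{R}^d\times\mathbb{R}^d)}\le C N^{-d-\alpha-\beta}$; summing this over the $O(N^d)$ indices $|l|\le N/2$ and bounding the tail $\sum_{|l|>N/2}(|C_{F^N_l,l}|+|C_{F_l,l}|)$ by the majorant above, one obtains $\kappa_N\to\kappa$ as $N\to\infty$.

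With these two facts the conclusion follows by a routine approximation argument. Given $h\in L^1(\mathbb{R}^d)$ and $\eta>0$, first choose $N$ so large that $\|g-g_N\|_{L^\infty(\mathbb{R}^d)}\,\|h\|_{L^1(\mathbb{R}^d)}<\eta/3$ and $|\kappa_N-\kappa|\,\|h\|_{L^1(\mathbb{R}^d)}<\eta/3$, and then $\varepsilon_0>0$ such that $\big|\int_{\mathbb{R}^d}g_N(x/\varepsilon)h(x)\,dx-\kappa_N\int_{\mathbb{R}^d}h\big|<\eta/3$ for all $\varepsilon<\varepsilon_0$ (this being Lemma \ref{Schro_convergence_carre_supp} applied to $G_N$, $H_N$). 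Since $\big|\int_{\mathbb{R}^d}(g(x/\varepsilon)-g_N(x/\varepsilon))h(x)\,dx\big|\le\|g-g_N\|_{L^\infty(\mathbb{R}^d)}\,\|h\|_{L^1(\mathbb{R}^d)}$, the triangle inequality yields $\big|\int_{\mathbb{R}^d}g(x/\varepsilon)h(x)\,dx-\kappa\int_{\mathbb{R}^d}h\big|<\eta$ for $\varepsilon<\varepsilon_0$; that is, $g(./\varepsilon)$ weakly-$\star$ converges to $\kappa=\sum_l C_{F_l,l}$ in $L^\infty(\mathbb{R}^d)$, which is exactly the conclusion of Lemma \ref{Schro_convergence_carre_supp}.

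I expect the main obstacle to be the interchange of the limit $N\to\infty$ with the series $\sum_l$ defining $\kappa_N$, i.e. the proof that $\kappa_N\to\kappa$: this is where the polynomial decay of $G$ and $H$ in their second variable is genuinely used, through the convolution-type bound $|F_l(x,y)|\le C(1+|l|^{d+\gamma})^{-1}$, which simultaneously makes $\kappa$ well defined and supplies a summable majorant for $|C_{F^N_l,l}-C_{F_l,l}|$. The remaining ingredients — the uniform $L^\infty$ truncation error (a variant of Lemma \ref{Schro_lemme_densite}) and the final weak-$\star$ passage to the limit — are soft.
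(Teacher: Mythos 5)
Your proof is correct and follows the same route the paper has in mind: the paper simply declares the result "a direct consequence of Lemmas \ref{Schro_convergence_carre_supp} and \ref{Schro_lemme_densite}" without supplying details, and your argument is exactly the expansion of that remark — truncate $G$ and $H$ in their second variable (using continuous cutoffs $\zeta_N$ rather than $1_{B_N}$, which is needed so the truncations still satisfy the continuity hypothesis of Lemma \ref{Schro_convergence_carre_supp} and of \eqref{A4}), apply Lemma \ref{Schro_convergence_carre_supp} to the truncated pair, control $\|g-g_N\|_{L^\infty}$ via the factorization $g=\mathcal{G}\mathcal{H}$ and the density estimate underlying Lemma \ref{Schro_lemme_densite}, and pass to the limit. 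The one piece the paper leaves entirely implicit and you spell out carefully — the convergence $\kappa_N\to\kappa$, obtained from the convolution bound $|F_l|\lesssim(1+|l|^{d+\gamma})^{-1}$ and the uniform estimate $\|F_l^N-F_l\|_{L^\infty}\lesssim N^{-d-\alpha-\beta}$ for $|l|\le N/2$ — is precisely where the polynomial decay enters, and your splitting of $\mathbb{R}^d$ into $\{|z|<|l|/2\}$, $\{|z-l|<|l|/2\}$ and the complement is the right way to get it. No gaps.
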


\subsection{Existence result for equation \ref{c_schro}}

\label{Section_non_linear_equation}

We are now in point to study the equation
\begin{equation}
\label{schro_eq_corrector_nonlin}
    \Delta w_2 = \sum_{k \in \mathbb{Z}^d} Z_k^T \left( \int_{0}^1 (1-t) D^2\varphi(.-k-tZ_k)dt \right) Z_k.
\end{equation}
Using Lemma \ref{Prop_elementaire_schro}, the existence of $w_2$ is actually easier to establish than that of $w_1$ solution to \eqref{correcteur_lienaire_schro}. On the other hand, the nonlinearity with respect to $Z$ on the right-hand side of \eqref{schro_eq_corrector_nonlin} requires \eqref{A4} to show the convergence of $|\nabla w_2|^2(./\varepsilon)$.

\begin{prop}
\label{prop_corrector_nonlin}
Assume $d\geq 2$. Let $\varphi \in \mathcal{D}(\mathbb{R}^d)$ and $Z$ satisfying \eqref{hyp_borne_Z_schro} and \eqref{A4}. For $i,j \in \{1,...,d\}$, we denote $u_{i,j} = G \ast \partial_{i} \partial_j \varphi$. Then there exists a solution $w_2 \in L^1_{loc}(\mathbb{R}^d)$ to \eqref{schro_eq_corrector_nonlin} such that
$$ \nabla w_2 = \sum _{i,j \in \{1,...,d\} } \sum_{k \in \mathbb{Z}^d} (Z_k)_j(Z_k)_i \int_{0}^1 (1-t)\nabla u_{i,j}(.-k-tZ_k)dt.$$
In addition, $\nabla w_2 \in \left(L^{\infty}(\mathbb{R}^d)\right)^d$ and, when $\varepsilon$ converges to 0, $\nabla w_2(./\varepsilon)$ weakly converges to 0 in $L^{\infty}(\mathbb{R}^d)-\star$, $\varepsilon w_2(./\varepsilon)$ strongly converges to 0 in $L^{\infty}_{loc}(\mathbb{R}^d)$ and there exists $\mathcal{M}_2 \in \mathbb{R}$ such that 
\begin{equation}
    |\nabla w_{2}|^2(./\varepsilon) \stackrel{\varepsilon \to 0}{\longrightarrow} \mathcal{M}_2 \quad \text{ in } L^{\infty}(\mathbb{R}^d)-\star.
\end{equation}
\end{prop}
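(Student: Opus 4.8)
The plan is to treat the right-hand side of \eqref{schro_eq_corrector_nonlin} term by term, exactly as announced, and to lean on Lemma~\ref{Prop_elementaire_schro} to get $L^\infty$ decay, then on Lemma~\ref{corol_convergence_schro} to get the weak limit of the square. First I would fix $i,j\in\{1,\dots,d\}$ and set $u_{i,j}=G\ast\partial_i\partial_j\varphi$. Since $\varphi\in\mathcal D(\mathbb R^d)$, the function $\partial_i\partial_j\varphi$ is smooth, compactly supported and, crucially, has vanishing integral \emph{and} vanishing first moment (both follow from the Green/divergence formula, since $\partial_i\partial_j\varphi$ is a second derivative of a compactly supported function). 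Lemma~\ref{Prop_elementaire_schro}, second estimate \eqref{borne_green_schrodinger2}, therefore gives a constant $C$ with $|\nabla u_{i,j}(x)|\le C/(1+|x|^{d+1})$. Because $Z\in(l^\infty(\mathbb Z^d))^d$ and $\varphi$ has support in some $B_A$, each translate $\nabla u_{i,j}(\,\cdot\,-k-tZ_k)$ is controlled, uniformly in $t\in[0,1]$ and in $k$, by $C'/(1+|x-k|^{d+1})$. Summing over $k\in\mathbb Z^d$ gives a series that converges normally in $(L^\infty(\mathbb R^d))^d$; integrating in $t$ against the bounded weight $(1-t)$ preserves this. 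Hence $\nabla w_2$, defined by the stated formula, is a well-defined bounded vector field, and it is a gradient (being a limit, in $\mathcal D'$, of gradients of the smooth functions $\sum_{|k|\le N}(Z_k)_i(Z_k)_j\int_0^1(1-t)u_{i,j}(\,\cdot\,-k-tZ_k)\,dt$, so its curl vanishes; Schwarz's lemma then produces $w_2\in L^1_{loc}$ with $\Delta w_2$ equal to the right-hand side of \eqref{schro_eq_corrector_nonlin}, since $\operatorname{div}\nabla u_{i,j}=\partial_i\partial_j\varphi$ and one recovers the Taylor remainder by reassembling the sum).

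Next I would prove $\nabla w_2(\,\cdot\,/\varepsilon)\rightharpoonup 0$ weak-$\star$ in $L^\infty$. Each term $x\mapsto (Z_k)_i(Z_k)_j\int_0^1(1-t)\nabla u_{i,j}(x-k-tZ_k)\,dt$ has integral over $\mathbb R^d$ equal to $(Z_k)_i(Z_k)_j\int_0^1(1-t)\big(\int_{\mathbb R^d}\nabla u_{i,j}\big)\,dt$, and $\int_{\mathbb R^d}\nabla u_{i,j}=0$ because $\nabla u_{i,j}\in L^1(\mathbb R^d)$ is itself a gradient of the $L^1$ function $u_{i,j}$ (or directly: its Fourier transform vanishes at the origin). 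Testing $\nabla w_2(\,\cdot\,/\varepsilon)$ against a characteristic function $1_{B_R(x_0)}$ and using the $1/(1+|x-k|^{d+1})$ decay to discard the $O(\varepsilon^{1-d})$ boundary indices (as in the proof of Lemma~\ref{prop_convergence_primlitiv_pot}), the bulk contribution is $\varepsilon^d\sum_{k}(Z_k)_i(Z_k)_j\cdot 0+O(\varepsilon)\to 0$; density of simple functions in $L^1$ finishes it. Strict sublinearity, i.e.\ $\varepsilon w_2(\,\cdot\,/\varepsilon)\to 0$ in $L^\infty_{loc}$, follows from Lemma~\ref{lemme_sous_linearité}: $\nabla w_2\in L^\infty$ gives the required uniform $L^p(B_{4R})$ bound for any $p>d$, and the weak convergence of $\nabla w_2(\,\cdot\,/\varepsilon)$ to $0$ in $\mathcal D'$ has just been established (after normalizing $w_2(0)=0$).

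The substantive point is the weak limit of $|\nabla w_2|^2(\,\cdot\,/\varepsilon)$, and here Assumption~\eqref{A4} enters through Lemma~\ref{corol_convergence_schro}. Writing $\nabla w_2=\sum_{i,j}V^{(i,j)}$ with $V^{(i,j)}_m(x)=\sum_{k}G^{(i,j)}_m(Z_k,x-k)$, where $G^{(i,j)}_m(z,y)=z_iz_j\int_0^1(1-t)\partial_m u_{i,j}(y-tz)\,dt$ satisfies $|G^{(i,j)}_m(z,y)|\le C/(1+|y|^{d+1})$ uniformly in $z$ on the bounded range of $Z$ (using $|z|\le\|Z\|_{l^\infty}$ and the $1/(1+|y|^{d+1})$ decay of $\nabla u_{i,j}$, noting $tz$ stays bounded), the scalar $|\nabla w_2|^2=\sum_{m}\big(\sum_{i,j}V^{(i,j)}_m\big)^2$ is a finite sum of products $\sum_{k,l}G(Z_k,x-k)H(Z_l,x-l)$ of exactly the type covered by Lemma~\ref{corol_convergence_schro}, with both kernels decaying faster than $|y|^{-d}$. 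That lemma gives, for each such product, a weak-$\star$ limit $\sum_{l\in\mathbb Z^d}C_{F_l,l}$ with $F_l(z,w)=\int_{\mathbb R^d}G(z,y)H(w,y-l)\,dy$ a continuous function of $(z,w)$ — and the series is finite-term-wise meaningful because the decay of $G,H$ makes $F_l$ itself decay rapidly in $l$. Adding up the finitely many $(m,i,j,i',j')$ contributions defines the constant $\mathcal M_2\in\mathbb R$ and yields $|\nabla w_2|^2(\,\cdot\,/\varepsilon)\rightharpoonup\mathcal M_2$ weak-$\star$ in $L^\infty$. The main obstacle I anticipate is purely bookkeeping: verifying that the products of two absolutely convergent series may be regrouped so as to fit the hypotheses of Lemma~\ref{corol_convergence_schro}, and checking the $t$-integrated kernels genuinely inherit the $d+1$ decay uniformly in the $Z$-variable; none of this is deep, but it must be done carefully to justify exchanging sums, integrals, and the weak limit.
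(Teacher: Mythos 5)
Your proposal follows the same overall strategy as the paper: both use the vanishing zeroth and first moments of $\partial_i\partial_j\varphi$ together with Lemma~\ref{Prop_elementaire_schro} to obtain $|\nabla u_{i,j}(x)|\lesssim (1+|x|^{d+1})^{-1}$, hence normal convergence in $L^\infty$ of the series defining $\nabla w_2$ (Schwarz's lemma identifying the limit as a gradient); both invoke Lemma~\ref{lemme_sous_linearité} for the strict sublinearity and Lemma~\ref{corol_convergence_schro} — which is where \eqref{hyp_borne_Z_schro} and \eqref{A4} enter — for the weak-$\star$ limit of $|\nabla w_2(\cdot/\varepsilon)|^2$. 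The one place where you genuinely diverge is the proof that $\nabla w_2(\cdot/\varepsilon)\rightharpoonup 0$ in $L^\infty$-$\star$. The paper extracts a weak-$\star$ limit $\nabla v$, passes to the limit in the weak formulation of \eqref{schro_eq_corrector_nonlin} (using the weak vanishing of the scaled right-hand side, itself established by the argument of Lemma~\ref{prop_convergence_primlitiv_pot}) to get $\Delta v=0$, and then invokes Liouville. You instead argue directly: each translate $\nabla u_{i,j}(\cdot-k-tZ_k)$ has zero integral over $\mathbb{R}^d$, so testing $\nabla w_2(\cdot/\varepsilon)$ against an indicator $1_{B_M(x_0)}$ and discarding $O(\varepsilon)$ boundary contributions, the bulk sum vanishes. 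Your route is more elementary (no extraction and no uniqueness-of-limit step) and, arguably, slightly more airtight: Liouville only gives that $\nabla v$ is a constant vector, and the passage to ``hence $\nabla v=0$'' implicitly relies on precisely the fact you make explicit — that the large-ball averages of $\nabla w_2$ vanish because each summand has zero mean. One small caveat: the phrase ``$\nabla u_{i,j}\in L^1$ is itself a gradient, hence has zero integral'' is not a valid shortcut on its own; the correct justification is the one you add parenthetically (the Fourier transform $i\xi\,\widehat{u_{i,j}}(\xi)$ is continuous and vanishes at $\xi=0$ since $\widehat{u_{i,j}}=-\widehat{\varphi}(\xi)\xi_i\xi_j/|\xi|^2$ is bounded near the origin), or equivalently a surface-integral/decay argument using $|u_{i,j}(x)|\lesssim|x|^{-d}$.
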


\begin{proof}
We first remark that a simple application of the Green formula shows $\displaystyle \int_{\mathbb{R}^d} \partial_{i} \partial_j \varphi(x) dx =~0$ and $\displaystyle \int_{\mathbb{R}^d} x \partial_{i} \partial_j \varphi(x) dx =0$ for every $i,j \in \{1,...,d\}$. Lemma \ref{Prop_elementaire_schro} therefore ensures the existence of $M>0$ such that 
\begin{equation}
\label{preuve_borne_ui_schro}
    |\nabla u_{i,j}(x)| \leq \dfrac{M}{1+|x|^{d+1}} \quad \forall x \in \mathbb{R}^d.
\end{equation}
For every $N \in \mathbb{N}$, we denote 
$$S_N = \sum _{i,j \in \{1,...,d\} } \sum_{|k|\leq N} (Z_k)_j(Z_k)_i \int_{0}^1 (1-t) u_{i,j}(.-k-tZ_k)dt,$$
solution to 
\begin{equation}
\label{equation_reste_tronque}
    \Delta S_N = \sum_{|k|\leq N} Z_K^T \left(\int_{0}^1 (1-t) D^2\varphi(.-k-tZ_k)dt \right)Z_k.
\end{equation}
Since $Z$ belongs to $\left(l^{\infty}(\mathbb{Z}^d)\right)^d$ and $\nabla u_{i,j}$ satisfies \eqref{preuve_borne_ui_schro}, the sum $\nabla S_N$ normally converges in $L^{\infty}_{loc}(\mathbb{R}^d)$ and its limit is a gradient $\nabla w_2$ as a consequence of the Schwarz lemma. Passing to the limit in \eqref{equation_reste_tronque} when $N \to \infty$, we show that $w_2$ is a solution to \eqref{schro_eq_corrector_nonlin}. Moreover, for every $q \in \mathbb{Z}^d$, we have 
$$\nabla w_2(.+q) = \sum _{i,j \in \{1,...,d\} } \sum_{k \in \mathbb{Z}^d} (Z_{k-q})_j(Z_{k-q})_i \int_{0}^1 (1-t) \nabla u_{i,j}(.-k-tZ_{k-q})dt.$$
Again, using \eqref{preuve_borne_ui_schro} and the fact that $Z_k$ is bounded uniformly with respect to $k$, we obtain
$$\forall q \in \mathbb{Z}^d, \quad \|\nabla w_2(.+q)\|_{L^{\infty}(B_2)} \leq C(d) \|Z\|_{l^{\infty}}^2 \sum_{k \in \mathbb{Z}^d} \dfrac{1}{1+|k|^{d+1}},$$
where $C(d)>0$ depends only on the dimension $d$. Since the right-hand side is independent of $q$, we deduce that $\nabla w_2$ belongs to $\left(L^{\infty}(\mathbb{R}^d)\right)^d$.

We have shown that the sequence $\nabla w_2(./\varepsilon)$ is uniformly bounded with respect to $\varepsilon$ in $\left(L^{\infty}(\mathbb{R}^d)\right)^d$. Up to an extraction, it therefore converges to a gradient $\nabla v$ for the weak-$\star$ topology of $\left(L^{\infty}(\mathbb{R}^d)\right)^d$ when $\varepsilon \to 0$. Moreover, since $w_2$ is solution to \eqref{schro_eq_corrector_nonlin}, we have for every $\varepsilon>0$ and $\psi \in \mathcal{D}(\mathbb{R}^d)$ :
\begin{equation}
\begin{split}
    \int_{\mathbb{R}^d} \nabla w_2(x/\varepsilon).\nabla \psi(x) dx &= - \int_{\mathbb{R}^d} \left( \sum_{i,j} \sum_{k\in \mathbb{Z}^d} (Z_k)_i(Z_k)_j \int_0^1 (1-t) \partial_j \partial_i \varphi(x/\varepsilon - k - tZ_k)dt\right)  \psi(x) dx. \\
    &= \int_{\mathbb{R}^d} \left( \sum_{i,j} \sum_{k\in \mathbb{Z}^d} (Z_k)_i(Z_k)_j \int_0^1 (1-t) \partial_i \varphi(x/\varepsilon - k - tZ_k)dt\right) \partial_j \psi(x) dx.
\end{split}
\end{equation}
We can next show that $\displaystyle \sum_{k\in \mathbb{Z}^d} (Z_k)_i(Z_k)_j \int_0^1 (1-t) \partial_i \varphi(x/\varepsilon - k - tZ_k)dt$ weakly converges to 0 using that $\displaystyle \int_{\mathbb{R}^d} \partial_{i} \varphi = 0$ for every $i \in \{1,...,d\}$ and proceeding exactly as in the proof of Lemma~\ref{prop_convergence_primlitiv_pot}. When $\varepsilon \to 0$, it follows that 
\begin{equation}
    \int_{\mathbb{R}^d} \nabla v. \nabla \psi = 0,
\end{equation}
that is $\Delta v = 0$ in $\mathcal{D}'(\mathbb{R}^d)$. Since $\nabla v \in \left( L^{\infty}(\mathbb{R}^d) \right)^d,$ we obtain $\nabla v=0$. We deduce that $\nabla w(./\varepsilon)$ converges to 0 in $L^{\infty}(\mathbb{R}^d)-\star$ and, as a consequence of Lemma \ref{lemme_sous_linearité}, that $\varepsilon w_{2}(./\varepsilon)$ converges to 0 in $L^{\infty}_{loc}(\mathbb{R}^d)$. 

Finally, since 
$$\nabla w_2 (x) = \sum _{i,j \in \{1,...,d\} } \sum_{k \in \mathbb{Z}^d} (Z_{k})_j(Z_{k})_i \int_{0}^1 (1-t) \nabla u_{i,j}(x-k-tZ_{k})dt,$$
and $\nabla u_{i,j}$ satisfies \eqref{preuve_borne_ui_schro},
the weak convergence of $|\nabla w_2(./\varepsilon)|^2$ to a constant is a direct consequence of \eqref{hyp_borne_Z_schro}, \eqref{A4} and Lemma \ref{corol_convergence_schro}. 
\end{proof}

\subsection{Proof of Theorem \ref{theorem1_schro}}

Problems \eqref{correcteur_tronque} and \eqref{schro_eq_corrector_nonlin} being solved, we are in position to prove Theorem \ref{theorem1_schro}. 

\begin{proof}[Proof of Theorem \ref{theorem1_schro}]
For every $\varepsilon>0$ and $R>0$, we define $W_{\varepsilon,R} = w_{per} - \Tilde{W}_{\varepsilon,R} + w_{2}$ where $w_{per}$ is the periodic solution (unique up to the addition of a constant) to $\Delta w_{per} = \displaystyle g_{per} + \sum_{k\in \mathbb{Z}^d} \varphi(.-k)=V_{per}$ and $\Tilde{W}_{\varepsilon,R}$ and $w_2$ are respectively defined by Proposition \ref{prop_correcteur_cas_lin_schro} and Proposition \ref{prop_corrector_nonlin}. By linearity, $W_{\varepsilon,R}$ is a solution to \eqref{equ_correcteur_theorem1}. In addition, since $\nabla w_{per}$ is the gradient of a periodic function, we have $\langle \nabla w_{per}\rangle=~0$ and, by periodicity, we know that $w_{per}$ is strictly sublinear at infinity. The properties of $W_{\varepsilon,R}$ and $w_2$ given respectively by Proposition \ref{prop_correcteur_cas_lin_schro} and Proposition \ref{prop_corrector_nonlin} therefore ensure that $\nabla W_{\varepsilon,R}(./\varepsilon)$ weakly converges to 0 in $L^p(B_R)$ for every $p\in[1,+\infty[$ and that $\|\varepsilon W_{\varepsilon,R}(./\varepsilon)\|_{L^{\infty}(B_R)}$ converges to 0 as $\varepsilon$ vanishes. We next turn to the weak convergence of $|\nabla W_{\varepsilon,R}(./\varepsilon)|^2$. We first remark that 
$$|\nabla W_{\varepsilon,R}|^2 = |\nabla w_{per}|^2 + |\nabla \Tilde{W}_{\varepsilon,R}|^2+ |\nabla w_{2}|^2-2 \nabla w_{per}. \nabla \Tilde{W}_{\varepsilon,R} - 2  \nabla w_{2}.\nabla \Tilde{W}_{\varepsilon,R} + 2 \nabla w_{per}.\nabla w_{2}.$$
As a consequence of the periodicity of $\nabla w_{per}$ and the results of Propositions \ref{prop_correcteur_cas_lin_schro} and \ref{prop_corrector_nonlin}, we already know that $|\nabla w_{per}(./\varepsilon)|^2$, $|\nabla \Tilde{W}_{\varepsilon,R}(./\varepsilon)|^2$ and $|\nabla w_{2}(./\varepsilon)|^2$ weakly converge as $\varepsilon$ vanishes. We have to show the convergence of the rightmost three terms. We only prove here the convergence of $ \nabla w_{2}(./\varepsilon).\nabla \Tilde{W}_{\varepsilon,R}(./\varepsilon)$, the proof for the other terms being extremely similar. We denote by $\mathcal{V}$ the function defined in~\eqref{primitiv_pot_lienar_schro}.
For $\phi \in \mathcal{D}(B_R)$, multiplying \eqref{correcteur_tronque} by $\chi(x) = w_{2}(x/\varepsilon) \phi( x)$ and integrating, we have 
\begin{align*}
    \int_{B_R} \nabla \Tilde{W}_{\varepsilon,R}(./\varepsilon).\nabla w_{2}(./\varepsilon) \hspace{0.7pt} \phi & = -\varepsilon \int_{B_R} w_{2}(./\varepsilon) \nabla \Tilde{W}_{\varepsilon,R}(./\varepsilon).\nabla  \phi  +  \int_{B_R}  \mathcal{V}(./\varepsilon).\nabla w_{2}(./\varepsilon) \hspace{0.7pt} \phi  \\
    &+ \varepsilon \int_{B_R} w_{2}(./\varepsilon) \mathcal{V}(./\varepsilon).\nabla \phi.
\end{align*}
We know that $\nabla \Tilde{W}_{\varepsilon,R}(./\varepsilon)$ is bounded in $L^2(B_R)$, uniformly with respect to $\varepsilon$, that $\varepsilon w_{2}(./\varepsilon)$ uniformly converges to 0 on $B_R$ and that $\mathcal{V}$ is bounded in $\left(L^{\infty}(\mathbb{R}^d)\right)^d$. It follows that  
$$\lim_{\varepsilon \to 0} \int_{B_R}  \nabla \Tilde{W}_{\varepsilon,R}(./\varepsilon).\nabla w_{2}(./\varepsilon) \hspace{0.7pt} \phi = \lim_{\varepsilon \to 0} \int_{B_R} \mathcal{V}(./\varepsilon).\nabla w_{2}(./\varepsilon) \hspace{0.7pt} \phi .$$ 
In addition, we have
$$\mathcal{V}.\nabla w_{2} = \sum_{i,j,n \in \{1,...,d\}} \sum_{k,l \in \mathbb{Z}^d} (Z_l)_n(Z_k)_j(Z_k)_i \varphi(.-l) \int_{0}^1 (1-t) \partial_n u_{i,j}(.-k-tZ_k)dt ,$$
where $u_{i,j} = G \ast \partial_{i}\partial_j \varphi$ and $|u_{i,j}(x)| \leq \dfrac{M}{1+|x|^{d+1}}$ (where $M>0$) as a consequence of Lemma~\ref{Prop_elementaire_schro}. Under assumptions \eqref{hyp_borne_Z_schro} and \eqref{A4}, Lemma \ref{corol_convergence_schro} therefore shows the existence of a constant $C$ such that $\Tilde{V}(./\varepsilon).\nabla w_{2}(./\varepsilon)$ converges to $C\in \mathbb{R}$ for the weak-$\star$ topology of $L^{\infty}(B_R)$ and we can conclude.  
\end{proof}

\begin{corol}
\label{Schro_corol_convergence_corrector}
Under the assumptions of Theorem \ref{theorem1_schro}, $V(./\varepsilon)W_{\varepsilon,R}(./\varepsilon)$ is bounded in $W^{-1,p}(B_R)$, uniformly with respect to $\varepsilon>0$ and for every $p \in ]1,+\infty[$. In addition, $V(./\varepsilon)W_{\varepsilon,R}(./\varepsilon)$ weakly converges in $W^{-1,p}(B_R)$ to $-\mathcal{M}$ as $\varepsilon$ vanishes. 
\end{corol}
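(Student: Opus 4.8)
The plan is to test the product $V(./\varepsilon) W_{\varepsilon,R}(./\varepsilon)$ against an arbitrary $\phi \in W^{1,p'}_0(B_R)$ (with $1/p + 1/p' = 1$) and to exploit the corrector equation \eqref{equ_correcteur_theorem1} directly. First I would recall that $W_{\varepsilon,R}$ solves $\Delta W_{\varepsilon,R} = V$ on $B_{R/\varepsilon}$, so after rescaling we have $\varepsilon^2 \Delta \big( W_{\varepsilon,R}(./\varepsilon) \big) = V(./\varepsilon)$ on $B_R$ in $\mathcal{D}'(B_R)$. Hence, for $\phi \in \mathcal{D}(B_R)$,
\begin{equation*}
  \int_{B_R} V(./\varepsilon)\, W_{\varepsilon,R}(./\varepsilon)\, \phi
  = \int_{B_R} \varepsilon^2 \Delta\big(W_{\varepsilon,R}(./\varepsilon)\big)\, W_{\varepsilon,R}(./\varepsilon)\, \phi
  = - \int_{B_R} \big| \varepsilon\nabla W_{\varepsilon,R}(./\varepsilon) \big|^2 \phi
  - \int_{B_R} \varepsilon^2 W_{\varepsilon,R}(./\varepsilon)\, \nabla W_{\varepsilon,R}(./\varepsilon)\cdot\nabla\phi,
\end{equation*}
after one integration by parts (legitimate since $\nabla W_{\varepsilon,R}(./\varepsilon) \in L^p(B_R)$ for all finite $p$ and $W_{\varepsilon,R}(./\varepsilon)$ is locally bounded by \eqref{c1}). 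Wait—I should be careful: the first term should read $-\int_{B_R} |\varepsilon \nabla W_{\varepsilon,R}(./\varepsilon)|^2/\varepsil^{?}$; let me redo the bookkeeping. Writing $v_\varepsilon = W_{\varepsilon,R}(./\varepsilon)$, one has $\Delta v_\varepsilon = \varepsilon^{-2} V(./\varepsilon)$, so $V(./\varepsilon) = \varepsilon^2 \Delta v_\varepsilon$ and
\begin{equation*}
  \int_{B_R} V(./\varepsilon)\, v_\varepsilon\, \phi
  = \varepsilon^2 \int_{B_R} \Delta v_\varepsilon\, v_\varepsilon\, \phi
  = - \varepsilon^2 \int_{B_R} |\nabla v_\varepsilon|^2 \phi - \varepsilon^2 \int_{B_R} v_\varepsilon\, \nabla v_\varepsilon \cdot \nabla \phi.
\end{equation*}
Now $\varepsilon^2 |\nabla v_\varepsilon|^2 = |\varepsilon \nabla v_\varepsilon|^2 \to 0$... but that is too strong; in fact the correct identity must keep $|\nabla v_\varepsilon|^2$ without the $\varepsilon^2$ prefactor in order to recover $-\mathcal{M}$. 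The resolution is that $\Delta v_\varepsilon = \varepsilon^{-1} V(./\varepsilon) \cdot \varepsilon^{-1}$; more precisely $(\Delta W_{\varepsilon,R})(x/\varepsilon) = V(x/\varepsilon)$ gives $\varepsilon^2 \Delta\big(W_{\varepsilon,R}(\cdot/\varepsilon)\big) = V(\cdot/\varepsilon)$, hence $\tfrac1\varepsilon V(\cdot/\varepsilon) = \varepsilon\,\Delta v_\varepsilon$. That is exactly the scaling appearing in \eqref{homog_eps_schro}, and it is $\tfrac1\varepsilon V(./\varepsilon)$ that multiplies $u^\varepsilon$ there, not $V(./\varepsilon)$; so for the present corollary, with the unscaled $V(./\varepsilon)$, the clean identity is
\begin{equation*}
  \int_{B_R} V(./\varepsilon)\, v_\varepsilon\, \phi
  = \varepsilon\int_{B_R} \varepsilon\,\Delta v_\varepsilon\, v_\varepsilon\, \phi
  = -\varepsilon \int_{B_R} \nabla v_\varepsilon\cdot\nabla(v_\varepsilon\phi)\cdot\varepsilon
\end{equation*}
which still carries an extra $\varepsilon$ and would force the limit to be $0$, not $-\mathcal{M}$. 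Given this tension, the honest plan is the following: use the decomposition $W_{\varepsilon,R} = w_{per} - \Tilde W_{\varepsilon,R} + w_2$ from the proof of Theorem \ref{theorem1_schro}, together with the analogous decomposition $V = V_{per} + V_1 + V_2$, and treat $V(./\varepsilon) W_{\varepsilon,R}(./\varepsilon)$ term by term. For the pairing $V_{per}(./\varepsilon) w_{per}(./\varepsilon)$ one invokes the classical periodic identity $\langle V_{per} w_{per}\rangle = -\langle |\nabla w_{per}|^2\rangle$ recalled in \eqref{eq_homog_per_schro}; for each cross term one integrates by parts against the relevant Green-kernel representation and uses Lemma \ref{corol_convergence_schro} (for terms involving $w_2$ and $\mathcal{V}$) exactly as in the final step of the proof of Theorem \ref{theorem1_schro}, and the arguments of Substeps 4.1–4.3 of Proposition \ref{prop_correcteur_cas_lin_schro} (for terms involving $\Tilde W_{\varepsilon,R}$). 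Assembling all contributions and comparing with the identity $|\nabla W_{\varepsilon,R}|^2 = |\nabla w_{per}|^2 + |\nabla \Tilde W_{\varepsilon,R}|^2 + |\nabla w_2|^2 - 2\nabla w_{per}\cdot\nabla\Tilde W_{\varepsilon,R} - 2\nabla w_2\cdot\nabla\Tilde W_{\varepsilon,R} + 2\nabla w_{per}\cdot\nabla w_2$ already used there, one recognizes that the weak limit of $V(./\varepsilon)W_{\varepsilon,R}(./\varepsilon)$ equals $-\mathcal{M}$, where $\mathcal{M}$ is the weak limit of $|\nabla W_{\varepsilon,R}|^2(./\varepsilon)$ supplied by \eqref{d1}.

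For the uniform bound in $W^{-1,p}(B_R)$: since $\Delta W_{\varepsilon,R} = V$ on $B_{R/\varepsilon}$ with $V \in L^\infty$, rescaling gives $\operatorname{div}\big(\varepsilon\nabla W_{\varepsilon,R}(./\varepsilon)\big) = \varepsilon^{-1} V(./\varepsilon)$, i.e. $V(./\varepsilon) = \varepsilon\,\operatorname{div}\big(\varepsilon\nabla W_{\varepsilon,R}(./\varepsilon)\big) = \operatorname{div}\big(\varepsilon^2\nabla W_{\varepsilon,R}(./\varepsilon)\big)$, so in any case $W_{\varepsilon,R}(./\varepsilon)$ is uniformly bounded in $W^{1,p}(B_R)$ for every finite $p$ by Theorem \ref{theorem1_schro} and $V(./\varepsilon)$ is uniformly bounded in $L^\infty(\mathbb{R}^d)$; consequently the product $V(./\varepsilon) W_{\varepsilon,R}(./\varepsilon)$ is uniformly bounded in $L^p(B_R)$, hence a fortiori in $W^{-1,p}(B_R)$, for every $p\in]1,+\infty[$. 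This handles the boundedness claim cheaply and reduces the convergence claim to identifying the limit along a subsequence, which the term-by-term analysis above does.

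The main obstacle will be the careful matching of the cross terms. Each product like $\nabla w_{per}(./\varepsilon)\cdot\nabla\Tilde W_{\varepsilon,R}(./\varepsilon)$ or $\mathcal{V}(./\varepsilon)\cdot\nabla w_2(./\varepsilon)$ does converge weakly (by periodicity plus \eqref{b}, or by Lemma \ref{corol_convergence_schro}), but one must verify that, summed with the corresponding piece of $V(./\varepsilon)W_{\varepsilon,R}(./\varepsilon)$ arising after integration by parts, the total telescopes to exactly $-\mathcal{M}$ and not merely to \emph{some} constant. The cleanest route is to avoid splitting at all and instead write, for $\phi\in\mathcal{D}(B_R)$, using $\Delta W_{\varepsilon,R}=V$ on $B_{R/\varepsilon}$ and testing with $x\mapsto \varepsilon W_{\varepsilon,R}(x/\varepsilon)\phi(x)$ after multiplying the equation by $\varepsilon$:
\begin{equation*}
  \int_{B_R} V(./\varepsilon)\,W_{\varepsilon,R}(./\varepsilon)\,\phi
  \;=\; -\int_{B_R} \bigl|\nabla W_{\varepsilon,R}(./\varepsilon)\bigr|^2\phi
  \;-\;\varepsilon\int_{B_R} W_{\varepsilon,R}(./\varepsilon)\,\nabla W_{\varepsilon,R}(./\varepsilon)\cdot\nabla\phi,
\end{equation*}
where the boundary term vanishes because $\phi$ is compactly supported in $B_R$ and $W_{\varepsilon,R}$ solves the equation on the strictly larger ball $B_{R/\varepsilon}$. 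The second term on the right tends to $0$ since $\varepsilon W_{\varepsilon,R}(./\varepsilon)\to 0$ in $L^\infty(B_R)$ by \eqref{c1} and $\nabla W_{\varepsilon,R}(./\varepsilon)$ is bounded in $L^2(B_R)$; the first term tends to $-\int_{B_R}\mathcal{M}\phi$ by \eqref{d1}. Passing to the limit and invoking the density of $\mathcal{D}(B_R)$ in $W^{1,p'}_0(B_R)$ together with the already-established uniform $W^{-1,p}$ bound yields the weak convergence of $V(./\varepsilon)W_{\varepsilon,R}(./\varepsilon)$ to $-\mathcal{M}$ in $W^{-1,p}(B_R)$ for every $p\in]1,+\infty[$, which is the assertion of the corollary.
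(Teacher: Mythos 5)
Your ``cleanest route'' identity is exactly the paper's: multiplying \eqref{equ_correcteur_theorem1} by $\chi(x)=W_{\varepsilon,R}(x/\varepsilon)\phi(x)$ and integrating by parts gives
\[
\int_{B_R} V(./\varepsilon)\,W_{\varepsilon,R}(./\varepsilon)\,\phi
  = -\int_{B_R} \bigl|\nabla W_{\varepsilon,R}(./\varepsilon)\bigr|^2\phi
  -\int_{B_R}\varepsilon\, W_{\varepsilon,R}(./\varepsilon)\,\nabla W_{\varepsilon,R}(./\varepsilon)\cdot\nabla\phi,
\]
and the limit passage via \eqref{c1}, \eqref{d1} and density of $\mathcal{D}(B_R)$ in $W^{1,p'}_0(B_R)$ is the same as in the paper. (Your intermediate worry about a spurious factor of $\varepsilon$ was unfounded: $\nabla[W_{\varepsilon,R}(\cdot/\varepsilon)]=\varepsilon^{-1}(\nabla W_{\varepsilon,R})(\cdot/\varepsilon)$, so the $\varepsilon^2$ coming from $\Delta[W_{\varepsilon,R}(\cdot/\varepsilon)]=\varepsilon^{-2}V(\cdot/\varepsilon)$ cancels exactly; the detour through the decomposition $W_{\varepsilon,R}=w_{per}-\Tilde W_{\varepsilon,R}+w_2$ is unnecessary.)

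However, your argument for the \emph{uniform $W^{-1,p}$ bound} contains a genuine error. You claim that $W_{\varepsilon,R}(./\varepsilon)$ is uniformly bounded in $W^{1,p}(B_R)$ ``by Theorem \ref{theorem1_schro}''. This is false. The theorem bounds $(\nabla W_{\varepsilon,R})(./\varepsilon)$ in $L^p(B_R)$, whereas $\nabla\bigl[W_{\varepsilon,R}(./\varepsilon)\bigr]=\varepsilon^{-1}(\nabla W_{\varepsilon,R})(./\varepsilon)$ has $L^p$-norm of order $\varepsilon^{-1}$. Nor is the function $W_{\varepsilon,R}(./\varepsilon)$ itself bounded in $L^p(B_R)$: the only control available is the strict sublinearity $\varepsilon W_{\varepsilon,R}(./\varepsilon)\to 0$ in $L^\infty(B_R)$, so $\|W_{\varepsilon,R}(./\varepsilon)\|_{L^\infty(B_R)}$ can (and generically does, given the BMO-type gradient of $\Tilde W_{\varepsilon,R}$) diverge as $\varepsilon\to 0$. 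Consequently your claim that $V(./\varepsilon)W_{\varepsilon,R}(./\varepsilon)$ is uniformly bounded in $L^p(B_R)$ is not justified and is in general too strong; the corollary asserts the weaker $W^{-1,p}$ bound precisely for this reason. The correct (and short) way to get the bound is from the very identity you already wrote: since $\varepsilon W_{\varepsilon,R}(./\varepsilon)$ is bounded in $L^\infty(B_R)$, $\nabla W_{\varepsilon,R}(./\varepsilon)$ is bounded in $L^q(B_R)$ for every finite $q$, and hence $|\nabla W_{\varepsilon,R}(./\varepsilon)|^2$ is bounded in $L^q(B_R)$ as well, the H\"older inequality applied to the right-hand side yields $\bigl|\int_{B_R}V(./\varepsilon)W_{\varepsilon,R}(./\varepsilon)\phi\bigr|\le C\|\phi\|_{W^{1,p'}(B_R)}$ with $C$ independent of $\varepsilon$, which is the uniform $W^{-1,p}$ bound. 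This is exactly what the paper does; replacing your incorrect $W^{1,p}$ claim by this H\"older estimate fixes the argument.
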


\begin{proof}
For every $\phi \in \mathcal{D}(B_R)$ and every $\varepsilon>0$, multiplying \eqref{equ_correcteur_theorem1} by $\chi = W_{\varepsilon,R}(./\varepsilon)\phi$ and integrating, we obtain : 
\begin{align}
\label{formulation_varationnelle_WV}
  \int_{B_R}  W_{\varepsilon,R}(./\varepsilon) V(./\varepsilon) \phi  = - \int_{B_R} \varepsilon W_{\varepsilon,R}(./\varepsilon) \nabla W_{\varepsilon,R}(./\varepsilon). \nabla \phi -  \int_{B_R} |\nabla W_{\varepsilon,R}(./\varepsilon)|^2 \phi.
\end{align}
For every $q \in [1,+\infty[$, since $\varepsilon W_{\varepsilon,R}(./\varepsilon)$ and $\nabla W_{\varepsilon,R}(./\varepsilon)$ are both bounded in $L^q(B_R)$, uniformly with respect to $\varepsilon$, the H\"older inequality gives the existence of a constant $C>0$ independent of $\varepsilon$ such that for every $p\in]1,+\infty[$,
\begin{equation}
    \left|  \int_{B_R}  W_{\varepsilon,R}(./\varepsilon) V(./\varepsilon) \phi  \right| \leq C \|\phi \|_{W^{1,p'}(B_R)},
\end{equation}
where we have denoted by $p'$ the conjugate exponent of $p$. It follows that $ W_{\varepsilon,R}(./\varepsilon) V(./\varepsilon)$ is uniformly bounded in $W^{-1,p}(B_R)$. Since $\varepsilon W_{\varepsilon,R}(./\varepsilon)$ converges to 0 in $L^{\infty}(B_R)$ as $\varepsilon$ vanishes and $|\nabla W_{\varepsilon,R}(./\varepsilon)|^2$ weakly converges to $-\mathcal{M}$ in $L^p(B_R)$ for every $p \in ]1,+\infty[$, the weak convergence of $W_{\varepsilon,R}(./\varepsilon) V(./\varepsilon)$ to $-\mathcal{M}$ in $W^{-1,p}$ is therefore a consequence of \eqref{formulation_varationnelle_WV}. 
\end{proof}

\section{Homogenization results}

\label{Schro_section5}

The existence of a corrector solution to \eqref{equ_correcteur_theorem1} satisfying suitable properties being established, we now turn to the proof of our homogenization results stated in Theorems \ref{schro_theorem_2} and \ref{schro_theorem_3}. In Section~\ref{schro_scetion_wellposedness} we begin by studying the well-posedness of problem \eqref{homog_eps_schro}, showing that the first eigenvalue of $-\Delta + \dfrac{1}{\varepsilon}V(./\varepsilon)$ converges to the first eigenvalue of $-\Delta - \mathcal{M}$. This result will next provide the sufficient condition~\eqref{H} which allows to perform the homogenization of \eqref{homog_eps_schro} and to prove Theorem~\ref{schro_theorem_2}. We next use the result of Theorem~\ref{schro_theorem_2} to show the convergence of \emph{all} the eigenvalues of $-\Delta + \dfrac{1}{\varepsilon}V(./\varepsilon)$ in Proposition \ref{prop_convergence_autre_vp} and we conclude with the proof of Theorem \ref{schro_theorem_3}.

\subsection{Well-posedness of Problem (\ref{homog_eps_schro})}

\label{schro_scetion_wellposedness}

In order to show, for $\varepsilon$ sufficiently small, the well-posedness of problem (\ref{homog_eps_schro}), we first need to introduce the following technical lemma :

\begin{lemme}
\label{Lemme_convergence_Hminus1}
Assume $d\geq 2$. Let $(f^{\varepsilon})_{\varepsilon>0}$ be a sequence in $L^2(B_{4R})$ for some $R>0$. Assume there exists $p>d$ such that $f^{\varepsilon}$ is bounded in $L^{p}(B_{4R})$, uniformly with respect to $\varepsilon$, and $f^{\varepsilon}$ weakly converges to 0 in $L^p(B_{4R})$ as $\varepsilon$ vanishes. 
Then there exists a sequence $\left(\psi^{\varepsilon}\right)_{\varepsilon>0}$ of $W^{2,p}(B_{4R})$ such that
$$\left\{
\begin{array}{cc}
  \Delta \psi^{\varepsilon} =f^{\varepsilon}  & \text{on } B_{4R}, \\
   \psi^{\varepsilon} =0  & \text{on } \partial B_{4R},
\end{array} \right.$$
and $\displaystyle \lim_{\varepsilon \to 0} \|\nabla \psi^{\varepsilon}\|_{L^{\infty}(B_R)} =0$. 
\end{lemme}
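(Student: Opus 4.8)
The plan is to solve the Dirichlet problem by standard elliptic theory and then extract the desired decay of the gradient near the center from the weak convergence of the data, using a Calderón–Zygmund estimate to produce uniform $W^{2,p}$ bounds and then a compact-embedding / harmonic-extraction argument exactly in the spirit of Lemma~\ref{lemme_sous_linearité}. Concretely, first I would invoke $L^p$-regularity for the Laplacian on the ball $B_{4R}$: since $f^\varepsilon \in L^p(B_{4R})$ with $p>d>1$ and $\partial B_{4R}$ is smooth, there is a unique $\psi^\varepsilon \in W^{2,p}(B_{4R}) \cap W^{1,p}_0(B_{4R})$ with $\Delta \psi^\varepsilon = f^\varepsilon$ on $B_{4R}$ and $\psi^\varepsilon = 0$ on $\partial B_{4R}$, together with the a priori bound
\begin{equation}
\|\psi^\varepsilon\|_{W^{2,p}(B_{4R})} \leq C \|f^\varepsilon\|_{L^p(B_{4R})},
\end{equation}
where $C$ depends only on $p$, $d$ and $R$. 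Since $\|f^\varepsilon\|_{L^p(B_{4R})}$ is bounded uniformly in $\varepsilon$ by hypothesis, the sequence $(\psi^\varepsilon)_{\varepsilon>0}$ is bounded in $W^{2,p}(B_{4R})$, hence $(\nabla\psi^\varepsilon)_{\varepsilon>0}$ is bounded in $W^{1,p}(B_{4R})$.

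Next I would identify the limit. By Rellich–Kondrachov (since $W^{1,p}(B_{4R}) \hookrightarrow\hookrightarrow C^{0}(\overline{B_{2R}})$ for $p>d$, or at least compactly into $L^q$), any weak-$W^{2,p}$ limit point $\psi$ of the sequence $\psi^\varepsilon$ satisfies $\Delta \psi = 0$ in $B_{4R}$, because $f^\varepsilon \rightharpoonup 0$ weakly in $L^p(B_{4R})$ and one can pass to the limit in the variational formulation $\int_{B_{4R}} \nabla\psi^\varepsilon\cdot\nabla\phi = -\int_{B_{4R}} f^\varepsilon \phi$ tested against $\phi \in \mathcal{D}(B_{4R})$. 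To pin down $\psi$ I would use the boundary condition: the embedding into $C^0(\overline{B_{4R}})$ is still compact up to the boundary for this smooth domain, so $\psi = 0$ on $\partial B_{4R}$, and the unique harmonic function with zero boundary data is $\psi \equiv 0$. Since $0$ is the only weak-$W^{2,p}$ adherent value, the whole sequence $\psi^\varepsilon$ converges weakly to $0$ in $W^{2,p}(B_{4R})$, and by compactness $\nabla\psi^\varepsilon \to 0$ strongly in $C^0(\overline{B_R})$; in particular $\|\nabla\psi^\varepsilon\|_{L^\infty(B_R)} \to 0$.

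The main obstacle is making the passage to the limit clean, and in particular justifying that the zero boundary condition is preserved in the limit and that interior $L^p$-ellipticity really gives $C^0$-control of $\nabla\psi^\varepsilon$ on the \emph{smaller} ball $B_R$ rather than merely an $L^q$ bound; this is where the hypothesis $p>d$ is essential, via $W^{1,p} \hookrightarrow C^{0,1-d/p}$ (Morrey), and it is exactly the point exploited in Lemma~\ref{lemme_sous_linearité}. An alternative, slightly cleaner route that avoids any boundary subtlety: one does not actually need the specific solution vanishing on $\partial B_{4R}$ for the conclusion on $B_R$; one could instead localize, observing that for a cutoff $\theta \in \mathcal{D}(B_{3R})$ with $\theta \equiv 1$ on $B_{2R}$, the function $\theta\psi^\varepsilon$ solves a Laplace equation on all of $\mathbb{R}^d$ with right-hand side $\theta f^\varepsilon + 2\nabla\theta\cdot\nabla\psi^\varepsilon + \psi^\varepsilon\Delta\theta$, which is bounded in $L^p$ and weakly convergent to $0$ (using interior estimates for the first-order terms), and then $\nabla(G*(\theta f^\varepsilon + \dots))$ restricted to $B_R$ converges to $0$ in $L^\infty$ by the weak convergence against the $L^{p'}$ kernel pieces and the uniform bound. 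Either way the argument is routine once the uniform $W^{2,p}$ bound and the weak convergence of the data are in hand.
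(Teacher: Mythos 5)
Your proof is correct and follows essentially the same route as the paper's: solve the Dirichlet problem on $B_{4R}$, obtain a uniform $W^{2,p}$ bound by elliptic regularity, identify the weak limit as the zero harmonic function using the homogeneous boundary data, and then upgrade to strong $C^0$ convergence of $\nabla\psi^\varepsilon$ on the smaller ball via Morrey/compact embedding (the paper phrases this last step as Morrey plus Arzel\`a--Ascoli, while you invoke the compact embedding $W^{2,p}\hookrightarrow C^1$ directly, which amounts to the same thing for $p>d$). The only cosmetic difference is that the paper first runs an $H^1$ Lax--Milgram argument to identify the limit as $0$ before bringing in the $L^p$ estimates, whereas you do everything in the $W^{2,p}$ framework from the start; both are valid and equivalent in content.
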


\begin{proof}
We denote by $\psi^{\varepsilon} \in H^1_0(B_{4R})$ the unique solution (provided by the Lax-Milgram Lemma) to 
\begin{equation}
\label{laplace_partiel_schro}
    \Delta \psi^{\varepsilon} = f^{\varepsilon} \quad \text{on } B_{4R}, 
\end{equation}
and we denote by $C_1>0$ a constant independent of $\varepsilon$ such that 
\begin{equation}
    \|\psi^{\varepsilon}\|_{H^1(B_{4R})} \leq C_1 \|f^{\varepsilon}\|_{L^2(B_{4R})}.
\end{equation}
Since $f^{\varepsilon}$ is uniformly bounded in $L^2(B_{4R})$ with respect to $\varepsilon$, the sequence $\psi^{\varepsilon}$ is bounded in $H^1_0(B_{4R})$ and, up to an extraction, it therefore weakly converges to a function $v \in H^1_0(B_{4R})$. Passing to the limit in \eqref{laplace_partiel_schro} when $\varepsilon \to 0 $, we obtain
$$\Delta v = 0 \quad \text{on } B_{4R}.$$
Since $v \in H^1_0(B_{4R})$, we deduce that $v = 0$. 

We next use elliptic regularity, see for instance \cite[Theorem 7.4 p.141]{MR3099262}, to obtain the existence of a constant $C_2>0$, independent of $\varepsilon$, such that : 
\begin{equation}
    \|D^2 \psi^{\varepsilon} \|_{L^p(B_{4R})} \leq C_2 \|f^{\varepsilon} \|_{L^p(B_{4R})}.  
    \end{equation}
The Morrey inequality (see \cite[p.268]{evans10}) next yields a constant $C_3>0$, also independent of $\varepsilon$, such that for every $x,y \in B_{2R}$, $x\neq y$, and $i \in \{1,...,d\}$ : 
\begin{equation}
\label{Morrey_schro_correcteur2}
|\partial_i \psi^{\varepsilon}(x) - \partial_i \psi_{\varepsilon}(y)| \leq C_3 |x-y|^{1-d/p} \|D^2 \psi_{\varepsilon}\|_{L^p(B_{4R})}\leq C_2C_3 |x-y|^{1-d/p} \|f^{\varepsilon}\|_{L^p(B_{4R})}.
\end{equation}
Since $f^{\varepsilon}$ is bounded in $L^p(B_{4R})$, \eqref{Morrey_schro_correcteur2} shows that $\nabla \psi^{\varepsilon}$ is bounded in $L^{\infty}(B_{2R})$ and equicontinuous on $B_{2R}$, both uniformly with respect to $\varepsilon$. The Arzela-Ascoli theorem therefore shows that the sequence $\nabla \psi^{\varepsilon}$, up to an extraction, uniformly converges on every compact of $B_{2R}$. Since $\nabla \psi^{\varepsilon}$ weakly converges to 0 in $L^2(B_{4R})$, the uniqueness of the limit in $\mathcal{D}'(B_R)$ allows to conclude that $\nabla \psi^{\varepsilon}$ uniformly converges to 0. Since 0 is the unique adherent value of $\nabla \psi^{\varepsilon}$ in $L^{\infty}(B_{R})$, a compactness argument ensures that the whole sequence $\nabla \psi^{\varepsilon}$ converges to 0 in $L^{\infty}(B_{R})$. 
\end{proof}

We next establish the convergence of the first eigenvalue of $-\Delta + \dfrac{1}{\varepsilon}V(./\varepsilon)$ when $\varepsilon$ vanishes. 

\begin{prop}
\label{prop_convergence_vp}
Let $\lambda_1^{\varepsilon}$ and $\mu_1$ be respectively the first eigenvalue of $-\Delta + \dfrac{1}{\varepsilon}V(./\varepsilon)$ and the first eigenvalue of $-\Delta$ on $\Omega$ both with homogeneous Dirichlet boundary conditions. Then, under the assumptions of Theorem \ref{theorem1_schro}, 
\begin{equation}
    \lim_{\varepsilon \to 0} \lambda_1^{\varepsilon} = \mu_1 - \mathcal{M},
\end{equation}
where $\mathcal{M}$ is the constant given by \eqref{d1} in Theorem \ref{theorem1_schro}. 
\end{prop}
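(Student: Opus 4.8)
The plan is to compare the first eigenvalue $\lambda_1^\varepsilon$ of $-\Delta + \frac1\varepsilon V(\cdot/\varepsilon)$ on $H^1_0(\Omega)$ with the first eigenvalue $\mu_1 - \mathcal{M}$ of $-\Delta - \mathcal{M}$ (equivalently of $-\Delta$ shifted by the constant $-\mathcal M$), via the Rayleigh quotient characterization
\begin{equation*}
\lambda_1^\varepsilon = \min_{v \in H^1_0(\Omega),\, v\neq 0} \frac{\displaystyle\int_\Omega |\nabla v|^2 + \frac1\varepsilon \int_\Omega V(\cdot/\varepsilon) v^2}{\displaystyle\int_\Omega v^2},
\end{equation*}
and I would establish the two inequalities $\limsup_{\varepsilon\to0}\lambda_1^\varepsilon \le \mu_1-\mathcal M$ and $\liminf_{\varepsilon\to0}\lambda_1^\varepsilon \ge \mu_1-\mathcal M$ separately. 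Throughout I would write $R = \mathrm{Diam}(\Omega)$ and use the $\varepsilon$-dependent corrector $W_{\varepsilon,R}$ of Theorem \ref{theorem1_schro}, restricting attention to $\Omega \subset B_R$.

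For the upper bound, I would take $u_1$ a first eigenfunction of $-\Delta$ on $H^1_0(\Omega)$ (so $\int_\Omega|\nabla u_1|^2 = \mu_1 \int_\Omega u_1^2$) and plug the oscillating test function $v^\varepsilon = u_1 + \varepsilon u_1 W_{\varepsilon,R}(\cdot/\varepsilon)$ into the Rayleigh quotient. One must be slightly careful that $v^\varepsilon$ need not vanish on $\partial\Omega$ because $W_{\varepsilon,R}(\cdot/\varepsilon)$ does not; the fix is to multiply the corrector term by a cutoff, or to use $v^\varepsilon = u_1(1 + \varepsilon W_{\varepsilon,R}(\cdot/\varepsilon))$ and note that $u_1\in H^1_0$ already forces the boundary value to zero, which is the cleaner choice. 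Expanding $\nabla v^\varepsilon = (1+\varepsilon W_{\varepsilon,R}(\cdot/\varepsilon))\nabla u_1 + u_1 \nabla W_{\varepsilon,R}(\cdot/\varepsilon)$ and using the corrector equation $\Delta W_{\varepsilon,R} = V$ on $B_{R/\varepsilon}$ (which gives $\int \frac1\varepsilon V(\cdot/\varepsilon) u_1^2 (1+\varepsilon W_{\varepsilon,R}(\cdot/\varepsilon)) = -\int u_1^2 |\nabla W_{\varepsilon,R}(\cdot/\varepsilon)|^2 + \text{(terms vanishing with }\varepsilon)$ after an integration by parts, exactly as in the periodic virial identity), the properties \eqref{c1}, \eqref{b1}, \eqref{d1} of Theorem \ref{theorem1_schro} — strong convergence of $\varepsilon W_{\varepsilon,R}(\cdot/\varepsilon)$ to $0$, weak convergence of $\nabla W_{\varepsilon,R}(\cdot/\varepsilon)$ to $0$, and weak convergence of $|\nabla W_{\varepsilon,R}(\cdot/\varepsilon)|^2$ to $\mathcal M$ — yield numerator $\to \mu_1\int u_1^2 - \mathcal M\int u_1^2$ and denominator $\to \int u_1^2$, hence $\limsup \lambda_1^\varepsilon \le \mu_1 - \mathcal M$.

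For the lower bound, let $u^\varepsilon$ be a first eigenfunction of $-\Delta + \frac1\varepsilon V(\cdot/\varepsilon)$ normalized in $L^2(\Omega)$. From $\lambda_1^\varepsilon \int_\Omega |u^\varepsilon|^2 = \int_\Omega |\nabla u^\varepsilon|^2 + \frac1\varepsilon\int_\Omega V(\cdot/\varepsilon)|u^\varepsilon|^2$ and the (crude, $\varepsilon$-independent) boundedness of $\lambda_1^\varepsilon$ from above already obtained, together with the fact that $\frac1\varepsilon V(\cdot/\varepsilon)$ can be written $\Delta(\varepsilon W_{\varepsilon,R}(\cdot/\varepsilon))$ with $\varepsilon W_{\varepsilon,R}(\cdot/\varepsilon)$ bounded, one gets a uniform $H^1_0(\Omega)$ bound on $u^\varepsilon$, so $u^\varepsilon \rightharpoonup u^*$ weakly in $H^1_0$ and strongly in $L^2$ (hence $\|u^*\|_{L^2}=1$). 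The delicate point is to pass to the limit in the term $\frac1\varepsilon\int_\Omega V(\cdot/\varepsilon)|u^\varepsilon|^2$: writing $\frac1\varepsilon V(\cdot/\varepsilon) = \mathrm{div}(\nabla W_{\varepsilon,R}(\cdot/\varepsilon))$ (valid in $B_{R/\varepsilon}\supset\Omega/\varepsilon$) one integrates by parts to get $-\int_\Omega \nabla W_{\varepsilon,R}(\cdot/\varepsilon)\cdot\nabla(|u^\varepsilon|^2)$ plus a boundary-type contribution, then inserts the corrector expansion $u^\varepsilon = u^* + \varepsilon u^* W_{\varepsilon,R}(\cdot/\varepsilon) + R^\varepsilon$-type remainder and uses once more \eqref{b1}–\eqref{d1} plus Corollary \ref{Schro_corol_convergence_corrector} to identify the limit as $-\mathcal M \int_\Omega |u^*|^2 = -\mathcal M$. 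Combining with lower semicontinuity $\liminf\int|\nabla u^\varepsilon|^2 \ge \int|\nabla u^*|^2 \ge \mu_1\int|u^*|^2 = \mu_1$ gives $\liminf\lambda_1^\varepsilon \ge \mu_1 - \mathcal M$. The main obstacle is precisely this rigorous treatment of the singular term $\frac1\varepsilon V(\cdot/\varepsilon)$ tested against $|u^\varepsilon|^2$ — handling the lack of an $L^\infty$ bound on $\nabla W_{\varepsilon,R}$ (only $L^p$ for all finite $p$) and controlling the boundary terms from integrating by parts on $\Omega$ rather than on a ball; this is where the full strength of Theorem \ref{theorem1_schro} and Corollary \ref{Schro_corol_convergence_corrector} is needed, and the argument should be phrased so that it also delivers, essentially for free, the weak formulation needed later for Proposition \ref{prop_schro_homog}.
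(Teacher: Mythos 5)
Your upper bound is essentially the paper's and is fine: insert $v^\varepsilon = (1+\varepsilon W_{\varepsilon,\Omega}(\cdot/\varepsilon))\,\phi$ with $\phi\in H^1_0(\Omega)$ into the Rayleigh quotient, use $\frac1\varepsilon V(\cdot/\varepsilon) = \Delta(\varepsilon W_{\varepsilon,\Omega}(\cdot/\varepsilon))$ and the convergences \eqref{b1}, \eqref{c1}, \eqref{d1}; optimizing over $\phi$ (or specializing to $\phi=u_1$, as you do) gives $\limsup\lambda_1^\varepsilon\le\mu_1-\mathcal M$.

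Your lower bound, however, has a quantitative error that the proposed combination of weak lower semicontinuity and "identify the potential term" cannot fix. One has $\frac1\varepsilon\int_\Omega V(\cdot/\varepsilon)|u^\varepsilon|^2 = -2\int_\Omega\nabla W_{\varepsilon,\Omega}(\cdot/\varepsilon)\cdot u^\varepsilon\nabla u^\varepsilon$ (no boundary term, since $u^\varepsilon\in H^1_0$), and inserting the two-scale ansatz $u^\varepsilon\approx u^*(1+\varepsilon W_{\varepsilon,\Omega}(\cdot/\varepsilon))$ gives a limit of $-2\mathcal M\|u^*\|_{L^2}^2$, not $-\mathcal M\|u^*\|_{L^2}^2$: the cross term $\nabla W\cdot u^*\nabla u^*$ vanishes by \eqref{b1}, but the diagonal term $-2\int|u^*|^2(1+\varepsilon W)|\nabla W_{\varepsilon,\Omega}(\cdot/\varepsilon)|^2\to-2\mathcal M\|u^*\|^2_{L^2}$. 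Correspondingly $\int|\nabla u^\varepsilon|^2$ tends to $\|\nabla u^*\|^2+\mathcal M\|u^*\|^2_{L^2}$, \emph{not} to $\|\nabla u^*\|^2$; weak lower semicontinuity throws away precisely the positive $+\mathcal M$ that cancels half of the $-2\mathcal M$. With the corrected potential limit, your argument only yields $\liminf\lambda_1^\varepsilon\ge\mu_1-2\mathcal M$, which is too weak (since $\mathcal M\ge 0$). In addition, invoking the expansion $u^\varepsilon=u^*+\varepsilon u^*W_{\varepsilon,\Omega}(\cdot/\varepsilon)+R^\varepsilon$ with $R^\varepsilon$ small in $H^1$ is circular here: that smallness is Theorem \ref{schro_theorem_2}, whose proof uses the coercivity provided by the very Proposition you are proving. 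The paper sidesteps both issues by a change of unknown valid \emph{uniformly} over test functions: any $v\in H^1_0(\Omega)$ is written $v=(1+\varepsilon W_{\varepsilon,\Omega}(\cdot/\varepsilon))\phi^\varepsilon$, and the quadratic form becomes $\pi_\varepsilon(v)=\int(1+\varepsilon W)^2|\nabla\phi^\varepsilon|^2+\int (VW_{\varepsilon,\Omega})(\cdot/\varepsilon)(1+\varepsilon W)|\phi^\varepsilon|^2$. The first term is bounded below by $(1-o(1))\|\nabla\phi^\varepsilon\|^2$ and the second is controlled, after one more integration by parts, via Lemma \ref{Lemme_convergence_Hminus1} (the auxiliary $\psi^\varepsilon$ solving $\Delta\psi^\varepsilon=\mathcal M-|\nabla W_{\varepsilon,\Omega}(\cdot/\varepsilon)|^2$ with $\|\nabla\psi^\varepsilon\|_{L^\infty}\to0$). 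This delivers $\pi_\varepsilon(v)\ge(\mu_1-\mathcal M-o(1))\|v\|_{L^2}^2$ directly, without extracting eigenfunctions, and is the mechanism you need to adopt for the lower bound.
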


\begin{proof}
Our proof is adapted from that of the equivalent result \cite[Theorem 12.2 p.162]{bensoussan2011asymptotic} established in the \emph{periodic} case. For $\varepsilon>0$, we introduce the operator $\pi_{\varepsilon}$ defined for every $v\in H^1_0(\Omega)$ by : 
$$\pi_{\varepsilon}(v) = \int_{\Omega} |\nabla v|^2 + \dfrac{1}{\varepsilon}V(./\varepsilon)v^2.$$
We denote by $R$ the diameter of $\Omega$ and we introduce $W_{\varepsilon,\Omega}= W_{\varepsilon, R}$ the corrector given by Theorem~\ref{theorem1_schro}. For every $v\in H^1_0(\Omega)$ and $\varepsilon>0$ sufficiently small, we define $\phi^{\varepsilon}\in H^1_0(\Omega)$ by : 
$$v = (1+\varepsilon W_{\varepsilon,\Omega}(./\varepsilon))\phi^{\varepsilon}.$$
Since $\varepsilon W_{\varepsilon,\Omega}(./\varepsilon)$ uniformly converges to 0 on $\Omega$, $\phi^{\varepsilon}$ is uniquely defined when $\varepsilon$ is sufficiently small.

% We have
% $$\pi_{\varepsilon}(v) = \int_{\Omega} \left( -\Delta v + \dfrac{1}{\varepsilon}V(./\varepsilon) v\right) v.$$
Since $\Delta W_{\varepsilon,\Omega} = V$ on $\Omega/\varepsilon$, we have
\begin{align*}
    -\Delta v + \dfrac{1}{\varepsilon}V(./\varepsilon) v &= - (1+\varepsilon W_{\varepsilon,\Omega}(./\varepsilon))\Delta \phi^{\varepsilon} - 2 \nabla W_{\varepsilon,\Omega}(./\varepsilon).\nabla \phi^{\varepsilon}
    + \left(VW_{\varepsilon,\Omega}\right)(./\varepsilon) \phi^{\varepsilon},
\end{align*}
and a direct calculation shows that
\begin{align*}
   %\pi_{\varepsilon}(v) &= - \langle \Delta \phi^{\varepsilon}, (1+ \varepsilon W_{\varepsilon,\Omega}(./\varepsilon))^2 \phi^{\varepsilon} \rangle - 2 \int_{\Omega} \nabla W_{\varepsilon,\Omega}(x/\varepsilon).\nabla \phi^{\varepsilon}(x) (1+\varepsilon W_{\varepsilon,\Omega}(x/\varepsilon))\phi^{\varepsilon}dx \\
   %& + \int_{\Omega} VW_{\varepsilon,\Omega}(x/\varepsilon)(1+ \varepsilon W_{\varepsilon,\Omega}(x/\varepsilon))|\phi^{\varepsilon}(x)|^2 dx\\
   \pi_{\varepsilon}(v) &= I_1^{\varepsilon} + I_2^{\varepsilon},
\end{align*}
where 
\begin{align}
\label{def_I1_schro_vp}
    I_1^{\varepsilon} & = \int_{\Omega} (1+\varepsilon W_{\varepsilon,\Omega}(x/\varepsilon))^2 |\nabla \phi^{\varepsilon}(x)|^2 dx, \\
    I_2 & = \int_{\Omega} \left(VW_{\varepsilon,\Omega}\right)(x/\varepsilon)(1+ \varepsilon W_{\varepsilon,\Omega}(x/\varepsilon))|\phi^{\varepsilon}(x)|^2 dx.
\end{align}

Our aim is now to study the behavior of $I^{\varepsilon}_1$ and $I^{\varepsilon}_2$. 
For $\varepsilon>0$, we define $\delta_1(\varepsilon)= \|\varepsilon W_{\varepsilon,\Omega}(./\varepsilon)\|_{L^{\infty}(\Omega)}$ and we have
\begin{equation}
    \label{majoration_I1_vp}
    I^{\varepsilon}_1 \geq (1-2 \delta_1(\varepsilon)) \int_{\Omega} |\nabla \phi^{\varepsilon}|^2.
\end{equation}
In order to bound $I_2^{\varepsilon}$ from below, we consider $\chi(x) =  W_{\varepsilon, \Omega}(x/\varepsilon) \left(1 + \varepsilon W_{\varepsilon, \Omega}(x/\varepsilon)\right)|\phi^{\varepsilon}(x)|^2$. Since $\Delta W_{\varepsilon, \Omega}(./\varepsilon) = V(./\varepsilon)$, we have :
\begin{align}
\label{inegalite_schor_vp_2_1}
    I_2^{\varepsilon} &= \int_{\Omega} V(x/\varepsilon) \chi(x) dx 
     = \int_{\Omega} \Delta W_{\varepsilon, \Omega}(x/\varepsilon) \chi(x) dx 
     = I_{2,1}^{\varepsilon} + I_{2,2}^{\varepsilon},
\end{align}
where 
\begin{align}
     I_{2,1}^{\varepsilon}  &  = -\int_{\Omega} |\nabla W_{\varepsilon, \Omega}(x/\varepsilon)|^2 \left(1 + \varepsilon W_{\varepsilon, \Omega}(x/\varepsilon)\right)|\phi^{\varepsilon}(x)|^2 dx, \\
     I_{2,2}^{\varepsilon} & = - \int_{\Omega} \varepsilon W_{\varepsilon, \Omega}(x/\varepsilon) \nabla W_{\varepsilon, \Omega}(x/\varepsilon). \nabla \left(\left(1 + \varepsilon W_{\varepsilon, \Omega}(x/\varepsilon)\right)|\phi^{\varepsilon}(x)|^2\right)dx .
\end{align}
We note that the rightmost equality in \eqref{inegalite_schor_vp_2_1} is valid since the trace the trace of $\phi^{\varepsilon}$ vanishes on $\partial \Omega$ (we recall that $\Omega$ is assumed to be $\mathcal{C}^1$). Since $|\nabla W_{\varepsilon,\Omega}(./\varepsilon)|^2$ weakly converges to $\mathcal{M}$ in $L^p(\Omega)$, for every $p \in ]1, +\infty[$, we can introduce the function $\psi^{\varepsilon} \in H^1(\Omega)$ given by Lemma \ref{Lemme_convergence_Hminus1}, solution to
$$\Delta \psi^{\varepsilon} = \mathcal{M} - |\nabla W_{\varepsilon,\Omega}(./\varepsilon)|^2   \quad \text{on } \Omega,$$
and such that $\nabla \psi^{\varepsilon}$ converges to 0 in $L^{\infty}(\Omega)$ as $\varepsilon$ vanishes. We also define $\delta_2(\varepsilon) =~\|\nabla \psi^{\varepsilon}\|_{L^{\infty}(\Omega)}$. We split $I^{\varepsilon}_{2,1}$ as
\begin{align}
\label{egalite_I21_vp_schro}
    I^{\varepsilon}_{2, 1}=&   \int_{\Omega} -\mathcal{M}(1+\varepsilon W_{\varepsilon,\Omega}(x/\varepsilon))|\phi^{\varepsilon}(x)|^2dx + \int_{\Omega} \Delta \psi^{\varepsilon}(x)  (1+\varepsilon W_{\varepsilon,\Omega}(x/\varepsilon))|\phi^{\varepsilon}(x)|^2 dx.
\end{align}
On the one hand, we have
\begin{equation}
\label{inegalite1_I21_vp_schro}
    \int_{\Omega} -\mathcal{M}(1+\varepsilon W_{\varepsilon,\Omega}(x/\varepsilon))|\phi^{\varepsilon}(x)|^2dx \geq -(1+\delta_1(\varepsilon)) \mathcal{M} \int_{\Omega} |\phi^{\varepsilon}(x)|^2 dx.
\end{equation}
On the other hand, since the trace of $\phi^{\varepsilon}$ vanishes on $\partial \Omega$, we obtain
\begin{align}
\label{inegalite2_I21_vp_schro}
    \int_{\Omega} \Delta \psi^{\varepsilon}  (1+\varepsilon W_{\varepsilon,\Omega}(./\varepsilon))&|\phi^{\varepsilon}|^2 dx = -  \int_{\Omega} \nabla \psi^{\varepsilon}.\nabla\left((1+\varepsilon W_{\varepsilon,\Omega}(./\varepsilon))|\phi^{\varepsilon}|^2\right)dx.
\end{align}
We remark that $ \displaystyle \nabla\left((1+\varepsilon W_{\varepsilon,\Omega}(./\varepsilon))|\phi^{\varepsilon}|^2\right) = \nabla W_{\varepsilon,\Omega}(./\varepsilon)|\phi^{\varepsilon}|^2 + (1+\varepsilon W_{\varepsilon,\Omega}(./\varepsilon))  \nabla |\phi^{\varepsilon}|^2$. Since $\nabla |\phi^{\varepsilon}|^2 = 2 \phi^{\varepsilon} \nabla \phi^{\varepsilon}$, the Cauchy-Schwarz inequality shows
$$\int_{\Omega} |\nabla (|\phi^{\varepsilon}|^2)| \leq 2\|\phi^{\varepsilon}\|_{L^2(\Omega)}\|\nabla \phi^{\varepsilon}\|_{L^2(\Omega)} \leq  2 \|\phi^{\varepsilon}\|_{H^1(\Omega)}^2.$$
Using the H\"older inequality and the fact that $\nabla W_{\varepsilon,R}(./\varepsilon)$ is bounded in $L^p(\Omega)$, uniformly with respect to $\varepsilon$ and for every $p\in]1,+\infty[$, we have the existence of $C>0$ independent of $\varepsilon$ and $\phi^{\varepsilon}$ such that
\begin{align}
\label{inegalite3_I21_vp_schro}
    \int_{\Omega} \left|\nabla W_{\varepsilon,\Omega}(./\varepsilon)\right| |\phi^{\varepsilon}|^2 \leq \left\{ \begin{array}{cc}
       \|\nabla W_{\varepsilon,\Omega}(./\varepsilon)\|_{L^2(\Omega)} \|\phi^{\varepsilon}\|^2_{L^4(\Omega)} \leq C \|\phi^{\varepsilon}\|^2_{H^1(\Omega)} & \text{if } d =2, \\
       & \\
       \quad \|\nabla W_{\varepsilon,\Omega}(./\varepsilon)\|_{L^{\frac{d}{2}}(\Omega)} \|\phi^{\varepsilon}\|^2_{L^{\frac{2d}{d-2}}(\Omega)} \leq C
      \|\phi^{\varepsilon}\|^2_{H^1(\Omega)}  & \text{if } d\geq 3.
    \end{array}  \right.
\end{align}
The latter inequality above is a consequence of a Sobolev embedding from $H^1_0(\Omega)$ to $L^{p}(\Omega)$ for every $p<+\infty$ if $d= 2$ or every $p \leq \dfrac{2d}{d-2}$ if $d\geq 3$. \eqref{inegalite2_I21_vp_schro} and \eqref{inegalite3_I21_vp_schro} therefore yield the existence of $c_1>0$ such that 
\begin{equation}
\label{inegalite4_I21_vp_schro}
    \int_{\Omega} \Delta \psi^{\varepsilon} (1+\varepsilon W_{\varepsilon, \Omega}(x/\varepsilon))|\phi^{\varepsilon}(x)|^2 dx \geq - c_1 \delta_2(\varepsilon)(1+ \delta_1(\varepsilon)) \|\phi^{\varepsilon}\|_{H^1(\Omega)}^2.
\end{equation}
Finally, using \eqref{egalite_I21_vp_schro}, \eqref{inegalite1_I21_vp_schro} and \eqref{inegalite4_I21_vp_schro}, we show that
\begin{equation}
\label{minoration_I21_vp}
    I_{2,1}^{\varepsilon} \geq -(1+\delta_1(\varepsilon)) \mathcal{M} \int_{\Omega} |\phi^{\varepsilon}(x)|^2 dx - c_1 \delta_2(\varepsilon)(1+ \delta_1(\varepsilon)) \|\phi^{\varepsilon}\|_{H^1(\Omega)}^2 .
\end{equation}
Using that $\varepsilon W_{\varepsilon, \Omega}(./\varepsilon)$ uniformly converges on $\Omega$ and $\nabla W_{\varepsilon, \Omega}(./\varepsilon)$ is bounded in $L^p(\Omega)$, we can similarly show the existence of $c_2>0$ such that 
\begin{equation}
\label{minoration_I22_vp}
    I_{2,2}^{\varepsilon} \geq - c_2 \delta_1(\varepsilon)(1+ \delta_1(\varepsilon)) \|\phi^{\varepsilon}\|_{H^1(\Omega)}^2,
\end{equation}
and we obtain 
\begin{equation}
\label{minoration_I2_vp}
    I^{\varepsilon}_2 \geq  -(1+\delta_1(\varepsilon)) \mathcal{M} \int_{\Omega} |\phi^{\varepsilon}(x)|^2 dx - \left(c_1 \delta_2(\varepsilon) + c_2 \delta_1(\varepsilon)\right)(1+ \delta_1(\varepsilon)) \|\phi^{\varepsilon}\|_{H^1(\Omega)}^2.
\end{equation}

Hence, we use \eqref{majoration_I1_vp} and \eqref{minoration_I2_vp}, and we obtain 
\begin{align*}
  \pi_{\varepsilon}(v) & \geq   (1-2 \delta_1(\varepsilon)- \left(c_1 \delta_2(\varepsilon) + c_2 \delta_1(\varepsilon)\right)(1+ \delta_1(\varepsilon))) \int_{\Omega} |\nabla \phi^{\varepsilon}|^2 \\
  & - \left((1+\delta_1(\varepsilon)) \mathcal{M} - \left(c_1 \delta_2(\varepsilon) + c_2 \delta_1(\varepsilon)\right)(1+ \delta_1(\varepsilon)) \right) \int_{\Omega} |\phi^{\varepsilon}(x)|^2.
\end{align*}
The definition of $\mu_1$ gives  
$$\int_{\Omega} |\nabla \phi^{\varepsilon}|^2 \geq \mu_1 \int_{\Omega} |\phi^{\varepsilon}|^2.$$
When $\varepsilon$ is sufficiently small, it therefore follows
\begin{align*}
    \pi_{\varepsilon}(v) \geq \left(\mu_1 - \mathcal{M} - \delta_3(\varepsilon)\right)\int_{\Omega} |\phi^{\varepsilon}|^2,
\end{align*}
where we have denoted 
$$\delta _3(\varepsilon) = \mu_1\left(2 \delta_1(\varepsilon)+ \left(c_1 \delta_2(\varepsilon) + c_2 \delta_1(\varepsilon)\right)(1+ \delta_1(\varepsilon))\right) + \delta_1(\varepsilon) \mathcal{M} + \left(c_1 \delta_2(\varepsilon) + c_2 \delta_1(\varepsilon)\right)(1+ \delta_1(\varepsilon)) .$$
When $\varepsilon$ is sufficiently small, we also have, by definition 
$$\phi^{\varepsilon} = \dfrac{v}{1+\varepsilon W_{\varepsilon,R}(./\varepsilon)},$$ and a Taylor expansion provides the existence of $c_3>0$ independent of $\varepsilon$ such that
$$ \|\phi^{\varepsilon}\|_{L^2(\Omega)} \geq (1 - c_3\delta_1(\varepsilon)) \|v\|_{L^2(\Omega)}.$$
So we obtain 
\begin{align}
    \pi_{\varepsilon}(v) \geq \left(\mu_1 - \mathcal{M} - \delta_3(\varepsilon)\right)(1-c_3\delta_1(\varepsilon))^2\int_{\Omega} |v|^2.
\end{align}
Since this inequality holds for every $v \in H^1_0$, it follows : 
\begin{equation}
    \label{minoration_pi_schro_vp}
    \lambda_1^{\varepsilon} = \inf_{v \in H^1_0} \dfrac{\pi_{\varepsilon}(v)}{\|v\|^2_{L^2(\Omega)}} \geq \left(\mu_1 - \mathcal{M} - \delta_3(\varepsilon)\right)(1-c_3\delta_1(\varepsilon))^2.
\end{equation}
We next establish a similar upper bound for $\pi_{\varepsilon}(v)$. We consider $\phi \in H^1_0(\Omega)$ such that $\|\phi\|_{L^2(\Omega)}=1$ and, for $\varepsilon>0$, we define $v^{\varepsilon} = (1+\varepsilon W_{\varepsilon, \Omega}(./\varepsilon))\phi$. We have again to bound the following two integrals :
$$I^{\varepsilon}_1 = \int_{\Omega} (1+\varepsilon W_{\varepsilon,\Omega}(x/\varepsilon))^2 |\nabla \phi(x)|^2 dx,$$
and
$$I^{\varepsilon}_2 = \int_{\Omega} VW_{\varepsilon,\Omega}(x/\varepsilon)(1+ \varepsilon W_{\varepsilon, \Omega}(x/\varepsilon))|\phi(x)|^2 dx.$$
As above, we can show that
\begin{align*}
  \pi_{\varepsilon}(v^{\varepsilon}) & \leq   (1+\delta_4(\varepsilon)) \int_{\Omega} |\nabla \phi|^2  + \left(- \mathcal{M} + \delta_5(\varepsilon) \right) \int_{\Omega} |\phi|^2 \\
  & = (1+\delta_4(\varepsilon)) \int_{\Omega} |\nabla \phi|^2  - \mathcal{M} + \delta_5(\varepsilon),
\end{align*}
where $\delta_4(\varepsilon)$ and $\delta_5(\varepsilon)$ both depend on $\|\varepsilon W_{\varepsilon, \Omega}(./\varepsilon)\|_{L^{\infty}(\Omega)}$ and $\|\nabla \psi^{\varepsilon}\|_{L^{\infty}(\Omega)}$ and converge to 0 when $\varepsilon \to 0$. Since $v^{\varepsilon} = (1+\varepsilon W_{\varepsilon,\Omega}(./\varepsilon))\phi$, we have $    \|v^{\varepsilon}\|_{L^2(\Omega)} \geq (1-\delta_1(\varepsilon))\|\phi\|_{L^2(\Omega)} =  1- \delta_1(\varepsilon)$, and it follows
\begin{align*}
    \lambda_1^{\varepsilon} & = \inf_{v \in H^1_0(\Omega)} \dfrac{\pi_{\varepsilon}(v)}{\|v\|^2_{L^2(\Omega)}} \leq  \dfrac{\pi_{\varepsilon}(v^{\varepsilon})}{\|v^{\varepsilon}\|^2_{L^2(\Omega)}} \leq \dfrac{1}{(1-\delta_1(\varepsilon))^2}\left((1+\delta_4(\varepsilon)) \int_{\Omega} |\nabla \phi|^2  - \mathcal{M} + \delta_5(\varepsilon)\right).
\end{align*}
This inequality holds for every $\phi \in H^1_0(\Omega)$ such that $\|\phi\|_{L^2(\Omega)} =1$. By definition of $\mu_1$, we have $\mu_1 = \displaystyle \inf_{\phi\in H^1_0(\Omega), \|\phi\|_{L^2(\Omega)} =1}\int_{\Omega}|\nabla \phi|^2$ and since $\delta_1(\varepsilon)$ converges to 0, we obtain the existence of $c_4>0$ independent of $\varepsilon$ such that 
\begin{equation}
\label{majoration_pi_schro_vp}
\begin{split}
     \lambda_1^{\varepsilon}& \leq \dfrac{1}{(1-\delta_1(\varepsilon))^2}\left((1+\delta_4(\varepsilon))\inf_{\phi\in H^1_0(\Omega), \|\phi\|_{L^2(\Omega)} =1}\int_{\Omega}|\nabla \phi|^2 - \mathcal{M} + \delta_5(\varepsilon)\right) \\
   & \leq c_4\left((1+\delta_4(\varepsilon))\mu_1 - \mathcal{M} + \delta_5(\varepsilon)\right).
  \end{split}
\end{equation}
Using \eqref{minoration_pi_schro_vp} and \eqref{majoration_pi_schro_vp}, we have finally shown that
$$ \left(\mu_1 - \mathcal{M} - \delta_3(\varepsilon)\right)(1-c_3\delta_1(\varepsilon))^2 \leq \lambda_1^{\varepsilon} \leq c_4\left((1+\delta_4(\varepsilon))\mu_1 - \mathcal{M} + \delta_5(\varepsilon)\right).$$
Passing to the limit when $\varepsilon \to 0$, we conclude that $\displaystyle \lim_{\varepsilon \to 0} \lambda_1^{\varepsilon} = \mu_1 - \mathcal{M}$.
\end{proof}

In particular, when \eqref{H} is satisfied and if $\varepsilon$ is sufficiently small, Proposition \ref{prop_convergence_vp} ensures that the quadratic form 
$$a^{\varepsilon}(u,u)= \int_{\Omega} |\nabla u|^2 + \dfrac{1}{\varepsilon}\int_{\Omega} V(./\varepsilon) |u|^2 + \nu \int_{\Omega} |u|^2,$$
is coercive in $H^1_0(\Omega)$, uniformly with respect to $\varepsilon$. 
Indeed, given \eqref{H}, a simple adaptation of the proof of Proposition \ref{prop_convergence_vp} shows that there exists $0<\eta<1$ such that, for every $\varepsilon>0$ sufficiently small, the first eigenvalue of $-(1-\eta)\Delta + \dfrac{1}{\varepsilon}V(./\varepsilon) + \nu$ is positive and, consequently, we have 
$$0\leq (1-\eta)\int_{\Omega} |\nabla u|^2 + \dfrac{1}{\varepsilon}\int_{\Omega} V(./\varepsilon) |u|^2 + \nu \int_{\Omega} |u|^2.$$
It follows that 
\begin{equation}
\begin{split}
\label{ineg_1_schro_coercivite_split}
    \dfrac{\eta}{2} \|\nabla u \|^2_{L^2(\Omega)} & \leq \dfrac{\eta}{2} \int_{\Omega} |\nabla u|^2 + \dfrac{1}{2}\left((1-\eta)\int_{\Omega} |\nabla u|^2 + \dfrac{1}{\varepsilon}\int_{\Omega} V(./\varepsilon) |u|^2 + \nu \int_{\Omega} |u|^2\right)\\
    & =  \dfrac{1}{2}a^{\varepsilon}(u,u).
\end{split}
\end{equation}
Moreover, for $\varepsilon$ sufficiently small, Proposition \ref{prop_convergence_vp} also ensures that
\begin{equation}
    \label{ineg_2_schro_coercivite_split}
    \dfrac{\mu_1 - \mathcal{M} + \nu}{2} \|u\|^2_{L^2(\Omega)} \leq \dfrac{1}{2} a^{\varepsilon}(u,u).
\end{equation}
Denoting $C = \min\left(\eta, \left( \mu_1 - \mathcal{M} + \nu\right)\right)$, assumption \eqref{H} gives $C>0$ and, using \eqref{ineg_1_schro_coercivite_split} and \eqref{ineg_2_schro_coercivite_split}, we obtain 
\begin{equation}
\label{coercivite_unif_schro}
   \dfrac{C}{2} \|u\|_{H^1(\Omega)}^2 \leq a^{\varepsilon}(u,u). 
\end{equation}
For such values of $\varepsilon$, problem \eqref{homog_eps_schro} is therefore well-posed in $H^1_0(\Omega)$ as a consequence of the Lax-Milgram lemma.

\begin{remark}
If the periodic potential $\displaystyle V_{per} = g_{per} + \sum_{k \in \mathbb{Z}^d} \varphi(.-k)$ satisfies assumption \eqref{Schro_condition_suff_per}, we remark that assumption \eqref{H} is not necessarily satisfied by $\displaystyle V = g_{per} + \sum_{k \in \mathbb{Z}^d} \varphi(.-k-Z_k)$. Consider indeed the one-dimensional example for which $g_{per} = 0$ and $V(x) = \displaystyle \sum_{k \in \mathbb{Z}} \psi'(x-k-Z_k)$ where $ \displaystyle \psi \in \mathcal{D}(\mathbb{R})$ is nonnegative and has support in $[0,1]$ and $Z_k\in]0,1[$ satisfies 
\begin{align}
    &\lim_{\varepsilon \to 0} \dfrac{\varepsilon}{R}\sum_{k \in B_{R}(x_0)/\varepsilon} F(Z_k) = \int_0^1 F(t)dt,\\
    \forall l \neq 0, \ &\lim_{\varepsilon \to 0} \dfrac{\varepsilon}{R}\sum_{k \in B_{R}(x_0)/\varepsilon} G(Z_k,Z_{k+l}) = \int_0^1 \int_0^1 G(t,u)dtdu,
\end{align}
for every $F \in \mathcal{C}^0(\mathbb{R})$, $G \in \mathcal{C}^0(\mathbb{R}, \mathbb{R})$. Such a sequence $Z$ indeed exists, as for example shown by the deterministic approximation of random uniform distribution given in Section \ref{Section_schro_examples}. In this case, the periodic corrector $w_{per}$ and our adapted corrector $w$, respectively solution to $w_{per}'' = V_{per}$ and solution to $w'' = V$, can both be made explicit and their derivatives are respectively given by $w_{per}' = \displaystyle \sum_{k \in \mathbb{Z}} \psi(.-k)$ and $w' = \displaystyle \sum_{k \in \mathbb{Z}} \psi(.-k-Z_k)$. An explicit calculation using the properties of $Z_k$ shows that 
\begin{align*}
  \langle |w'_{per}|^2 \rangle &=  \operatorname{weak} \underset{\varepsilon \to 0}{\lim}\ |w'_{per}(./\varepsilon)|^2 = \int_{\mathbb{R}} |\psi(t)|^2 dt \\
  \mathcal{M} &=  \operatorname{weak} \underset{\varepsilon \to 0}{\lim} \ |w'(./\varepsilon)|^2 = \int_{\mathbb{R}} |\psi(t)|^2 dt + \int_{\mathbb{R}} \psi(t) \int_0^1 \int_0^1 \psi(t+1-u+v)du \hspace{1pt} dv\hspace{1pt} dt, 
\end{align*}
and $\mathcal{M} > \langle |w'_{per}|^2 \rangle$ as soon as $\displaystyle \int_{\mathbb{R}} \psi(t) \int_0^1 \int_0^1 \psi(t+1-u+v)du \hspace{1pt} dv\hspace{1pt} dt >0$. In this case, \eqref{H} is therefore more restrictive than \eqref{Schro_condition_suff_per}. 
\end{remark}

\subsection{Proof of Theorem \ref{schro_theorem_2}}

\label{Sction_homog_schro}

In this section, we homogenize \eqref{homog_eps_schro} given the sufficient assumption~\eqref{H}. We first introduce the unique solution $u^*$ in $H^1_0(\Omega)$ to \eqref{equation_homog_schrodinger}. The existence and uniqueness of $u^*$ is, of course, ensured by~\eqref{H}. Our aim is now to show that the sequence $(u^{\varepsilon})_{\varepsilon>0}$ of solutions to \eqref{homog_eps_schro}, for $\varepsilon$ small, indeed converges to $u^*$ and, considering the sequence of remainders $R^{\varepsilon}$ defined by \eqref{schro_def_R}, to make precise the behavior of $u^{\varepsilon}$ in $H^1(\Omega)$. 

To start with, we establish the following proposition :  

\begin{prop}
\label{prop_schro_homog}
Under the assumptions of Theorem \ref{theorem1_schro}, and if $V$ additionally satisfies \eqref{H}, then the sequence $u^{\varepsilon}$ of solutions to \eqref{homog_eps_schro} converges strongly in $L^2(\Omega)$ and weakly in $H^1(\Omega)$ to the unique solution $u^* \in H^1_0(\Omega)$ to \eqref{equation_homog_schrodinger}. 
\end{prop}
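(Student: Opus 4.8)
\textbf{Proof plan for Proposition \ref{prop_schro_homog}.}

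The plan is to follow the classical oscillating test-function (Tartar) method, adapted to the present $\varepsilon$-dependent corrector $W_{\varepsilon,\Omega}$. First I would establish a uniform a priori bound: since \eqref{H} holds, the uniform coercivity estimate \eqref{coercivite_unif_schro} gives $\|u^{\varepsilon}\|_{H^1_0(\Omega)} \leq C\|f\|_{L^2(\Omega)}$ with $C$ independent of $\varepsilon$ (for $\varepsilon$ small). Hence, up to extraction, $u^{\varepsilon} \rightharpoonup u^{\sharp}$ weakly in $H^1_0(\Omega)$ and, by Rellich, strongly in $L^2(\Omega)$. It then suffices to identify $u^{\sharp} = u^*$, since uniqueness of the limit will upgrade the convergence to the full sequence.

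To pass to the limit in the weak formulation
$\int_{\Omega} \nabla u^{\varepsilon}\cdot\nabla v + \frac{1}{\varepsilon}\int_{\Omega} V(./\varepsilon) u^{\varepsilon} v + \nu\int_{\Omega} u^{\varepsilon} v = \int_{\Omega} f v$
for $v \in \mathcal{D}(\Omega)$, the delicate term is $\frac{1}{\varepsilon}\int_{\Omega} V(./\varepsilon) u^{\varepsilon} v$. Following Tartar's trick, I would rewrite $\frac{1}{\varepsilon}V(./\varepsilon) = \Delta\big(\varepsilon W_{\varepsilon,\Omega}(./\varepsilon)\big)$ on $\Omega$ (valid by \eqref{equ_correcteur_theorem1} since $\mathrm{Diam}(\Omega) = R$) and integrate by parts twice, transferring derivatives onto the smooth product $u^{\varepsilon} v$. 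Concretely,
$$\frac{1}{\varepsilon}\int_{\Omega} V(./\varepsilon) u^{\varepsilon} v = \int_{\Omega} \Delta\big(\varepsilon W_{\varepsilon,\Omega}(./\varepsilon)\big) u^{\varepsilon} v = \int_{\Omega} \nabla\big(\varepsilon W_{\varepsilon,\Omega}(./\varepsilon)\big)\cdot\nabla(u^{\varepsilon} v) = \int_{\Omega} \nabla W_{\varepsilon,\Omega}(./\varepsilon)\cdot\nabla(u^{\varepsilon} v) ,$$
the boundary terms vanishing because $v$ is compactly supported. Now $\nabla(u^{\varepsilon} v) = v\nabla u^{\varepsilon} + u^{\varepsilon}\nabla v$, so the term splits as $\int_{\Omega} v\,\nabla W_{\varepsilon,\Omega}(./\varepsilon)\cdot\nabla u^{\varepsilon} + \int_{\Omega} u^{\varepsilon}\,\nabla W_{\varepsilon,\Omega}(./\varepsilon)\cdot\nabla v$. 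For the second piece, $\nabla W_{\varepsilon,\Omega}(./\varepsilon) \rightharpoonup 0$ weakly in $L^p(\Omega)$ by \eqref{b1} and $u^{\varepsilon}\nabla v \to u^{\sharp}\nabla v$ strongly in $L^{p'}$, so it tends to $0$. The first piece is the genuine obstacle, since it pairs \emph{two} weakly converging sequences, $\nabla W_{\varepsilon,\Omega}(./\varepsilon)$ and $\nabla u^{\varepsilon}$.

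To handle $\int_{\Omega} v\,\nabla W_{\varepsilon,\Omega}(./\varepsilon)\cdot\nabla u^{\varepsilon}$ I would perform the symmetric manoeuvre: use the equation for $u^{\varepsilon}$ itself tested against $\varepsilon W_{\varepsilon,\Omega}(./\varepsilon) v$. Integrating the $-\Delta u^{\varepsilon}$ term by parts,
$$\int_{\Omega} \nabla u^{\varepsilon}\cdot\nabla\big(\varepsilon W_{\varepsilon,\Omega}(./\varepsilon) v\big) + \frac{1}{\varepsilon}\int_{\Omega} V(./\varepsilon) u^{\varepsilon}\,\varepsilon W_{\varepsilon,\Omega}(./\varepsilon) v + \nu\int_{\Omega} u^{\varepsilon}\,\varepsilon W_{\varepsilon,\Omega}(./\varepsilon) v = \int_{\Omega} f\,\varepsilon W_{\varepsilon,\Omega}(./\varepsilon) v .$$
Expanding $\nabla\big(\varepsilon W_{\varepsilon,\Omega}(./\varepsilon) v\big) = v\,\nabla W_{\varepsilon,\Omega}(./\varepsilon) + \varepsilon W_{\varepsilon,\Omega}(./\varepsilon)\nabla v$ isolates exactly $\int_{\Omega} v\,\nabla u^{\varepsilon}\cdot\nabla W_{\varepsilon,\Omega}(./\varepsilon)$; every other term is controlled because $\varepsilon W_{\varepsilon,\Omega}(./\varepsilon) \to 0$ strongly in $L^{\infty}(\Omega)$ by \eqref{c1} while $\nabla u^{\varepsilon}$, $u^{\varepsilon}$ and $f$ are bounded in $L^2$. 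Crucially, the term $\frac{1}{\varepsilon}\int_{\Omega} V(./\varepsilon) u^{\varepsilon}\,\varepsilon W_{\varepsilon,\Omega}(./\varepsilon) v = \int_{\Omega} (V W_{\varepsilon,\Omega})(./\varepsilon)\, u^{\varepsilon} v$ is handled by Corollary \ref{Schro_corol_convergence_corrector}: $V(./\varepsilon) W_{\varepsilon,\Omega}(./\varepsilon) \rightharpoonup -\mathcal{M}$ in $W^{-1,p}(\Omega)$, and since $u^{\varepsilon} v \to u^{\sharp} v$ strongly in $W^{1,p'}_0$, the product converges to $-\mathcal{M}\int_{\Omega} u^{\sharp} v$. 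Combining the two identities, I obtain $\int_{\Omega} v\,\nabla W_{\varepsilon,\Omega}(./\varepsilon)\cdot\nabla u^{\varepsilon} \to -\big(-\mathcal{M}\int_{\Omega} u^{\sharp} v\big) = \mathcal{M}\int_{\Omega} u^{\sharp} v$, whence the oscillating potential term converges to $\mathcal{M}\int_{\Omega} u^{\sharp} v$. Plugging everything back into the weak formulation for $u^{\varepsilon}$ yields $\int_{\Omega}\nabla u^{\sharp}\cdot\nabla v + \mathcal{M}\int_{\Omega} u^{\sharp} v + \nu\int_{\Omega} u^{\sharp} v = \int_{\Omega} f v$ for all $v \in \mathcal{D}(\Omega)$, i.e. $u^{\sharp}$ solves \eqref{equation_homog_schrodinger}; by uniqueness $u^{\sharp} = u^*$ and the whole sequence converges. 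The main obstacle, as indicated, is the double-weak product $\int_{\Omega} v\,\nabla W_{\varepsilon,\Omega}(./\varepsilon)\cdot\nabla u^{\varepsilon}$, resolved only by exploiting both the strong $L^\infty$ sublinearity \eqref{c1} and the $W^{-1,p}$ convergence of $V(./\varepsilon)W_{\varepsilon,\Omega}(./\varepsilon)$ from Corollary \ref{Schro_corol_convergence_corrector}; a secondary subtlety is that, because $W_{\varepsilon,\Omega}$ solves the corrector equation only on $B_{R/\varepsilon} \supset \Omega/\varepsilon$ and not on all of $\mathbb{R}^d$, one must be careful that all integrations by parts take place on $\Omega$ with test functions supported inside $\Omega$.
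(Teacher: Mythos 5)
Your overall strategy — oscillating test functions in the spirit of Tartar, with the $\varepsilon$-dependent corrector replacing the periodic one, and using the corrector equation together with properties \eqref{b1}, \eqref{c1}, \eqref{d1} to kill the singular term — is essentially the same as the paper's, which works with $\theta^\varepsilon = 1+\varepsilon W_{\varepsilon,\Omega}(./\varepsilon)$, tests \eqref{homog_eps_schro} and \eqref{schro_equation_theta} against $\theta^\varepsilon\psi$ and $u^\varepsilon\psi$ respectively, and subtracts. However, there are two concrete problems in the execution.

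First, a sign error. You write $\frac{1}{\varepsilon}\int_\Omega V(./\varepsilon)u^\varepsilon v = \int_\Omega \Delta\big(\varepsilon W_{\varepsilon,\Omega}(./\varepsilon)\big)u^\varepsilon v = \int_\Omega\nabla W_{\varepsilon,\Omega}(./\varepsilon)\cdot\nabla(u^\varepsilon v)$, but the integration by parts carries a minus sign: $\int\Delta f\,g = -\int\nabla f\cdot\nabla g$ for $g$ compactly supported. With the minus restored, the oscillating term converges to $-\mathcal{M}\int_\Omega u^\sharp v$, not $+\mathcal{M}\int_\Omega u^\sharp v$; without the correction your limit weak formulation reads $\int\nabla u^\sharp\cdot\nabla v + \mathcal{M}\int u^\sharp v + \nu\int u^\sharp v = \int fv$, which is not \eqref{equation_homog_schrodinger} (whose potential term is $-\mathcal{M}$), although you assert that it is.

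Second, and more seriously, the step handling $\int_\Omega (VW_{\varepsilon,\Omega})(./\varepsilon)u^\varepsilon v$ is not justified as stated. You claim "$u^\varepsilon v\to u^\sharp v$ strongly in $W^{1,p'}_0$" and then pair this with the weak $W^{-1,p}$ convergence from Corollary \ref{Schro_corol_convergence_corrector}. But $u^\varepsilon v$ does \emph{not} converge strongly in $W^{1,p'}_0$ for any $p'\geq 1$: the gradient $\nabla(u^\varepsilon v) = v\nabla u^\varepsilon + u^\varepsilon\nabla v$ contains the factor $v\nabla u^\varepsilon$, which converges only weakly in $L^2$. The pairing of a $W^{-1,p}$-weakly convergent sequence with a merely weakly-$H^1$ convergent one does not pass to the limit in general. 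To close this gap you must not use the Corollary as a black box; instead reproduce its proof with the $\varepsilon$-dependent test function, i.e., write
\begin{equation}
\int_\Omega (VW_{\varepsilon,\Omega})(./\varepsilon)u^\varepsilon v = -\int_\Omega |\nabla W_{\varepsilon,\Omega}(./\varepsilon)|^2\,u^\varepsilon v \;-\;\int_\Omega \varepsilon W_{\varepsilon,\Omega}(./\varepsilon)\,\nabla W_{\varepsilon,\Omega}(./\varepsilon)\cdot\nabla(u^\varepsilon v),
\end{equation}
and then observe that the first term converges by pairing the weak $L^p$ limit \eqref{d1} of $|\nabla W_{\varepsilon,\Omega}(./\varepsilon)|^2$ with the \emph{strong} $L^{p'}$ convergence of $u^\varepsilon v$ (available via Rellich and Sobolev, with $p'<2d/(d-2)$), while the second term tends to zero because $\|\varepsilon W_{\varepsilon,\Omega}(./\varepsilon)\|_{L^\infty}\to 0$ by \eqref{c1} and the remaining factors are bounded in $L^2$. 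This is precisely what the paper does by multiplying the corrector equation \eqref{equ_correcteur_theorem1} by $W_{\varepsilon,\Omega}(./\varepsilon)u^\varepsilon\psi$ and integrating by parts.
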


\begin{proof}
When $\varepsilon$ is sufficiently small, assumption \eqref{H} and Proposition \ref{prop_convergence_vp} give the existence of $C>0$ independent of $\varepsilon$ such that
$$C \|u^{\varepsilon}\|^2_{H^1(\Omega)} \leq \int_{\Omega} \left(|\nabla u^{\varepsilon}|^2 + \dfrac{1}{\varepsilon}V(./\varepsilon)|u^{\varepsilon}|^2 + \nu |u^{\varepsilon}|^2\right) \leq \|f\|_{L^2(\Omega)}\|u^{\varepsilon}\|_{H^1(\Omega)}.$$
The function $u^{\varepsilon}$ is therefore bounded in $H^1(\Omega)$, uniformly with respect to $\varepsilon$, and, up to an extraction, it converges strongly in $L^2(\Omega)$ and weakly in $H^1(\Omega)$ to a function $u^* \in H^1_0(\Omega)$. For every $\varepsilon>0$, we introduce $W_{\varepsilon, \Omega}$, the corrector given by Theorem \ref{theorem1_schro} for $R = Diam(\Omega)$ and we define
$$\theta^{\varepsilon} = 1 + \varepsilon W_{\varepsilon,\Omega}(./\varepsilon).$$
Since $W_{\varepsilon,\Omega}$ satisfies $\Delta W_{\varepsilon,\Omega}(./\varepsilon) = V(./\varepsilon)$ in $\mathcal{D}'(\Omega)$, we have
\begin{equation}
\label{schro_equation_theta}
   -\Delta \theta^{\varepsilon} + \dfrac{1}{\varepsilon} V(./\varepsilon) \hspace{0.9pt} \theta^{\varepsilon} = V(./\varepsilon)W_{\varepsilon,\Omega}(./\varepsilon). 
\end{equation}
For $\psi \in \mathcal{D}(\Omega)$, we multiply \eqref{homog_eps_schro}  and \eqref{schro_equation_theta} respectively by $\theta^{\varepsilon}\psi$ and $u^{\varepsilon}\psi$ and we obtain 
$$\int_{\Omega} \nabla u^{\varepsilon}.\nabla \theta^{\varepsilon} \hspace{0.9pt} \psi + \int_{\Omega} \nabla u^{\varepsilon}.\nabla \psi \hspace{0.9pt} \theta^{\varepsilon} + \int_{\Omega} \dfrac{1}{\varepsilon} V(./\varepsilon)u^{\varepsilon} \theta^{\varepsilon}\psi + \int_{\Omega} \nu \hspace{0.9pt} u^{\varepsilon} \theta^{\varepsilon}\psi = \int_{\Omega} f \theta^{\varepsilon} \psi,$$
and 
$$\int_{\Omega} \nabla u^{\varepsilon}.\nabla \theta^{\varepsilon} \psi + \int_{\Omega} \nabla \theta^{\varepsilon}.\nabla \psi \hspace{0.9pt} u^{\varepsilon} + \int_{\Omega} \dfrac{1}{\varepsilon} V(./\varepsilon)u^{\varepsilon} \hspace{0.9pt} \theta^{\varepsilon}\psi = \int_{\Omega} V(./\varepsilon)W_{\varepsilon,\Omega}(./\varepsilon)u^{\varepsilon} \psi.$$
Subtracting the above two equalities, we have 
\begin{equation}
\label{egalite_integrale_schro}
   \int_{\Omega} \nabla u^{\varepsilon}.\nabla \psi \hspace{0.9pt} \theta^{\varepsilon} - \int_{\Omega} \nabla \theta^{\varepsilon}.\nabla \psi \hspace{0.9pt} u^{\varepsilon} + \int_{\Omega} \nu \hspace{0.9pt} u^{\varepsilon} \hspace{0.9pt} \theta^{\varepsilon}\psi  = \int_{\Omega} f \theta^{\varepsilon} \psi - \int_{\Omega} V(./\varepsilon)W_{\varepsilon,\Omega}(./\varepsilon)u^{\varepsilon} \psi. 
\end{equation}
Since $u^{\varepsilon}$ weakly converges to $u^*$ in $H^1(\Omega)$ and  $\varepsilon W_{\varepsilon,\Omega}(./\varepsilon)$ uniformly converges to 0 on $\Omega$, we have 
$$\lim_{\varepsilon \to 0} \int_{\Omega} \nabla u^{\varepsilon}.\nabla \psi \hspace{0.9pt} \theta^{\varepsilon} = \int_{\Omega} \nabla u^*. \nabla \psi,$$ 
$$ \lim_{\varepsilon \to 0} \int_{\Omega} \nu \hspace{0.9pt} u^{\varepsilon} \hspace{0.9pt} \theta^{\varepsilon}\psi  = \int_{\Omega} \nu \hspace{0.9pt} u^{*} \psi,$$ and  
$$ \lim_{\varepsilon \to 0} \int_{\Omega} f \theta^{\varepsilon} \psi = \int_{\Omega} f  \psi.$$
Similarly, the weak convergence of $\nabla \theta^{\varepsilon} = \nabla W_{\varepsilon,\Omega}(./\varepsilon)$  to 0 and the strong convergence of $u^{\varepsilon}$ to $u^*$ in $L^2(\Omega)$ imply
$$\lim_{\varepsilon \to 0} \int_{\Omega} \nabla \theta^{\varepsilon}.\nabla \psi \hspace{0.9pt} u^{\varepsilon} = 0.$$
Multiplying next the corrector equation \eqref{equ_correcteur_theorem1} by $\chi = W_{\varepsilon , \Omega}(./\varepsilon)u^{\varepsilon} \psi$ and since $u^{\varepsilon} \psi$ converges to $u^* \psi$ strongly in $L^2(\Omega)$ and weakly in $H^1(\Omega)$, the convergence properties of the corrector imply 
\begin{align*}
    \lim_{\varepsilon \to 0} \int_{\Omega}  W_{\varepsilon,\Omega}(./\varepsilon)V(./\varepsilon)u^{\varepsilon} \psi & = \lim_{\varepsilon \to 0} - \int_{\Omega} |\nabla W_{\varepsilon,\Omega}(./\varepsilon)|^2 u^{\varepsilon} \psi - \int_{\Omega} \varepsilon W_{\varepsilon, \Omega}(./\varepsilon)\nabla (u^{\varepsilon} \psi)\\
    &= - \mathcal{M} \int_{\Omega} u^*\psi.
\end{align*}
Passing to the limit in \eqref{egalite_integrale_schro} when $\varepsilon \to 0$, we have shown that for every $\psi \in \mathcal{D}(\Omega)$ we have
$$ \int_{\Omega} \nabla u^*. \nabla \psi + (- \mathcal{M} + \nu) \int_{\Omega} u^* \psi = \int_{\Omega} f \psi.$$
We have therefore proved that $u^*$ is a solution to \eqref{equation_homog_schrodinger}. The limit being independent of the extraction, we can conclude that the sequence $u^{\varepsilon}$ converges to $u^*$. 
\end{proof}

We are now able to study the behavior of $u^{\varepsilon}$ in $H^1(\Omega)$ and to prove Theorem \ref{schro_theorem_2}.

\begin{proof}[Proof of Theorem \ref{schro_theorem_2}]
We first show that $R^{\varepsilon} = u^{\varepsilon} - u^* - \varepsilon u^* W_{\varepsilon, \Omega}(./\varepsilon)$ is uniformly bounded in $H^1(\Omega)$ with respect to $\varepsilon$. Indeed, when $\varepsilon$ is sufficiently small, assumption \eqref{H} and Proposition \ref{prop_convergence_vp} ensure that the bilinear form $a$ defined by 
$$a^{\varepsilon}(u,v) = \int_{\Omega} \nabla u(x).\nabla v(x) dx + \dfrac{1}{\varepsilon} \int_{\Omega} V(x/\varepsilon) u(x)v(x)dx + \nu \int_{\Omega}  u(x)v(x)dx,$$
is coercive, uniformly with respect to $\varepsilon$, and that the sequence $u^{\varepsilon}$ is therefore uniformly bounded in $H^1(\Omega)$. Moreover, we know that $\varepsilon W_{\varepsilon,\Omega}(./\varepsilon)$ is uniformly bounded in $L^{\infty}(\Omega)$ and $\nabla W_{\varepsilon,\Omega}(./\varepsilon)$ is uniformly bounded in $L^{p}(\Omega)$ for every $p\in [1,+\infty[$. It follows that $R^{\varepsilon}$ is bounded in $H^1(\Omega)$ uniformly with respect to $\varepsilon$. Proposition \ref{prop_schro_homog} also ensures that $R^{\varepsilon}$ strongly converges to 0 in $L^2(\Omega)$. 

For every $\varepsilon>0$, a simple calculation shows that $R^{\varepsilon}$ is solution in $\mathcal{D}'(\Omega)$ to
\begin{align*}
    -\Delta R^{\varepsilon} + \dfrac{1}{\varepsilon}V(./\varepsilon) R^{\varepsilon} + \nu R^{\varepsilon} & = \left( -\mathcal{M}- W_{\varepsilon,\Omega}(./\varepsilon)V(./\varepsilon) \right) u^*\\
    &+ 2\nabla W_{\varepsilon,\Omega}(./\varepsilon). \nabla u^* + \varepsilon W_{\varepsilon,\Omega}(./\varepsilon) \left(-\mathcal{M}u^* - f\right).
\end{align*}
Since $R^{\varepsilon}$ belongs to $H^1_0(\Omega)$, we have $a(R^{\varepsilon}, R^{\varepsilon}) = I^{\varepsilon}= I_1^{\varepsilon} + I_2^{\varepsilon} + I_3^{\varepsilon}$ where
\begin{align}
    I_1^{\varepsilon} & = \int_{\Omega} \left( -\mathcal{M}- W_{\varepsilon,\Omega}(./\varepsilon)V(./\varepsilon) \right) u^* R^{\varepsilon},\\
    I_2^{\varepsilon} & = \int_{\Omega} 2\nabla W_{\varepsilon,\Omega}(./\varepsilon). \nabla u^* R^{\varepsilon}, \\
    I_3^{\varepsilon} & =  \int_{\Omega} \varepsilon W_{\varepsilon,\Omega}(./\varepsilon) \left(-\mathcal{M}u^* - f\right) R^{\varepsilon}.
\end{align}
We next show that the three integrals $I^{\varepsilon}_1$, $I^{\varepsilon}_2$ and $I^{\varepsilon}_3$ converge to 0 when $\varepsilon \to 0$. 

Since $u^*$ and $R^{\varepsilon}$ both belong to $H^1_0(\Omega)$, the function $u^* R^{\varepsilon}$ belong to $W^{1,1}_0(\Omega)$ and since $\Delta W_{\varepsilon, \Omega}(./\varepsilon) = V(./\varepsilon)$, an integration by parts shows that
$$I^{\varepsilon}_1 = \int_{\Omega} \left( -\mathcal{M} + |\nabla W_{\varepsilon,\Omega}(./\varepsilon)|^2 \right) u^*R^{\varepsilon} + \int_{\Omega} \varepsilon W_{\varepsilon, \Omega}(./\varepsilon) \nabla W_{\varepsilon, \Omega}(./\varepsilon).\nabla (u^* R^{\varepsilon}).$$
Lemma \ref{Lemme_convergence_Hminus1} next gives the existence of $\psi^{\varepsilon} \in L^1_{loc}(\Omega)$ such that $\Delta \psi^{\varepsilon} = -\mathcal{M} + |\nabla W_{\varepsilon,\Omega}(./\varepsilon)|^2$ in $\mathcal{D}'(\Omega)$ and $\displaystyle \lim_{\varepsilon \to 0} \|\nabla \psi^{\varepsilon}\|_{L^{\infty}(\Omega)} = 0$. Using the boundedness of $R^{\varepsilon}$ in $H^1(\Omega)$, we therefore have
\begin{align*}
    \left|\int_{\Omega} \left( -\mathcal{M} + |\nabla W_{\varepsilon,\Omega}(./\varepsilon)|^2 \right) u^*R^{\varepsilon}\right| &=  \left|\int_{\Omega} \Delta \psi^{\varepsilon} u^*R^{\varepsilon}\right|  \\
    &= \left|\int_{\Omega} \nabla \psi^{\varepsilon}.\nabla u^* R^{\varepsilon} + \int_{\Omega} \nabla \psi^{\varepsilon}.\nabla R^{\varepsilon}  u^*\right| \\
    & \leq 2 \|\nabla \psi^{\varepsilon}\|_{L^{\infty}(\Omega)} \|u^*\|_{H^1(\Omega)}\|R^{\varepsilon}\|_{H^1(\Omega)} \stackrel{\varepsilon \to 0}{\longrightarrow} 0.
\end{align*}
On the other hand, we have 
\begin{align}
    \left| \int_{\Omega} \varepsilon W_{\varepsilon, \Omega}(./\varepsilon) \nabla W_{\varepsilon, \Omega}(./\varepsilon).\nabla (u^* R^{\varepsilon}) \right| \leq \|\varepsilon W_{\varepsilon, \Omega}(./\varepsilon)\|_{L^{\infty}(\Omega)}  \int_{\Omega} |\nabla W_{\varepsilon, \Omega}(./\varepsilon)|\left(|u^*||\nabla R^{\varepsilon}| + |\nabla u^*| |R^{\varepsilon}|\right).
\end{align}
The H\"older inequality and a Sobolev embedding give the existence of $C_1>0$ independent of $\varepsilon$ such that 
\begin{align*}
    \int_{\Omega} |u^*||\nabla W_{\varepsilon, \Omega}(./\varepsilon)||\nabla R^{\varepsilon}| & \leq \left\{ \begin{array}{cc}
    \|u^*\|_{L^{4}(\Omega)} \|\nabla W_{\varepsilon, \Omega}(./\varepsilon)\|_{L^4(\Omega)} \|\nabla R^{\varepsilon}\|_{L^2(\Omega)} & \text{if } d=2, \\
     \|u^*\|_{L^{\frac{2d}{d-2}}(\Omega)} \|\nabla W_{\varepsilon, \Omega}(./\varepsilon)\|_{L^d(\Omega)}  \|\nabla R^{\varepsilon}\|_{L^2(\Omega)}  & \text{if } d \geq 3,
\end{array}
\right. \\
& \leq C_1 \left\{ \begin{array}{cc}
    \|u^*\|_{H^1(\Omega)} \|\nabla W_{\varepsilon, \Omega}(./\varepsilon)\|_{L^4(\Omega)} \|\nabla R^{\varepsilon}\|_{L^2(\Omega)} & \text{if } d=2, \\
    \|u^*\|_{H^1(\Omega)}\|\nabla W_{\varepsilon, \Omega}(./\varepsilon)\|_{L^d(\Omega)}  \|\nabla R^{\varepsilon}\|_{L^2(\Omega)}  & \text{if } d \geq 3.
\end{array}
\right.
\end{align*}
We have similarly : 
\begin{align*}
    \int_{\Omega} |\nabla u^*||\nabla W_{\varepsilon, \Omega}(./\varepsilon)||R^{\varepsilon}| 
& \leq C_1 \left\{ \begin{array}{cc}
   \|u^*\|_{H^1(\Omega)}\|\nabla W_{\varepsilon, \Omega}(./\varepsilon)\|_{L^4(\Omega)}\| R^{\varepsilon}\|_{H^1(\Omega)}   & \text{if } d=2, \\
     \|u^*\|_{H^1(\Omega)} \|\nabla W_{\varepsilon, \Omega}(./\varepsilon)\|_{L^d(\Omega)}  \|R^{\varepsilon}\|_{H^1(\Omega)} & \text{if } d \geq 3.
\end{array}
\right.
\end{align*}
Since $R^{\varepsilon}$ is bounded in $H^1(\Omega)$ and $\nabla W_{\varepsilon, \Omega}(./\varepsilon)$ is bounded in $L^p(\Omega)$ for every $p\geq 1$, we deduce the existence of $C>0$ such that
\begin{align*}
    \left|\int_{\Omega} \varepsilon W_{\varepsilon,\Omega}(./\varepsilon) \nabla W_{\varepsilon,\Omega}(./\varepsilon).\nabla (u^* R^{\varepsilon}) \right| \leq C \|\varepsilon W_{\varepsilon, \Omega}(./\varepsilon)\|_{L^{\infty}(\Omega)} \stackrel{\varepsilon \to 0}{\longrightarrow} 0,
\end{align*}

and we have proved that  $\displaystyle \lim_{\varepsilon \to 0} I^{\varepsilon}_1 =0$.

We next study the behavior of $I^{\varepsilon}_2$. We know that $f \in L^2(\Omega)$ and $u^*$ is solution to \eqref{equation_homog_schrodinger}. The elliptic regularity therefore ensures that $u^* \in H^2(\Omega)$. Since $R^{\varepsilon} \in H^1_0(\Omega)$, an integration by parts shows
\begin{align*}
    \left|I^{\varepsilon}_2\right| &= 2 \left|\int_{\Omega} \varepsilon W_{\varepsilon,\Omega}(./\varepsilon) \left(\Delta u^* R^{\varepsilon} + \nabla u^*.\nabla R^{\varepsilon}\right)\right|\\
    &\leq 4 \|\varepsilon W_{\varepsilon, \Omega}(./\varepsilon)\|_{L^{\infty}(\Omega)} \|\Delta u^*\|_{L^2(\Omega)}\|R^{\varepsilon}\|_{H^1(\Omega)}.
\end{align*}
Since $\varepsilon W_{\varepsilon, \Omega}(./\varepsilon)$ uniformly converges to 0 in $\Omega$ and $R^{\varepsilon}$ is bounded in $H^1(\Omega)$, we deduce that $I^{\varepsilon}_2$ converges to 0 when $\varepsilon \to 0$. 

Similar arguments for $I_3^{\varepsilon}$ give : 
$$|I^{\varepsilon}_3| \leq \|\varepsilon W_{\varepsilon, \Omega}(./\varepsilon)\|_{L^{\infty}(\Omega)} \|-\mathcal{M}u^* - f \|_{L^2(\Omega)}\|R^{\varepsilon}\|_{L^2(\Omega)} \stackrel{\varepsilon \to 0}{\longrightarrow} 0.$$

We finally conclude that $I^{\varepsilon}$ converges to 0 when $\varepsilon$ converges to 0. The uniform coercivity of $a^{\varepsilon}$ in $H^1_0(\Omega)$ next yields a constant $C>0$ independent of $\varepsilon$ such that
$$C\|R^{\varepsilon}\|_{H^1(\Omega)}^2 \leq a^{\varepsilon}(R^{\varepsilon},R^{\varepsilon})= I^{\varepsilon}.$$
We deduce that $R^{\varepsilon}$ strongly converges to 0 in $H^1(\Omega)$ and we can conclude.
\end{proof}

\subsection{Proof of Theorem \ref{schro_theorem_3}}

We are now in position to study the convergence of \emph{all} the eigenvalues of the operator $-\Delta + \dfrac{1}{\varepsilon} V(./\varepsilon)$ with homogeneous Dirichlet boundary conditions on $\Omega$. This result is established in the following proposition~:

\begin{prop}
\label{prop_convergence_autre_vp}
Let $\lambda_l^{\varepsilon}$ and $\mu_l$ be respectively the lower $l^{th}$ eigenvalue (counting multiplicities) of $-\Delta + \dfrac{1}{\varepsilon}V(./\varepsilon)$ and $-\Delta $  on $\Omega$ with homogeneous Dirichlet boundary conditions. Then, under the assumptions of Theorem \ref{theorem1_schro},  
\begin{equation}
    \lim_{\varepsilon \to 0} \lambda_l^{\varepsilon} = \mu_l - \mathcal{M}. 
\end{equation}
\end{prop}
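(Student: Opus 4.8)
The plan is to mimic the classical variational/min–max argument for eigenvalue convergence, exactly as in the proof of Proposition~\ref{prop_convergence_vp}, but now tracking all the Rayleigh quotients simultaneously via the Courant–Fischer characterization. Recall that
$$
\lambda_l^{\varepsilon} = \min_{\substack{E \subset H^1_0(\Omega)\\ \dim E = l}} \ \max_{\substack{v \in E\\ v\neq 0}} \frac{\pi_{\varepsilon}(v)}{\|v\|_{L^2(\Omega)}^2},
\qquad
\mu_l = \min_{\substack{E \subset H^1_0(\Omega)\\ \dim E = l}} \ \max_{\substack{v \in E\\ v\neq 0}} \frac{\int_{\Omega}|\nabla v|^2}{\|v\|_{L^2(\Omega)}^2},
$$
where $\pi_{\varepsilon}(v) = \int_{\Omega}|\nabla v|^2 + \tfrac1\varepsilon V(./\varepsilon)v^2$. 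The key is the estimate already obtained inside the proof of Proposition~\ref{prop_convergence_vp}: writing any $v \in H^1_0(\Omega)$ as $v = (1+\varepsilon W_{\varepsilon,\Omega}(./\varepsilon))\phi^{\varepsilon}$, one has, for $\varepsilon$ small,
$$
\pi_{\varepsilon}(v) \geq \bigl(\mu_1 - \mathcal{M} - \delta_3(\varepsilon)\bigr)\int_{\Omega}|\phi^{\varepsilon}|^2,
\qquad
\|\phi^{\varepsilon}\|_{L^2(\Omega)}^2 \geq (1-c_3\delta_1(\varepsilon))^2 \|v\|_{L^2(\Omega)}^2,
$$
with $\delta_1(\varepsilon),\delta_3(\varepsilon)\to 0$. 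The crucial point for the \emph{lower bound} for general $l$ is that the map $v \mapsto \phi^{\varepsilon} = v/(1+\varepsilon W_{\varepsilon,\Omega}(./\varepsilon))$ is a linear isomorphism of $H^1_0(\Omega)$ onto itself (for $\varepsilon$ small, since $\varepsilon W_{\varepsilon,\Omega}(./\varepsilon)\to 0$ uniformly), hence it maps $l$-dimensional subspaces to $l$-dimensional subspaces bijectively. Therefore, for any $l$-dimensional $E$,
$$
\max_{v\in E}\frac{\pi_{\varepsilon}(v)}{\|v\|_{L^2}^2}
\ \geq\ \bigl(\mu_1-\mathcal{M}-\delta_3(\varepsilon)\bigr)(1-c_3\delta_1(\varepsilon))^2\,
\max_{\phi \in \phi^{\varepsilon}(E)}\frac{\|\phi\|_{L^2}^2}{\|\phi\|_{L^2}^2}
$$
is too crude; instead one must be more careful and estimate $\pi_{\varepsilon}(v)$ from below not just by $(\mu_1-\mathcal M)\int|\phi^\varepsilon|^2$ but rather by $\int_{\Omega}|\nabla\phi^{\varepsilon}|^2 + (-\mathcal M + o(1))\int_{\Omega}|\phi^{\varepsilon}|^2$, i.e. one wants
$$
\pi_{\varepsilon}(v)\ \geq\ (1-\delta(\varepsilon))\int_{\Omega}|\nabla\phi^{\varepsilon}|^2 + \bigl(-\mathcal{M}-\delta(\varepsilon)\bigr)\int_{\Omega}|\phi^{\varepsilon}|^2,
$$
for some $\delta(\varepsilon)\to 0$, and symmetrically an upper bound $\pi_{\varepsilon}(v)\leq (1+\delta(\varepsilon))\int|\nabla\phi^\varepsilon|^2 + (-\mathcal M+\delta(\varepsilon))\int|\phi^\varepsilon|^2$. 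This is precisely the content of the $I_1^\varepsilon$, $I_2^\varepsilon$ computations in the proof of Proposition~\ref{prop_convergence_vp}; those estimates are \emph{pointwise-in-}$\phi$ (they do not use that $\phi$ is an eigenfunction or that one is at the bottom of the spectrum), so they apply verbatim here.

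With such two-sided bounds in hand, I would run Courant–Fischer twice. For the lower bound on $\lambda_l^\varepsilon$: take a competitor subspace of the form $E = \phi^{\varepsilon}(F)$, where $F$ ranges over $l$-dimensional subspaces; then $\max_{v\in E}\pi_\varepsilon(v)/\|v\|_{L^2}^2 \geq (1-\delta(\varepsilon))\,\max_{\phi\in F}\bigl[\int|\nabla\phi|^2 + (-\mathcal M -\delta(\varepsilon))\int|\phi|^2\bigr]/\|\phi\|_{L^2}^2 \cdot(1-c_3\delta_1(\varepsilon))^{-2}$, uniformly using the equivalence of $\|v\|_{L^2}$ and $\|\phi^\varepsilon\|_{L^2}$; minimizing over $F$ and using the min–max for $\mu_l$ gives $\lambda_l^\varepsilon \geq (1-\delta(\varepsilon))(\mu_l - \mathcal M - \delta(\varepsilon)) - C\delta(\varepsilon)$ for $\varepsilon$ small. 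For the upper bound: fix an $l$-dimensional $F$ nearly optimal for $\mu_l$ (e.g. spanned by the first $l$ Dirichlet eigenfunctions of $-\Delta$), set $E = \{(1+\varepsilon W_{\varepsilon,\Omega}(./\varepsilon))\phi : \phi \in F\}$, and use the upper estimate on $\pi_\varepsilon$ together with $\|(1+\varepsilon W_{\varepsilon,\Omega}(./\varepsilon))\phi\|_{L^2} \geq (1-\delta_1(\varepsilon))\|\phi\|_{L^2}$ to get $\lambda_l^\varepsilon \leq \max_{\phi\in F}\bigl[(1+\delta(\varepsilon))\int|\nabla\phi|^2 - \mathcal M + \delta(\varepsilon)\bigr]\cdot(1-\delta_1(\varepsilon))^{-2} = (1+\delta(\varepsilon))\mu_l - \mathcal M + o(1)$. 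Letting $\varepsilon\to 0$ in both inequalities yields $\lim_{\varepsilon\to 0}\lambda_l^\varepsilon = \mu_l - \mathcal M$.

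The main subtlety — and the only place where something genuinely new relative to Proposition~\ref{prop_convergence_vp} is needed — is the bookkeeping that the change of unknown $v \leftrightarrow \phi^{\varepsilon}$ is a uniform-in-$\varepsilon$ linear isomorphism that distorts both the $H^1$ and $L^2$ norms by factors tending to $1$, so that it sends the min–max problem for $\lambda_l^{\varepsilon}$ to a perturbation of the min–max problem for $\mu_l$ \emph{at every level $l$ at once}, not just at the bottom. This is exactly the remark made after the statement of Theorem~\ref{schro_theorem_3} that the argument is "a simple adaptation of the proof of Theorem~\ref{schro_theorem_2}" (equivalently, of Proposition~\ref{prop_convergence_vp}); once one observes that all the pointwise energy estimates on $I_1^\varepsilon, I_2^\varepsilon$ there are uniform in the test function, the extension to arbitrary $l$ via Courant–Fischer is automatic, and everything else (the auxiliary function $\psi^\varepsilon$ of Lemma~\ref{Lemme_convergence_Hminus1}, the Sobolev embeddings controlling $\int |\nabla W_{\varepsilon,\Omega}(./\varepsilon)|\,|\phi|^2$, the uniform convergence of $\varepsilon W_{\varepsilon,\Omega}(./\varepsilon)$) is reused unchanged.
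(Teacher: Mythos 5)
Your argument is correct, but it takes a genuinely different route from the paper's own proof. The paper establishes Proposition~\ref{prop_convergence_autre_vp} by a resolvent/operator-norm argument: it fixes a shift $\kappa$ with $\mu_1-\mathcal{M}+\kappa>0$, so that Proposition~\ref{prop_convergence_vp} guarantees coercivity and Proposition~\ref{prop_schro_homog} applies to the $\kappa$-shifted problem; it then introduces the compact self-adjoint solution operators $T^{\varepsilon,\kappa}$ and $T^{*,\kappa}$ and invokes the eigenvalue-comparison inequality of \cite{kenig2013estimates,zhang2021estimates}, namely $|1/\lambda_l^{\varepsilon,\kappa}-1/\lambda_l^{*,\kappa}|\le\|T^{\varepsilon,\kappa}-T^{*,\kappa}\|_{\mathcal{L}(L^2,L^2)}\to 0$. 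Your approach bypasses all of this and runs the Courant--Fischer min--max characterization directly, using the multiplicative change of unknown $v=(1+\varepsilon W_{\varepsilon,\Omega}(\cdot/\varepsilon))\phi^{\varepsilon}$ together with the two-sided energy estimates (\eqref{majoration_I1_vp}, \eqref{minoration_I2_vp} and their upper-bound counterparts) already obtained in the proof of Proposition~\ref{prop_convergence_vp}, noting correctly that these estimates are uniform in $\phi^{\varepsilon}$ and that the multiplication map is a uniform-in-$\varepsilon$ linear isomorphism of $H^1_0(\Omega)$, hence preserves the dimension of competitor subspaces. What your route buys is self-containment: it does not require the homogenization result of Proposition~\ref{prop_schro_homog} nor the abstract Kenig--Lin--Shen lemma, only the corrector estimates of Theorem~\ref{theorem1_schro}. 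What the paper's route buys is that it ties the eigenvalue convergence explicitly to the rate of operator-norm convergence, which is the quantity tracked in \cite{zhang2021estimates} and would give quantitative bounds. One small technicality you should make explicit: when you divide the lower bound $\pi_{\varepsilon}(v)\ge(1-\delta)\int_\Omega|\nabla\phi^{\varepsilon}|^2-(\mathcal{M}+\delta)\int_\Omega|\phi^{\varepsilon}|^2$ by $\|v\|_{L^2(\Omega)}^2$, the numerator need not be nonnegative, so you cannot replace $\|v\|_{L^2}^2$ by a single one-sided bound in terms of $\|\phi^{\varepsilon}\|_{L^2}^2$; you should treat the nonnegative term $(1-\delta)\int_\Omega|\nabla\phi^{\varepsilon}|^2$ and the constant term $-(\mathcal{M}+\delta)\int_\Omega|\phi^{\varepsilon}|^2$ separately, using $\|v\|_{L^2}\le(1+c\,\delta_1(\varepsilon))\|\phi^{\varepsilon}\|_{L^2}$ for the first and $\|v\|_{L^2}\ge(1-c\,\delta_1(\varepsilon))\|\phi^{\varepsilon}\|_{L^2}$ for the second (or vice versa depending on the sign of $\mathcal{M}$). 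Since all distortion factors tend to $1$, the conclusion is unaffected, but the bookkeeping must be done sign-by-sign for the argument to be airtight.
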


\begin{proof}
Our approach is an adaptation of the method introduced in \cite[Section 3]{kenig2013estimates} and used in \cite[Section 4]{zhang2021estimates} for the periodic setting.  We fix $\kappa \in \mathbb{R}^*$ such that
\begin{equation}
\label{condition_nu}
   \mu_1 - \mathcal{M} + \kappa >0.
\end{equation}
For $f\in L^2(\Omega)$ and $\varepsilon>0$, we consider $u^{\varepsilon,\kappa}$, solution to 
\begin{equation}
\label{eq_schro_nu_eps_inter}
\left\{
\begin{array}{cc}
    -\Delta u^{\varepsilon, \kappa} + \dfrac{1}{\varepsilon} V(./\varepsilon) u^{\varepsilon, \kappa} + \kappa u^{\varepsilon,\kappa} = f & \text{on } \Omega, \\
     u^{\varepsilon, \kappa } = 0& \text{on } \partial \Omega.
\end{array}
    \right.
\end{equation}
Given \eqref{condition_nu}, Proposition \ref{prop_convergence_vp} ensures that $u^{\varepsilon,\kappa}$ is well defined when $\varepsilon$ is sufficiently small and Proposition~\ref{prop_schro_homog} shows that $u^{\varepsilon, \kappa}$ strongly converges in $L^2(\Omega)$ to $u^{*,\kappa}$, solution in $H^1(\Omega)$ to 
\begin{equation}
\label{eq_schro_nu_homog_inter}
\left\{
\begin{array}{cc}
    -\Delta u^{*, \kappa} - \mathcal{M}  u^{*, \kappa} + \nu u^{*,\kappa} = f & \text{on } \Omega, \\
     u^{*, \kappa } = 0& \text{on } \partial \Omega.
\end{array}
    \right.
\end{equation}
In addition, for $l\in \mathbb{N}^*$, we remark that the eigenvalues $\lambda^{\varepsilon,\kappa}_l$ and $\lambda^{*,\kappa}$, respectively defined as the lower $l^{th}$ eigenvalue of $-\Delta + \dfrac{1}{\varepsilon} V(./\varepsilon) + \kappa $ and $-\Delta - \mathcal{M}   + \kappa $ on $\Omega$ with homogeneous Dirichlet boundary conditions, are given by $\lambda_l^{\varepsilon, \kappa} = \lambda_l^{\varepsilon} + \kappa$ and $\lambda_l^{*, \kappa} = \mu_l - \mathcal{M}   + \kappa$. For $f \in L^2(\Omega)$, we next denote by $T^{\varepsilon,\kappa}(f)$ and $T^{*,\kappa}(f)$, respectively the unique solution in $H^1_0(\Omega)$ to \eqref{eq_schro_nu_eps_inter} and the unique solution in $H^1_0(\Omega)$ to \eqref{eq_schro_nu_homog_inter}. We also denote by $\left(v^{\varepsilon, \kappa}_l\right)_{l \in \mathbb{N}^*}$ and $\left(v^{*,\kappa}_l\right)_{l \in \mathbb{N}^*}$ two Hilbert bases of $L^2(\Omega)$ composed of eigenfunctions respectively associated with $\left(\lambda^{\varepsilon,\kappa}_l\right)_{\mathbb{N}^*}$ and $\left(\lambda^{*,\kappa}_l\right)_{\mathbb{N}^*}$. An adaptation of the results of \cite[Lemma 4.1]{zhang2021estimates}, which uses the method of \cite[Lemma 3.1]{kenig2013estimates}, next shows that
\begin{equation}
\label{majoration_schro_vp_opertor}
    \left|\dfrac{1}{\lambda_l^{\varepsilon, \kappa}} -  \dfrac{1}{\lambda_l^{*, \kappa} }\right| \leq \max \left\{  \max_{\substack{f \perp V^{*}_{l-1} \\ \|f\|_{L^2(\Omega)}= 1 }}  \left| \langle (T^{\varepsilon, \kappa} - T^{*,\kappa})f , f \rangle \right| ,  \max_{\substack{f \perp V^{\varepsilon}_{l-1} \\ \|f\|_{L^2(\Omega)}= 1 }} \left| \langle (T^{\varepsilon, \kappa} - T^{*,\kappa})f , f \rangle \right| \right\},
\end{equation}
where $V^{\varepsilon}_{l-1} = Span(v^{\varepsilon,\kappa}_1,..., v^{\varepsilon,\kappa}_{l-1})$ and $V^*_{l-1} = Span(v^{*,\kappa}_1,..., v^{*,\kappa}_{l-1})$. We note this inequality is actually established in \cite[Lemma 4.1]{zhang2021estimates} and \cite[Lemma 3.1]{kenig2013estimates} for a periodic setting. However, the periodicity of the coefficients is only used to ensure the existence of an homogenized operator $T^*$. Knowing the existence of this homogenized operator, the proof of \eqref{majoration_schro_vp_opertor} can be easily generalized in our setting since it is only based on a consequence of the min-max principle which uses the fact that both $T^{\varepsilon, \kappa}$ and $T^{*,\kappa}$ are self-adjoint and compact (ensured by the assumption $V \in L^{\infty}(\mathbb{R}^d)$).

From \eqref{majoration_schro_vp_opertor}, it therefore follows
\begin{equation}
    \left|\dfrac{1}{\lambda_l^{\varepsilon, \kappa}} -  \dfrac{1}{\lambda_l^{*, \kappa} }\right| \leq \left\|T^{\varepsilon,\kappa} - T^{*, \kappa} \right\|_{\mathcal{L}(L^2(\Omega), L^2(\Omega))}.
\end{equation}
Since $\displaystyle \lim_{\varepsilon \to 0} \left\|T^{\varepsilon,\kappa} - T^{*, \kappa} \right\|_{\mathcal{L}(L^2(\Omega), L^2(\Omega))} =0$ as a consequence of Proposition \ref{prop_schro_homog}, we deduce that $\dfrac{1}{\lambda_l^{\varepsilon, \kappa}}$ converges to $\dfrac{1}{\lambda_l^{*, \kappa} }$ when $\varepsilon \to 0$. We can therefore conclude that $\lambda_l^{\varepsilon}$ converges to $\mu_l - \mathcal{M}$ for every $l \in \mathbb{N}^*$.
\end{proof}

We next turn to the proof of Theorem \ref{schro_theorem_3}.

\begin{proof}[Proof of Theorem \ref{schro_theorem_3}]
Given the assumption $\mu_l-\mathcal{M} + \nu \neq 0$ for every $l \in \mathbb{N}^*$, Proposition \ref{prop_convergence_autre_vp} first implies that all the eigenvalues of the operator $-\Delta + \dfrac{1}{\varepsilon} V(./\varepsilon) + \nu$ are isolated from zero when $\varepsilon$ is sufficiently small. The well-posedness of \eqref{homog_eps_schro} is therefore a consequence of the Fredholm alternative. 
We next claim there exists a constant $C>0$ independent of $\varepsilon$ such that
\begin{equation}
\label{bound_H1_schro_Th3}
    \|u^{\varepsilon}\|_{H^1(\Omega)} \leq C \|f\|_{L^2(\Omega)},
\end{equation}
when $\varepsilon$ is sufficiently small. We indeed denote by $(v^{\varepsilon}_l)_{l \in \mathbb{N}^*} \in H^1_0(\Omega)^{\mathbb{N}}$ an Hilbert basis of $L^2(\Omega)$ composed of the eigenfunctions associated with the sequence $(\lambda_l^{\varepsilon}+ \nu)_{\varepsilon>0}$ of the eigenvalues of $-\Delta + \dfrac{1}{\varepsilon} V(./\varepsilon) + \nu$. We also define $E^{\varepsilon}_+ = \operatorname{Span}\left( v^{\varepsilon}_l, \ \lambda_l^{\varepsilon}+ \nu>0 \right)$, $E^{\varepsilon}_{-} = \operatorname{Span}\left( v^{\varepsilon}_l, \ \lambda_l^{\varepsilon} + \nu <0 \right)$ and $\Pi_+^{\varepsilon}$, $\Pi_-^{\varepsilon}$ the orthogonal projections respectively associated with $E^{\varepsilon}_+ $ and $E^{\varepsilon}_{-}$. For every $u,v \in H^1_0(\Omega)$, we define $a^{\varepsilon}(u,v) = \displaystyle \int_{\Omega} \nabla u. \nabla v + \dfrac{1}{\varepsilon} V(./\varepsilon) uv + \nu uv$. Since $u^{\varepsilon}$ is solution to \eqref{homog_eps_schro} in $H^1_0(\Omega)$, we have 
\begin{equation}
  a^{\varepsilon}(\Pi_+^{\varepsilon}(u^{\varepsilon}), \Pi_+^{\varepsilon}(u^{\varepsilon})) = a(u^{\varepsilon}, \Pi_+^{\varepsilon}(u^{\varepsilon})) = \int_{\Omega} f \Pi_+^{\varepsilon}(u^{\varepsilon}) = \int_{\Omega} \Pi_+^{\varepsilon}(f) \Pi_+^{\varepsilon}(u^{\varepsilon}).
\end{equation}
We next remark that 
\begin{equation}
    a^{\varepsilon}(\Pi_+^{\varepsilon}(u^{\varepsilon}), \Pi_+^{\varepsilon}(u^{\varepsilon})) \geq \min_{\lambda^{\varepsilon}_l +\nu >0} \left\{\lambda^{\varepsilon}_l+ \nu \right\} \|\Pi_+^{\varepsilon}(u^{\varepsilon})\|_{L^2(\Omega)}^2.
\end{equation}
We also know that $\displaystyle \min_{\lambda^{\varepsilon}_l +\nu >0}\left\{\lambda^{\varepsilon}_l+ \nu \right\} > \dfrac{\displaystyle \min_{\mu_l - \mathcal{M} +\nu >0}\left\{\mu_l - \mathcal{M} +\nu \right\} }{2}>0$ as a consequence of Proposition \ref{prop_convergence_autre_vp}. Exactly as in the proof of \eqref{coercivite_unif_schro} established in Section \ref{schro_scetion_wellposedness}, we can therefore show the coercivity of $a^{\varepsilon}$ in $H^1_0(\Omega) \cap E^{\varepsilon}_+$ and establish the existence of $C_+>0$, independent of $\varepsilon$, such that 
\begin{equation}
     a^{\varepsilon}(\Pi_+^{\varepsilon}(u^{\varepsilon}), \Pi_+^{\varepsilon}(u^{\varepsilon})) \geq C_+ \|\Pi_+^{\varepsilon}(u^{\varepsilon})\|_{H^1(\Omega)}^2.
\end{equation}
Moreover, the Cauchy-Schwarz inequality gives
\begin{equation}
    \left|\int_{\Omega} \Pi_+^{\varepsilon}(f) \Pi_+^{\varepsilon}(u^{\varepsilon})\right| \leq \|\Pi_+^{\varepsilon}(f)\|_{L^2(\Omega)}\|\Pi_+^{\varepsilon}(u^{\varepsilon})\|_{H^1(\Omega)} \leq  \|f\|_{L^2(\Omega)} \|\Pi_+^{\varepsilon}(u^{\varepsilon})\|_{H^1(\Omega)},
\end{equation}
and we obtain 
\begin{equation}
\label{Schro_born_h1_pip}
    \|\Pi_+^{\varepsilon}(u^{\varepsilon})\|_{H^1(\Omega)} \leq \dfrac{1}{C_+} \|f\|_{L^2(\Omega)}.
\end{equation}
On the other hand, we have
\begin{equation}
  - a^{\varepsilon}(\Pi_-^{\varepsilon}(u^{\varepsilon}), \Pi_-^{\varepsilon}(u^{\varepsilon})) = -a^{\varepsilon}(u^{\varepsilon}, \Pi_-^{\varepsilon}(u^{\varepsilon})) = - \int_{\Omega} f \Pi_-^{\varepsilon}(u^{\varepsilon}) = -\int_{\Omega} \Pi_-^{\varepsilon}(f) \Pi_-^{\varepsilon}(u^{\varepsilon}),
\end{equation}
and 
\begin{equation}
    - a^{\varepsilon}(\Pi_-^{\varepsilon}(u^{\varepsilon}), \Pi_-^{\varepsilon}(u^{\varepsilon})) \geq \min_{\lambda_l^{\varepsilon} + \nu <0} \left\{|\lambda^{\varepsilon}_l+ \nu |\right\}\|\Pi_-^{\varepsilon}(u^{\varepsilon})\|_{L^2(\Omega)}^2.
\end{equation}
As above, we can also deduce the existence of $C_->0$ such that 
\begin{equation}
\label{Schro_born_h1_pim}
    \|\Pi_-^{\varepsilon}(u^{\varepsilon})\|_{H^1(\Omega)} \leq \dfrac{1}{C_-} \|f\|_{L^2(\Omega)}.
\end{equation}
We next denote $M = \max\left(\dfrac{1}{C_+} , \dfrac{1}{C_-}\right)$. Since $\lambda_l^{\varepsilon} + \nu  \neq 0$ for every $l\in \mathbb{N}$ and for $\varepsilon$ sufficiently small, we have $\|u^{\varepsilon}\|_{H^1(\Omega)} \leq  \|\Pi_+^{\varepsilon}(u^{\varepsilon})\|_{H^1(\Omega)} + \|\Pi_-^{\varepsilon}(u^{\varepsilon})\|_{H^1(\Omega)}$.  Using \eqref{Schro_born_h1_pip} and \eqref{Schro_born_h1_pim}, we obtain 
\begin{equation}
    \|u^{\varepsilon}\|_{H^1(\Omega)} \leq 2M \|f\|_{L^2(\Omega)}, 
\end{equation}
which yields \eqref{bound_H1_schro_Th3}.
Therefore, up to an extraction, $u^{\varepsilon}$ converges strongly in $L^2(\Omega)$ and weakly in $H^1(\Omega)$. Repeating step by step the proof of Proposition \ref{prop_schro_homog}, we can then conclude that the sequence $u^{\varepsilon}$ converges, strongly in $L^2(\Omega)$ and weakly in $H^1(\Omega)$, to $u^*$ solution to \eqref{equation_homog_schrodinger}. If we next define $R^{\varepsilon}$ by \eqref{schro_def_R}, $R^{\varepsilon}$ belongs to $H^1_0(\Omega)$, is clearly bounded in $H^1(\Omega)$ uniformly with respect to $\varepsilon$ and converges to 0 in $L^2(\Omega)$ as $\varepsilon$ vanishes. Moreover, a calculation shows that 
\begin{align*}
    -\Delta R^{\varepsilon} + \dfrac{1}{\varepsilon}V(./\varepsilon) R^{\varepsilon} + \nu R^{\varepsilon} & = F^{\varepsilon},
\end{align*}
where 
\begin{equation}
    F^{\varepsilon} = \left( -\mathcal{M}- W_{\varepsilon,\Omega}(./\varepsilon)V(./\varepsilon) \right) u^*
    + 2\nabla W_{\varepsilon,\Omega}(./\varepsilon). \nabla u^* + \varepsilon W_{\varepsilon,\Omega}(./\varepsilon) \left(-\mathcal{M}u^* - f\right).
\end{equation}
Using that $a^{\varepsilon}(R^{\varepsilon},v)= \displaystyle \int_{\Omega} F^{\varepsilon} v$ for every $v \in H^1_0(\Omega)$, we can also show that (see the above proofs of estimates \eqref{bound_H1_schro_Th3}, \eqref{Schro_born_h1_pip} and \eqref{Schro_born_h1_pim}) :
\begin{equation}
    \dfrac{1}{C^2}\|R^{\varepsilon}\|^2_{H^1(\Omega)} \leq \left|\int_{\Omega} F^{\varepsilon} \Pi^{\varepsilon}_{-} (R^{\varepsilon}) \right| + \left|\int_{\Omega} F^{\varepsilon} \Pi^{\varepsilon}_{+} (R^{\varepsilon})\right|. 
\end{equation}
We finally conclude that both $\displaystyle \left|\int_{\Omega} F^{\varepsilon} \Pi^{\varepsilon}_{-} (R^{\varepsilon}) \right|$ and $\displaystyle \left|\int_{\Omega} F^{\varepsilon} \Pi^{\varepsilon}_{+} (R^{\varepsilon})\right|$ converge to 0 as $\varepsilon$ vanishes proceeding exactly as in the proof of Theorem \ref{schro_theorem_2}. 

\end{proof}

We conclude this article with a discussion regarding the possibility to extend our homogenization results to a larger class of non periodic potentials $V$. To this end, we suggest two possible complementary approaches : 

\textbf{1)} \underline{Extension by density arguments}. We could adapt all of our proofs and establish results similar to those of Theorems \ref{theorem1_schro}, \ref{schro_theorem_2} and \ref{schro_theorem_3} considering a potential of the form \eqref{potential_form_schro} when $\varphi$ is no longer compactly supported but decays sufficiently fast at infinity. This is for instance expressed by $|\varphi(x)| \leq \dfrac{C}{1+|x|^{\alpha}}$ for $\alpha>d$. We indeed remark that a simple adaptation of Lemma \ref{Schro_lemme_densite} shows that such a potential is a limit in $L^{\infty}(\mathbb{R}^d)$ of functions of the class \eqref{potential_form_schro} that we study in this article. In addition, the proof of Theorem \ref{theorem1_schro} is based on the continuity from $L^{\infty}(\mathbb{R}^d)$ to $BMO(\mathbb{R}^d)$ of $T: f \rightarrow \nabla ^2 G \ast f$ (see the proof of Proposition~\ref{prop_correcteur_cas_lin_schro}) and we could easily show that all of our results regarding the corrector equation (the weak-convergence properties of the gradient of our sequence of correctors $W_{\varepsilon,R}$ in particular) could therefore be extended by density arguments.
%T  are continuous in $BMO(\mathbb{R}^d)$. 
Consequently, since the homogenization results of the present contribution are established only using the specific properties of the adapted corrector and the fact that $V \in L^{\infty}(\mathbb{R}^d)$, they could be naturally extended to this setting.

\textbf{2)} \underline{Extension by algebraic operations}. The homogenization results could be also established considering a potential of the form, say, $ \displaystyle V = \sum_{k,l \in \mathbb{Z}^d} \varphi(.-k-Z_k)\psi(.-l-Z_l)$ obtained by multiplying two potentials of the particular class \eqref{potential_form_schro} that we have studied. For this new setting, although our approach based on the Taylor expansion of $V$ would still be possible to solve the corrector equation, Assumptions \eqref{A0} to \eqref{A4} would no longer be sufficient to establish the existence of an adapted corrector $w$ since the convergence of $|\nabla w(./\varepsilon)|^2$ would involve the correlations of the sequence $Z$ up to the fourth order. Adapting \eqref{A00},\eqref{A2},\eqref{A3} and \eqref{A4} to the fourth order correlations of $Z$, the method introduced in the present article would again allow to conclude. Iterating this argument and under suitable assumptions for the correlations of $Z$ of any order, our homogenization results could therefore be extended to the whole algebra generated by the potential of the form \eqref{potential_form_schro}.

\section*{Acknowledgements}
The research of the second author is partially supported by ONR under Grant N00014-20-1-2691 and by EOARD under Grant FA8655-20-1-7043.

\end{document}